\tikzstyle{v} = [circle, draw, inner sep=2pt, minimum size=3pt, fill=black]
\tikzstyle{l} = [rectangle, draw, rounded corners]
\newcommand{\CC }{\mathbb{C}}
\newcommand{\RR }{\mathbb{R}}
\newcommand{\ZZ }{\mathbb{Z}}
\newcommand{\KK }{\mathbb{K}}
\newcommand{\Ac }{\mathscr{A}}
\newcommand{\Aa }{\mathscr{A}}
\newcommand{\Ec }{\mathscr{E}}
\newcommand{\Bc }{\mathscr{B}}
\newcommand{\Bb }{\mathscr{B}}
\newcommand{\Cc }{\mathscr{C}}
\newcommand{\Dc }{\mathscr{D}}
\newcommand{\Hc }{\mathscr{H}}
\newcommand{\Kc }{\mathcal{C}}
\newcommand{\Wc }{\mathcal{W}}
\newcommand{\Csc }{\mathscr{C}}
\newcommand{\Ccal }{\mathscr{C}}
\newcommand{\Dcal }{\mathscr{D}}
\newcommand{\Rc }{\mathscr{R}}
\newcommand{\Ss }{\mathscr{S}}
\newcommand{\Gc }{\mathcal{G}}
\newcommand{\Fc }{\mathscr{F}}
\newcommand{\Qc }{\mathcal{Q}}
\newcommand{\Ic}{\mathcal{I}}
\newcommand{\cc}{\mathbf{c}} 
\def\DynkinNodeSize{2mm}
\def\DynkinArrowLength{3mm}
\tikzset{
  dnode/.style={
    circle,
    inner sep=0pt,
    minimum size=\DynkinNodeSize,
    fill=white,
    draw},
  middlearrow/.style={
    decoration={markings,
      mark=at position 0.6 with
      {\draw (0:0mm) -- +(+135:\DynkinArrowLength); \draw (0:0mm) -- +(-135:\DynkinArrowLength);},
    },
    postaction={decorate}
  },
  leftrightarrow/.style={
    decoration={markings,
      mark=at position 0.999 with
      {
      \draw (0:0mm) -- +(+135:\DynkinArrowLength); \draw (0:0mm) -- +(-135:\DynkinArrowLength);
      },
      mark=at position 0.001 with
      {
      \draw (0:0mm) -- +(+45:\DynkinArrowLength); \draw (0:0mm) -- +(-45:\DynkinArrowLength);
      },
    },
    postaction={decorate}
  },
  sedge/.style={
  },
  dedge/.style={
    middlearrow,
    double distance=1mm,
  },
  tedge/.style={
    middlearrow,
    double distance=1.0mm+\pgflinewidth,
    postaction={draw}, 
  },
  infedge/.style={
    leftrightarrow,
    double distance=0.5mm,
  },
}
\DeclareMathOperator{\Shi}{Shi}
\DeclareMathOperator{\Cat}{Cat}
\DeclareMathOperator{\Cox}{Cox}
\DeclareMathOperator{\Der}{Der}
\DeclareMathOperator{\GL}{GL}
\newcommand\ddxi[1]{\partial/\partial x_{#1}}
\DeclareMathOperator{\rk}{rk}
\DeclareMathOperator{\codim}{codim}
\DeclareMathOperator{\h}{ht}
\numberwithin{equation}{section}
\theoremstyle{plain}
\newtheorem{lemma}[equation]{Lemma}
\newtheorem{theorem}[equation]{Theorem}
\newtheorem{conjecture}[equation]{Conjecture}
\newtheorem{corollary}[equation]{Corollary}
\newtheorem{proposition}[equation]{Proposition}
\newtheorem{claim}[equation]{Claim}
\theoremstyle{definition}
\newtheorem{definition}[equation]{Definition}
\newtheorem{remark}[equation]{Remark}
\newtheorem{remarks}[equation]{Remarks}
\newtheorem{example}[equation]{Example}
\newtheorem{problem}[equation]{Problem}
\newtheorem{notation}[equation]{Notation}
\title[Flag-accurate arrangements]
{Flag-accurate arrangements}
\author[P.~M\"ucksch]{Paul M\"ucksch}
\address{Paul M\"ucksch,
	Kyushu University,
	Department of Mathematics,
	744, Motooka, Nishi-ku,
	Fukuoka, Japan 819-0395}
\email{paul.muecksch+uni@gmail.com}
\author[G.~R\"ohrle]{Gerhard R\"ohrle}
\address{Gerhard R\"ohrle,
Fakult\"at f\"ur Mathematik,
Ruhr-Universit\"at Bochum,
D-44780 Bochum, Germany}
\email{gerhard.roehrle@rub.de}
\author[T.~N.~Tran]{Tan Nhat Tran}
\address{Tan Nhat Tran,
	Fakult\"at f\"ur Mathematik,
	Ruhr-Universit\"at Bochum,
	D-44780 Bochum, Germany}
\email{tan.tran@ruhr-uni-bochum.de}
\begin{document}

\begin{abstract}
	In \cite{mueckschroehrle:accurate}, the first two authors introduced the notion of an accurate arrangement, a particular notion of freeness. 
   	In this paper, we consider a special subclass, where the property of accuracy 
   	stems from a flag of flats in the intersection lattice of the underlying arrangement. 
   	Members of this family are called \emph{flag-accurate}.
   	One relevance of this new notion is that it entails divisional freeness. 
   There are a number of important natural classes which are flag-accurate, the most prominent one among them is the one consisting of Coxeter arrangements.
   This warrants a systematic study which is put forward in the present paper.
   
   More specifically, let $\Ac$ be a free arrangement of rank $\ell$.
   Suppose that
   for every $1\leq d \leq \ell$, the first $d$ exponents of $\Ac$ -- 
   when listed in increasing order -- 
   are realized as the exponents of a free restriction
   of $\Ac$ to some intersection of reflecting hyperplanes of $\Ac$ of dimension $d$.
   Following \cite{mueckschroehrle:accurate}, we call such an arrangement $\Ac$ with this natural property  \emph{accurate}.
   If in addition the flats involved 
   can be chosen to form a flag, we call $\Ac$ \emph{flag-accurate}. 
   
   We investigate flag-accuracy among reflection arrangements, extended Shi and extended Catalan arrangements, and further for various families of graphic and digraphic arrangements. We pursue these both from theoretical and computational perspectives. Along the way we present examples of accurate arrangements that are not flag-accurate.
   
   The main result of \cite{mueckschroehrle:accurate} shows that MAT-free arrangements are accurate. We provide strong evidence for the conjecture that MAT-freeness actually entails flag-accuracy. 
\end{abstract}


\keywords{Free arrangements, reflection arrangements, Coxeter arrangements,  Ideal arrangements, MAT-free arrangements, accurate arrangements, %
    extended Catalan arrangements, extended Shi arrangements, ideal-Shi arrangements, graphic arrangements, digraphic arrangements}
\subjclass[2020]{Primary: 20F55; Secondary: 51F15, 52C35, 32S22}

\maketitle


\tableofcontents


\section{Introduction and Statements of Results}

In the study of hyperplane arrangements and their freeness it is important to understand the behavior
of different classes of arrangements with respect to combinatorial or geometric constructions.
Moreover, a central theme is to fix certain numerical properties of (free) arrangements and to
investigate a possible classification of such arrangements in different prominent classes
such as reflection arrangements, their subarrangements and deformations thereof.
Following this philosophy, our present work is  a systematic study of a new notion 
which is shared by many arrangements in those 
well established
classes and to initiate a
detailed study of connections to other known families of free arrangements.

We begin by recalling and by extending the definition of an accurate arrangement from \cite{mueckschroehrle:accurate}.

\begin{definition}
	\label{Def_TF}
	Suppose $\Ac$ is free with exponents $\exp(\Ac) = (e_1, e_2, \ldots, e_\ell)_ \leq$ (this notation, used throughout,   
	simply means that $e_1 \leq e_2 \leq \ldots \leq e_\ell$).
	\begin{enumerate}[(i)]
		\item $\Ac$ is called \emph{almost accurate} provided for each $1 \leq d \leq \ell$ there exists a flat $X_d$ in the intersection lattice $L(\Ac)$ of $\Ac$ of dimension $d$ such that the restriction $\Ac^{X_d}$ of $\Ac$ to $X_d$ is free with $\exp(\Ac^{X_d}) \subseteq \exp(\Ac)$.
	The tuple $(X_1, X_2, \ldots, X_\ell)$ is called a \emph{witness} for the almost accuracy of $\Ac$.

		\item   $\Ac$ is said to be  \emph{accurate} provided 
		there is a witness $(X_1, X_2, \ldots, X_\ell)$ for the almost accuracy of $\Ac$ such that for every $1 \leq d \leq \ell$, we have 
		$\exp(\Ac^{X_d}) = (e_1, e_2, \ldots, e_d)_ \leq$.
		We say that $(X_1, X_2, \ldots, X_\ell)$ is a \emph{witness} for the accuracy of $\Ac$.
	
		\item   For $1 \leq k \leq \ell-1$, $\Ac$ is \emph{$k$-accurate} provided 
		there is a witness $(X_1, X_2, \ldots, X_\ell)$ for the accuracy of $\Ac$ such that $k$ is maximal subject to $(X_1, X_2, \ldots, X_k)$ being a flag in $L(\Ac)$.
		Then $(X_1, X_2, \ldots, X_\ell)$ is a \emph{witness} for the $k$-accuracy of $\Ac$.	
	
		\item    Dually, $\Ac$ is \emph{$k$-coaccurate} provided 
		there is a witness $(X_1, X_2, \ldots, X_\ell)$ for the accuracy of $\Ac$ such that $k$ is minimal subject to $(X_1, X_2, \ldots, X_{\ell-k})$ being a flag in $L(\Ac)$.
		We say that $(X_1, X_2, \ldots, X_\ell)$ is a \emph{witness} for the $k$-coaccuracy of $\Ac$. 
		Thus an $\ell$-arrangement $\Ac$ is $k$-accurate if and only if $\Ac$ is $(\ell-k)$-coaccurate.
	
		\item $\Ac$ is called \emph{flag-accurate} provided
		there is a witness for the $(\ell-1)$-accuracy (equivalently, for the $1$-coaccuracy)
	   	of $\Ac$.
		Such an $\ell$-tuple is a \emph{witness} for the flag-accuracy of $\Ac$.
		
		\item 
		$\Ac$ is called \emph{ind-flag-accurate} if $\Ac$ is both inductively free and flag-accurate, and there is a witness $(X_1, X_2, \ldots, X_\ell)$ for the flag-accuracy of $\Ac$ such that $\Ac^{X_d}$ is inductively free for every $1 \leq d \leq \ell$. 
		In that case $(X_1, X_2, \ldots, X_\ell)$ is said to be a  \emph{witness} for the ind-flag-accuracy of $\Ac$.	
	\end{enumerate}	
\end{definition}

In the sequel we give examples which discriminate between these different notions of accuracy. For instance, the reflection arrangement of the 
exceptional complex reflection group of type 
$G_{31}$ turns out to be flag-accurate, but it is not ind-flag-accurate (as it is not inductively free), see Remark \ref{rem:g31}.
On the other hand, all Coxeter arrangements are
ind-flag-accurate, see Corollary \ref{cor:coxeter-ind-flag-acc}.
Also in Example \ref{ex:star-but-not-divfree} we present a rank $5$ arrangement $\Dc$  which is $2$-accurate, and so $\Dc$  is accurate but not flag-accurate.
Moreover, in Section \ref{sect:graph} we construct a family of graphic arrangements which are 
$(k+1)$-coaccurate but not $k$-coaccurate for any $k$, 
see Theorem \ref{thm:Q4-ext}.

\bigskip

In his seminal work on the connection of freeness to properties of characteristic polynomials \cite{abe:divfree}, Abe introduced the following notion, where $\chi(\Ac, t)$ denotes the characteristic polynomial of $\Ac$, see \eqref{eq.chi}.
 
\begin{definition}[{\cite[Def.~1.5]{abe:divfree}}]
\label{def:DF}
An $\ell$-arrangement $\Ac$ is  \emph{divisionally free}  if there is a flag
$$X_1 \subseteq X_2  \subseteq \cdots  \subseteq X_{\ell-1} \subseteq X_\ell = V$$
such that $\dim(X_i)=i$ for each $1 \le i \le \ell$ and $\chi(\Ac^{X_{i}}, t)$ divides  $\chi(\Ac^{X_{i+1}}, t)$ for  each $1 \le i \le \ell-1$. Such a flag is called a \emph{divisional flag}.
\end{definition}

\begin{remarks}	
	\label{rem_MATRest-flag}
	(i). 
	It follows from the definitions above that if an arrangement is flag-accurate, then it is both accurate and divisionally free, since any witness for the flag-accuracy 
	is simultaneously a divisional flag and a witness for accuracy for free.
	The converse of this implication is false. For, in Example \ref{ex:shif4} we present an accurate and divisionally free arrangement which is not flag-accurate (see also  Corollary \ref{coro:acc-nfa}).

	(ii).
	It is clear that flag accuracy only depends on the intersection lattice of the underlying arrangement
	and is thus a combinatorial property, ditto for ind-flag-accuracy. 
	Likewise, divisional freeness is also  combinatorial \cite[Thm.~4.4(3)]{abe:divfree}. But it is not known whether this is also the case for accuracy itself.

	(iii).
	Non-divisionally free accurate arrangements are not necessarily flag-accurate, see Example \ref{ex:star-but-not-divfree}.
	
	(iv).
	While flag-accuracy implies divisional freeness,  the converse is false, see 
	\cite[Ex.~5.4]{mueckschroehrle:accurate}.
	
	(v).
	A product of arrangements is flag-accurate if and only if each factor is flag-accurate, cf.~\cite[Prop.~2.14, Prop.~4.28]{OrTer92_Arr}; ditto for ind-flag-accuracy, cf.~\cite[Prop.~2.10]{hogeroehrle:inductivelyfree}.
\end{remarks}

We focus on specific classes of arrangements in this paper and investigate (ind-)flag-accuracy among them.
The beginning of our investigation concentrates on the question of ind-flag-accuracy of 
complex reflection arrangements and of their restrictions.
For instance, in Theorem \ref{thm:complex-flag} we show that the notions of flag-accuracy and ind-flag-accuracy coincide for complex reflection arrangements.
Here is a compelling consequence of this result.

\begin{theorem}
	\label{thm:complex-ind-flag}
	Let $G$ be an irreducible complex reflection group with reflection arrangement $\Ac = \Ac(G)$. 
	Suppose $G \ne G_{31}$. Then 
	the following are equivalent:
	\begin{itemize}
		\item [(i)] $\Ac$ is accurate;
		\item [(ii)] $\Ac$ is flag-accurate;
		\item [(iii)] $\Ac$ is ind-flag-accurate;
		\item [(iv)] $\Ac$ is divisionally free;
		\item [(v)] $\Ac$ is inductively free.
	\end{itemize}
\end{theorem}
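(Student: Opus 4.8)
The plan is to establish the cyclic chain of implications
$$\text{(v)} \Rightarrow \text{(iii)} \Rightarrow \text{(ii)} \Rightarrow \text{(i)} \quad\text{and}\quad \text{(ii)} \Rightarrow \text{(iv)} \Rightarrow \text{(v)},$$
from which all five equivalences follow. Several of these arrows are already formal consequences of the definitions in Definition \ref{Def_TF} and of the general results quoted in Remarks \ref{rem_MATRest-flag}: the implication (ii)$\Rightarrow$(i) is immediate since flag-accuracy is a special case of accuracy, and (ii)$\Rightarrow$(iv) is the assertion, recorded in Remark \ref{rem_MATRest-flag}(i), that any witness for flag-accuracy is simultaneously a divisional flag. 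The implication (iii)$\Rightarrow$(ii) is trivial from the definition of ind-flag-accuracy. Thus the genuine content lies in closing the cycle, namely in the two arrows (iv)$\Rightarrow$(v) and (v)$\Rightarrow$(iii).

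For (iv)$\Rightarrow$(v), I would not attempt a direct combinatorial argument but instead invoke the classification. Reflection arrangements of irreducible complex reflection groups have been completely classified with respect to inductive freeness and divisional freeness in the work of Hoge--R\"ohrle and Abe; the point is that, with the single exception $G_{31}$, the class of reflection arrangements that are divisionally free coincides precisely with the class that is inductively free. This is a case-by-case verification over the Shephard--Todd list, and since we have excluded $G_{31}$ by hypothesis, the two notions agree on exactly the arrangements under consideration. So (iv)$\Rightarrow$(v) reduces to citing this established classification.

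The heart of the argument, and the step I expect to be the main obstacle, is (v)$\Rightarrow$(iii). Here one must upgrade inductive freeness to the much stronger statement that there exists a single flag of flats witnessing flag-accuracy with \emph{every} restriction $\Ac^{X_d}$ itself inductively free and having exponents equal to the first $d$ exponents of $\Ac$ in increasing order. The natural strategy is to appeal to Theorem \ref{thm:complex-flag}, which is quoted in the excerpt as asserting that flag-accuracy and ind-flag-accuracy coincide for complex reflection arrangements; granting that result, it suffices to produce flag-accuracy itself from inductive freeness. For this I would again proceed through the classification of irreducible complex reflection groups, treating the infinite families (the Coxeter types and the groups $G(r,p,n)$) uniformly and the exceptional groups individually. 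The key structural input is that restrictions of reflection arrangements are again (via Orlik--Terao) well-understood arrangements whose exponents are known, so that one can inductively build the flag $X_1 \subseteq \cdots \subseteq X_{\ell-1} \subseteq V$ by choosing at each stage a flat of one lower dimension inside the previous one realizing the appropriate truncated exponent tuple. The delicate part is guaranteeing that these choices can be made \emph{compatibly}, i.e.\ nested into an actual flag rather than merely an unordered witness, and that inductive freeness propagates down the flag; this is precisely where the exclusion of $G_{31}$ is forced, since that is the unique case where the required compatible flag fails to exist.

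Assembling these pieces, the proof is essentially a reduction to Theorem \ref{thm:complex-flag} together with the known classifications of inductive and divisional freeness for reflection arrangements. I would organize the writeup by first disposing of the formal implications (ii)$\Rightarrow$(i), (iii)$\Rightarrow$(ii), and (ii)$\Rightarrow$(iv), then citing the classification for (iv)$\Rightarrow$(v), and finally devoting the bulk of the argument to (v)$\Rightarrow$(iii) via Theorem \ref{thm:complex-flag}, with careful attention to the role of $G_{31}$ as the single exceptional case that necessitates the hypothesis $G \ne G_{31}$.
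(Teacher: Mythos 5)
Your overall strategy --- reduce everything to Theorem \ref{thm:complex-flag} together with the classifications of divisionally free and inductively free reflection arrangements --- is the same as the paper's, but your implication diagram has a genuine logical gap: statement (i) never appears as the \emph{source} of any arrow. Your cycle $(ii)\Rightarrow(iv)\Rightarrow(v)\Rightarrow(iii)\Rightarrow(ii)$ establishes the equivalence of (ii), (iii), (iv), (v), and the extra arrow $(ii)\Rightarrow(i)$ only shows that these imply accuracy; nothing returns from (i). To close the loop you need $(i)\Rightarrow(iv)$, i.e.\ that an accurate reflection arrangement is divisionally free. This is not a formal consequence of the definitions --- the paper exhibits accurate arrangements that are not divisionally free (Remark \ref{rem_MATRest-flag}(iii), Example \ref{ex:star-but-not-divfree}) --- and it is exactly the reflection-arrangement-specific result \cite[Thm.~5.8]{mueckschroehrle:accurate} (``a complex reflection arrangement is accurate if and only if it is divisionally free'') which the paper cites immediately before Theorem \ref{thm:complex-flag} and which your writeup omits.

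Two further points on $(v)\Rightarrow(iii)$. First, your explanation of why $G_{31}$ must be excluded is wrong: by Remark \ref{rem:g31}, $\Ac(G_{31})$ \emph{is} flag-accurate (the compatible flag exists) and divisionally free; it fails (iii) and (v) simply because it is not inductively free at all. So $G_{31}$ is excluded because $(iv)\Rightarrow(v)$ breaks there, not because ``the required compatible flag fails to exist.'' Second, the delicate step you anticipate --- propagating inductive freeness down the witness flag --- does not require a fresh case-by-case construction. The paper's mechanism is that inductively free reflection arrangements are \emph{hereditarily} inductively free (\cite[Thm.~1.2]{hogeroehrle:inductivelyfree}), so once (v) holds, every restriction occurring in a flag-accuracy witness is automatically inductively free, and $(ii)\wedge(v)\Rightarrow(iii)$ follows with no further work. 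With the missing citation for $(i)\Rightarrow(iv)$ and this hereditary-freeness input added, your argument becomes essentially the paper's proof.
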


\begin{remark}
	\label{rem:g31}
	The reason why we need to exclude the exceptional complex reflection group of type 
	$G_{31}$ in Theorem \ref{thm:complex-ind-flag} is due to the fact that
	$\Ac(G_{31})$ itself is not inductively free, \cite[Thm.~1.1]{hogeroehrle:inductivelyfree}, but $\Ac(G_{31})$ does satisfy the properties in parts (i), (ii), and (iv) of the theorem, cf.~\cite[Thm.~5.8]{mueckschroehrle:accurate}, Theorem \ref{thm:complex-flag}, and \cite{abe:divfree}, respectively.
	In particular, $\Ac(G_{31})$ is flag-accurate, but not ind-flag-accurate.
\end{remark}

The same equivalence 
as the one in Theorem \ref{thm:complex-flag}
prevails for 
restrictions of complex reflection arrangements. The following is a consequence of 
Theorem \ref{thm:restr-flag}.

\begin{theorem}
	\label{thm:restr-ind-flag}
	Let $G \subseteq \GL(\CC^n)$
	 be an irreducible complex reflection group with reflection arrangement $\Ac(G)$. Let $\Ac = \Ac(G)^Y$ be the restriction of $\Ac(G)$ to some flat $Y \in L(\Ac(G))\setminus \{\CC^n\}$. Then 
	the following are equivalent:
	\begin{itemize}
		\item [(i)] $\Ac$ is accurate;
		\item [(ii)] $\Ac$ is flag-accurate;
		\item [(iii)] $\Ac$ is ind-flag-accurate.
	\end{itemize}
\end{theorem}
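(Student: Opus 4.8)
The plan is to derive Theorem \ref{thm:restr-ind-flag} as a consequence of Theorem \ref{thm:restr-flag} in exactly the same way that Theorem \ref{thm:complex-ind-flag} is obtained from Theorem \ref{thm:complex-flag}. The crucial input is that for the restricted arrangements $\Ac = \Ac(G)^Y$ under consideration, the notions of flag-accuracy and ind-flag-accuracy coincide; this is precisely what Theorem \ref{thm:restr-flag} is expected to supply (the statement asserted in the text is called ``a consequence of Theorem \ref{thm:restr-flag}''). Granting that coincidence, the equivalence $(ii)\Leftrightarrow(iii)$ is immediate. The implication $(iii)\Rightarrow(ii)$ holds by definition in any case, since ind-flag-accuracy requires flag-accuracy as part of its defining conditions, so the content lies entirely in $(ii)\Rightarrow(iii)$, i.e.\ in promoting a witness for flag-accuracy to a witness in which every restriction $\Ac^{X_d}$ is inductively free.

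Next I would establish $(i)\Leftrightarrow(ii)$, the equivalence of accuracy and flag-accuracy. By Remark \ref{rem_MATRest-flag}(i), flag-accuracy always implies accuracy, giving $(ii)\Rightarrow(i)$ for free and for arbitrary arrangements. The reverse implication $(i)\Rightarrow(ii)$ is the substantive direction and is where the special structure of restrictions of complex reflection arrangements must be used. The natural route is to invoke the classification underlying Theorem \ref{thm:restr-flag}: every restriction $\Ac(G)^Y$ of an irreducible complex reflection arrangement is, by Orlik--Terao's work, again (a product of) well-understood arrangements, and one checks case by case -- or via a uniform MAT-freeness / inductive-freeness argument -- that whenever such a restriction is accurate it already admits a flag witness. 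I would first reduce to the irreducible factors using Remark \ref{rem_MATRest-flag}(v), which says flag-accuracy and ind-flag-accuracy are both compatible with products, so it suffices to treat each irreducible constituent of $\Ac(G)^Y$ separately.

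The main obstacle I anticipate is the $(i)\Rightarrow(ii)$ step, namely showing that accuracy of a restriction forces the flats in some witness to be chosen compatibly, forming a flag. Unlike the ambient reflection arrangement case, where one can lean on the rich chain structure of the intersection lattice and on MAT-freeness producing nested flats automatically, the restrictions $\Ac(G)^Y$ form a more heterogeneous family and may include arrangements where a naive accuracy witness need not be nested. The resolution I expect is to appeal to the detailed enumeration of restriction types that must already be carried out to prove Theorem \ref{thm:restr-flag}: for each type one exhibits an explicit inductively free divisional flag realizing the exponents in increasing order, thereby simultaneously yielding (iii), hence (ii), hence (i) trivially, and closing the cycle of implications. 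The delicate point is handling any exceptional restriction types where inductive freeness is not obvious, ensuring that the $G_{31}$-type pathology -- a flag-accurate but not inductively free arrangement -- does not arise among the restrictions in this setting, which is exactly why no group needs to be excluded in Theorem \ref{thm:restr-ind-flag}.

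Finally, I would assemble these pieces into the stated three-way equivalence by verifying the cycle $(i)\Rightarrow(ii)\Rightarrow(iii)\Rightarrow(i)$: the first implication from the case analysis behind Theorem \ref{thm:restr-flag}, the second from the coincidence of flag-accuracy and ind-flag-accuracy for these restrictions (again Theorem \ref{thm:restr-flag}), and the third from Remark \ref{rem_MATRest-flag}(i) together with the trivial fact that ind-flag-accuracy entails flag-accuracy entails accuracy. Throughout, the product reduction of Remark \ref{rem_MATRest-flag}(v) lets me assume irreducibility of the restricted arrangement, which keeps the case analysis tractable.
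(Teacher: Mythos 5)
Your overall route --- reduce everything to the classification in Theorem \ref{thm:restr-flag} plus the trivial implications ind-flag-accurate $\Rightarrow$ flag-accurate $\Rightarrow$ accurate --- is the same as the paper's, and the equivalence (i)$\Leftrightarrow$(ii) is indeed obtained there by observing that the accuracy proofs from \cite{mueckschroehrle:accurate} carry over to produce flag witnesses, so that the accurate and flag-accurate restrictions form the same list. Where you diverge is in (ii)$\Rightarrow$(iii). You propose to revisit the case analysis and exhibit, for each restriction type, an explicit witness all of whose members are inductively free; this would work but is substantially more labor than what the paper does, and your opening claim that Theorem \ref{thm:restr-flag} itself ``supplies the coincidence'' of flag-accuracy and ind-flag-accuracy is not accurate as stated --- that theorem only characterizes flag-accuracy, so on its own it gives nothing about inductive freeness. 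The missing ingredient that makes the paper's argument a one-liner is \emph{hereditary} inductive freeness: by \cite[Thm.~1.3]{amendhogeroehrle:indfree} every inductively free restriction of a reflection arrangement is hereditarily inductively free, so once $\Ac$ itself is known to be inductively free, \emph{every} witness for flag-accuracy automatically consists of inductively free restrictions and is a witness for ind-flag-accuracy --- no bespoke flags needed. One then only has to compare the list of flag-accurate restrictions from Theorem \ref{thm:restr-flag} with the classification of inductively free restrictions in \cite[Thm.~1.2]{amendhogeroehrle:indfree} to see that the former is contained in the latter; this is also exactly how your (correct) worry about a $G_{31}$-type pathology is dispatched. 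In short: your plan is sound but should replace the case-by-case construction of inductively free flags with the hereditary inductive freeness argument, which both closes the gap in your (ii)$\Rightarrow$(iii) step and matches the paper's proof.
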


For an explicit list of all instances 
that satisfy the equivalent statements above, see
Theorem \ref{thm:restr-flag}.

We emphasize that the equivalences in  
Theorems \ref{thm:complex-ind-flag} 
and \ref{thm:restr-ind-flag} 
do hold in particular for
Coxeter arrangements; we do record this as a separate result and do note in passing that all Coxeter arrangements are hereditarily inductively free, cf.~\cite{BC12} and \cite[\S 3.2.2]{hogeroehrle:inductivelyfree}.

\begin{corollary}
	\label{cor:coxeter-ind-flag-acc}
	Coxeter arrangements are ind-flag-accurate.
\end{corollary}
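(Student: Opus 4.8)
The plan is to deduce Corollary~\ref{cor:coxeter-ind-flag-acc} directly from the already-established Theorem~\ref{thm:complex-ind-flag}, reducing the assertion about Coxeter arrangements to the fact that real reflection groups sit inside the class of complex reflection groups covered there. Recall that a Coxeter arrangement is precisely the reflection arrangement $\Ac(W)$ of a finite real reflection group $W$, and every such $W$ is a finite complex reflection group once we complexify the ambient space. The first step is therefore to observe that it suffices to treat \emph{irreducible} Coxeter arrangements: by Remark~\ref{rem_MATRest-flag}(v), a product of arrangements is ind-flag-accurate if and only if each factor is, so decomposing $W$ into its irreducible components reduces the general case to the irreducible one.

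For the irreducible case, I would invoke Theorem~\ref{thm:complex-ind-flag} with $G = W$. The one subtlety is the excluded group $G_{31}$: I must confirm that $G_{31}$ is \emph{not} the complexification of any finite real reflection group, so that no irreducible Coxeter arrangement falls into the exceptional case. This is standard from the Shephard--Todd classification---$G_{31}$ is a genuinely complex (non-real) exceptional group---so every irreducible Coxeter arrangement arises as $\Ac(G)$ for some irreducible complex reflection group $G \neq G_{31}$, and Theorem~\ref{thm:complex-ind-flag} applies.

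It then remains only to verify one of the five equivalent conditions in Theorem~\ref{thm:complex-ind-flag} for each irreducible Coxeter arrangement. The cleanest choice is condition (v), inductive freeness: it is a classical theorem (see the references cited just before the corollary, \cite{BC12} and \cite[\S 3.2.2]{hogeroehrle:inductivelyfree}) that all Coxeter arrangements are inductively free---indeed hereditarily so. Feeding this into the equivalence of Theorem~\ref{thm:complex-ind-flag} yields condition (iii), namely that $\Ac(W)$ is ind-flag-accurate, for every irreducible $W$, and the product statement of Remark~\ref{rem_MATRest-flag}(v) then lifts this to all finite real reflection groups.

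I do not expect any genuine obstacle here, since the corollary is essentially a packaging of Theorem~\ref{thm:complex-ind-flag} together with known freeness results; the only point demanding a moment's care is the bookkeeping around $G_{31}$, ensuring the excluded exceptional group never arises from a real group and hence does not interfere with the Coxeter conclusion.
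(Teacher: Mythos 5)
Your proposal is correct and follows essentially the same route as the paper: the authors likewise derive the corollary by specializing Theorem~\ref{thm:complex-ind-flag} to Coxeter arrangements, using the fact that these are (hereditarily) inductively free by \cite{BC12} and \cite[\S 3.2.2]{hogeroehrle:inductivelyfree}, with the reduction to irreducible factors and the observation that $G_{31}$ is not a real reflection group being exactly the implicit bookkeeping the paper relies on.
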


In \cite{ABCHT16_FreeIdealWeyl}, Abe, Barakat, Cuntz, Hoge and Terao proved the
so-called  \emph{Multiple Addition Theorem} (MAT) (Theorem \ref{Thm_MAT})
which is a variation
of the addition part of Terao's seminal Addition-Deletion Theorem \cite{Terao1980_FreeI} (\cite[Thm.~4.51]{OrTer92_Arr}).
Using this theorem, they went on
to uniformly derive the freeness of \emph{ideal subarrangements} of Weyl arrangements (Definition \ref{DEF_ideal} and Theorem \ref{thm:IdealFree}).
As a special case of this result, they obtained a new uniform proof of the
classical Kostant-Macdonald-Shapiro-Steinberg formula for the exponents of a Weyl group. 

\bigskip

In \cite{CunMue19_MATfree}, Cuntz and the first author introduced the notion of \emph{MAT-freeness} (Definition \ref{Def_MATfree})
to investigate arrangements whose freeness can be derived using an iterative application of the Multiple Addition Theorem (Theorem \ref{Thm_MAT}). 

Next we recall the principal result 
from \cite[Thm.~1.2]{mueckschroehrle:accurate},
which asserts that MAT-freeness
is sufficient for accuracy from Definition \ref{Def_TF}.

\begin{theorem}
	\label{Thm_MATRest}
	MAT-free arrangements are accurate.
\end{theorem}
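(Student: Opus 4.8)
The plan is to read the witnessing flats directly off the MAT-recursion underlying Definition~\ref{Def_MATfree}, peeling the largest exponents off one block at a time by restriction. First I would unwind MAT-freeness: a MAT-free $\Ac$ carries a chain $\varnothing = \Ac_0 \subset \Ac_1 \subset \cdots \subset \Ac_n = \Ac$ in which $\Ac_k = \Ac_{k-1} \cup \Bc_k$ arises by a single application of the Multiple Addition Theorem (Theorem~\ref{Thm_MAT}) adding a block $\Bc_k$ of $q_k := |\Bc_k|$ hyperplanes, with $q_1 \ge q_2 \ge \cdots \ge q_n$. Tracking the exponent increments prescribed by Theorem~\ref{Thm_MAT} (each step raises the top $q_k$ exponents by one), one sees that $\exp(\Ac) = (e_1, \ldots, e_\ell)_\le$ is the partition conjugate to $(q_1, \ldots, q_n)$; in particular $e_\ell = n$ and the multiplicity of the largest exponent equals $q_n$. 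In view of Definition~\ref{Def_TF}(ii) it then suffices to produce, for every $1 \le d \le \ell$, a single flat $X_d \in L(\Ac)$ with $\dim X_d = d$ such that $\Ac^{X_d}$ is free with $\exp(\Ac^{X_d}) = (e_1, \ldots, e_d)_\le$; then $(X_1, \ldots, X_\ell)$, with $X_\ell = V$, is a witness for the accuracy of $\Ac$.

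The engine is to peel off the largest exponents by restriction, combining Theorem~\ref{Thm_MAT} with the restriction half of the Addition--Deletion Theorem \cite[Thm.~4.51]{OrTer92_Arr}. For a single step this is transparent: choosing $H \in \Bc_n$, the deletion $\Ac \setminus \{H\} = \Ac_{n-1} \cup (\Bc_n \setminus \{H\})$ is still produced from $\Ac_{n-1}$ by a MAT step (the hypotheses of Theorem~\ref{Thm_MAT} for the block $\Bc_n$ pass to any subblock), hence is free with exactly one of the top exponents lowered by one; Addition--Deletion then forces $\Ac^{H}$ to be free with $\exp(\Ac^{H}) = (e_1, \ldots, e_{\ell-1})_\le$, i.e.\ with the single largest exponent removed. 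The substance of the proof is to iterate this controlled removal: I would establish, as the key lemma, that for each codimension $1 \le c \le \ell-1$ there is a flat $X$ of codimension $c$ -- realized as an intersection of suitably chosen hyperplanes drawn from the last blocks $\Bc_n, \Bc_{n-1}, \ldots$ of the recursion -- for which $\Ac^{X}$ is free with $\exp(\Ac^{X}) = (e_1, \ldots, e_{\ell-c})_\le$. Setting $X_d := X$ for $c = \ell - d$ then finishes the argument.

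The hard part will be exactly this key lemma, that is, controlling the \emph{multiple} restriction. Two points need care. First is the dimension bookkeeping: the chosen hyperplanes must meet in a flat of the expected codimension $c$, and that flat must not be absorbed into the earlier hyperplanes, which is where the general-position content of the MAT hypotheses in Theorem~\ref{Thm_MAT} is used. Second, and more seriously, one cannot simply iterate the single-step peeling, because after restricting to one hyperplane of $\Bc_n$ the resulting arrangement need no longer be MAT-free, so Addition--Deletion is not available a second time for free; freeness and the \emph{exact} initial segment of exponents for the multi-hyperplane restriction must instead be extracted from the global MAT data of $\Ac$. Descending past a single block -- passing from restrictions inside $\Bc_n$ to those also involving $\Bc_{n-1}$ -- is the delicate case, since the naive block-by-block restriction has the wrong block sizes (it removes one hyperplane from the last block only, not one from every block at once). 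Finally I would emphasize what the argument does and does not deliver: the flats $X_d$ are assembled from different depths of the recursion and need not be nested, so the construction yields accuracy but not a single flag; promoting the $X_d$ to a flag is precisely the open conjecture that MAT-freeness entails flag-accuracy, and is deliberately not attempted here.
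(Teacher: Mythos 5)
Your reduction is sound as far as it goes: computing $\exp(\Ac)$ as the conjugate of the block sizes is correct, and your codimension-one step (delete $H\in\pi_n$, observe that the conditions of Theorem~\ref{Thm_MAT} pass to the subblock $\pi_n\setminus\{H\}$, and apply Addition--Deletion to get $\exp(\Ac^H)=(e_1,\ldots,e_{\ell-1})_\le$) is exactly right. But everything beyond codimension one is deferred to a ``key lemma'' that you state and do not prove, and that lemma \emph{is} the theorem; moreover, the construction you sketch for it points in the wrong direction. The actual mechanism -- quoted in this paper as Corollary~\ref{cor:MAT-stepRestriction} and Theorem~\ref{thm:MATRestGen} -- is that the witness flat of codimension $c$ is the intersection of $c$ hyperplanes all drawn from a \emph{single} block $\pi_k$, where $k$ is chosen with $|\pi_{k+1}|\le c\le|\pi_k|$; since $|\pi_1|>|\pi_2|\ge\cdots\ge|\pi_n|$, these ranges cover every codimension up to $\rk(\Ac)$. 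One then checks $\Ac^X=\Ac_k^X$ by a hyperplane count (as in the proof of Corollary~\ref{coro:MATRk4-FA}). The nontrivial input is precisely the single-block restriction statement of Corollary~\ref{cor:MAT-stepRestriction}, which cannot be obtained by iterating Addition--Deletion on the restricted arrangement (as you yourself observe, $\Ac^H$ need not be MAT-free), and which your sketch neither proves nor correctly locates.

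Your plan instead assembles the flat ``from the last blocks $\Bc_n,\Bc_{n-1},\ldots$'', i.e.\ across several blocks, and your remark that ``the naive block-by-block restriction has the wrong block sizes'' confirms you are intersecting hyperplanes from different depths. That construction does not produce the correct exponents in general: if $|\pi_n|\ge 2$, the two largest exponents both equal $n$, and stripping them requires two hyperplanes of $\pi_n$, not one from $\pi_n$ and one from $\pi_{n-1}$ (the latter intersection typically yields a restriction whose exponent multiset is not an initial segment of $\exp(\Ac)$). So the gap is twofold: the central lemma is missing, and the flats you propose to build it from are the wrong ones. On the other hand, your closing observation -- that the flats for different $d$ live in different blocks and need not be nested, which is why accuracy rather than flag-accuracy is obtained -- is an accurate description of the situation and matches Problem~\ref{prob:MAT-FA}.
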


As ideal subarrangements of Weyl arrangements are MAT-free, by \cite{ABCHT16_FreeIdealWeyl}
(see Theorem \ref{thm:IdealFree}),  
Theorem \ref{Thm_MATRest}
readily yields the following \cite[Thm.~1.3]{mueckschroehrle:accurate}.

\begin{theorem}
	\label{Thm_HtRestIdeal} %
	Ideal arrangements are accurate.
\end{theorem}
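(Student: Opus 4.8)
The plan is to deduce this statement as an immediate formal consequence of the two results that directly precede it. The key observation is simply that ideal arrangements already sit inside the class of MAT-free arrangements, so their accuracy is inherited from the general accuracy result for MAT-free arrangements.

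First I would invoke the theorem of Abe, Barakat, Cuntz, Hoge and Terao \cite{ABCHT16_FreeIdealWeyl}, recorded in the excerpt as Theorem \ref{thm:IdealFree}, which establishes that every ideal subarrangement of a Weyl arrangement is MAT-free in the sense of Definition \ref{Def_MATfree}. This is precisely the content that allows the Multiple Addition Theorem to be applied iteratively to such arrangements. Next I would apply Theorem \ref{Thm_MATRest}, the principal result carried over from \cite{mueckschroehrle:accurate}, which guarantees that any MAT-free arrangement is accurate. Composing the two inclusions, namely ideal arrangements $\subseteq$ MAT-free arrangements $\subseteq$ accurate arrangements, yields the claim at once.

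Since both ingredients are already in hand, there is essentially no obstacle: the theorem is a direct corollary rather than an independent argument. The only point meriting a moment's care is to confirm that the notion of \emph{ideal arrangement} entering Theorem \ref{thm:IdealFree} and the hypothesis of Theorem \ref{Thm_MATRest} are compatible, i.e.\ that the MAT-freeness established for ideal arrangements is exactly the notion (Definition \ref{Def_MATfree}) feeding into the accuracy result. Given the way these statements are formulated above, this compatibility is automatic, and the proof reduces to a single sentence chaining Theorem \ref{thm:IdealFree} with Theorem \ref{Thm_MATRest}.
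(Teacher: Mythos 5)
Your proposal is correct and is exactly the argument the paper gives: ideal arrangements are MAT-free by Theorem \ref{thm:IdealFree}, and MAT-free arrangements are accurate by Theorem \ref{Thm_MATRest}, so the statement follows as an immediate corollary. No further comment is needed.
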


It is natural to ask whether ideal arrangements also satisfy the stronger property
of flag-accuracy. Unfortunately, this cannot be derived from Theorem \ref{Thm_HtRestIdeal}
since the class of ideal arrangements is not closed under restrictions.
However, we can give a positive answer to this question for ideal arrangements of rank up to $8$ (Theorem \ref{thm:IdealUpToRk8FA}).
In particular, \emph{all} ideal arrangements stemming from exceptional reflection groups are flag-accurate.
This motivates the following.

\begin{conjecture}
	\label{conj:Ideal-flag} %
	Ideal arrangements are flag-accurate.
\end{conjecture}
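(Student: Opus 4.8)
The plan is to deduce the conjecture from the stronger assertion that \emph{every} MAT-free arrangement is flag-accurate; this suffices because ideal arrangements are MAT-free by \cite{ABCHT16_FreeIdealWeyl} (Theorem~\ref{thm:IdealFree}), and one may then specialise the argument so as to exploit the extra root-theoretic structure of the ideal. Concretely, I would begin from a MAT-free filtration $\varnothing=\Ac_0\subsetneq\Ac_1\subsetneq\cdots\subsetneq\Ac_n=\Ac_I$, where each step $\Ac_{k-1}\subsetneq\Ac_k$ is an application of the Multiple Addition Theorem (Theorem~\ref{Thm_MAT}) adding $p_k$ hyperplanes whose common intersection $Z_k$ is a flat of codimension $p_k$, and whose effect on exponents is to raise the top $p_k$ of them by one. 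Reading this tower off the height filtration of $I$ in the root poset $\Phi^+$ makes the flats $Z_k$ explicit, and these will serve as the coarse way-points of the witness flag we seek.

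It is convenient to recast flag-accuracy through Definition~\ref{def:DF}. Since $\dim X_d=d$ forces any divisibility $\chi(\Ac^{X_{d-1}},t)\mid\chi(\Ac^{X_d},t)$ to drop exactly one exponent, a witness for flag-accuracy is precisely a divisional flag whose restrictions are all free and whose each descent removes a \emph{currently maximal} exponent, so that after $\ell-d$ descents the multiset $(e_1,\dots,e_d)_{\le}$ is what survives. I would therefore build the flag from the top down, starting at $X_\ell=V$. The MAT-flats should provide the scaffolding: restriction of $\Ac_I$ to the flat $Z_k$ of codimension $p_k$ ought to be free with exponents $(e_1,\dots,e_{\ell-p_k})_{\le}$, i.e.\ exactly the bottom part, because the MAT conditions force the subarrangement of hyperplanes through $Z_k$ to consist precisely of the $p_k$ newly added hyperplanes and the top $p_k$ exponents to be \emph{carried} by them. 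Establishing this restriction statement — a restriction-side companion to Theorem~\ref{Thm_MAT}, provable via factorisation of characteristic polynomials and Abe's division theorem \cite{abe:divfree} — would fix the dimensions $\ell-p_k$ of the flag.

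The hard part will be to refine this coarse scaffolding into a \emph{complete} flag, descending one dimension at a time. Within a single MAT block the top $p_k$ exponents all equal some value $e$, and I must peel them off one at a time: I need a chain $Z_{k-1}=X_{\ell-p_{k-1}}\supsetneq\cdots\supsetneq X_{\ell-p_k}=Z_k$ of flats of consecutive codimensions such that each $\Ac_I^{X_{d-1}}$ is free with $\exp(\Ac_I^{X_{d-1}})=\exp(\Ac_I^{X_d})\setminus\{e\}$. This is a \emph{single-restriction} lemma: given a free arrangement whose top exponent $e$ has multiplicity $\ge 2$, produce a codimension-one subflat $Y$ of the ambient flat with $\Ac^Y$ free and $\exp(\Ac^Y)=\exp(\Ac)\setminus\{e\}$. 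A generic subflat will not do, so the root data must be used to locate $Y$, and the verification would again run through the divisibility $\chi(\Ac^{Y},t)\mid\chi(\Ac,t)$ of Definition~\ref{def:DF} together with control of $\exp(\Ac^Y)$ via Abe's division theorem. Guaranteeing simultaneously that (a) such a $Y$ exists, (b) it can be chosen inside the previously constructed flat so that the $X_d$ genuinely nest, and (c) the peeling matches up across the junctions $Z_k$ between consecutive MAT blocks, is where I expect essentially all of the difficulty to lie.

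Two reductions should streamline the write-up. First, by Remark~\ref{rem_MATRest-flag}(v) flag-accuracy is preserved under products, so one may assume $\Phi$ irreducible and argue type by type where no uniform argument is available. Second, all ideals of rank at most $8$ — and in particular all exceptional types — are already settled computationally (Theorem~\ref{thm:IdealUpToRk8FA}), which both anchors the induction in low rank and provides a stringent consistency check on any proposed single-restriction lemma. If the uniform single-restriction step resists proof, a viable fallback is to establish it in the classical types $A_\ell,B_\ell,C_\ell,D_\ell$ using explicit coordinates for the roots together with the height filtration, and to invoke the existing computations for the exceptional types; this would already yield the conjecture in full while isolating the genuinely uniform statement as the one remaining goal.
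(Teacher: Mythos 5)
The statement you are addressing is a \emph{conjecture} in this paper, not a theorem: the authors establish it only for ideal arrangements of rank at most $8$ (Theorem~\ref{thm:IdealUpToRk8FA}), and even there a substantial computer check is unavoidable. Your text is a programme rather than a proof, and the step you yourself identify as ``where essentially all of the difficulty lies'' is precisely the open content. Indeed, your plan is to deduce the conjecture from the stronger assertion that every MAT-free arrangement is flag-accurate; that stronger assertion is exactly Problem~\ref{prob:MAT-FA}, which the paper leaves open. The parts of your argument that do go through are already in the paper: the ``coarse scaffolding'' (free restrictions to intersections of hyperplanes inside a single MAT block, realizing the bottom exponents) is Corollary~\ref{cor:MAT-stepRestriction} and Theorem~\ref{thm:MATRestGen}, and the observation that the class of ideal arrangements is not closed under restriction (which blocks a naive induction on rank via Theorem~\ref{Thm_HtRestIdeal}) is made explicitly in the introduction. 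What remains --- refining the block-wise flats into a complete nested flag, one dimension at a time, compatibly across the junctions between MAT blocks --- is exactly where the authors report that their natural heuristic fails for several hundred ideals in types $E_6$, $E_7$, $E_8$, forcing a direct computation. So the ``single-restriction lemma'' you need is known not to follow uniformly from the MAT structure by the route you sketch.

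Two further points show that the missing lemma cannot be had cheaply. First, it is false for general free arrangements, and even for arrangements that are simultaneously accurate and divisionally free: in Example~\ref{ex:shif4} the arrangement $\Ac = \cc\Shi^1_{-\Ic}$ of type $F_4$ is accurate and divisionally free with top exponent of multiplicity one away from a repeated value, yet no hyperplane restriction can be continued to a flag witness; and in Example~\ref{ex:star-but-not-divfree} the rank-$5$ arrangement $\Dc$ with exponents $(1,5,5,5,5)$ has the required rank-$4$ and rank-$3$ restrictions but no compatible rank-$2$ flat between them. Any proof must therefore isolate which structural feature of ideals (beyond freeness, accuracy, divisional freeness, or MAT-freeness as currently understood) forces the nesting, and your proposal does not do this. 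Second, your fallback for the classical types is not carried out, and it is not automatic: supersolvability of the type $A$, $B$, $C$ ideals does not by itself yield flag-accuracy, since free (equivalently chordal, hence supersolvable) graphic arrangements can fail even to be accurate (Example~\ref{ex:FA-Rest-Loc}), and accurate graphic arrangements can fail to be flag-accurate (Corollary~\ref{coro:acc-nfa}). In short, the proposal correctly maps the terrain but proves nothing beyond what Theorem~\ref{thm:MATRestGen} already gives, and the conjecture remains open.
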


In fact much evidence points to an even stronger assertion. For, firstly all ideal arrangements are 
inductively free, thanks to \cite{CRS17_IdealIF}, and secondly a large number of them are even supersolvable thus hereditarily inductively free (e.g.~this applies to \emph{all} ideal arrangements in type $A$, $B$, $C$ and $G_2$). 
(Though in general, ideal arrangements are not hereditarily free, cf.~\cite{amendmoellerroehrle18}.)
So if a supersolvable ideal arrangement is  flag-accurate it is trivially also ind-flag-accurate.
Thus it is tantalizingly tempting to formulate the following.

\begin{conjecture}
	\label{conj:Ideal-ind-flag} %
	Ideal arrangements are ind-flag-accurate.
\end{conjecture}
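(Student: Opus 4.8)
The plan is to reduce Conjecture \ref{conj:Ideal-ind-flag} to the (still open) flag-accuracy statement of Conjecture \ref{conj:Ideal-flag}, exploiting what is already known about inductive freeness of ideal arrangements and their restrictions. By \cite{CRS17_IdealIF} every ideal arrangement is inductively free, so the first clause of ind-flag-accuracy (Definition \ref{Def_TF}(vi)) holds automatically; what remains is to exhibit a witness $(X_1, \ldots, X_\ell)$ for flag-accuracy all of whose restrictions $\Ac^{X_d}$ are inductively free. I would first dispose of the supersolvable types $A$, $B$, $C$ and $G_2$, where every ideal arrangement is supersolvable and hence hereditarily inductively free: there \emph{every} flat restriction $\Ac^{X_d}$ is inductively free, irrespective of the chosen flag, so ind-flag-accuracy is equivalent to flag-accuracy and Conjecture \ref{conj:Ideal-ind-flag} collapses onto Conjecture \ref{conj:Ideal-flag}.

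The crux is therefore flag-accuracy itself. Here I would attack the stronger assertion, for which the present paper already reports strong evidence, that \emph{every} MAT-free arrangement is flag-accurate; since ideal arrangements are MAT-free by \cite{ABCHT16_FreeIdealWeyl} (Theorem \ref{thm:IdealFree}), this would yield Conjecture \ref{conj:Ideal-flag} at once. The idea is to refine the proof of Theorem \ref{Thm_MATRest} (MAT-freeness implies accuracy): that argument already produces flats $X_d$ realizing the initial segment $(e_1, \ldots, e_d)_{\leq}$ of the exponents, and the new content is to nest them into a single flag. I would induct along the chain of subarrangements witnessing the MAT-freeness of $\Ac$ (Definition \ref{Def_MATfree}), at each stage selecting a hyperplane $X_{\ell-1}$ on which the restriction is again MAT-free with exponents $(e_1, \ldots, e_{\ell-1})_{\leq}$, and then iterating inside $\Ac^{X_{\ell-1}}$. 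The point to establish is that the hyperplane accounting for the dropped largest exponent can always be chosen through the previously built flats, i.e.\ that the MAT filtration descends compatibly to restrictions.

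For the non-supersolvable types $D_\ell$, $E_6$, $E_7$, $E_8$ and $F_4$ -- where ideal arrangements need not even be hereditarily free, cf.~\cite{amendmoellerroehrle18} -- the reduction of the first paragraph no longer applies, so once flag-accuracy is in hand one must verify separately that the witness flag can be chosen with inductively free restrictions. Here I would interleave the construction of the flag with an inductive-freeness certificate for each $\Ac^{X_d}$, again reading such a certificate off the restricted MAT filtration. For the exceptional groups this can be completed by explicit computation, extending the verification in ranks up to $8$ behind Theorem \ref{thm:IdealUpToRk8FA} and thereby settling the conjecture in all exceptional cases.

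The main obstacle is the uniform proof of flag-accuracy, that is, the compatible descent sketched in the second paragraph. Because neither the class of ideal arrangements nor that of MAT-free arrangements is closed under restriction, one loses the clean description of $\exp(\Ac^{X_d})$ as a height distribution in the root poset, so the equality $\exp(\Ac^{X_d}) = (e_1, \ldots, e_d)_{\leq}$ cannot be read off combinatorially. Everything hinges on showing that the MAT filtration always admits a hyperplane restriction that is itself MAT-free with the predicted exponents, and that these hyperplanes can be threaded into a flag; carrying this out uniformly over the intricate root posets of types $D$ and $E$, where order ideals behave far less transparently than in the linearly ordered classical cases, is where I expect the real difficulty to lie.
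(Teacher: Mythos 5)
The statement you are asked to prove is a \emph{conjecture} in the paper: the authors do not prove it, and the only support they offer is a computer verification for all ideal arrangements of rank at most $6$ (Remark \ref{rem:IdealUpToRk6IFA}), together with the rank-$\leq 8$ verification of flag-accuracy in Theorem \ref{thm:IdealUpToRk8FA}. Your proposal does not close this gap either: it is a reduction strategy whose pivotal step --- that every MAT-free arrangement is flag-accurate, obtained by ``refining'' Theorem \ref{Thm_MATRest} so that the witness flats nest into a flag --- is precisely Problem \ref{prob:MAT-FA}, which the authors explicitly pose as open. The machinery behind Theorem \ref{Thm_MATRest}, namely Theorem \ref{thm:MATRestGen} and Corollary \ref{cor:MAT-stepRestriction}, produces for each $q$ a flat $X=\cap_{H\in\Cc}H$ with $\Cc$ contained in a \emph{single} block $\pi_k$ of the MAT-partition realizing the exponents $(e_1,\ldots,e_{\ell-q})_\leq$; flats coming from different blocks have no reason to be comparable, and this is exactly the obstruction the authors could only overcome by ad hoc arguments in rank $\leq 4$ (Corollary \ref{coro:MATRk4-FA}) and by computer in ranks $5$--$8$. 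Your proposed induction ``iterate inside $\Ac^{X_{\ell-1}}$'' also presupposes that a suitable hyperplane restriction of a MAT-free (or ideal) arrangement is again MAT-free with the predicted exponents, but neither class is closed under restriction, as you yourself note; no mechanism is supplied to certify this, and Examples \ref{ex:star-but-not-divfree} and \ref{ex:shif4} show that accuracy alone does not force flag-accuracy, so the descent really does require a new idea.

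The parts of your sketch that do work are exactly the parts the paper already records: inductive freeness of all ideal arrangements (\cite{CRS17_IdealIF}) disposes of the first clause of Definition \ref{Def_TF}(vi); and in the supersolvable types $A$, $B$, $C$, $G_2$ hereditary inductive freeness makes ind-flag-accuracy equivalent to flag-accuracy, which is the observation the authors themselves make when motivating Conjecture \ref{conj:Ideal-ind-flag}. But since the remaining types ($D_\ell$, $E_6$, $E_7$, $E_8$, $F_4$) are where ideal arrangements fail to be hereditarily free, your plan leaves the conjecture open in exactly the cases where it is hard, and the ``explicit computation'' you invoke for the exceptional groups is only documented in the paper for flag-accuracy up to rank $8$ and for ind-flag-accuracy up to rank $6$. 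In short: your proposal is a reasonable research programme, consistent with the authors' own outlook, but it is not a proof, and no proof exists in the paper.
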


We have confirmed  Conjecture \ref{conj:Ideal-ind-flag} for all ideal arrangements up to rank $6$, see 
Remark \ref{rem:IdealUpToRk6IFA}.

In view of Theorem \ref{Thm_MATRest}, Conjecture \ref{conj:Ideal-flag} naturally leads to the following more general question, see also Corollary \ref{coro:MATRk4-FA}.

\begin{problem}
	\label{prob:MAT-FA}
	Are all MAT-free arrangements flag-accurate?
\end{problem}

Our evidence for Conjecture \ref{conj:Ideal-ind-flag} does point to an even stronger implication.

\begin{problem}
	\label{prob:MAT-IFA}
	Are all MAT-free arrangements ind-flag-accurate?
\end{problem}

\bigskip
The next class we consider is given by certain deformations of Weyl arrangements -- extended Shi and extended Catalan arrangements.

In \cite[Thm.~1.8]{mueckschroehrle:accurate}
it was shown that extended Shi arrangements, ideal-Shi arrangements and extended Catalan arrangements are accurate.
In Theorems \ref{thm:ExtShi-FA} and \ref{thm:ExtCat-FA} we  partially  strengthen these results as follows.

\begin{theorem}
	\label{Thm_ResShiCatTF-flag}
	Extended Shi arrangements $\Shi^m$ are flag-accurate.	
	Extended Catalan arrangements $\Cat^m$ of Dynkin type $A,B$, or $C$
	are flag-accurate.
\end{theorem}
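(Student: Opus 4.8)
The plan is to establish flag-accuracy for the extended Shi arrangements $\Shi^m$ and the extended Catalan arrangements $\Cat^m$ of types $A$, $B$, $C$ by exhibiting an explicit flag of flats together with freeness and the correct exponent sequences at each stage. The natural approach is to exploit the known MAT-freeness structure of these deformations: by Theorem \ref{Thm_MATRest}, MAT-freeness already yields accuracy, so the additional content here is the \emph{flag} condition, namely that the witnessing flats $X_1 \subseteq X_2 \subseteq \cdots \subseteq X_{\ell-1}$ can be chosen to form a chain. My first step would be to recall the restriction behaviour of $\Shi^m$ and $\Cat^m$: a suitable hyperplane intersection in one of these deformations restricts to a deformation of the \emph{same} combinatorial shape but of lower rank (for the Coxeter types at hand, restricting to a coordinate flat $x_{i_1} = x_{i_2} = \cdots$ yields an extended Shi or Catalan arrangement of the corresponding smaller Dynkin type). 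This ``self-similarity under restriction'' is precisely what allows an inductive construction of the flag.

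The key steps, carried out by induction on the rank $\ell$, would be as follows. First I would fix the natural coordinate flag $V = X_\ell \supseteq X_{\ell-1} \supseteq \cdots \supseteq X_1$ in which each $X_d$ is obtained from $X_{d+1}$ by imposing one additional linear relation of the appropriate type (for type $A_{\ell}$, successively setting the last coordinates equal; for types $B_\ell$, $C_\ell$, successively killing or identifying coordinates so that the restriction is the extended Shi/Catalan arrangement of type $B_{d}$ or $C_{d}$, down to the rank-one base case). Second, for this flag I would verify that each restriction $\Ac^{X_d}$ is again an extended Shi (resp.\ extended Catalan) arrangement of rank $d$ of the same series, hence is free, with the known exponents. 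Third, I would check the crucial exponent bookkeeping: the exponents of $\Shi^m$ of rank $\ell$ are known explicitly (they are constant, equal to $mh+1$ for the Weyl case pattern, with the appropriate multiplicities), and for $\Cat^m$ of types $A$, $B$, $C$ the exponents are likewise known arithmetic-progression--type data; I must confirm that the first $d$ exponents in increasing order coincide exactly with $\exp(\Ac^{X_d})$. Since the exponents of $\Shi^m$ are all equal, the increasing-order matching is automatic; for $\Cat^m$ the matching reduces to comparing the explicit exponent formulas across ranks, which is where care is needed.

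The main obstacle will be the exponent-matching step for the extended Catalan arrangements, and the reason the theorem is stated only for types $A$, $B$, $C$ rather than all Coxeter types. For a chain of flats to witness flag-accuracy one needs the restriction exponents at rank $d$ to be \emph{precisely} the $d$ smallest exponents of the ambient rank-$\ell$ arrangement; this is a genuine numerical compatibility between the exponent sequences of $\Cat^m$ across consecutive ranks in the same series, and it can fail for the exceptional and the remaining classical types (indeed the companion results and examples in the paper, e.g.\ Example \ref{ex:shif4} and the later theorems, show that flag-accuracy is delicate and not universal). So the heart of the proof is to pin down, type by type among $A$, $B$, $C$, the explicit exponents of $\Cat^m$ at each rank and to verify the nesting of exponent multisets along the chosen coordinate flag; for type $A$ this follows cleanly from the well-understood Shi/Catalan exponent formulas, while types $B$ and $C$ require tracking the half-integer shifts in the deformation parameters carefully. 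Once this numerical nesting is confirmed, freeness of each restriction (already guaranteed by the self-similar restriction structure and Theorem \ref{Thm_MATRest}) completes the verification that the coordinate flag is a witness for flag-accuracy.
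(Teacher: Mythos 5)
The central claim your argument rests on --- that restricting $\Shi^m$ or $\Cat^m$ along a suitable coordinate flag yields an extended Shi/Catalan arrangement of the \emph{same series} in smaller rank, so that freeness and exponents of the restrictions come for free --- is false, and this is precisely where the real work lies. For instance, in the paper's proof of Theorem \ref{thm:CatB-FA} the restrictions of $\cc\Cat^m(B_\ell)=\cc\Bb^0_\ell(m,m)$ along the chosen flag are of the form $\cc\Bb^{0}_{n-1}(m+(\ell-n+1)m,\,m)$: the ``diagonal'' deformation parameter grows at each step while the ``off-diagonal'' one stays fixed, so these are not extended Catalan arrangements of type $B_{n-1}$ for any parameter. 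One therefore has to introduce interpolating families $\Bb^p_\ell(m,a)$, $\Ccal^p_\ell(m,a)$ (and $\widetilde\Ccal_\ell(m,a,n)$) that \emph{are} closed under the relevant restrictions and deletions, and prove their freeness with explicit exponents by induction (Theorems \ref{thm:CatB-RF} and \ref{thm:CatC-F}); for type $C$ this freeness cannot be obtained by addition--deletion alone and requires Yoshinaga's criterion via Ziegler restrictions and local freeness in codimension three. Your appeal to Theorem \ref{Thm_MATRest} is also misplaced: the paper never asserts that $\cc\Shi^m$ or $\cc\Cat^m$ is MAT-free (their accuracy comes from \cite[Thm.~1.8]{mueckschroehrle:accurate}, i.e.\ Theorem \ref{thm:idealshi}, by a different route), and even granting accuracy you would still owe the freeness and exponents of the specific restrictions in your flag.

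Two further corrections. For the Shi case your observation that equal exponents make the ordering automatic is essentially Lemma \ref{lem:samexp}, but the flag itself is produced in the paper either from Abe's deletion criterion (Theorem \ref{thm:extshi-FA-cri}) together with the Abe--Terao freeness of $\Shi^m_{-\Sigma}$ (Proposition \ref{Prop_Shi-SimplRoots}), or from divisional freeness of $\cc\Shi^m$ --- not from any restriction self-similarity. And the reason the Catalan statement is limited to types $A,B,C$ is not a failure of exponent matching in the other types (the authors conjecture flag-accuracy of \emph{all} extended Catalan arrangements, Conjecture \ref{conj:Cat}); it is that the freeness of the required interpolating families in type $D$ is still conjectural (Conjectures \ref{thm:CatD-F} and \ref{thm:CatD-odd-FA}) and the exceptional types are computationally out of reach.
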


The second part of this theorem strongly hints towards the following.

\begin{conjecture}
		\label{conj:Cat}
	Extended Catalan arrangements are flag-accurate.
\end{conjecture}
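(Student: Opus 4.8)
The plan is to prove Conjecture \ref{conj:Cat} by establishing flag-accuracy for extended Catalan arrangements $\Cat^m$ of the remaining Dynkin types, namely $D$, $E$, $F$, and $G$, since types $A$, $B$, $C$ are already covered by Theorem \ref{Thm_ResShiCatTF-flag}. The broad strategy would be to leverage the already-known accuracy of $\Cat^m$ from \cite[Thm.~1.8]{mueckschroehrle:accurate} and upgrade it to flag-accuracy by exhibiting a genuine flag of flats $X_1 \subseteq X_2 \subseteq \cdots \subseteq X_{\ell-1} \subseteq X_\ell = V$ whose successive restrictions $\Cat^m{}^{X_d}$ realize the initial segments of $\exp(\Cat^m)$ in increasing order. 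The key conceptual point is that the exponents of $\Cat^m$ in type $\Phi$ of rank $\ell$ are well understood: they are governed by the degrees of the Weyl group together with the multiplicity $m$, and in fact one expects $\exp(\Cat^m)$ to take the symmetric form built from the exponents $e_1,\ldots,e_\ell$ of the Weyl group via the pattern arising in the Edelman--Reiner and Yoshinaga freeness results for the extended Catalan (Linial/Shi-type) filtration. First I would pin down these exponents precisely in each remaining type, so that the target numerical profile $(f_1, \ldots, f_d)_\leq$ for each $X_d$ is explicit.

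Next I would attempt to construct the flag restriction-by-restriction, working downward from $V$. The natural candidate for each restriction $\Cat^m{}^{X_d}$ is to intersect with a hyperplane (or flat) of the ambient Weyl arrangement and check that the restricted arrangement is again an extended Catalan-type arrangement of a smaller (possibly reducible) root subsystem. Concretely, one would choose $X_{\ell-1}$ to be a hyperplane $H \in \Ac(\Phi)$ so that the restriction $(\Cat^m)^H$ is free with exponents equal to the first $\ell-1$ entries of $\exp(\Cat^m)$; the restriction of a Catalan arrangement to a coordinate hyperplane of its defining Weyl arrangement tends to decompose as a product of Catalan arrangements of the irreducible components of the root subsystem orthogonal to (or fixed by) the corresponding reflection. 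I would then iterate: having landed at a product arrangement after one restriction, I would invoke the multiplicativity of flag-accuracy (Remark \ref{rem_MATRest-flag}(v)) and induct on the rank, reducing the problem in types $D$, $E$, $F$, $G$ to lower-rank instances whose flag-accuracy is either already known (types $A$, $B$, $C$, or small $D$) or handled by the induction hypothesis.

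The two steps I expect to be the real obstacles are, first, verifying that the successive restrictions are themselves of extended Catalan type (equivalently, that the restriction of $\Cat^m$ to the chosen flat is combinatorially a Catalan arrangement of the induced root subsystem) rather than some more exotic deformation; and second, ensuring that the exponents match up in increasing order along the flag. The exponent-matching is delicate because the increasing-order requirement in Definition \ref{Def_TF}(ii) forces the smallest exponents to be peeled off first, and it is not automatic that one can always find a flat whose restriction deletes exactly the largest current exponent. For the exceptional types $E_6, E_7, E_8, F_4, G_2$ I would additionally supplement the inductive argument with explicit computation, verifying flag-accuracy directly for the base cases and for any type that resists the uniform restriction argument; this mirrors the paper's broader methodology of combining theoretical reduction with computational checks. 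The crux is thus an interplay between the restriction theory of Catalan arrangements and the fixed-order constraint on exponents, and I would expect type $D$ to be the most instructive test case since its restrictions already produce reducible root subsystems, while the exceptional types would likely require the computational fallback.
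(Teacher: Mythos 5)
The statement you are addressing is stated in the paper as a \emph{conjecture}, not a theorem: the authors prove flag-accuracy of $\cc\Cat^m$ only in types $A$, $B$, $C$ (Theorem \ref{Thm_ResShiCatTF-flag}), and explicitly state that they are unable to handle type $D$ or the exceptional types beyond $G_2$ and the single case $\cc\Cat^1(F_4)$. Your proposal is therefore not being measured against an existing proof; it has to stand on its own, and as written it does not close the gap — it reproduces the strategy the authors already pursued and stalls at exactly the points they identify as open.

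The central flaw is your working assumption that ``the restriction of a Catalan arrangement to a coordinate hyperplane of its defining Weyl arrangement tends to decompose as a product of Catalan arrangements of the irreducible components of the root subsystem.'' This is false, and it is precisely why the problem is hard. Already in types $B$ and $C$ the paper must introduce new interpolating families $\Bb^p_\ell(m,a)$, $\Ccal^p_\ell(m,a)$ and the auxiliary arrangements $\widetilde\Ccal_\ell(m,a,n)$, because restricting $\cc\Cat^m$ along a flag immediately leaves the class of extended Catalan arrangements and produces deformations with shifted, asymmetric translation sets. In type $D$ the analogous restrictions are the families $\Dcal^r_\ell(a)$ and $\Fc_\ell(a,n)$ of Conjectures \ref{thm:CatD-F} and \ref{thm:CatD-odd-FA}, and even the \emph{freeness} of these restricted arrangements (let alone their exponents or flag-accuracy) is open; your induction has no base to run on. Your second fallback — direct computation for the exceptional types — is also not available at the stated level of generality: the authors report that their computations could not reach $E_6$, $E_7$, $E_8$ at all and reached $F_4$ only for $m=1$, whereas the conjecture quantifies over all $m\ge 1$. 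So a finite computation cannot settle even a single exceptional type. To make progress you would need either to prove the freeness and exponent formulas for the restricted families in type $D$ (e.g.\ via Ziegler restrictions and local freeness in codimension three, as the paper does for $\widetilde\Ccal$ in type $C$), or to find a genuinely different argument — for instance a uniform MAT-free or divisional structure on $\cc\Cat^m$ compatible with a flag — neither of which your proposal supplies.
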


Moreover, in view of \cite[Thm.~1.8]{mueckschroehrle:accurate} we might even pose the following.
\begin{problem}
	\label{prob:ideal-Shi-FA}
	Are all ideal-Shi arrangements flag-accurate?
\end{problem}

\bigskip
In Section \ref{sect:graph}, we consider free graphic arrangements under the aspect of 
flag-accuracy.
We exhibit an infinite family of flag-accurate graphic arrangements (Theorem \ref{thm:Qfamily} and Theorem \ref{thm:P-FA}).
In particular, all trivially perfect graphs are ind-flag-accurate.
In Example \ref{ex:FA-Rest-Loc}, we present a non-flag-accurate free graphic arrangement which admits an extension by one vertex which 
gives a flag-accurate graphic arrangement. 
Thus, (ind-)flag-accuracy, similarly to accuracy, is neither compatible with  restriction, nor  with localization.

Finally, in the last section we investigate flag-accuracy for $\psi$-digraphic arrangements. Among them are $N$-Ish arrangements. Here we also discuss 
classes of arrangements called \emph{Shi descendants} and \emph{Catalan descendants},  
which are closely related to 
extended Shi and extended Catalan arrangements by means of graph theoretic operations which we call \emph{mutation}. 
The core results here are as follows. 
Theorem \ref{thm:A-H} demonstrates that 
the origins in the Shi descendant sequences have ind-flag-accurate cones. This in particular implies that 
the extended Shi arrangements have this property, 
see Remark \ref{rem:Shi-A-alter}.
In Theorem \ref{thm:Shi-ally-FA} we then show 
that all Shi descendants admit ind-flag-accurate cones.
There are analogous results for Catalan descendants. For instance, 
Corollary \ref{cor:CatA-FA} proves that the cones of extended Catalan arrangements of type $A$ are also  
ind-flag-accurate. Finally, in Theorem \ref{thm:Cat-ally-FA} we show that indeed all 
Catalan descendants are ind-flag-accurate.
\bigskip

For general information about arrangements  
we refer the reader to \cite{OrTer92_Arr}.


\section{Preliminaries}
\label{ssect:recoll}

In this section we review some basic concepts and preliminary results on various classes of free arrangements. Our standard reference is \cite{OrTer92_Arr}.

\subsection{Hyperplane arrangements}
\label{ssect:hyper}
Let $ \mathbb{K} $ be a field and let $V = \KK^\ell$ be an $ \ell $-dimensional vector space over $ \mathbb{K} $. 
A \textit{hyperplane} in $V$ is an affine subspace of codimension $1$ of $V$.
An \emph{arrangement} $\Ac = (\Ac, V)$  is a finite collection of hyperplanes in  $ V$. 
We say that $ \Ac $ is \emph{central} if every hyperplane in $ \Ac $ passes through the origin. 
If we want to emphasize the dimension $\ell$ of the ambient vector space
we say that $\Ac$ is an $\ell$-arrangement. 

Let  $ \Ac $ be an arrangement.
Define the \emph{intersection poset} $ L(\Ac) $ of $ \Ac $ by 
\begin{align*}
L(\Ac) :=\left\{\bigcap_{H \in \Bc }H \neq \emptyset \mid  \Bc \subseteq \Ac \right\},
\end{align*}
where the partial order is given by reverse inclusion $X\le Y\Leftrightarrow Y\subseteq X$ for $X, Y \in L(\Ac)$. 
We agree that $V $ is a unique minimal element in $ L(\Ac) $ as the intersection over the empty set. 
Thus $ L(\Ac) $ is a semi-lattice which can be equipped with the rank function $ \rk(X) :=\operatorname{codim}(X) $ for $X \in L(\Ac)$. 
We also define the \emph{rank} $\rk(\Ac)$ of $\Ac$ as the rank of a maximal element of $L(\Ac)$. 
The intersection poset $ L(\Ac) $ is sometimes referred to as the  \emph{combinatorics} of  $ \Ac $.

All of the properties considered in the present paper, such as supersolvability, (inductive, divisional, recursive, MAT-)freeness, ((almost, ind-)flag-)accuracy of arrangements are compatible with products.

An $\ell$-arrangement $ \Ac $ is called \emph{essential} if $ \rk(\Ac) = \ell $. 
Any arrangement $ \Ac $ of rank $r$ in $\KK^\ell$ can be written as the product of an essential arrangement $ \Ac^{\mathrm{ess}} $ and the $(\ell-r)$-dimensional empty arrangement $\varnothing_{\ell-r}$. 
We call $ \Ac^{\mathrm{ess}} $ the \emph{essentialization} of $ \Ac $. 
Thus $ \Ac $ satisfies any of these properties  if and only if  $ \Ac^{\mathrm{ess}} $ does.

\bigskip

The \emph{characteristic polynomial} $ \chi(\Ac, t) \in \mathbb{Z}[t] $ of $ \Ac $ is defined by
\begin{equation}
	\label{eq.chi}
\chi(\Ac, t) :=\sum_{X \in L(\Ac)}\mu(X)t^{\dim(X)}, 
\end{equation}
where $ \mu $ denotes the \emph{M\"{o}bius function} $ \mu \colon L(\Ac) \to \mathbb{Z} $ defined recursively by 
\begin{align*}
\mu\left(V \right) :=1 
\quad \text{ and } \quad 
\mu(X) :=-\sum_{\substack{Y \in L(\Ac) \\ X \subsetneq Y}}\mu(Y). 
\end{align*}

Let $S = S(V^*)$ be the symmetric algebra of the dual space $V^*$ of $V$.
We fix a basis $x_1,\ldots,x_\ell$ for $V^*$ and identify $S$ with the polynomial ring $\KK[x_1,\ldots,x_\ell]$.
The algebra $S$ is equipped with the grading by polynomial degree: $S = \bigoplus_{p\in \ZZ} S_p$,
where $S_p$ is the $\KK$-space of homogeneous polynomials of degree $p$ (along with $0$), where $S_p = \{0\}$ for $p < 0$.

The \emph{defining polynomial} $Q(\Ac)$ of $\Ac$ is given by
$$Q(\Ac):=\prod_{H \in \Ac} \alpha_H \in S,$$
where $ \alpha_H=a_1x_1+\cdots+a_\ell x_\ell+d$  $(a_i, d \in \mathbb{K})$ satisfies $H = \ker(\alpha_H)$. 

The operation of \emph{coning} is a standard way to pass from  an arbitrary arrangement to a central one. 
The \emph{cone} $ \cc\Ac$ over $\Ac$ is the central arrangement in $\mathbb{K}^{\ell+1}$ with the defining polynomial
$$Q(\cc\Ac):=z Q(\Ac)'
\in \mathbb{K}[x_1,\ldots, x_\ell,z],$$
where $Q(\Ac)'$ 
is the homogenization of $Q(\Ac)$, and $z=0$ is the \emph{hyperplane at infinity}, denoted $H_{\infty}$; cf.~\cite[Def.~1.15]{OrTer92_Arr}. 
By abuse of notation, if the cone $ \cc\Ac$ has a property, e.g., supersolvability, freeness, etc, we sometimes say $\Ac$ has that property too.
The characteristic polynomials of $\Ac$ and $\cc\Ac$ are related by the following simple formula (e.g., \cite[Prop.~ 2.51]{OrTer92_Arr}):
$$\chi(\cc\Ac, t)=(t-1)\chi(\Ac,t).$$

\bigskip
Associated with $X \in L(\Ac)$ we have two canonical arrangements, 
the \emph{localization $\Ac_X$} of $\Ac$ at $X$, given by
\[
\Ac_X := \{H \in \Ac  \mid H \supseteq X \},
\]
and the \emph{restriction $\Ac^X$ of $\Ac $ to $X$}, defined by
\[
\Ac^X := \{H \cap X  \ne \emptyset \mid H \in \Ac\setminus \Ac_X  \}.
\]

Two (central) arrangements $\Ac$ and $\Bc$ in $\KK^\ell$ are said to be \emph{(linearly) affinely equivalent} if there is an invertible (linear) affine endomorphism $\varphi: \KK^\ell \to \KK^\ell$ such that $\Bc=\varphi(\Ac)=\{\varphi(H)\mid H\in \Ac\}$. 
In particular, the intersection posets of two affinely equivalent arrangements are isomorphic. 
All of the properties of arrangements considered in the present paper are preserved under affine and linear equivalences. 
In the rest of the paper, we often identify affinely equivalent arrangements and note that for such non-central $\Ac$ and $\Bc$, the cones $\cc\Ac$ and $\cc\Bc$ are linearly equivalent.

\subsection{Free, inductively free, and recursively free arrangements}
\label{SSec_freeArr}

A $\KK$-linear map $\theta:S\to S$ which satisfies $\theta(fg) = \theta(f)g + f\theta(g)$ is called a $\KK$-\emph{derivation}.
Let $\Der(S)$ be the $S$-module of $\KK$-derivations of $S$. 
It is a free $S$-module with basis $\ddxi{1},\ldots,\ddxi{\ell}$.
We say that a non-zero derivation $\theta  = \sum_{i=1}^\ell f_i \ddxi{i}$  is \emph{homogeneous of degree} $p$ provided all coefficients $f_i$ belong to $S_p$, cf.~\cite[Def.~4.2]{OrTer92_Arr}.
In this case we write $\deg{\theta} = p$.
We obtain a $\ZZ$-grading $\Der(S) = \bigoplus_{p \in \ZZ} \Der(S)_p$ of the $S$-module $\Der(S)$.

In the remainder of this section, we assume that $ \Ac $ is a \textbf{central} $\ell$-arrangement.

\begin{definition}
	The \emph{module of $\Ac$-derivations}  of $\Ac$  is defined by
	\begin{equation*}
	D(\Ac) := \{ \theta \in \Der(S) \mid \theta(Q(\Ac)) \in Q(\Ac)S\}.
	\end{equation*}
	In particular, if $\Bc \subseteq \Ac$, then $D(\Ac) \subseteq D(\Bc)$.
	
	We say that $\Ac$ is \emph{free} if the module of $\Ac$-derivations is a free $S$-module. 
\end{definition}

If $\Ac$ is a free arrangement we may choose a homogeneous basis $\theta_1, \ldots, \theta_\ell$ of $D(\Ac)$.
Then the degrees of the $\theta_i$ are called the \emph{exponents} of $\Ac$. 
They are uniquely determined by $\Ac$ \cite[Def.\ 4.25]{OrTer92_Arr}. 
In that case we write 
\[
\exp(\Ac) := (\deg{\theta_1},\ldots,\deg{\theta_\ell})
\]
for the exponents of $\Ac$. 
If $\exp(\Ac) = (e_1, e_2, \ldots, e_\ell)$ with $ e_1 \leq e_2 \leq \ldots \leq e_\ell$ we often write  $\exp(\Ac) = (e_1, e_2,\ldots, e_\ell)_\le$, as in the introduction.
If $\Ac$ has an exponent $e$ appearing  $d\ge0$ times  in $\exp(\Ac)$ we also write $e^d \in \exp(\Ac)$.

Note that the empty arrangement $\varnothing_\ell$ in $V$ is free with
$D(\varnothing_\ell) = \Der(S)$ so that $\exp(\varnothing_\ell)=(0^\ell)\in\ZZ^\ell$.

Fix $H \in \Ac$, denote $\Ac':=\Ac\setminus \{H\}$ and $\Ac'':=\Ac^H$. 
We call $(\Ac, \Ac', \Ac'')$ the triple with respect to the hyperplane $H \in\Ac$. 
 \begin{theorem}[Addition-Deletion Theorem {\cite{Terao1980_FreeI}},  {\cite[Thms.~ 4.46 and 4.51]{OrTer92_Arr}}]
 \label{thm:AD}
Let $\Ac$ be a non-empty arrangement and let $H \in \Ac$. Then two of the following statements imply the third:
\begin{itemize}
\item[(i)] $\Ac$ is free with $\exp(\Ac) = (e_1, \ldots, e_{\ell-1}, e_\ell)$.
\item[(ii)]  $\Ac'$ is free with $\exp(\Ac')=(e_1, \ldots, e_{\ell-1}, e_\ell-1)$. 
\item[(iii)] $\Ac''$ is free with $\exp(\Ac'') = (e_1, \ldots, e_{\ell-1})$.
\end{itemize}
Moreover, all three assertions hold if $\Ac$ and $\Ac'$ are both free.
\end{theorem}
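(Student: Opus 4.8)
The plan is to prove all three implications at once from a single exact sequence of graded $S$-modules, using Saito's criterion to detect freeness and a degree count to pin down the exponents. Since $\Ac$ is central, after a linear change of coordinates I may assume $\alpha_H = x_\ell$, so that $\bar S := S/x_\ell S = \KK[x_1,\dots,x_{\ell-1}]$ is the coordinate ring of $H$ and $\Ac''$ is an arrangement in $H$. The starting point is the per-hyperplane description of derivation modules: because $S$ is a UFD and the $\alpha_K$ are pairwise non-associate irreducibles, one has $\theta \in D(\Bc)$ iff $\theta(\alpha_K)\in\alpha_K S$ for every $K\in\Bc$. As $\Ac = \Ac'\cup\{H\}$, this yields $D(\Ac) = \{\theta\in D(\Ac') : \theta(x_\ell)\in x_\ell S\}$ and in particular the inclusion $D(\Ac)\subseteq D(\Ac')$.

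The engine is the restriction map $\rho\colon D(\Ac)\to D(\Ac'')$: for $\theta=\sum f_i\partial_{x_i}\in D(\Ac)$ one has $f_\ell=\theta(x_\ell)\in x_\ell S$, so $\theta$ descends to $\bar\theta=\sum_{i<\ell}\bar f_i\,\partial_{x_i}$ on $\bar S$, and one checks $\bar\theta\in D(\Ac'')$. A short computation using that $x_\ell$ is coprime to every $\alpha_K$ with $K\ne H$ identifies the kernel as $\ker\rho = x_\ell D(\Ac')$, giving the exact sequence of graded $S$-modules
$$0\longrightarrow x_\ell D(\Ac')\longrightarrow D(\Ac)\overset{\rho}{\longrightarrow} D(\Ac''),$$
in which $x_\ell D(\Ac')\cong D(\Ac')$ with all degrees shifted up by one. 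The whole theorem is encoded in when $\rho$ is surjective, and I would run the three cases through Saito's criterion (a homogeneous $\ell$-tuple in $D(\Ac)$ is a basis iff it is $S$-independent and its degrees sum to $|\Ac|$), together with Terao's factorization $\chi(\Bc,t)=\prod(t-e_i)$ for free $\Bc$ and the deletion–restriction recursion $\chi(\Ac,t)=\chi(\Ac',t)-\chi(\Ac'',t)$.

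For the Addition step (ii)$+$(iii)$\Rightarrow$(i) I would take a homogeneous basis $\theta_1,\dots,\theta_\ell$ of $D(\Ac')$ with $\deg\theta_i=e_i$ for $i<\ell$ and $\deg\theta_\ell=e_\ell-1$, and a homogeneous basis of $D(\Ac'')$ of degrees $e_1,\dots,e_{\ell-1}$; lift the latter along $\rho$ to $\psi_1,\dots,\psi_{\ell-1}\in D(\Ac)$ of the same degrees and adjoin $x_\ell\theta_\ell\in D(\Ac)$ of degree $e_\ell$. The degrees then sum to $\sum_{i=1}^{\ell}e_i=|\Ac'|+1=|\Ac|$, and $S$-independence of this $\ell$-tuple is verified through $\rho$ (the images $\bar\psi_i$ are $\bar S$-independent while $x_\ell\theta_\ell$ spans the kernel direction), so Saito gives freeness with $\exp(\Ac)=(e_1,\dots,e_\ell)$. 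The Deletion/Restriction steps (i)$+$(ii)$\Rightarrow$(iii) and (i)$+$(iii)$\Rightarrow$(ii) are dual: from freeness of two of the modules one shows $D(\Ac)/x_\ell D(\Ac')\cong\operatorname{im}\rho$ is a free $\bar S$-module, reads off its exponents by comparing Hilbert series, and concludes via Saito. The final ``moreover'' clause is cleanest of all: if $\Ac$ and $\Ac'$ are both free, the inclusion $x_\ell D(\Ac')\subseteq D(\Ac)$ of free modules of equal rank, compared degree-by-degree using $\sum\exp(\Ac)=|\Ac|=|\Ac'|+1=\sum\exp(\Ac')+1$, forces the exponents into the stated pattern, forces $\rho$ to be surjective, and hence delivers (iii).

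The main obstacle is the surjectivity of $\rho$ — equivalently, lifting a homogeneous basis of $D(\Ac'')$ to derivations in $D(\Ac)$ of the same degrees. This is precisely where the numerical hypothesis (the prescribed exponent pattern) must be spent: surjectivity is not automatic, and it is the degree bookkeeping from Terao's factorization combined with the characteristic-polynomial recursion that certifies the Hilbert series of $\operatorname{im}\rho$ exhausts all of $D(\Ac'')$ with vanishing cokernel. Controlling this cokernel term, and keeping the exponent matchings of the three implications mutually consistent, is the delicate part; once that is in place, the remainder is a formal consequence of the exact sequence and Saito's criterion.
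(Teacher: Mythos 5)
This theorem is not proved in the paper: it is quoted as a classical result of Terao, with the proof delegated to \cite{Terao1980_FreeI} and \cite[Thms.~4.46, 4.51]{OrTer92_Arr}. So your attempt can only be measured against the cited proof. Your skeleton is indeed the skeleton of that proof: the exact sequence $0 \to D(\Ac') \xrightarrow{\cdot\,\alpha_H} D(\Ac) \xrightarrow{\rho} D(\Ac'')$ with $\ker\rho = \alpha_H D(\Ac')$, Saito's criterion, and the degree count $\sum e_i = |\Ac|$. The identification of the kernel and the check that $\rho$ lands in $D(\Ac'')$ are correct, and the ``moreover'' clause is handled in essentially the standard way.

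There is, however, a genuine gap at exactly the point you flag, and the mechanism you propose to close it does not work. In the addition step (ii)$+$(iii)$\Rightarrow$(i) you need lifts $\psi_1,\dots,\psi_{\ell-1}\in D(\Ac)$ of a basis of $D(\Ac'')$ in the same degrees, i.e.\ surjectivity of $\rho$ in those degrees, and you propose to certify this by computing the Hilbert series of $\operatorname{im}\rho \cong D(\Ac)/\alpha_H D(\Ac')$ from Terao's factorization together with $\chi(\Ac,t)=\chi(\Ac',t)-\chi(\Ac'',t)$. This is circular: the Hilbert series of $D(\Ac)$ is not determined by $\chi(\Ac,t)$, nor by any combinatorial data, unless $\Ac$ is already known to be free --- which is precisely the conclusion of the addition step. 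The same circularity afflicts (i)$+$(iii)$\Rightarrow$(ii), where the Hilbert series of $D(\Ac')\cong\ker\rho$ is the unknown. Only in (i)$+$(ii)$\Rightarrow$(iii) are both modules in the sequence of known Hilbert series, and even there ``comparing Hilbert series'' does not by itself show that the quotient $D(\Ac)/\alpha_H D(\Ac')$ is a free $S/\alpha_H S$-module (one needs, e.g., a depth/Auslander--Buchsbaum argument from $\operatorname{pd}_S \le 1$), nor that it is all of $D(\Ac'')$ rather than a proper submodule of the correct Hilbert function in low degrees. The cited proof closes these gaps by arguing directly with coefficient determinants of explicitly constructed $\ell$-tuples via Saito's criterion (using that a nonzero determinant in $Q(\Ac)\cdot S$ of degree $|\Ac|$ must be a scalar multiple of $Q(\Ac)$), not by Hilbert-series bookkeeping; it also does not need the Factorization Theorem, which in the standard development is proved after Addition--Deletion. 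As written, the decisive lifting step of your argument is asserted rather than proved.
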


Theorem \ref{thm:AD} above  motivates the following concepts.
\begin{definition}[{\cite[Def.~4.53]{OrTer92_Arr}}]
\label{def:IF}
The class $\mathcal{IF}$ of \emph{inductively free} arrangements is the smallest class of arrangements which satisfies
\begin{itemize}
\item[(i)] the empty arrangement $\varnothing_\ell$ is in $\mathcal{IF}$ for $\ell \ge 0$,
\item [(ii)] if there exists $H \in\Ac$ such that $\Ac'' \in \mathcal{IF}$, $\Ac' \in \mathcal{IF}$, and $\exp(\Ac'') \subseteq \exp(\Ac')$, then $\Ac \in \mathcal{IF}$.
\end{itemize} 
\end{definition}

\begin{definition}[{\cite[Def.~4.60]{OrTer92_Arr}}]
\label{def:IF}
The class $\mathcal{RF}$ of \emph{recursively free} arrangements is the smallest class of arrangements which satisfies
\begin{itemize}
\item[(i)]  $\varnothing_\ell \in \mathcal{RF}$ for $\ell \ge 0$,
\item[(ii)]  if there exists $H \in\Ac$ such that $\Ac'' \in \mathcal{RF}$, $\Ac' \in \mathcal{RF}$, and $\exp(\Ac'') \subseteq \exp(\Ac')$, then $\Ac \in \mathcal{RF}$, 
\item[(iii)]  if there exists $H \in\Ac$ such that $\Ac'' \in \mathcal{RF}$, $\Ac \in \mathcal{RF}$, and $\exp(\Ac'') \subseteq \exp(\Ac)$, then $\Ac' \in \mathcal{RF}$. 
\end{itemize} 
\end{definition}

Thus  $\mathcal{IF} \subseteq  \mathcal{RF}$, and by Theorem \ref{thm:AD} every recursively free arrangement is free.

\subsection{Multiarrangements}
\label{SSec_multi}

A \emph{multiarrangement} is a pair $(\Ac, \mathbf{m})$ where $\Ac = (\Ac, V)$ is an arrangement  and $ \mathbf{m}$ is a map $ \mathbf{m} : \Ac \to \ZZ_{\ge0}$, called a \emph{multiplicity} on $\Ac$. 
Let $\Ac$ be a central arrangement  in $\KK^\ell$ and let $\mathbf{m}$ be a multiplicity on $\Ac$.
The defining polynomial $Q(\Ac, \mathbf{m})$ of the multiarrangement $(\Ac, \mathbf{m})$ is given by 
$$Q(\Ac, \mathbf{m}):= \prod_{H \in \Ac} \alpha^{\mathbf{m}(H)}_H \in S= \mathbb{K}[x_{1}, \dots, x_{\ell}].$$
When $\mathbf{m}(H) = 1$ for every $H \in \Ac$, $(\Ac, \mathbf{m})$ is simply a hyperplane arrangement. 
The \emph{module $D(\Ac, \mathbf{m})$ of logarithmic derivations} of  $(\Ac, \mathbf{m})$ is defined by
$$D(\Ac, \mathbf{m}):=  \{ \theta\in \Der(S) \mid \theta(\alpha_H) \in \alpha^{\mathbf{m}(H)}_HS \mbox{ for all } H \in \Ac\}.$$

We say that $(\Ac, \mathbf{m})$  is \emph{free}  with the multiset $ \exp(\Ac, \mathbf{m}) = (d_{1}, \dots, d_{\ell}) $ of \emph{exponents}  if $D(\Ac, \mathbf{m})$ is a free $S$-module with a homogeneous basis $ \{\theta_{1}, \dots, \theta_{\ell}\}$  such that $ \deg \theta_{i} = d_{i} $ for each $ i $. 
It is known that  $(\Ac, \mathbf{m})$ is always free for $\ell\le 2$ \cite[Cor.~7]{Z89}.

\bigskip
Next we consider the \emph{Ziegler restriction} of a simple arrangement, \cite[Thm.~11]{Z89}.
Let $H \in \Ac$. The \emph{Ziegler restriction} of $\Ac$ onto $H$ is a multiarrangement $(\Ac^H,\mathbf{m}^H)$  defined by $$\mathbf{m}^H(X):=|\Ac_X|-1 \quad \text{ for } \quad X \in \Ac^H.$$ 

We say that $\Ac$ is \emph{locally free in codimension three along $H$} if $\Ac_X$ is free for every $X\in L(\Ac)$
with $X\subseteq H$ and $\codim_V(X)=3$. 
 
The following fundamental theorem gives
a deep connection between the freeness of a simple arrangements and the freeness of a particular Ziegler restriction.
 
\begin{theorem}[{\cite[Thm.~ 2.2]{Yos04_CharaktFree}}, {\cite[Thm.~ 4.1]{AY13}}, {\cite[Thm.~11]{Z89}}]
\label{thm:Yoshinaga's criterion}
Let $\Ac$ be a central arrangement in $\KK^\ell$ with $\ell \ge 3$ and let $H\in \Ac$. Then
$\Ac$ is free  with $\exp(\Ac) = (1, d_2,\ldots, d_\ell)$ if and only if the Ziegler restriction  $(\Ac^H,\mathbf{m}^H)$ is free  with $\exp (\Ac^H,\mathbf{m}^H)=(d_2,\ldots, d_\ell)$ and $\Ac$ is  locally free in codimension three along $H$.
\end{theorem}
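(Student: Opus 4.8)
The plan is to treat the two implications by rather different means, since the statement bundles Ziegler's restriction theorem together with Yoshinaga's freeness criterion and its Abe--Yoshinaga local reformulation. For the forward (``only if'') direction I would invoke Ziegler's theorem directly. Assuming $\Ac$ is free with $\exp(\Ac)=(1,d_2,\ldots,d_\ell)$, fix a homogeneous basis $\theta_1,\theta_2,\ldots,\theta_\ell$ of $D(\Ac)$ with $\theta_1$ the Euler derivation, the generator accounting for the exponent $1$. The map $\theta\mapsto\theta|_H$ reduced modulo $\alpha_H$ carries $D(\Ac)$ into $D(\Ac^H,\mathbf{m}^H)$, and Ziegler's argument shows that the images of $\theta_2,\ldots,\theta_\ell$ form a homogeneous basis, so $(\Ac^H,\mathbf{m}^H)$ is free with $\exp(\Ac^H,\mathbf{m}^H)=(d_2,\ldots,d_\ell)$. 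Local freeness in codimension three is then automatic: for any flat $X\in L(\Ac)$ the localization $\Ac_X$ inherits freeness from $\Ac$ (cf.\ \cite{OrTer92_Arr}), so this holds in particular for every rank-three $X\subseteq H$.

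The substantial content is the reverse (``if'') direction, for which I would pass to the sheaf-theoretic picture on $\mathbb{P}(V)=\mathbb{P}^{\ell-1}$. Sheafifying $D(\Ac)$ and quotienting out the line subbundle spanned by the Euler derivation yields a reflexive coherent sheaf $\mathscr{D}$ of rank $\ell-1$, whose splitting into a sum of line bundles $\bigoplus_{i=2}^{\ell}\mathcal{O}(-d_i)$, together with the vanishing of the intermediate graded cohomology $H^1_*(\mathbb{P}^{\ell-1},\mathscr{D})$, is equivalent to the freeness of $\Ac$ with exponents $(1,d_2,\ldots,d_\ell)$. Two inputs drive the argument. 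Freeness of the Ziegler restriction $(\Ac^H,\mathbf{m}^H)$ translates, via the restriction map above, into the splitting of $\mathscr{D}|_{\mathbb{P}(H)}$ as $\bigoplus_{i=2}^{\ell}\mathcal{O}(-d_i)$ on the hyperplane $\mathbb{P}(H)$. The hypothesis of local freeness in codimension three along $H$ supplies the matching codimension count: a reflexive sheaf is automatically locally free off a locus of codimension $\geq 3$, and imposing freeness of $\Ac_X$ for the rank-three flats $X\subseteq H$ ensures that the bad locus of $\mathscr{D}$ meets $\mathbb{P}(H)$ only in codimension $\geq 3$ within $\mathbb{P}(H)$, which is precisely what makes the restriction to $\mathbb{P}(H)$ reflexive and well-behaved.

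It then remains to upgrade ``splits on $\mathbb{P}(H)$'' to ``splits on $\mathbb{P}^{\ell-1}$.'' I would carry this out by a Horrocks-type splitting argument controlling $H^1_*(\mathbb{P}^{\ell-1},\mathscr{D})$ through the restriction sequence to $\mathbb{P}(H)$, and then track the Chern-class, equivalently the characteristic-polynomial, bookkeeping to recover the exponents $(1,d_2,\ldots,d_\ell)$. This final step is the true heart of Yoshinaga's theorem and the main obstacle: neither the identification of the Ziegler restriction nor the forward direction is hard by comparison, whereas forcing the global splitting of $\mathscr{D}$ from its behavior along a single hyperplane demands the delicate intermediate-cohomology vanishing in which the codimension-three local freeness is genuinely and irreducibly used.
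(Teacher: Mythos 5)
You should first note that the paper offers no proof of this statement at all: it is quoted from the literature, with the ``only if'' direction going back to Ziegler \cite{Z89} and the ``if'' direction to Yoshinaga \cite{Yos04_CharaktFree} as refined by Abe--Yoshinaga \cite{AY13}. So the only meaningful comparison is with those sources, and measured against them your outline has the right architecture. The forward direction is exactly as you say: Ziegler's restriction theorem yields the freeness of $(\Ac^H,\mathbf{m}^H)$ with exponents $(d_2,\ldots,d_\ell)$, and local freeness in codimension three is automatic because localizations of free arrangements are free. For the reverse direction you correctly identify the two pillars of Yoshinaga's argument: the reflexive sheaf on $\PP(V)$ obtained by sheafifying $D(\Ac)$ modulo the Euler derivation, whose splitting is equivalent to the freeness of $\Ac$, and the interplay between its restriction to $\PP(H)$ and the Ziegler restriction.

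That said, what you have written is a plan rather than a proof, and the two steps you defer are precisely the ones that carry all the weight. First, the isomorphism between the restriction of that sheaf to $\PP(H)$ and the sheafification of $D(\Ac^H,\mathbf{m}^H)$ is not formal: this is exactly where the codimension-three local freeness along $H$ enters (it guarantees that the two reflexive sheaves on $\PP(H)$ agree outside a locus of codimension at least three in $\PP(H)$, hence agree), and without an argument here the hypothesis never actually does any work in your proof. Second, the Horrocks-type step ``splits on $\PP(H)$ implies splits on $\PP(V)$'' is a theorem about vector bundles on $\PP^n$ with $n\geq 3$; to apply it to a reflexive sheaf that need not be locally free, restricted to $\PP(H)\cong\PP^{\ell-2}$, you need $\ell\geq 4$ together with an extension of the restriction criterion to reflexive sheaves, and the case $\ell=3$ must be treated separately (in \cite{Yos04_CharaktFree} it is handled via a characteristic-polynomial condition; in the formulation above it is degenerate, since for $\ell=3$ the only codimension-three flat contained in $H$ is the origin and local freeness there already asserts the freeness of $\Ac$ itself). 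None of this makes your approach wrong --- it is the approach of the cited papers --- but as it stands the proposal names the decisive lemmas rather than proving them.
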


\subsection{Supersolvable arrangements}
\label{subsec:SS-arr}
Let $ \Ac $ be a  central arrangement.
An element  $ X \in L(\Ac) $ is said to be \emph{modular} if $ X+Y \in L(\Ac) $ for all $ Y \in L(\Ac) $. 
A modular element of corank $ 1 $ is called a \emph{modular coatom}. 
If $ X \in L(\Ac) $ is a modular coatom, we call the localization $ \Ac_{X}$ a modular coatom of $\Ac$ as well. 

 \begin{definition}
 \label{def:supersolvable}
A central arrangement $\Ac$ of rank $r$ is called \emph{supersolvable} if there exists a chain of arrangements, called an \textrm{M}-chain,
$$\emptyset  =  \Ac_{X_0} \subseteq \Ac_{X_1} \subseteq \dots \subseteq \Ac_{X_{r}} = \Ac,$$
in which $ \Ac_{X_{i}} $ is a modular coatom of $ \Ac_{X_{i+1}} $ for each $0 \le i \le r-1$. 
\end{definition}

The following result is useful to check whether  an element is a modular coatom. 
\begin{proposition}[{\cite[Thm.~ 4.3]{BEZ90}}]
 \label{prop:MC-criterion}
Let   $ X \in L(\Ac) $ be a coatom. 
Then $ \Ac_{X} $ is a modular coatom of $\Ac$ if and only if for any distinct $ H, H^{\prime} \in \Ac\setminus \Ac_{X} $, there exists $ H^{\prime\prime} \in \Ac_{X} $ such that $ H\cap H^{\prime} \subseteq H^{\prime\prime} $. 
\end{proposition}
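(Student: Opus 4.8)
The plan is to pass to the geometric lattice $L(\Ac)$ (which, for a central arrangement, is geometric of rank $r=\rk(\Ac)$, with atoms the hyperplanes $H\in\Ac$, join $A\vee B=A\cap B$, and meet $A\wedge B=\overline{A+B}$, the smallest flat containing the subspace sum) and to reformulate both the notion of modularity and the stated criterion there. First I would record that the paper's definition ``$X+Y\in L(\Ac)$ for all $Y$'' is precisely the condition that $(X,Y)$ be a modular pair for every $Y$: indeed $X+Y\in L(\Ac)$ iff $\overline{X+Y}=X+Y$ iff $\rk(X\wedge Y)=\rk(X)+\rk(Y)-\rk(X\vee Y)$, the defining equality of a modular pair. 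Since $X$ is a coatom, every $Y\not\le X$ has $X\vee Y=\hat1$, so this identity collapses to $\rk(X\wedge Y)=\rk(Y)-1$; thus $\Ac_X$ is a modular coatom iff $\rk(X\wedge Y)=\rk(Y)-1$ for all flats $Y\not\le X$. Second, I would rephrase the combinatorial criterion as the \emph{line condition}: every rank-$2$ flat $\ell\not\le X$ satisfies $\rk(\ell\wedge X)=1$. This is a verbatim translation, since a rank-$2$ flat is $\ell=H\vee H'=H\cap H'$ for distinct atoms $H,H'$, the requirement $H,H'\notin\Ac_X$ says $H,H'\not\le X$, and an atom $H''\le X$ with $H''\le\ell$ is exactly an $H''\in\Ac_X$ with $H\cap H'\subseteq H''$ (one only checks that a rank-$2$ flat with a single atom off $X$ meets $X$ automatically, so the phrasings agree).

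With these reductions both directions become rank computations. For the forward implication, if $\Ac_X$ is a modular coatom, then applying $\rk(X\wedge Y)=\rk(Y)-1$ to $Y=\ell$ a rank-$2$ flat not below $X$ yields $\rk(\ell\wedge X)=1$, which is the line condition. The reverse implication is the heart of the argument: assuming the line condition, I would prove $\rk(X\wedge Y)=\rk(Y)-1$ for all $Y\not\le X$ by induction on $k=\rk(Y)$, the cases $k\le2$ being immediate (the case $k=2$ is the line condition itself, and $k=1$ is trivial). For $k\ge3$ I pick an atom $p\le Y$ with $p\not\le X$ and a flat $Y_0$ with $p\le Y_0$ of rank $k-1$ covered by $Y$; then $Y_0\not\le X$, so by induction $\rk(X\wedge Y_0)=k-2$, forcing $\rk(X\wedge Y)\in\{k-2,k-1\}$. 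To exclude the value $k-2$, i.e.\ $X\wedge Y=X\wedge Y_0$, I choose an atom $q\le Y$ with $q\not\le Y_0$ and note $q\not\le X$ (else $q\le X\wedge Y=X\wedge Y_0\le Y_0$). Now $p,q$ are distinct atoms off $X$, so the line condition supplies an atom $s\le X$ with $s\le p\vee q\le Y$, whence $s\le X\wedge Y=X\wedge Y_0\le Y_0$. But then $p,s\le Y_0$ give $p\vee s\le Y_0$, while $p\vee s=p\vee q$ (two distinct atoms of a line span it), so $q\le Y_0$, contradicting the choice of $q$. Hence $\rk(X\wedge Y)=k-1$, completing the induction.

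The main obstacle is this reverse direction, and within it the inductive step just sketched: its content is that the rank-$2$ (line) condition propagates to flats of all ranks. The decisive moves are to manufacture the one line $p\vee q$ on which the hypothesis can be invoked and then to collapse $q$ into $Y_0$ using the geometric-lattice fact that any two distinct atoms of a rank-$2$ flat span it. I would also take care at the outset to verify rigorously the equivalence between the paper's subspace-sum definition of modularity and the lattice modular-pair condition, since the entire rank bookkeeping, and in particular the coatom collapse $\rk(X\wedge Y)=\rk(Y)-1$, rests upon it.
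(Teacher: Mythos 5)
Your argument is correct. Note first that the paper itself supplies no proof of this proposition: it is quoted verbatim from \cite[Thm.~4.3]{BEZ90}, so there is no in-paper argument to compare against. What you have written is a correct, self-contained proof along the classical lattice-theoretic lines (essentially the argument of Björner--Edelman--Ziegler, going back to Stanley's work on modular elements). The two reductions check out: for a central arrangement $X+Y\in L(\Ac)$ is indeed equivalent to $\codim\overline{X+Y}=\codim(X+Y)$, i.e.\ to the rank identity $\rk(X\wedge Y)=\rk(X)+\rk(Y)-\rk(X\vee Y)$, and since $X$ is a coatom one has $X\vee Y=\hat1$ for every $Y\not\le X$ (the case $Y\le X$ being vacuous because then $X+Y=Y$), so modularity collapses to $\rk(X\wedge Y)=\rk(Y)-1$. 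The dictionary between the rank-$2$ line condition and the stated hyperplane criterion is also right, including your observation that a line with an atom below $X$ satisfies the condition automatically. The inductive step is the only place where something could go wrong, and it doesn't: the atom $p\le Y$ with $p\not\le X$ exists because $Y\not\le X$ means $\Ac_Y\not\subseteq\Ac_X$; the flat $Y_0$ exists by the Jordan--Hölder property; $q\not\le X$ follows from the assumed equality $X\wedge Y=X\wedge Y_0$; and the contradiction via $p\vee s=p\vee q\le Y_0$ uses only that two distinct atoms below a rank-$2$ flat join to it, which is semimodularity. So the proposal is a complete and valid proof of the cited result.
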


 Next we recall a classical result on supersolvable and free arrangements. 
 
 \begin{theorem}[{\cite[Thm.~ (4.2)]{JT84}}]
 \label{thm:exp-ss}
If $\Ac$ is supersolvable, then $\Ac$ is inductively free hence free. 
Furthermore, if $\Ac$ has an \textrm{M}-chain $\emptyset  =  \Ac_{X_0} \subseteq \Ac_{X_1} \subseteq \dots \subseteq \Ac_{X_{\rk(\Ac)}} = \Ac,$ then
$\exp(\Ac) = (0^{\ell-\rk(\Ac)}, e_1, \ldots, e_{\rk(\Ac)})$ where $e_i = | \Ac_{X_{i}} \setminus  \Ac_{X_{i-1}}|$.
 \end{theorem}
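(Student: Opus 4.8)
The plan is to argue by induction on the rank $r = \rk(\Ac)$, the base cases $r \le 1$ being immediate (the empty arrangement $\emptyset_\ell$ and a single hyperplane are inductively free with the asserted exponents). Throughout I fix an M-chain $\emptyset = \Ac_{X_0} \subseteq \Ac_{X_1} \subseteq \cdots \subseteq \Ac_{X_r} = \Ac$ and abbreviate $\Bc := \Ac_{X_{r-1}}$, the modular coatom sitting one step below the top. Since $\Ac_{X_0} \subseteq \cdots \subseteq \Ac_{X_{r-1}}$ is an M-chain for $\Bc$, the arrangement $\Bc$ is supersolvable of rank $r-1$, so by the induction hypothesis it is inductively free with $\exp(\Bc) = (0^{\ell - r + 1}, e_1, \ldots, e_{r-1})$, where $e_i = |\Ac_{X_i} \setminus \Ac_{X_{i-1}}|$. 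It then remains to pass from $\Bc$ to $\Ac$ by adding, one at a time, the hyperplanes of $\Ac \setminus \Bc$, and to show that each addition preserves inductive freeness while raising exactly one exponent.

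The technical heart is the following Main Lemma about restrictions. Fix any $H \in \Ac \setminus \Bc$. First, using that $\Bc = \Ac_X$ is a modular coatom together with Proposition \ref{prop:MC-criterion}, for every other $H' \in \Ac \setminus \Bc$ there is $H'' \in \Bc$ with $H' \cap H \subseteq H''$; comparing dimensions inside $H$ upgrades this to $H' \cap H = H'' \cap H$. Hence every hyperplane of the restriction coming from a member of $\Ac \setminus \Bc$ already arises from a member of $\Bc$, so that $\Ac^H = \Bc^H$ as arrangements in $H$. Second, I claim $\Bc^H$ is supersolvable of rank $r-1$ in $H \cong \KK^{\ell-1}$: the chain $\Ac_{X_0}^H \subseteq \cdots \subseteq \Ac_{X_{r-1}}^H = \Bc^H$ is an M-chain, since $H$ meets the modular flats $X_i$ transversally and Proposition \ref{prop:MC-criterion} is inherited under this restriction, with unchanged step sizes $|\Ac_{X_i}^H \setminus \Ac_{X_{i-1}}^H| = e_i$. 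By the induction hypothesis applied to $\Bc^H$ (of rank $r-1$) we obtain that $\Bc^H$ is inductively free with $\exp(\Bc^H) = (0^{\ell - r}, e_1, \ldots, e_{r-1})$, i.e.\ exactly the nonzero exponents of $\Bc$.

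With the Main Lemma in hand the addition is routine bookkeeping. Enumerate $\Ac \setminus \Bc = \{H_1, \ldots, H_e\}$ with $e = e_r = |\Ac \setminus \Bc|$ and set $\Bc_j := \Bc \cup \{H_1, \ldots, H_j\}$, so $\Bc_0 = \Bc$ and $\Bc_e = \Ac$. The same containment argument shows $\Bc_j^{H_j} = \Bc^{H_j}$ for every $j$, and this restriction is inductively free with exponents $(0^{\ell-r}, e_1, \ldots, e_{r-1})$. I then prove by induction on $j$ that $\Bc_j$ is inductively free with $\exp(\Bc_j) = (0^{\ell-r}, e_1, \ldots, e_{r-1}, j)$: applying the Addition part of Theorem \ref{thm:AD} to the triple $(\Bc_j, \Bc_{j-1}, \Bc_j^{H_j})$, the deletion $\Bc_{j-1}$ and the restriction $\Bc_j^{H_j}$ are both inductively free with the matching profiles, and the inclusion $\exp(\Bc_j^{H_j}) \subseteq \exp(\Bc_{j-1})$ required for inductive freeness holds. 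At $j = e$ this yields that $\Ac = \Bc_e$ is inductively free with $\exp(\Ac) = (0^{\ell - r}, e_1, \ldots, e_{r-1}, e_r)$, as claimed.

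The step I expect to be the main obstacle is the second half of the Main Lemma: verifying that restriction to a transverse hyperplane $H$ preserves the modular-coatom structure, so that $\Bc^H$ is again supersolvable with the same step sizes $e_i$. This is exactly where the combinatorial force of modularity (through Proposition \ref{prop:MC-criterion}) is needed, and where one must argue carefully that distinct hyperplanes added at level $i$ in $\Bc$ restrict to distinct, genuinely new hyperplanes at level $i$ in $\Bc^H$, rather than collapsing or merging into lower levels. Once this is secured, everything else reduces to repeated application of the Addition--Deletion Theorem.
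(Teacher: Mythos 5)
The paper itself gives no proof of this statement (it is quoted from \cite{JT84}), so your argument must stand on its own. Its skeleton is the standard one and is sound: induct on the rank $r$, peel off the modular coatom $\Bc=\Ac_{X_{r-1}}$, and adjoin the hyperplanes of $\Ac\setminus\Bc$ one at a time via Theorem \ref{thm:AD}; the bookkeeping in your final paragraph is correct \emph{provided} the Main Lemma holds. The genuine gap is the second half of that lemma, which you yourself flag as ``the main obstacle'' and then leave unproved: you assert, but do not show, that for $H\in\Ac\setminus\Bc$ the restricted chain $\Ac_{X_0}^H\subseteq\cdots\subseteq\Ac_{X_{r-1}}^H$ is an M-chain with the \emph{same} step sizes $e_i$. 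Everything hinges on the map $K\mapsto K\cap H$ being injective on $\Bc$; if two hyperplanes of $\Bc$ collapsed under restriction, the exponent count $\exp(\Bc^H)=(0^{\ell-r},e_1,\ldots,e_{r-1})$ would fail and with it the hypothesis $\exp(\Bc_j^{H_j})\subseteq\exp(\Bc_{j-1})$ in your addition step. As written this is an announced intention, not an argument, so the proof is incomplete at its technical heart.

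The missing step is true and here is how to close it. Suppose $K\neq K'$ in $\Bc=\Ac_X$ (with $X=X_{r-1}$) satisfy $K\cap H=K'\cap H$. By modularity of $X$ the subspace $X+(K\cap H)$ lies in $L(\Ac)$; since $X\not\subseteq H$ we have $\dim(X\cap K\cap H)=\dim(X\cap H)=\dim X-1$, whence $\dim\bigl(X+(K\cap H)\bigr)=(\ell-r+1)+(\ell-2)-(\ell-r)=\ell-1$. Thus $X+(K\cap H)$ is a hyperplane of $\Ac$ contained in both $K$ and $K'$, forcing $K=K'$, a contradiction. Combined with the surjectivity you already proved via Proposition \ref{prop:MC-criterion}, restriction is a bijection $\Bc\to\Bc^H=\Ac^H$ preserving each $\Ac_{X_i}$, so the step sizes are unchanged, and modularity of $\Ac_{X_i}^H$ in $\Ac_{X_{i+1}}^H$ follows by intersecting the criterion of Proposition \ref{prop:MC-criterion} with $H$. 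Alternatively, you could bypass the whole lemma by invoking the paper's Proposition \ref{prop:modular coatom}(ii), which packages exactly this restriction-versus-localization comparison and reduces the theorem to a one-line induction along the M-chain.
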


\subsection{MAT-free arrangements}
\label{SSec_MAT}

We begin by recalling the core result from \cite{ABCHT16_FreeIdealWeyl}, the so-called Multiple Addition Theorem (MAT). 

\begin{theorem}[{\cite[Thm.~3.1]{ABCHT16_FreeIdealWeyl}}]
	\label{Thm_MAT}
	Let $\Ac' = (\Ac', V)$ be a free arrangement with
	$\exp(\Ac')=(e_1,\ldots,e_\ell)_\le$
	and let $1 \le p \le \ell$ be the multiplicity of the highest exponent, i.e.
	\[ e_{\ell-p} < e_{\ell-p+1} =\cdots=e_\ell=:e. \]
	Let $H_1,\ldots,H_q$ be hyperplanes in $V$ with
	$H_i \not \in \Ac'$ for $i=1,\ldots,q$. Define
	\[ \Ac''_j:=(\Ac'\cup \{H_j\})^{H_j}=\{H\cap H_{j} \mid H\in \Ac'\}, \quad \text{ for }j=1,\ldots,q. \]
	Assume that the following conditions are satisfied:
	\begin{itemize}
		\item[(1)]
		$X:=H_1 \cap \cdots \cap H_q$ is $q$-codimensional.
		\item[(2)]
		$X \not \subseteq \bigcup_{H \in \Ac'} H$.
		\item[(3)]
		$|\Ac'|-|\Ac''_j|=e$ for $1 \le j \le q$.
	\end{itemize}
	Then $q \leq p$ and $\Ac:=\Ac' \cup \{H_1,\ldots,H_q\}$ is free with
	$$\exp(\Ac)=(e_1,\ldots,e_{\ell-q},(e+1)^q)_\le.$$
\end{theorem}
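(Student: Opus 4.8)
The plan is to verify freeness through the classical Saito determinantal criterion \cite[Thm.~4.19]{OrTer92_Arr}, after first replacing $\Ac$ by its essentialization (freeness, exponents, and all three hypotheses are unaffected). Since $D(\Ac) = \bigcap_{j=1}^q D(\Ac' \cup \{H_j\})$ sits inside the free module $D(\Ac')$, the whole argument takes place relative to a fixed homogeneous basis $\theta_1, \dots, \theta_\ell$ of $D(\Ac')$ with $\deg \theta_i = e_i$; by hypothesis the top $p$ members $\theta_{\ell-p+1}, \dots, \theta_\ell$ all have degree $e$. The first useful observation is purely numerical: the proposed exponent multiset $(e_1, \dots, e_{\ell-q}, (e+1)^q)$ has sum equal to $\sum_i e_i + q = |\Ac'| + q = |\Ac|$. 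Hence, by Saito's criterion, it suffices to exhibit $\ell$ homogeneous, $S$-linearly independent derivations lying in $D(\Ac)$ whose degrees are exactly $e_1, \dots, e_{\ell-q}, e+1, \dots, e+1$; the determinant is then automatically a nonzero scalar multiple of $Q(\Ac)$.

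Next I would set up the obstruction governing which derivations survive the passage from $D(\Ac')$ to $D(\Ac)$. A homogeneous $\theta \in D(\Ac')$ lies in $D(\Ac' \cup \{H_j\})$ precisely when $\alpha_{H_j}$ divides $\theta(\alpha_{H_j})$. Condition (1) lets me choose coordinates with $\alpha_{H_j} = x_j$ for $1 \le j \le q$, and condition (2) furnishes a point $z \in X = \{x_1 = \dots = x_q = 0\}$ lying on no hyperplane of $\Ac'$. On the $p$-dimensional top space $U := \langle \theta_{\ell-p+1}, \dots, \theta_\ell\rangle_\KK$ (degree $e$) I would study the $\KK$-linear obstruction map $\beta\colon U \to \bigoplus_{j=1}^q \left(S/\alpha_{H_j}S\right)_e$, $\theta \mapsto (\theta(\alpha_{H_j}) \bmod \alpha_{H_j})_j$, whose kernel is exactly $D(\Ac)\cap U$. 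The crucial claim is that condition (3) forces the image of $\beta$ to be $q$-dimensional, contributing one independent obstruction per added hyperplane, so that $\dim(D(\Ac)\cap U) = p - q$. Since the image is $q$-dimensional inside a map out of a $p$-dimensional space, this also yields at once the asserted bound $q \le p$.

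With the rank claim in hand the construction finishes as follows. The $p - q$ kernel derivations (degree $e$), together with the lower basis derivations $\theta_1, \dots, \theta_{\ell-p}$ — each corrected, if necessary, by subtracting $S$-multiples of higher basis elements so as to land in $D(\Ac)$ without lowering degree — account for all exponents below $e+1$. For each of the $q$ obstruction directions not annihilated by the kernel I would take a corresponding top derivation, multiply by the appropriate $x_k$ to raise its degree to $e+1$, and combine these into $\xi_1, \dots, \xi_q \in \bigcap_{j} D(\Ac' \cup \{H_j\}) = D(\Ac)$, using condition (1) (independence of the $x_j$) to guarantee the corrections can be made simultaneously for every $j$. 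A leading-term (triangularity) argument then shows that the assembled family of $\ell$ derivations is $S$-independent, and Saito's criterion concludes that $\Ac$ is free with $\exp(\Ac) = (e_1, \dots, e_{\ell-q}, (e+1)^q)_\le$.

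The main obstacle is the rank claim for $\beta$, which is precisely where condition (3) does its work: the numerical balance $|\Ac'| - |\Ac''_j| = e$ is what pins down the dimension of the genuine obstruction along each $H_j$ and prevents the top derivations either from accidentally surviving or from over-counting, so that exactly $q$ of them must be promoted to degree $e+1$. Everything else — the coordinate normalization from (1), the generic point from (2), the clearing of the lower-degree derivations into $D(\Ac)$, and the final determinant computation — is bookkeeping by comparison. I would also note that Theorem \ref{thm:AD} offers an alternative inductive framing: adding the $H_j$ one at a time and using (1) and (2) to identify $\Ac^{H_q}$ as $\Ac''_q$ together with the $q-1$ genuinely new hyperplanes $H_i \cap H_q$ (distinct from those of $\Ac''_q$ because $z$ avoids $\bigcup_{H\in\Ac'}H$); but this route still bottoms out at the same freeness-with-prescribed-exponents input for the restrictions, so it does not circumvent the rank computation above.
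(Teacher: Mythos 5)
The paper only quotes this theorem from \cite[Thm.~3.1]{ABCHT16_FreeIdealWeyl}, so your attempt must be measured against the argument given there. Your Saito-criterion architecture — the degree count $\sum e_i + q = |\Ac|$, the obstruction map $\beta\colon U \to \bigoplus_j (S/\alpha_{H_j}S)_e$ with kernel $U\cap D(\Ac)$, and the promotion of $q$ top derivations to degree $e+1$ by multiplying with the $\alpha_{H_k}$ — is indeed the right skeleton, but you leave unproved exactly the statement that carries the whole theorem. The missing ingredient is a divisibility lemma: for every $\theta\in D(\Ac')$ the class of $\theta(\alpha_{H_j})$ in $S/\alpha_{H_j}S$ is divisible by the fixed polynomial $\bar b_j:=\prod_{Y\in\Ac''_j}\bar\alpha_Y^{\,|\Ac'_Y|-1}$ (where $\Ac'_Y$ is the set of hyperplanes of $\Ac'$ containing $Y$), whose degree is $\sum_Y(|\Ac'_Y|-1)=|\Ac'|-|\Ac''_j|=e$ by condition (3). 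This is what forces each component $\beta_j$ to land in the \emph{one-dimensional} space $\KK\bar b_j$ rather than in the large space $(S/\alpha_{H_j}S)_e$; without it there is no reason for the image of $\beta$ to have dimension $q$, and your appeal to "the numerical balance" in (3) is an assertion, not an argument. The same lemma shows the obstruction vanishes in all degrees below $e$, so $\theta_1,\dots,\theta_{\ell-p}$ already lie in $D(\Ac)$ with no correction — which is fortunate, because the correction you propose (subtracting $S$-multiples of \emph{higher} basis elements while preserving the degree $e_i$) is impossible on degree grounds: a homogeneous modification in degree $e_i$ can only involve basis elements of degree at most $e_i$, and you give no reason why any modification should land in $D(\Ac)$.

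A second, related misstep is the attribution of the rank claim. Once each obstruction is known to be one-dimensional, the surjectivity of $\beta$ (hence $q\le p$ and $\dim\ker\beta=p-q$, and the ability to pick $\theta^{(k)}\in\bigcap_{j\ne k}D(\Ac'\cup\{H_j\})\setminus D(\Ac'\cup\{H_k\})$ for the degree-$(e+1)$ generators) comes not from condition (3) but from conditions (1) and (2), via evaluation at a point $z\in X\setminus\bigcup_{H\in\Ac'}H$: since $\det(\theta_i(x_k))(z)$ is a nonzero multiple of $Q(\Ac')(z)\ne 0$ and the $\alpha_{H_j}$ are independent, the $q$ columns $(\theta_i(\alpha_{H_j})(z))_i$ are linearly independent, and by the divisibility above they are supported in the last $p$ rows, which is what forces the $p\times q$ matrix of obstruction coefficients to have rank $q$. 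You introduce $z$ and the coordinate normalization but never use them. In sum, the proposal is a correct table of contents for the proof, with the two load-bearing steps — the divisibility lemma extracted from condition (3), and the evaluation argument using (1) and (2) — asserted rather than proved, and with one concretely wrong mechanism in the handling of the low-degree derivations.
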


We frequently consider the addition of several hyperplanes using
Theorem \ref{Thm_MAT}. This motivates the next terminology.

\begin{definition}
	Let $\Ac'$ and $\{H_1,\ldots,H_q\}$ be as in Theorem \ref{Thm_MAT} such that
	conditions (1)--(3) are satisfied. Then the addition 
	of $\{H_1,\ldots,H_q\}$ to $\Ac'$ resulting in $\Ac = \Ac' \cup \{H_1,\ldots,H_q\}$
	is called an \emph{MAT-step}.
\end{definition}

An iterative application of Theorem \ref{Thm_MAT} motivates the following natural concept.

\begin{definition}%
	[{\cite[Def.~3.2, Lem.~3.8]{CunMue19_MATfree}}]
	\label{Def_MATfree}%
	
	An arrangement $\Ac$ is called \emph{MAT-free} if there exists an ordered partition
	\[
	\pi = (\pi_1|\cdots|\pi_n)
	\] 
	of $\Ac$ such that the following hold. 
	Set $\Ac_0 := \varnothing_\ell$ and
	\[
	\Ac_k := \bigcup_{i=1}^k \pi_i \quad\text{ for } 1 \leq k \leq n.
	\]
	Then for every $0 \leq k \leq n-1$ suppose that
	\begin{itemize}
		\item[(1)] $\rk(\pi_{k+1}) = \vert \pi_{k+1} \vert$,
		\item[(2)] $\cap_{H \in \pi_{k+1}} H \nsubseteq \bigcup_{H' \in \Ac_k}H'$,
		\item[(3)] $\vert \Ac_k \vert - \vert (\Ac_k \cup \{H\})^H \vert = k$ for each $H \in \pi_{k+1}$,
	\end{itemize}
	i.e.\ $\Ac_{k+1} = \Ac_{k}\cup\pi_{k+1}$ is an MAT-step.
	
	An ordered partition $\pi$ with these properties is called an \emph{MAT-partition} for $\Ac$.
\end{definition}

\begin{remark}[{\cite[Rem.~2.15]{mueckschroehrle:accurate}}]
	\label{rem:MAT-free}
	Suppose that $\Ac$ is MAT-free with MAT-partition
	$\pi = (\pi_1|\cdots|\pi_n)$. Then we have:
	\begin{itemize}
		\item [(i)]
		for each $1 \leq k \le n$, $\Ac_k$ is MAT-free with MAT-partition $(\pi_1|\cdots|\pi_{k})$, 
		
		\item[(ii)]
		$\Ac$ is free and the exponents 
		$\exp(\Ac) = (e_1,\ldots,e_\ell)_\le$ of $\Ac$ are given by the block sizes
		of the dual partition of $\pi$: 
		\[ 
		e_i := |\{k \mid |\pi_k|\geq \ell-i+1 \}|, 
		\]
		
		\item [(iii)]
		$|\pi_1| > |\pi_2| \geq \cdots \geq |\pi_n|$.
	\end{itemize}
\end{remark}

The following central result in \cite{mueckschroehrle:accurate} relates arrangements which are constructed by MAT-steps 
from smaller free arrangements with the notion of accuracy.

\begin{theorem}[{\cite[Thm.~3.11]{mueckschroehrle:accurate}}]
	\label{thm:MATRestGen}
	Let $\Ac = \Ac' \dot{\cup} \Bc$ be a free arrangement obtained from the free arrangement $\Ac'$
	through MAT-steps with exponents $\exp(\Ac) = (e_1,\ldots,e_\ell)_\leq$. 
	Suppose that $\pi = (\pi_1 | \cdots | \pi_n )$ is the corresponding
	ordered partition of $\Bc$. 
	Then for each $1 \leq k \leq n$ and each $|\pi_{k+1}| \leq q \leq |\pi_k|$ there is a $\Cc \subseteq \pi_k$ with $|\Cc|=q$
	such that for $X := \cap_{H \in \Cc}H$ 
	the restriction $\Ac^{X}$ is free with exponents 
	\[
	\exp\left(\Ac^{X}\right) = (e_1,\ldots,e_{\ell-q})_\leq.
	\]
\end{theorem}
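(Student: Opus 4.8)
The plan is to exploit the structure of a single MAT-step together with the Multiple Addition Theorem (Theorem \ref{Thm_MAT}), applied to a suitable restriction. Fix $1 \leq k \leq n$ and let $\Ac_k = \Ac' \dot\cup \pi_1 \dot\cup \cdots \dot\cup \pi_k$ be the intermediate arrangement. By Remark \ref{rem:MAT-free}(i), $\Ac_k$ is itself free, and by Remark \ref{rem:MAT-free}(ii) its exponents are determined by the block sizes of the dual partition; in particular the highest exponent of $\Ac_k$ has multiplicity $p = |\pi_k|$, since the $k$-th block $\pi_k$ is precisely the set of hyperplanes added in the $(k-1)$-th MAT-step (indexing so that $\pi_{k}$ is added to $\Ac_{k-1}$). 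The first step is therefore to read off from Remark \ref{rem:MAT-free}(ii) that $\exp(\Ac_k) = (e_1, \ldots, e_{\ell - |\pi_k|}, e^{|\pi_k|})_{\leq}$ for an appropriate top exponent $e$, where these first $\ell - |\pi_k|$ exponents agree with those of $\Ac$ up to that index.

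Next I would fix any $|\pi_{k+1}| \leq q \leq |\pi_k|$ and choose a subset $\Cc \subseteq \pi_k$ with $|\Cc| = q$, setting $X := \bigcap_{H \in \Cc} H$. The heart of the argument is to show that $\Ac^X$ is free with the claimed exponents. For this I would apply Theorem \ref{Thm_MAT} to the deletion $\Ac_k \setminus \Cc$ playing the role of $\Ac'$ and $\Cc$ playing the role of $\{H_1, \ldots, H_q\}$: the three MAT-conditions (that $X$ is $q$-codimensional, that $X$ is not covered by the hyperplanes of $\Ac_k \setminus \Cc$, and that the deletion count equals the exponent) follow from the fact that $\Cc$ is a sub-collection of the block $\pi_k$ that was added in a genuine MAT-step, hence inherits conditions (1)--(3) of Definition \ref{Def_MATfree} for the relevant value $k-1$. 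This identifies $\Ac_k$ as obtained from $\Ac_k \setminus \Cc$ by an MAT-step on exactly the hyperplanes of $\Cc$, so $q \leq |\pi_k| = p$ and the restriction behaves controllably.

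To pass from $\Ac_k^X$ to $\Ac^X$ I would observe that the remaining blocks $\pi_{k+1} | \cdots | \pi_n$ are added to $\Ac_k$ by MAT-steps of strictly higher index, so their hyperplanes meet $X$ in flats that do not create new restricted hyperplanes affecting the top exponents; more precisely, the restriction $\Ac^X$ differs from $\Ac_k^X$ only by hyperplanes contributing to exponents that have already been accounted for, and a degree/Möbius comparison (or a direct application of the Addition-Deletion Theorem \ref{thm:AD} along the flag of restrictions) pins down $\exp(\Ac^X) = (e_1, \ldots, e_{\ell - q})_{\leq}$. The main obstacle I anticipate is precisely this last reduction: one must verify that the hyperplanes in the later blocks either contain $X$ or restrict to hyperplanes already present, so that the restriction to $X$ does not disturb the first $\ell - q$ exponents. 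Controlling this interaction between $X$ and the subsequent MAT-steps — rather than the MAT application itself — is where the real care is needed, and I would expect the cleanest route to be an inductive comparison of characteristic polynomials via \eqref{eq.chi} combined with Theorem \ref{thm:Yoshinaga's criterion} or Theorem \ref{thm:AD} to guarantee freeness is preserved throughout.
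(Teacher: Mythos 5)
This theorem is quoted in the paper from \cite[Thm.~3.11]{mueckschroehrle:accurate} without a proof, so the comparison has to be with what the statement actually demands; measured against that, your proposal has a genuine gap (in fact two). First, the claim that $\Ac_k$ is obtained from $\Ac_k\setminus\Cc$ by an MAT-step on the hyperplanes of $\Cc$ is unjustified and in general false: if $\Cc\subsetneq\pi_k$, then $\Ac_k\setminus\Cc=\Ac_{k-1}\cup(\pi_k\setminus\Cc)$ already has top exponent $e+1$ with multiplicity $|\pi_k|-q$, so Theorem \ref{Thm_MAT} would force $q\le|\pi_k|-q$ and would output top exponents $(e+2)^q$, which is the wrong answer; moreover condition (3) for the base $\Ac_k\setminus\Cc$ does not follow from condition (3) for the base $\Ac_{k-1}$, since the hyperplanes of $\pi_k\setminus\Cc$ change the restriction counts. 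The correct tool at this stage is Corollary \ref{cor:MAT-stepRestriction} applied to the single step $\Ac_k=\Ac_{k-1}\,\dot\cup\,\pi_k$, which gives directly that $\Ac_k^X$ is free with the first $\ell-q$ exponents of $\Ac_k$; these coincide with $(e_1,\dots,e_{\ell-q})_\le$ because the block sizes are weakly decreasing (Theorem \ref{Thm_MAT} forces each step to have size at most the current top multiplicity), so all blocks after $\pi_k$ have size at most $|\pi_{k+1}|\le q$ and an MAT-step of size $p$ only raises the largest $p$ exponents.

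Second, and fatally, the passage from $\Ac_k^X$ to $\Ac^X$ --- which you yourself identify as ``where the real care is needed'' --- is the actual content of the theorem and is not carried out. You assert that the hyperplanes of $\pi_{k+1},\dots,\pi_n$ do not create new hyperplanes on $X$ affecting the relevant exponents and gesture at a M\"obius or degree comparison, but no argument is given that every $H'\in\pi_j$ with $j>k$ satisfies $H'\cap X=H''\cap X$ for some $H''\in\Ac_k$ (equivalently, that $|\Ac^X|=|\Ac_k^X|=e_1+\dots+e_{\ell-q}$), nor even that $\Ac^X$ is free. This is precisely where the existential quantifier in the statement matters: the conclusion is claimed only for \emph{some} $\Cc\subseteq\pi_k$, not for an arbitrary one, whereas you choose $\Cc$ arbitrarily and never isolate the property of $\Cc$ that makes the subsequent MAT-steps restrict harmlessly to $X$. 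Until one exhibits such a $\Cc$ and proves the compatibility of the later additions with the restriction (for instance by showing that they induce MAT-steps, or at least free additions with controlled exponents, on $\Ac_k^X$), the argument is incomplete.
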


 \section{Flag-accuracy}
\label{sec:sc-FA}
In this section we collect some sufficient conditions for (ind-)flag-accuracy which are used in subsequent proofs. 
 Let $\KK$ be an arbitrary field and $\Ac$ an arrangement in $V=\KK^\ell$. 
 The following simple criteria, which readily follow from the definitions, are useful to show (ind-)flag-accuracy within an inductive argument. 
\begin{lemma}
	\label{lem:flag-accuracy}
Let $\Ac$ be  (inductively) free with exponents $\exp(\Ac) = (e_1, \ldots, e_\ell)_\le$. Then $\Ac$ is (ind-)flag-accurate if and only if there exist $k$ linearly independent hyperplanes $H_1,\ldots,H_k \in \Ac$ for some $1 \le k \le \ell$ such that $\Ac^{X_i}$ is (inductively) free with $\exp(\Ac^{X_i}) = (e_1, \ldots, e_{\ell-i})_\le$ for each $1 \le i \le k$ where $X_i :=\bigcap_{j=1}^i H_j$  and that $\Ac^{X_k}$ is (ind-)flag-accurate.
	
	In particular, $\Ac$ is (ind-)flag-accurate if and only if there exists an $H$ in $\Ac$ such that $\Ac^H$ is (ind-)flag-accurate with $\exp(\Ac^H) = (e_1, \ldots, e_{\ell-1})_\le$. 
\end{lemma}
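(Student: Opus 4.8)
The plan is to prove the general equivalence directly from the definitions, with the ``in particular'' statement being simply the case $k=1$ (there the freeness of $\Ac^{X_1}$ is subsumed by its flag-accuracy, since flag-accuracy entails freeness). Throughout I would use two standard facts about the intersection lattice: first, a flat of codimension one in $L(\Ac)$ is precisely a hyperplane of $\Ac$; and second, restriction is \emph{transitive} along flats, i.e.\ $(\Ac^{W})^{Y} = \Ac^{Y}$ whenever $Y \subseteq W$ are flats of $\Ac$. The only real care needed is bookkeeping between two indexing conventions: a witness $(Y_1,\ldots,Y_\ell)$ for (ind-)flag-accuracy indexes flats by \emph{dimension} ($\dim Y_d = d$), whereas the hyperplanes $H_1,\ldots,H_k$ and the flats $X_i = \bigcap_{j\le i} H_j$ are indexed by \emph{codimension} ($\dim X_i = \ell - i$), so that $X_i$ plays the role of $Y_{\ell-i}$.

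For the forward direction I would assume $\Ac$ is (ind-)flag-accurate with witness $Y_1 \subset Y_2 \subset \cdots \subset Y_\ell = V$, where $\dim Y_d = d$ and $\Ac^{Y_d}$ is (inductively) free with $\exp(\Ac^{Y_d}) = (e_1,\ldots,e_d)_\le$. Since the right-hand side only asks for \emph{some} $1 \le k \le \ell$, it suffices to take $k=1$ and set $H_1 := Y_{\ell-1}$; being a codimension-one flat, $Y_{\ell-1}$ is a hyperplane of $\Ac$, so $X_1 = H_1 = Y_{\ell-1}$ and $\Ac^{X_1}$ is (inductively) free with the required exponents $(e_1,\ldots,e_{\ell-1})_\le$. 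It then remains to see that $\Ac^{X_1}=\Ac^{H_1}$ is itself (ind-)flag-accurate: the truncated flag $Y_1 \subset \cdots \subset Y_{\ell-1} = H_1$ is a complete flag inside $H_1$, and by transitivity of restriction $(\Ac^{H_1})^{Y_d} = \Ac^{Y_d}$ is (inductively) free with exponents $(e_1,\ldots,e_d)_\le$ for each $d \le \ell-1$. As $\exp(\Ac^{H_1}) = (e_1,\ldots,e_{\ell-1})_\le$, this truncated flag is a witness for the (ind-)flag-accuracy of $\Ac^{H_1}$.

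For the converse I would be given linearly independent hyperplanes $H_1,\ldots,H_k$ with the stated properties; in particular $\Ac^{X_k}$ is (ind-)flag-accurate, so it admits a witness $Z_1 \subset \cdots \subset Z_{\ell-k} = X_k$ with $\dim Z_d = d$ and $(\Ac^{X_k})^{Z_d} = \Ac^{Z_d}$ (inductively) free of exponents $(e_1,\ldots,e_d)_\le$ (using $\exp(\Ac^{X_k}) = (e_1,\ldots,e_{\ell-k})_\le$ and transitivity once more). The key step is then to splice this lower flag onto the descending chain of the $X_i$: the concatenation
$$Z_1 \subset \cdots \subset Z_{\ell-k} = X_k \subset X_{k-1} \subset \cdots \subset X_1 \subset X_0 = V$$
is a complete flag of $\Ac$ whose dimension-$d$ member $Y_d$ equals $Z_d$ for $d \le \ell-k$ and $X_{\ell-d}$ for $d \ge \ell-k$ (with $X_0 = V$); the two descriptions agree at $d=\ell-k$ since $Z_{\ell-k}=X_k$. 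By the hypotheses on the $X_i$ and on the $Z_d$, each $\Ac^{Y_d}$ is (inductively) free with $\exp(\Ac^{Y_d}) = (e_1,\ldots,e_d)_\le$, so $(Y_1,\ldots,Y_\ell)$ witnesses the (ind-)flag-accuracy of $\Ac$.

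I expect no serious obstacle, consistent with the result ``readily following from the definitions''; the only points demanding genuine attention are the transitivity of restriction along flats, which ensures the spliced flats compute the intended restrictions and exponents, and the consistent translation between the dimension-indexed witness flags and the codimension-indexed hyperplane data. The parenthetical ``(ind-)'' versions run in exact parallel: inductive freeness of each restriction is part of the relevant witness data and is transported by the very same transitivity identity, so it requires no separate argument.
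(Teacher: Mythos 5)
Your proof is correct. The paper gives no proof of this lemma (it is stated as a criterion that ``readily follows from the definitions''), and your argument --- truncating a witness flag at $Y_{\ell-1}$ for the forward direction, and splicing the witness flag of $\Ac^{X_k}$ onto the chain $X_k \subseteq \cdots \subseteq X_1 \subseteq V$ for the converse, with the two standard facts that a codimension-one flat of $L(\Ac)$ is a hyperplane of $\Ac$ and that $(\Ac^{W})^{Y} = \Ac^{Y}$ for flats $Y \subseteq W$ --- is precisely the definitional argument the authors intend, carried out correctly in both the plain and the ind- versions.
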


\begin{lemma}
\label{lem:samexp}
	Suppose $\Ac $ is free with exponents $\exp(\Ac) = (1,e,\ldots,e)$ for some $e \ge 1$. 
	Then $\Ac$ is flag-accurate if and only if $\Ac$ is divisionally free. 
	In particular, if $\Ac$ is inductively free with $\exp(\Ac) = (1,e,\ldots,e)$, then $\Ac$ is flag-accurate.
\end{lemma}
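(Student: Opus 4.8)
The plan is to prove the two implications separately, observing that the forward one is completely general while the backward one is where the special exponent shape does the work; the final ``in particular'' is then immediate.

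First I would record the shape of the characteristic polynomials. Since every exponent is positive, $\Ac$ is essential of rank $\ell$, and Terao's Factorization Theorem (see \cite{OrTer92_Arr}) gives $\chi(\Ac,t)=(t-1)(t-e)^{\ell-1}$. For the implication ``flag-accurate $\Rightarrow$ divisionally free'' I would take a witness $(X_1,\dots,X_\ell)$ for flag-accuracy: it is a flag with $\exp(\Ac^{X_i})=(e_1,\dots,e_i)_\le$, so $\chi(\Ac^{X_i},t)=(t-1)(t-e)^{i-1}$ divides $\chi(\Ac^{X_{i+1}},t)=(t-1)(t-e)^{i}$, making the witness a divisional flag. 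This direction uses nothing about the exponents and is already subsumed by Remark \ref{rem_MATRest-flag}(i).

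For the converse I would start from a divisional flag $X_1\subseteq\cdots\subseteq X_\ell=V$ and first upgrade it to freeness data. Since $X_i$ is a codimension-one flat of $\Ac^{X_{i+1}}$, it is a hyperplane of $\Ac^{X_{i+1}}$ and $\Ac^{X_i}=(\Ac^{X_{i+1}})^{X_i}$; so, proceeding up the flag from the rank-one arrangement $\Ac^{X_1}$, Abe's Division Theorem \cite{abe:divfree} turns each divisibility $\chi(\Ac^{X_i},t)\mid\chi(\Ac^{X_{i+1}},t)$ into freeness of $\Ac^{X_{i+1}}$, so that every $\Ac^{X_i}$ is free. Terao factorization then rewrites the divisibility chain as a chain of sub-multiset inclusions $\exp(\Ac^{X_1})\subseteq\cdots\subseteq\exp(\Ac^{X_\ell})=\exp(\Ac)$, where $\exp(\Ac)$ consists of the single entry $1$ together with $\ell-1$ copies of $e$.

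The key step, and the one I expect to carry the whole argument, is the rigidity of this chain. I would observe that $\exp(\Ac^{X_1})=(1)$, since a rank-one essential arrangement has exponent $1$; the chain of sub-multisets of sizes $1,2,\dots,\ell$ thus starts at $(1)=(e_1)$ and gains exactly one entry at each step while staying inside $\exp(\Ac)=(e_1,\dots,e_\ell)_\le=(1,e,\dots,e)$. Because the only entry below the common value $e$ is the single smallest one $e_1=1$, every entry added after the first must equal $e$, forcing $\exp(\Ac^{X_i})=(e_1,\dots,e_i)_\le$ for all $i$ (this also holds trivially in the degenerate case $e=1$). Hence the divisional flag is automatically a witness for accuracy and, being a flag, witnesses flag-accuracy. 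This is exactly where the hypothesis is indispensable: in general a divisional flag need not be an accuracy witness --- flag-accuracy is strictly stronger than divisional freeness --- and it is only the presence of a single small exponent among otherwise equal exponents that removes all freedom from the nested multisets. For the final assertion I would simply recall that inductive freeness implies divisional freeness \cite{abe:divfree} and apply the equivalence just established.
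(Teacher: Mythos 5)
Your proof is correct, and it supplies in full the argument that the paper leaves implicit (the lemma is stated there without proof, as one of the criteria said to "readily follow from the definitions"). The two directions are handled exactly as intended: the forward implication is the general fact recorded in Remark \ref{rem_MATRest-flag}(i), and for the converse you correctly use Abe's Division Theorem to make every restriction along a divisional flag free, Terao factorization to turn divisibility of characteristic polynomials into a nested chain of exponent multisets starting from $\exp(\Ac^{X_1})=(1)$, and the rigidity of sub-multisets of $(1,e,\dots,e)$ to force $\exp(\Ac^{X_i})=(e_1,\dots,e_i)_\le$; the "in particular" clause then follows from $\mathcal{IF}\subseteq\mathcal{DF}$ as in \cite{abe:divfree}.
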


Next, we recall the well-known modular coatom technique.

 \begin{proposition}
 	 \label{prop:modular coatom}
 Let $\Ac$ be a central arrangement and let  $X\in L(\Ac)$ be a modular coatom. 
Then the following statements hold.
\begin{itemize}
\item[(i)] $t\cdot \chi(\Ac, t) = \left(t-|\Ac\setminus\Ac_{X}|\right)\cdot\chi(\Ac_{X}, t) $. 
\item[(ii)] $ \Ac $ is supersolvable  (resp., (inductively) free)  if and only if $ \Ac_{X} $ is supersolvable  (resp., (inductively) free). 
In this case, $ \exp (\Ac) \cup \{0\} = \exp (\Ac_X) \cup \{|\Ac\setminus\Ac_{X}|\} $. 
\item[(iii)] If  $\Ac_{X}$ is ((ind-)flag-)accurate whose exponents do not exceed  $|\Ac\setminus\Ac_{X}|$, then $ \Ac $ is ((ind-)flag-)accurate.
\item[(iv)] If  $\Ac_{X}$ is almost accurate, then $ \Ac $ is also almost accurate.
\end{itemize}
\end{proposition}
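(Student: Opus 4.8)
**The plan is to prove Proposition \ref{prop:modular coatom} by treating the four parts in the order (i), (ii), (iii), (iv), exploiting the fact that each later part builds on the exponent relation established earlier.** Part (i) is the well-known factorization of the characteristic polynomial at a modular coatom. I would derive it from the Möbius-theoretic structure: since $X$ is modular, the interval $[V, X]$ in $L(\Ac)$ together with the atoms not below $X$ controls the recursion for $\mu$, and a standard computation (e.g.\ via the decomposition of $L(\Ac)$ afforded by modularity, as in the classical supersolvability literature) yields the stated identity $t\cdot \chi(\Ac, t) = (t - |\Ac\setminus\Ac_X|)\cdot \chi(\Ac_X, t)$. The quantity $|\Ac\setminus\Ac_X|$ counts exactly the hyperplanes not passing through $X$, and modularity ensures that each intersection $H\cap H'$ of two such hyperplanes lies in a common member of $\Ac_X$ (this is the criterion of Proposition \ref{prop:MC-criterion}), which is what makes the factor split off cleanly.

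For part (ii), the supersolvability equivalence is immediate from Definition \ref{def:supersolvable}, since a modular coatom $\Ac_X$ is by definition the top step of an M-chain for $\Ac$, so $\Ac$ is supersolvable precisely when $\Ac_X$ is. The freeness equivalence and the exponent relation $\exp(\Ac)\cup\{0\} = \exp(\Ac_X)\cup\{|\Ac\setminus\Ac_X|\}$ I would obtain by combining part (i) with Terao's factorization theorem and the Addition-Deletion machinery of Theorem \ref{thm:AD}: adding the hyperplanes of $\Ac\setminus\Ac_X$ back to $\Ac_X$ one at a time, the modularity of $X$ guarantees that each restriction $(\Ac_X \cup \{H\})^H$ has the same combinatorics, forcing a single new exponent equal to $|\Ac\setminus\Ac_X|$. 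The inductive-freeness statement follows along the same chain since each addition step is an Addition-Deletion step with matching exponents.

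For parts (iii) and (iv), which are the genuinely new content, the strategy is to transport a witness for the accuracy of $\Ac_X$ up to $\Ac$. Suppose $\Ac_X$ is accurate with witness $(X_1, \ldots, X_{\ell-1})$ where $\ell-1 = \rk(\Ac_X)$ and $\exp(\Ac_X) = (f_1,\ldots,f_{\ell-1})_\le$ with every $f_i \le |\Ac\setminus\Ac_X|$; by part (ii) the sorted exponents of $\Ac$ are obtained by inserting the value $m := |\Ac\setminus\Ac_X|$ into this list at the top. I would set $X_\ell := V$ (or the appropriate full-rank flat giving $\Ac$ itself as the restriction) so that $\Ac^{X_\ell} = \Ac$ realizes all exponents including $m$; for $d < \ell$ the hypothesis that the exponents of $\Ac_X$ do not exceed $m$ is exactly what ensures $(f_1,\ldots,f_d)_\le$ is an initial segment of the sorted exponents of $\Ac$, so the flats $X_1,\ldots,X_{\ell-1}$ witnessing accuracy of $\Ac_X$ continue to witness accuracy of $\Ac$. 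The flag and inductive-freeness refinements are inherited verbatim, since the appended top flat extends any flag and $\Ac$ itself is (inductively) free by part (ii). Part (iv) is the weaker statement requiring only containment $\exp(\Ac^{X_d})\subseteq \exp(\Ac)$, which follows by the same transport but without the no-exceeding hypothesis, since almost accuracy does not constrain which initial segment is realized.

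**The main obstacle I anticipate is the bookkeeping in part (iii)**: one must verify carefully that inserting the new exponent $m$ at the top of the sorted list of $\exp(\Ac_X)$ does not disturb the initial segments $(e_1,\ldots,e_d)$ for $d \le \ell-1$, and this is precisely where the hypothesis ``whose exponents do not exceed $|\Ac\setminus\Ac_X|$'' is indispensable — without it, $m$ could fall in the middle of the sorted list and shift the initial segments, breaking the witness. I would handle this by writing out the sorted exponents of $\Ac$ explicitly as $(f_1,\ldots,f_{\ell-1}, m)_\le$ and checking the defining condition $\exp(\Ac^{X_d}) = (e_1,\ldots,e_d)_\le$ of Definition \ref{Def_TF}(ii) at each level $d$.
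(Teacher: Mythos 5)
Your outline for part (i) and for the ``addition'' half of part (ii) is workable, but the argument you give for parts (iii) and (iv) --- the parts that carry the actual content of the proposition --- has a genuine gap. You propose to reuse the witness flats $X_1,\dots,X_{\ell-1}$ for the accuracy of the \emph{localization} $\Ac_X$ as witness flats for $\Ac$. The problem is that every such flat $Y\in L(\Ac_X)$ satisfies $Y\supseteq X$, so no hyperplane of $\Ac\setminus\Ac_X$ contains $Y$, and hence all of them leave traces on $Y$: one has $\Ac^{Y}\supseteq(\Ac_X)^{Y}$ with the containment strict in general. The accuracy hypothesis on $\Ac_X$ controls the freeness and exponents of $(\Ac_X)^{Y}$, which says nothing a priori about the larger arrangement $\Ac^{Y}$ that Definition \ref{Def_TF} actually requires you to control. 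For instance, for the braid arrangement $\Ac$ of type $A_3$ and the modular coatom $X=\{x_1=x_2=x_3\}$, the restriction $(\Ac_X)^{X}$ is empty while $\Ac^{X}$ contains the trace of $x_1-x_4$; these are different arrangements with different exponents, so the inference ``$(\Ac_X)^{X_d}$ is free with exponents $E$, therefore $\Ac^{X_d}$ is free with exponents $E$'' is simply false. Your careful bookkeeping about where the new exponent $m=|\Ac\setminus\Ac_X|$ sits in the sorted list is correct but addresses the wrong question: it shows what the exponents of $\Ac^{X_d}$ would have to be, not that they are that.

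The missing ingredient, which is the crux of the paper's proof of (iii), (iv) and of the reverse implication in (ii), is that for a modular coatom there exists $H\in\Ac\setminus\Ac_{X}$ such that $(\Ac^H)^{\mathrm{ess}}$ is \emph{linearly equivalent} to $\Ac_X^{\mathrm{ess}}$ (\cite[Lem.~2.2]{MMR22}); that is, the localization is realized as a restriction of $\Ac$ to a hyperplane. One then places the witness \emph{inside $H$}, where the compatibility $(\Ac^{H})^{Y}=\Ac^{Y}$ holds for all $Y\in L(\Ac)$ with $Y\subseteq H$, and Lemma \ref{lem:flag-accuracy} together with your (correct) observation that the hypothesis $f_i\le|\Ac\setminus\Ac_X|$ makes $\exp(\Ac^H)$ an initial segment of $\exp(\Ac)$ finishes the argument. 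Separately, your claim that the forward implication in (ii) is ``immediate from the definition'' is too quick: a supersolvable $\Ac$ admits \emph{some} M-chain, which need not pass through the given $X$; one needs that supersolvability, freeness and inductive freeness are closed under localization (\cite[Prop.~3.2]{St72}, \cite[Thm.~4.37]{OrTer92_Arr}, \cite[Thm.~1.1]{HRS17}), which is precisely what the paper invokes there.
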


\begin{proof}
Part (i) follows from {\cite[Thm.~ 2]{St71}}. 
For the forward implication in (ii) note that supersolvability, freeness and inductive freeness are closed under taking localizations (see \cite[Prop.~3.2]{St72}, \cite[Thm.~ 4.37]{OrTer92_Arr}, \cite[Thm.~1.1]{HRS17}).
For (iii), (iv) and the reverse implication in (ii) note that there exists a $H\in \Ac\setminus\Ac_{X}$ such that the essentializations $(\Ac^H)^{\mathrm{ess}}$ and $\Ac_X^{\mathrm{ess}}$ are linearly equivalent (see {\cite[Lem.~2.2]{MMR22}}). 
\end{proof}

The following general statement due to Abe \cite[Thm.~ 6.2]{abe:divfree} was first introduced to give a sufficient condition for divisional freeness. 
Its proof applies equally to flag-accuracy.

\begin{theorem}
	\label{thm:extshi-FA-cri}
Assume that there are distinct hyperplanes $H_1, \ldots , H_{\ell-1} \in \Ac$ so that the following conditions hold:
 	\begin{itemize}
		\item [(i)]
		$\Ac'_i :=\Ac\setminus \{H_i\}$ is free with $\exp(\Ac'_i ) = (1,d_1,\ldots,d_{i-1},d_i -1,d_{i+1}, \ldots ,d_{\ell-1})$ for each $1 \le i \le \ell-1$.
		
		\item[(ii)]
		$\Ac' :=\Ac\setminus \{H_1, \ldots , H_{\ell-1} \}$ is free with $\exp(\Ac') = (1,d_1-1,d_2-1,\ldots, d_{\ell-1}-1)$.
	\end{itemize}
Then $\Ac$ is flag-accurate with $\exp(\Ac)= (1,d_1,d_2,\ldots, d_{\ell-1})$. 

Moreover, suppose $1 \le d_1 \le d_2\le \cdots \le d_{\ell-1}$ and set 
$X_i : = \bigcap_{j=i}^{\ell-1} H_j$ for $1 \le i \le \ell-1$.
Then $(X_1, \ldots,  X_{\ell-1}, X_{\ell}=V)$ is a witness for the flag-accuracy of $\Ac$.
\end{theorem}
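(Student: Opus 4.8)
The plan is to prove flag-accuracy by repeatedly applying the Addition-Deletion Theorem to peel off the hyperplanes $H_1, \ldots, H_{\ell-1}$ and thereby track the exponents of a chain of restrictions. First I would establish the freeness and exponents of $\Ac$ itself: since condition (i) gives that $\Ac'_1 = \Ac \setminus \{H_1\}$ is free with $\exp(\Ac'_1) = (1, d_1 - 1, d_2, \ldots, d_{\ell-1})$, I want to show $\Ac$ is free with $\exp(\Ac) = (1, d_1, \ldots, d_{\ell-1})$ and that the restriction $\Ac'' = \Ac^{H_1}$ is free with the exponents obtained by dropping one copy of the relevant degree. The natural way to produce the ``third'' free arrangement is to interlace conditions (i) and (ii): comparing $\exp(\Ac'_i)$ with $\exp(\Ac')$ should let me apply Theorem \ref{thm:AD} to the triples governing the successive deletions of $H_1, \ldots, H_{\ell-1}$.

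The core of the argument is an induction that builds the flag $X_{\ell-1} \subseteq \cdots \subseteq X_1$ from the top down, where $X_i = \bigcap_{j=i}^{\ell-1} H_j$. The key computation is to show that for each $i$ the restriction $\Ac^{X_i}$ is free with $\exp(\Ac^{X_i}) = (1, d_1, \ldots, d_{i-1})_\le$, i.e. exactly the first $i$ exponents of $\Ac$ in increasing order (using the hypothesis $1 \le d_1 \le \cdots \le d_{\ell-1}$). I would run the induction on the corank of $X_i$: having established freeness and exponents of $\Ac^{X_{i+1}}$, I form the triple associated to deleting the image of $H_i$ inside $\Ac^{X_{i+1}}$. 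The hypotheses (i) and (ii), which compare $\Ac$ to $\Ac \setminus \{H_i\}$ and to $\Ac \setminus \{H_1, \ldots, H_{\ell-1}\}$, are precisely the data needed to feed the Addition-Deletion Theorem at each stage and conclude that passing from $X_{i+1}$ to $X_i$ drops the top exponent by one while removing $H_i$. Since the $H_i$ are distinct hyperplanes of $\Ac$ and $\dim X_i = i$, the tuple $(X_1, \ldots, X_{\ell-1}, X_\ell = V)$ is a genuine flag in $L(\Ac)$, and matching the exponents $\exp(\Ac^{X_i}) = (e_1, \ldots, e_i)_\le$ verifies Definition \ref{Def_TF}(ii), hence (by $\ell-1$ of the $X_i$ forming a flag) flag-accuracy.

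The main obstacle I anticipate is bookkeeping the exponents correctly through the simultaneous deletions, rather than any single deep input. The subtlety is that conditions (i) and (ii) involve deleting a \emph{single} $H_i$ on one hand and \emph{all} of $H_1, \ldots, H_{\ell-1}$ on the other; to run the restriction induction I need to interpolate by deleting hyperplanes one at a time and check at each step that the Addition-Deletion hypotheses (two of the three statements holding) are met. The cleanest route is to invoke Abe's original argument verbatim: since the statement explicitly says ``the proof applies equally to flag-accuracy,'' I would structure the proof as a reduction to Theorem \ref{thm:extshi-FA-cri}'s first assertion (that $\Ac$ is flag-accurate), and then separately verify that under the ordering assumption $d_1 \le \cdots \le d_{\ell-1}$ the specific chain $(X_i)$ produced by deleting $H_{\ell-1}, H_{\ell-2}, \ldots$ in that order realizes the exponents in increasing order, so that it is a witness in the precise sense of Definition \ref{Def_TF}(ii) and not merely some witness for divisional freeness. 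The one place demanding genuine care is confirming that the restriction exponents come out sorted, i.e. that the top exponent removed at each stage is indeed the largest among those remaining, which is exactly where the hypothesis $d_1 \le \cdots \le d_{\ell-1}$ is used.
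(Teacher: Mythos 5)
Your fallback route (invoke Abe's proof of \cite[Thm.~6.2]{abe:divfree} and then check that the ordering $d_1 \le \cdots \le d_{\ell-1}$ makes the realized exponents an initial segment) is exactly what the paper does: the paper offers no self-contained argument, but remarks just before the statement that Abe's proof ``applies equally to flag-accuracy.'' The reason this works is that Abe's proof shows $(X_1,\ldots,X_\ell)$ is a divisional flag with $\chi(\Ac^{X_i},t)=(t-1)(t-d_1)\cdots(t-d_{i-1})$, and his Division Theorem then yields freeness of every $\Ac^{X_i}$ and of $\Ac$ itself, so that $\exp(\Ac^{X_i})=(1,d_1,\ldots,d_{i-1})$ by Terao factorization; under the ordering hypothesis these are precisely the $i$ smallest exponents of $\Ac$, which is what flag-accuracy demands. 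You correctly identify that this last sortedness check is where the hypothesis $d_1\le\cdots\le d_{\ell-1}$ enters, and that divisional freeness alone would not suffice.

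However, your primary mechanism---running Theorem \ref{thm:AD} along the flag---has a genuine gap, and it is not a bookkeeping issue. To apply Addition-Deletion inside $\Ac^{X_{i+1}}$ at the hyperplane $H_i\cap X_{i+1}$ you need two of the three members of the triple $\bigl(\Ac^{X_{i+1}},\,\Ac^{X_{i+1}}\setminus\{H_i\cap X_{i+1}\},\,\Ac^{X_i}\bigr)$ to be free with compatible exponents; by induction you know only the first. Hypotheses (i) and (ii) control deletions of $\Ac$ in the ambient space and give you nothing about deletions inside restrictions: freeness does not pass to restrictions (so freeness of $\Ac'_i$ says nothing about $(\Ac'_i)^{X_{i+1}}$), deletion does not commute with restriction (other hyperplanes of $\Ac$ may restrict to $H_i\cap X_{i+1}$), and the intermediate deletions $\Ac\setminus S$ for $2\le |S|\le \ell-2$---which your ``interlacing'' of (i) and (ii) would require---are not asserted to be free and need not be. The same problem already blocks your very first step: to get freeness of $\Ac$ from freeness of $\Ac'_1$ by the addition part of Theorem \ref{thm:AD} you would need $\Ac^{H_1}$ to be free with the right exponents, which is precisely what is unavailable. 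This is exactly why Abe's argument is not an Addition-Deletion induction but hinges on the Division Theorem (divisibility of characteristic polynomials along the flag is converted into freeness), a strictly stronger tool; without importing it, your plan cannot be completed.
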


We also recall the following similar statement from \cite{mueckschroehrle:accurate}.
\begin{corollary}[{\cite[Cor.~2.13]{mueckschroehrle:accurate}}]
	\label{cor:MAT-stepRestriction}
	Let $\Ac'$ be free, $\Ac = \Ac' \dot{\cup} \{H_1,\ldots,H_p\}$ an MAT-step
	and $\Cc \subseteq \{H_1,\ldots,H_p\}$.
	Suppose that $\exp(\Ac) = (e_1,\ldots,e_\ell)_\leq$ and let
	$X := \cap_{H\in\Cc}H$.
	Then $\Ac^X$ is free with $\exp(\Ac^X) = (e_1,\ldots,e_{\ell-|\Cc|})_\leq$.
\end{corollary}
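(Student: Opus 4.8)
The plan is to restrict one hyperplane at a time and feed each single restriction into the Addition--Deletion Theorem (Theorem \ref{thm:AD}), running an induction on $q := |\Cc|$ that is quantified over \emph{all} single MAT-steps at once. After relabelling we may assume $\Cc = \{H_1, \dots, H_q\}$, and we set $X_i := H_1 \cap \cdots \cap H_i$, so that $V = X_0 \supsetneq X_1 \supsetneq \cdots \supsetneq X_q = X$ with $\operatorname{codim} X_i = i$ by condition (1) of the MAT-step. Writing $\exp(\Ac') = (e_1,\dots,e_{\ell-p},e^{p})_\le$, the aim is to peel off the chain $X_0, X_1, \dots, X_q$ while lowering the list of exponents by one top entry at each stage.

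Two observations drive the argument. First, conditions (1)--(3) of Theorem \ref{Thm_MAT} are inherited by every subset $\Dc \subseteq \{H_1,\dots,H_p\}$: conditions (1) and (3) are checked hyperplane by hyperplane, and for (2) one notes that $\bigcap_{H\in\Dc}H \supseteq H_1\cap\cdots\cap H_p$, so the point witnessing $H_1\cap\cdots\cap H_p \not\subseteq \bigcup_{H\in\Ac'}H$ witnesses the condition for $\Dc$ as well. Hence every $\Ac'\cup\Dc$ is free with $\exp(\Ac'\cup\Dc) = (e_1,\dots,e_{\ell-|\Dc|},(e+1)^{|\Dc|})_\le$; this pins down all exponent multisets below and gives the base case $q=0$. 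Second, condition (2) forces $H_1\cap\cdots\cap H_p \not\subseteq H$ for every $H\in\Ac'$, hence no hyperplane of $\Ac'$ contains any $X_i$. Combined with the linear independence of the $H_j$, this shows that inside $\Ac^{X_{i-1}}$ the hyperplane $\bar H_i := H_i \cap X_{i-1} = X_i$ is the image of $H_i$ and of no other member of $\Ac$, so that
\[
\Ac^{X_{i-1}} \setminus \{\bar H_i\} = (\Ac\setminus\{H_i\})^{X_{i-1}}.
\]

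For the inductive step at $q = |\Cc|$, put $Y := X_{q-1}$. By the inductive hypothesis applied to the MAT-step $\Ac'\to\Ac$ with restricting set $\{H_1,\dots,H_{q-1}\}$ of size $q-1$, the restriction $\Ac^{Y}$ is free with $\exp(\Ac^{Y}) = (e_1,\dots,e_{\ell-q+1})_\le$, whose top exponent is $e+1$ since $q\le p$. Applying the inductive hypothesis again, now to the MAT-step $\Ac'\to\Ac\setminus\{H_q\}$ (adding the $p-1$ hyperplanes $\{H_1,\dots,H_p\}\setminus\{H_q\}$) with the same restricting set of size $q-1$, the deletion $(\Ac\setminus\{H_q\})^{Y} = \Ac^{Y}\setminus\{\bar H_q\}$ is free, and a short multiset computation using the first observation identifies its exponents with those of $\Ac^{Y}$ with the top entry lowered by one. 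Thus the triple $\bigl(\Ac^{Y},\,\Ac^{Y}\setminus\{\bar H_q\},\,(\Ac^{Y})^{\bar{H}_q}\bigr)$ satisfies hypotheses (i) and (ii) of Theorem \ref{thm:AD}; transitivity of restriction identifies $(\Ac^{Y})^{\bar{H}_q}$ with $\Ac^{X}$, and conclusion (iii) yields that $\Ac^{X}$ is free with $\exp(\Ac^{X}) = (e_1,\dots,e_{\ell-q})_\le$, completing the induction.

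I expect the main obstacle to be the second observation: one must verify carefully that $\bar H_i$ is the image of $H_i$ alone, since otherwise the deletion would not coincide with $(\Ac\setminus\{H_i\})^{X_{i-1}}$ and the whole reduction would break down. This is precisely the point at which the genericity condition (2) of the MAT-step is indispensable. The remaining work is purely bookkeeping---confirming that the exponent multisets align so that Theorem \ref{thm:AD} applies---which the first observation takes care of. Note that this argument is self-contained and, unlike the existential statement one could extract from Theorem \ref{thm:MATRestGen}, it delivers the conclusion for \emph{every} subset $\Cc$ uniformly.
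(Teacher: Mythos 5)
Your argument is correct. Note that the paper itself offers no proof of this statement---it is imported verbatim from \cite[Cor.~2.13]{mueckschroehrle:accurate}---so there is nothing in-text to compare against; your induction on $|\Cc|$ via the Addition--Deletion Theorem is the natural route and matches the spirit of the cited source and of the proof of Theorem \ref{thm:MATRestGen}. The two points you flag as delicate are exactly the right ones and you handle both correctly: condition (2) of the MAT-step (together with condition (1) applied to subsets) guarantees that $H_q$ is the \emph{unique} hyperplane of $\Ac$ restricting to $\bar H_q$ on $Y=X_{q-1}$, so that $\Ac^{Y}\setminus\{\bar H_q\}=(\Ac\setminus\{H_q\})^{Y}$, and the heredity of conditions (1)--(3) to subsets pins down the exponent multisets of $\Ac^{Y}$ and $(\Ac\setminus\{H_q\})^{Y}$ in the form required by hypotheses (i) and (ii) of Theorem \ref{thm:AD} (the top exponent being $e+1$ precisely because $q\le p$).
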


From the previous corollary and Theorem \ref{thm:MATRestGen} we readily obtain the following.

\begin{corollary}
	\label{coro:MATRk4-FA}
	Let $\Ac$ be a free arrangement of rank at most $4$ which is obtained from the free arrangement $\Ac'$ by MAT-steps such that one of the steps has size at least $2$.
	Then $\Ac$ is flag-accurate.
\end{corollary}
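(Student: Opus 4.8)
The plan is to reduce to the essential case and then, from a single large MAT-step, build an explicit short flag of restrictions with prefix exponents, invoking Lemma \ref{lem:flag-accuracy} to propagate flag-accuracy upward from a rank-$\le 2$ restriction.

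First I would pass to the essentialization; since flag-accuracy is preserved under essentialization, I may assume $\Ac$ is essential of rank $\ell \le 4$. Then its smallest exponent is $e_1 = 1$: the Euler derivation forces $1 \in \exp(\Ac)$, while essentiality rules out a degree-$0$ basis element, so $e_1 = 1$. Moreover, every free essential arrangement of rank $\le 2$ is flag-accurate: a rank-$2$ arrangement has $\exp = (1,e_2)_\le$, and restricting it to one of its hyperplanes (a line) leaves the origin with exponent $(1) = (e_1)$, which is the required length-one flag; ranks $0,1$ are vacuous. This furnishes the base case for the induction encoded in Lemma \ref{lem:flag-accuracy}.

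Next, let $\pi = (\pi_1|\cdots|\pi_n)$ be the ordered partition of $\Bc$ realizing $\Ac = \Ac' \dot{\cup} \Bc$ as a succession of MAT-steps, with $\exp(\Ac) = (e_1,\ldots,e_\ell)_\le$. By hypothesis some block has size $\ge 2$; let $\pi_k$ be the \emph{last} such block, so that every later block is a singleton and hence $|\pi_{k+1}| \le 1$ (with the convention $|\pi_{n+1}| := 0$). For the main case $\ell = 4$ I would apply Theorem \ref{thm:MATRestGen} to $\pi_k$: as $|\pi_{k+1}| \le 1 \le 2 \le |\pi_k|$, both $q = 1$ and $q = 2$ are admissible, and the theorem supplies linearly independent $H_1, H_2 \in \pi_k$ — independence coming from the MAT-condition $\rk(\pi_k) = |\pi_k|$ — with $\Ac^{H_1}$ free of exponents $(e_1,e_2,e_3)_\le$ and $\Ac^{H_1 \cap H_2}$ free of exponents $(e_1,e_2)_\le$. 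Setting $X_1 = H_1 \supseteq X_2 = H_1 \cap H_2$ gives a length-two flag whose deepest term $\Ac^{X_2}$ has rank $\le 2$, hence is flag-accurate by the base case; Lemma \ref{lem:flag-accuracy} with $k = 2$ then yields flag-accuracy of $\Ac$. For $\ell \le 3$ a single restriction of the last block with $q = 1$ already lands in rank $\le 2$, and Lemma \ref{lem:flag-accuracy} with $k = 1$ finishes without even using the size hypothesis.

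The crux — the step I expect to demand the most care — is the \emph{nested} extraction of the codimension-one and codimension-two restrictions from the \emph{same} block $\pi_k$, with the exact prefixes $(e_1,e_2,e_3)$ and $(e_1,e_2)$. Theorem \ref{thm:MATRestGen} literally asserts only the existence of \emph{some} subset of each admissible size, whereas I need the subsets to be nested, $\{H_1\} \subset \{H_1,H_2\}$. The remedy is to use that the restriction statement holds for \emph{every} subset of $\pi_k$ of admissible cardinality — exactly as Corollary \ref{cor:MAT-stepRestriction} is phrased for a single step, and as the multi-step proof proceeds — which makes the nested choice available. Selecting $\pi_k$ to be the \emph{last} large block is what secures $|\pi_{k+1}| \le 1$ and thereby the admissibility of $q = 1$; this is precisely where the hypotheses that some step has size $\ge 2$ and that $\ell \le 4$ enter, since a codimension-two restriction carrying a prefix of the exponents cannot otherwise be realized inside one step.
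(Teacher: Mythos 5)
Your overall strategy --- restrict twice inside the last block of size at least $2$ to reach a rank-$2$ arrangement and then climb back up via Lemma \ref{lem:flag-accuracy} --- is exactly the paper's, and you correctly isolate the crux: Theorem \ref{thm:MATRestGen} only provides, for each admissible $q$ separately, \emph{some} $q$-subset of $\pi_k$ whose restriction carries the prefix $(e_1,\ldots,e_{\ell-q})$, whereas the flag needs a nested pair. The gap lies in your remedy. You assert that the conclusion of Theorem \ref{thm:MATRestGen} in fact holds for \emph{every} admissible subset of $\pi_k$, ``as the multi-step proof proceeds'', but nothing you cite establishes this for $\Ac$ itself. What Corollary \ref{cor:MAT-stepRestriction} gives for every subset $\Cc\subseteq\pi_k$ is a statement about the \emph{truncated} arrangement $\Dc:=\Ac'\cup(\pi_1\cup\cdots\cup\pi_k)$, namely that $\Dc^{X}$ is free with the first $\ell-|\Cc|$ exponents of $\Dc$; it says nothing about $\Ac^{X}$, which also contains the restrictions of the hyperplanes in the later singleton blocks $\pi_{k+1},\ldots,\pi_n$. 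For an arbitrary $H\in\pi_k$ these extra hyperplanes may enlarge $\Ac^{H}$ beyond $\Dc^{H}$, in which case $|\Ac^{H}|>e_1+e_2+e_3$ and $\Ac^{H}$ cannot have exponents $(e_1,e_2,e_3)$ even if it is free. So the ``every subset'' version for $\Ac$ is precisely what needs proof, and you do not supply it.

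The paper bridges this as follows. Since the later blocks are singletons, each singleton MAT-step raises only the top exponent, so $\exp(\Dc)$ and $\exp(\Ac)$ agree in their first three entries. Take the particular $H\in\pi_k$ furnished by the existence statement of Theorem \ref{thm:MATRestGen}, so that $\Ac^{H}$ is free with $\exp(\Ac^{H})=(e_1,e_2,e_3)$; Corollary \ref{cor:MAT-stepRestriction} gives $\Dc^{H}$ free with the same exponents, and $\Dc^{H}\subseteq\Ac^{H}$ together with $|\Dc^{H}|=e_1+e_2+e_3=|\Ac^{H}|$ forces $\Ac^{H}=\Dc^{H}$. Only now, for any $H'\in\pi_k\setminus\{H\}$ and $X=H\cap H'$, does one get $\Ac^{X}=\Dc^{X}$, and Corollary \ref{cor:MAT-stepRestriction} applied to $\{H,H'\}\subseteq\pi_k$ yields $\exp(\Ac^{X})=(e_1,e_2)$. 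This cardinality comparison is the missing step in your argument; with it inserted, the rest of your proposal (the reduction to the essential case, the treatment of rank at most $3$, and the appeal to Lemma \ref{lem:flag-accuracy}) goes through.
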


\begin{proof}
	Without loss we may assume that $\Ac$ is a free arrangement in $\KK^4$
	with  $\exp(\Ac) = (e_1,e_2,e_3,e_4)_\leq$.
	Assume further that $\Ac = \Ac' \cup \Bc$
	and that $\pi = (\pi_1 | \cdots | \pi_n )$
	is a partition of $\Bc$ yielding the successive MAT-steps form $\Ac'$ to $\Ac$.
	By assumption there is a $k \in \{1,\ldots,n\}$ 
	such that $|\pi_k| \geq 2$ and $|\pi_i|\leq 1$ for $k<i\leq n$.
	By Theorem \ref{thm:MATRestGen} there is an $H \in \pi_k$ such that
	$\Ac^H$ is free with $\exp(\Ac^H) = (e_1,e_2,e_3)$. 
	Moreover, for $\Dc := \Ac'\cup (\cup_{i=1}^k \pi_i) \subseteq \Ac$ we 
	have that $\Dc^H \subseteq \Ac^H$ is free with $\exp(\Dc^H) = (e_1,e_2,e_3)$ by 
	Corollary \ref{cor:MAT-stepRestriction}, since $\pi_k$ is an MAT-step. 
	In particular, $|\Ac^H| = |\Dc^H|$, so $\Ac^H = \Dc^H$.
	Thus, for any other $H' \in \pi_k \setminus \{H\}$ and $X=H \cap H'$
	we have that $\Ac^X = \Dc^X$ is free with $\exp(\Ac^X) = (e_1,e_2)$, again, thanks to Corollary \ref{cor:MAT-stepRestriction}.
	As a rank $2$ arrangement, $\Ac^X$ is flag-accurate, so $\Ac^H$ is flag-accurate
	by Lemma \ref{lem:flag-accuracy} and $\Ac$ is flag accurate as well.
\end{proof}

\section{Flag-accurate reflection arrangements}
\label{sec:ReflArr}

\subsection{Complex reflection arrangements and their restrictions}
\label{ssect:refl}

Let $G \subseteq \GL(V)$ be a finite, 
complex reflection group acting on the complex vector space $V=\CC^\ell$.
The \emph{reflection arrangement} of $G$ in $V$ is the 
hyperplane arrangement $\Ac(G)$ 
consisting of the reflecting hyperplanes 
of the elements in $G$ acting as reflections on $V$.

Terao \cite{Terao1980_FreeUniRefArr} has shown that every 
reflection arrangement $\Ac(G)$ is free 
and that the exponents of $\Ac(G)$
coincide with the coexponents of $G$, 
cf.~\cite[Prop.~6.59 and Thm.~6.60]{OrTer92_Arr}.

In \cite[Thm.~1.6]{mueckschroehrle:accurate}, it was shown that all 
Coxeter arrangements are accurate and in \cite[Thm.~5.8]{mueckschroehrle:accurate}, that more generally  
a complex reflection arrangement is accurate if and only if it is divisionally free.
In view of these results, it is natural to examine flag-accuracy 
for the class of complex reflection arrangements. 
Thanks to Remark \ref{rem_MATRest-flag}(v) and \cite[Prop.~2.12]{roehrle:divfree}, 
flag-accuracy and divisional freeness are compatible with products.

The proofs from \cite{mueckschroehrle:accurate} carry over almost immediately. So that we observe that 
a reflection arrangement is flag-accurate if and only if it is accurate; likewise for restrictions of reflection arrangements. We record both results correcting an omission in the second in \cite[Thm.~5.12]{mueckschroehrle:accurate}.

\begin{theorem}
	\label{thm:complex-flag}
	Let $G$ be a complex reflection group with reflection arrangement $\Ac = \Ac(G)$. Then $\Ac$ is flag-accurate 
	if and only if it is divisionally free. This is the case if and only if $G$ has no irreducible factor isomorphic 
	to one of the monomial groups $G(r,r,\ell)$, $r>2$, $\ell>2$, or   
	$ G_{24}, G_{27}, G_{29}, G_{33}, G_{34}$.
\end{theorem}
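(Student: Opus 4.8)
**The plan is to reduce flag-accuracy to divisional freeness and then quote the classification of divisionally free reflection arrangements.**

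The statement has two parts. The second equivalence — that $\Ac(G)$ is divisionally free exactly when $G$ has no irreducible factor among the listed groups — is precisely the known classification of divisionally free complex reflection arrangements; this is due to Abe and Hoge--Röhrle--Tran and is recorded in the literature, so I would simply cite it. The genuine content is the first equivalence, namely that \emph{flag-accurate $\Leftrightarrow$ divisionally free} for reflection arrangements. One direction is free: by Remark \ref{rem_MATRest-flag}(i), flag-accuracy always implies divisional freeness (a witness for flag-accuracy is a divisional flag). So the whole task is the reverse implication: assuming $\Ac(G)$ is divisionally free, produce a witness for flag-accuracy.

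First I would reduce to the irreducible case. Since flag-accuracy is compatible with products (Remark \ref{rem_MATRest-flag}(v)) and divisional freeness is as well (cited from \cite{roehrle:divfree}), it suffices to treat an irreducible $G$; the product of witnesses gives a witness for the product, so a reducible $\Ac(G)$ is flag-accurate iff each factor is, matching the factorwise nature of the classification. For irreducible $G$ with $\Ac(G)$ divisionally free, I would invoke the classification to see that $\Ac(G)$ is in fact \emph{inductively free} in all the relevant cases (indeed the divisionally free reflection arrangements coincide with the inductively free ones, with the sole exception $G_{31}$, which is divisionally free but not inductively free). The strategy is then to run an inductive-freeness argument that simultaneously produces a full flag of restrictions realizing the initial segments of the exponents, exactly as in the proof of \cite[Thm.~5.8]{mueckschroehrle:accurate}. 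Concretely, I would follow the accuracy proof from \cite{mueckschroehrle:accurate} verbatim but upgrade it using Lemma \ref{lem:flag-accuracy}: given that $\Ac(G)$ is divisionally free along a divisional flag $X_1 \subseteq \cdots \subseteq X_\ell$, each restriction $\Ac^{X_i}$ is again divisionally free (divisional freeness descends to the flats of a divisional flag by \cite[Thm.~4.4]{abe:divfree}), and the dividing characteristic polynomials force $\exp(\Ac^{X_i}) = (e_1,\dots,e_i)_{\le}$; the hyperplanes cutting out the flag can be chosen among the hyperplanes of $\Ac$, giving the linearly independent $H_1,\dots,H_{\ell-1}$ demanded by the flag-accuracy criterion. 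Applying Lemma \ref{lem:flag-accuracy} inductively down the flag then yields flag-accuracy.

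The remark that ``the proofs from \cite{mueckschroehrle:accurate} carry over almost immediately'' signals that the main work is bookkeeping rather than a new idea: one checks that each step in the accuracy proof already produces the flat as an intersection of hyperplanes lying \emph{inside} the previous flat, so that the chosen $X_i$ actually form a flag rather than merely a witness for accuracy. \textbf{The main obstacle is exactly this flag condition}: accuracy only requires flats of each dimension realizing the initial exponent segments, whereas flag-accuracy demands they be nested. The key technical point to verify is therefore that in the divisionally free case the flats can always be taken nested — which is guaranteed by the divisional flag itself, since $\chi(\Ac^{X_i},t) \mid \chi(\Ac^{X_{i+1}},t)$ together with freeness of each $\Ac^{X_i}$ forces the exponent multisets to nest as $(e_1,\dots,e_i)_{\le} \subseteq (e_1,\dots,e_{i+1})_{\le}$. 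This is precisely where divisional freeness (as opposed to mere accuracy) is used, and it is why the correction to \cite[Thm.~5.12]{mueckschroehrle:accurate} flagged in the text is needed. Finally, I would note that the excluded exceptional cases $G_{24}, G_{27}, G_{29}, G_{33}, G_{34}$ and the families $G(r,r,\ell)$ are handled purely by citation: they are known to be non-divisionally-free, hence by the already-proved implication non-flag-accurate, closing the equivalence.
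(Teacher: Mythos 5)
Your overall skeleton is right: the forward implication is Remark \ref{rem_MATRest-flag}(i), the reduction to irreducible factors is the product compatibility, and the identification of the divisionally free reflection arrangements is a citation. But the mechanism you propose for the converse direction contains a genuine error. You claim that along a divisional flag $X_1\subseteq\cdots\subseteq X_\ell$ the divisibility $\chi(\Ac^{X_i},t)\mid\chi(\Ac^{X_{i+1}},t)$ together with freeness ``forces $\exp(\Ac^{X_i})=(e_1,\ldots,e_i)_\le$.'' It does not. For free arrangements the characteristic polynomials split as $\prod_j(t-e_j)$, so divisibility only forces $\exp(\Ac^{X_i})$ to be \emph{some} $i$-element sub-multiset of $\exp(\Ac^{X_{i+1}})$ --- there is no reason it must be the initial segment. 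If your argument were valid, every divisionally free arrangement would be flag-accurate, which contradicts the paper's own Remark \ref{rem_MATRest-flag}(iv) and Example \ref{ex:shif4} (an arrangement that is divisionally free and even accurate, yet not flag-accurate, precisely because no divisional flag realizes the initial exponent segments). So divisional freeness cannot be upgraded to flag-accuracy by a general principle; this is exactly the gap between the two notions.

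What the paper actually does is not a general reduction but a case-by-case verification: by the classification, the irreducible divisionally free reflection arrangements are the inductively free ones together with $G_{31}$, and for each of these one must exhibit an explicit nested witness realizing the initial exponent segments --- e.g.\ via supersolvability and the restriction analysis of the intermediate arrangements $\Ac^k_\ell(r)$ (Table \ref{table2}, Lemma \ref{lem:akl-flag}) for the monomial series, and direct checks for the exceptional groups. The phrase ``the proofs from \cite{mueckschroehrle:accurate} carry over'' means that the accuracy witnesses constructed there can be \emph{chosen} to be nested in each case, which is a verification, not a consequence of divisional freeness. Your proof as written would need to be replaced by that case analysis; as it stands the key step is false.
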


In view of Theorem \ref{thm:complex-flag} and the fact that all restrictions of complex reflection arrangements are free (thanks to \cite[\S 6.4, App.~D]{OrTer92_Arr}, 
\cite{OrTer92_CoxeterArrRestr}, and \cite{hogeroehrle:free}), 
it is natural to investigate flag-accuracy 
among restrictions of complex reflection arrangements, 
not all of which are reflection arrangements again. 

In order to state our results, we require some further notation.
Orlik and Solomon defined \emph{intermediate 
	arrangements} $\Ac^k_\ell(r)$ in 
\cite[\S 2]{OrSol83_CoxeterArr}
(cf.\ \cite[\S 6.4]{OrTer92_Arr}) which
interpolate between the
reflection arrangements of the monomial groups $G(r,r,\ell)$ and $G(r,1,\ell)$. 
They show up as restrictions of the reflection arrangement
of $G(r,r,\ell')$, for some $\ell'$,  
\cite[Prop.\ 2.14]{OrSol83_CoxeterArr} 
(cf.~\cite[Prop.\ 6.84]{OrTer92_Arr}).

For 
$\ell, r \geq 2$ and $0 \leq k \leq \ell$ the defining polynomial of
$\Ac^k_\ell(r)$ is given by
\[
Q(\Ac^k_\ell(r)) = x_1 \cdots x_k\prod\limits_{\substack{1 \leq i < j \leq \ell\\ 0 \leq n < r}}(x_i - \zeta^nx_j),
\]
where $\zeta$ is a primitive $r$-th root of unity,
so that 
$\Ac^\ell_\ell(r) = \Ac(G(r,1,\ell))$ and 
$\Ac^0_\ell(r) = \Ac(G(r,r,\ell))$. 
For $k \neq 0, \ell$, these are not reflection arrangements
themselves. 

Next we recall
\cite[Props.\ 2.11,  2.13]{OrSol83_CoxeterArr}
(cf.~\cite[Props.~6.82,  6.85]{OrTer92_Arr}).

\begin{proposition}
	\label{prop:intermediate}
	Let $\Ac = \Ac^k_\ell(r)$ for $\ell, r \geq 2$ and $0 \leq k \leq \ell$. Then
	\begin{itemize}
		\item[(i)] $\Ac$ is free with $$\exp(\Ac) = (1, r + 1, \ldots, (\ell - 2)r + 1, (\ell - 1)r - \ell + k + 1),$$
		\item[(ii)] for $H \in \Ac$, the type of $\Ac^H$ is given in Table \ref{table2}.
	\end{itemize}
\end{proposition}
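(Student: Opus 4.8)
The plan is to establish (i) by a downward induction on $k$ built on Terao's freeness theorem and Addition--Deletion, and then to obtain (ii) from the same restriction analysis. The one calculation underlying everything is the restriction of $\Ac = \Ac^k_\ell(r)$ to a coordinate hyperplane $H = \ker(x_i)$ with $1 \le i \le k$. Setting $x_i = 0$ in the defining polynomial, every factor $x_i - \zeta^n x_j$ or $x_j - \zeta^n x_i$ collapses to the coordinate equation $x_j = 0$, so on $H$ each of the remaining $\ell - 1$ coordinate hyperplanes survives, together with all braid hyperplanes in the surviving variables. Hence $\Ac^H = \Ac^{\ell-1}_{\ell-1}(r) = \Ac(G(r,1,\ell-1))$, independently of $i$ and $k$, and by Terao's theorem \cite{Terao1980_FreeUniRefArr} this restriction is free with $\exp(\Ac^H) = (1, r+1, \ldots, (\ell-2)r+1)$.

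For part (i) I would induct downward on $k$, starting at $k = \ell$, where $\Ac^\ell_\ell(r) = \Ac(G(r,1,\ell))$ is free with coexponents $(1, r+1, \ldots, (\ell-1)r+1)$ by Terao's theorem, matching the asserted formula. For the inductive step I apply Theorem \ref{thm:AD} to the triple $(\Ac^k_\ell(r), \Ac^{k-1}_\ell(r), (\Ac^k_\ell(r))^{\ker x_k})$: deleting $\ker(x_k)$ from $\Ac^k_\ell(r)$ produces exactly $\Ac^{k-1}_\ell(r)$, while the restriction equals $\Ac(G(r,1,\ell-1))$ by the computation above. Since $\exp((\Ac^k_\ell(r))^{\ker x_k}) = (1, r+1, \ldots, (\ell-2)r+1)$ is obtained from $\exp(\Ac^k_\ell(r))$ by removing the single entry $(\ell-1)r - \ell + k + 1$, Theorem \ref{thm:AD} yields freeness of $\Ac^{k-1}_\ell(r)$ with that entry lowered by one, namely $\exp(\Ac^{k-1}_\ell(r)) = (1, r+1, \ldots, (\ell-2)r+1, (\ell-1)r - \ell + (k-1) + 1)$, which is the claim at $k - 1$. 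The rank-two case $\ell = 2$ can be checked directly, since $\Ac^k_2(r)$ has $k + r$ hyperplanes and hence exponents $(1, k + r - 1)$.

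For part (ii) I would run through the $G$-orbit types of $H \in \Ac$. A coordinate hyperplane $\ker(x_i)$, $1 \le i \le k$, is covered by the computation above and yields $\Ac^{\ell-1}_{\ell-1}(r)$. For a braid hyperplane $H = \ker(x_i - \zeta^n x_j)$ with $i < j$, I would eliminate $x_i$ via $x_i = \zeta^n x_j$ and track the images of the factors: the surviving coordinate equations among $x_1, \ldots, x_k$ remain coordinate hyperplanes; the equation $x_j = 0$ always reappears, arising from $\ker(x_i)$ when $i \le k$ and from every factor $x_i - \zeta^m x_j$ with $m \ne n$; and the remaining braid factors reindex to braid hyperplanes in the $\ell - 1$ surviving variables. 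Collecting these identifies $\Ac^H$ as an intermediate arrangement $\Ac^{k'}_{\ell-1}(r)$ whose parameter $k'$ equals $k - 1$, $k$, or $k + 1$ according as $|\{i,j\} \cap \{1, \ldots, k\}|$ is $2$, $1$, or $0$, reproducing the entries of Table \ref{table2}.

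The main obstacle is the bookkeeping in the braid case of (ii): one has to verify that no hyperplane is double-counted when several factors map to the same coordinate or braid equation, and to separate the three sub-cases cleanly so that the value of $k'$ and the resulting type are read off correctly. By contrast, part (i) is essentially automatic once the identity $\Ac^{\ker x_i} = \Ac(G(r,1,\ell-1))$ is in hand, since Addition--Deletion then propagates freeness and the exponents through the entire family.
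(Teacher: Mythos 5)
Your proof is correct, but note that the paper does not actually prove this proposition: it is recalled verbatim from \cite[Props.~2.11, 2.13]{OrSol83_CoxeterArr} (cf.~\cite[Props.~6.82, 6.85]{OrTer92_Arr}), so there is no in-paper argument to compare against; what you have written is a self-contained derivation in the spirit of the cited literature. Your key restriction identity $\Ac^{\ker(x_i)} = \Ac^{\ell-1}_{\ell-1}(r) = \Ac(G(r,1,\ell-1))$ for $1 \le i \le k$ is right, and the downward induction on $k$ is a legitimate use of Theorem \ref{thm:AD}: knowing statement (i) for $\Ac^k_\ell(r)$ (inductive hypothesis) and statement (iii) for the restriction (Terao's theorem \cite{Terao1980_FreeUniRefArr} applied to $\Ac(G(r,1,\ell-1))$), the theorem yields statement (ii), i.e.\ freeness of the deletion $\Ac^{k-1}_\ell(r)$ with the distinguished exponent lowered by one, which is exactly the claimed formula at $k-1$; since exponents form a multiset, possible coincidences of $(\ell-1)r-\ell+k+1$ with other entries cause no trouble, the base case $k=\ell$ is Terao's theorem, and your separate check of $\ell=2$ is not even needed. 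Your case analysis in (ii) is also sound: the parameter drops, stays, or rises ($k'=k-1,\,k,\,k+1$) according as $|\{i,j\}\cap\{1,\ldots,k\}|$ is $2$, $1$, or $0$, the essential points being that $x_j=0$ always reappears on the restriction (from $\ker(x_i)$ when $i\le k$, and in any case from the $r-1\ge 1$ factors $x_i-\zeta^m x_j$ with $m\ne n$) and that the apparent double-counting is harmless because a restriction is by definition a set of intersections, not a multiset of factors. One could equally run the induction upward from $k=0$ using the addition part of Theorem \ref{thm:AD} starting from $\Ac(G(r,r,\ell))$; both directions work.
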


\begin{table}[ht!b]
	\renewcommand{\arraystretch}{1.5}
	\begin{tabular}{llll}\hline
		$k$ & \multicolumn{2}{l}{$\alpha_H$} & Type of $\Ac^H$\\ \hline
		$0$ & arbitrary & & $\Ac^1_{\ell - 1}(r)$\\
		$1, \ldots, \ell - 1$ & $x_i - \zeta x_j$ & $1 \leq i < j \leq k < \ell$ & $\Ac^{k - 1}_{\ell - 1}(r)$\\
		$1, \ldots, \ell - 1$ & $x_i - \zeta x_j$ & $1 \leq i \leq k < j \leq \ell$ & $\Ac^k_{\ell - 1}(r)$\\
		$1, \ldots, \ell - 1$ & $x_i - \zeta x_j$ & $1 \leq k < i < j \leq \ell$ & $\Ac^{k + 1}_{\ell - 1}(r)$\\
		$1, \ldots, \ell - 1$ & $x_i$ & $1 \leq i \leq \ell$ & $\Ac^{\ell - 1}_{\ell - 1}(r)$\\
		$\ell$ & arbitrary & & $\Ac^{\ell - 1}_{\ell - 1}(r)$\\ \hline
	\end{tabular}
	\caption{Restriction types of $\Ac^k_\ell(r)$}
	\label{table2}
\end{table}

The following two results are the counterparts of \cite[Lem.~5.10]{mueckschroehrle:accurate} and 
\cite[Thm.~5.12]{mueckschroehrle:accurate} for flag-accuracy.

\begin{lemma}
	\label{lem:akl-flag}
	Let $\Ac = \Ac^k_\ell(r)$ for $\ell, r \geq 2$ and $1 \leq k \leq \ell-1$.
	Then 
	\begin{itemize}
		\item[(i)]  for $r =2$, $\Ac$ is flag-accurate;
		\item[(ii)]  for $r > 2$, $\Ac$ is flag-accurate if and only if $r + k \ge \ell$.
	\end{itemize}
\end{lemma}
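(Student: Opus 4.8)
The plan is to prove Lemma \ref{lem:akl-flag} by induction on $\ell$, using the restriction data in Table \ref{table2} together with the inductive criterion Lemma \ref{lem:flag-accuracy}. The key observation is that for the intermediate arrangement $\Ac = \Ac^k_\ell(r)$ with $1 \le k \le \ell-1$, Proposition \ref{prop:intermediate}(i) gives $\exp(\Ac) = (1, r+1, 2r+1, \ldots, (\ell-2)r+1, (\ell-1)r - \ell + k + 1)_\le$, and I would first verify that these are already listed in increasing order precisely when the ``special'' exponent $e_\ell := (\ell-1)r - \ell + k + 1$ is at least $(\ell-2)r+1$, i.e.\ when $r + k \ge \ell$ (and when $r=2$ one checks this holds automatically for $k \ge 1$). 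Thus the numerical condition in (ii) is exactly the condition under which the largest exponent really is $e_\ell$; this is the crux of why the bound $r+k \ge \ell$ appears.

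For the forward direction of flag-accuracy I would use Lemma \ref{lem:flag-accuracy}: it suffices to exhibit a hyperplane $H \in \Ac$ such that $\Ac^H$ is flag-accurate with $\exp(\Ac^H) = (e_1, \ldots, e_{\ell-1})_\le$, i.e.\ such that restricting removes exactly the top exponent $e_\ell$. Reading off Table \ref{table2}, the natural candidate is $H = \{x_i - \zeta x_j = 0\}$ with $1 \le k < i < j \le \ell$, giving $\Ac^H \cong \Ac^{k+1}_{\ell-1}(r)$, whose exponents by Proposition \ref{prop:intermediate}(i) are $(1, r+1, \ldots, (\ell-3)r+1, (\ell-2)r - (\ell-1) + (k+1) + 1)$. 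I would check that this top exponent of $\Ac^H$ equals $(\ell-2)r - \ell + k + 2$, and confirm that dropping $e_\ell$ from $\exp(\Ac)$ leaves precisely the multiset $\exp(\Ac^H)$ when $r+k \ge \ell$; simultaneously the inductive hypothesis applied to $\Ac^{k+1}_{\ell-1}(r)$ (note $(k+1) + r \ge \ell$ becomes $(k+1)+r \ge (\ell-1)+1$, so the condition is preserved, and $1 \le k+1 \le \ell-1$ as long as $k \le \ell-2$) yields flag-accuracy of $\Ac^H$. The base case $\ell = 2$ is a rank-$2$ arrangement, hence trivially flag-accurate. For $r=2$ in part (i), since $r+k = 2+k \ge \ell$ fails only for small $\ell$, I would handle the boundary cases directly, and for the step $k = \ell - 1$ (where $\Ac^{k+1}_{\ell-1}$ degenerates to $\Ac^{\ell}_{\ell-1} = \Ac(G(r,1,\ell-1))$, a genuine reflection arrangement) I would appeal to Theorem \ref{thm:complex-flag} or to the known supersolvability/inductive freeness of $\Ac(G(r,1,m))$.

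For the converse in (ii) — that $r + k < \ell$ forces failure of flag-accuracy — I would argue via the exponents directly. When $r+k < \ell$ the special exponent $e_{\text{sp}} = (\ell-1)r - \ell + k + 1$ is strictly \emph{smaller} than $(\ell-2)r + 1$, so it is not the maximum; the true maximum exponent is $(\ell-2)r+1$, occurring as the next-to-special value. Flag-accuracy would require a single-hyperplane restriction whose exponents are obtained by deleting one copy of the largest exponent $(\ell-2)r+1$. Surveying all restriction types in Table \ref{table2} and their exponents via Proposition \ref{prop:intermediate}(i), I would show that no restriction $\Ac^H$ has exponent multiset equal to $\exp(\Ac)$ with a single copy of $(\ell-2)r+1$ removed — the key point being that each restriction either removes the special exponent or shifts $k$ in a way that changes several exponents at once, so the required matching of the top $\ell-1$ exponents cannot occur. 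This is the delicate part and mirrors the analogous exponent-comparison argument in \cite[Lem.~5.10]{mueckschroehrle:accurate}; the main obstacle will be organizing the finitely many restriction cases and carefully comparing the resulting sorted exponent tuples to rule out flag-accuracy, ensuring the non-monotone ordering of the exponents (when $r+k < \ell$) is tracked correctly throughout. I would also note that since flag-accuracy implies accuracy (Remark \ref{rem_MATRest-flag}(i)), I may freely import the accuracy-side computations from \cite[Lem.~5.10]{mueckschroehrle:accurate} to shorten the bookkeeping.
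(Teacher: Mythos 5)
The paper itself gives no written-out proof of this lemma (it asserts that the proofs of \cite[Lem.~5.10]{mueckschroehrle:accurate} ``carry over almost immediately''), so I assess your argument on its merits. Your framework is the right one: the threshold $r+k\ge\ell$ is exactly the condition for the special exponent $e_{\mathrm{sp}}=(\ell-1)r-\ell+k+1$ to be the maximum, flag-accuracy is tested via Lemma \ref{lem:flag-accuracy} by finding one hyperplane whose restriction deletes the top exponent, and the converse is correctly reduced to checking that none of the four restriction types in Table \ref{table2} realizes $\exp(\Ac)$ minus one copy of $(\ell-2)r+1$. That part of the proposal is sound.

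The forward direction, however, has a genuine error: the restriction you choose is the wrong one. For $H=\ker(x_i-\zeta x_j)$ with $k<i<j$, the special exponent of $\Ac^H\cong\Ac^{k+1}_{\ell-1}(r)$ is $(\ell-2)r-(\ell-1)+(k+1)+1=(\ell-2)r-\ell+k+3$ (not $+2$ as you wrote), while deleting the maximum $e_{\mathrm{sp}}$ from $\exp(\Ac)$ leaves $\{1,r+1,\dots,(\ell-2)r+1\}$; these multisets agree only when $k=\ell-2$. Concretely, for $\Ac^1_4(3)$ one has $\exp(\Ac)=(1,4,7,7)$ and $r+k=\ell$, but your restriction gives $\Ac^2_3(3)$ with $\exp=(1,4,6)\neq(1,4,7)$. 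The correct move when $r+k\ge\ell$ is the coordinate hyperplane $H=\ker(x_i)$, which by Table \ref{table2} yields $\Ac^{\ell-1}_{\ell-1}(r)=\Ac(G(r,1,\ell-1))$ with exponents exactly $(1,r+1,\dots,(\ell-2)r+1)$, and that reflection arrangement is flag-accurate (restricting repeatedly to coordinate hyperplanes truncates the exponents one at a time). Relatedly, your handling of $r=2$ is backwards: $2+k\ge\ell$ certainly does not hold ``automatically for $k\ge1$'' (take $k=1$, $\ell=5$), and the exceptional cases are all \emph{large} $\ell$, not small. In fact it is precisely when $r=2$ and $k\le\ell-2$ that your type-III restriction does work, because then $(\ell-2)\cdot 2-\ell+k+3=\ell+k-1=e_{\mathrm{sp}}$, so $\Ac^{k+1}_{\ell-1}(2)$ retains the special exponent and deletes exactly the true maximum $2\ell-3$; the induction $(\ell,k)\to(\ell-1,k+1)$ then terminates at a $G(2,1,m)$ arrangement. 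So the two restriction types must be used in the two regimes opposite to the way you have assigned them; as written, the argument proves neither (i) nor the forward implication of (ii).
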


\begin{theorem}	
	\label{thm:restr-flag}
	Let $G$ be an irreducible complex reflection group with reflection arrangement $\Ac(G)$. Let $\Ac = \Ac(G)^Y$, for $Y \in L(\Ac)\setminus \{V\}$. Then $\Ac$ is flag-accurate
	if and only if one of the following holds:
	\begin{itemize}
		\item[(i)] $G \ne G(r,r,\ell),  G_{34}$; 
		\item[(ii)] $G = G_{34}$ and $\Ac \ne \Ac(G)^H$; 
		\item [(iii)] $G = G(r,r,\ell)$
		 and either $r = 2$ or else $\Ac = \Ac_{\ell'}^k(r)$ with $r + k \ge \ell'$ for $r > 2$.
	\end{itemize}
\end{theorem}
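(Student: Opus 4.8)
The plan is to run the same case analysis by Shephard--Todd type that underlies the accuracy statement \cite[Thm.~5.12]{mueckschroehrle:accurate}, replacing each accuracy input by its flag-accurate counterpart. Two features make this feasible. First, flag-accuracy is compatible with products (Remark \ref{rem_MATRest-flag}(v)), so it suffices to decide flag-accuracy of each irreducible factor of the essentialization $(\Ac(G)^Y)^{\mathrm{ess}}$. Second, all restrictions of complex reflection arrangements are free, and their isomorphism types are classified: every essential irreducible restriction is either a reflection arrangement of a (smaller) complex reflection group, an intermediate arrangement $\Ac^k_{\ell'}(r)$, or --- for finitely many exceptional $G$ --- one of a short list of sporadic ``new'' arrangements. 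For the first type I would invoke Theorem \ref{thm:complex-flag} (flag-accuracy $\Leftrightarrow$ divisional freeness for reflection arrangements), and for the second Lemma \ref{lem:akl-flag}.

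First I would dispose of the infinite family $G = G(r,p,\ell)$. As arrangements one has $\Ac(G) = \Ac^0_\ell(r)$ when $p = r$ and $\Ac(G) = \Ac^\ell_\ell(r)$ when $p < r$, so only the cases $G(r,r,\ell)$ and $G(r,1,\ell)$ need examination. Using Proposition \ref{prop:intermediate} and the restriction table (Table \ref{table2}) one checks that every hyperplane restriction of the full monomial arrangement $\Ac^\ell_\ell(r)$ is again a full monomial arrangement $\Ac^{\ell-1}_{\ell-1}(r) = \Ac(G(r,1,\ell-1))$ --- restricting along $x_i - \zeta x_j$ collapses the remaining $x_i - \zeta^n x_j$ into a coordinate hyperplane --- so by iteration all essential restrictions of $\Ac(G(r,1,\ell))$ are reflection arrangements of type $G(r,1,m)$, flag-accurate by Theorem \ref{thm:complex-flag}; this is case (i). For $G(r,r,\ell)$ the same table shows the essential restrictions to be exactly the intermediate arrangements $\Ac^k_{\ell'}(r)$ with $1 \le k \le \ell'$ (the value $k = 0$ occurs only for $Y = V$, which is excluded, and the coordinate-hyperplane phenomenon again prevents products from appearing). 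Now Lemma \ref{lem:akl-flag} decides the matter: for $r = 2$ all such restrictions are flag-accurate, while for $r > 2$ the restriction $\Ac^k_{\ell'}(r)$ is flag-accurate if and only if $r + k \ge \ell'$ (the boundary $k = \ell'$ giving a flag-accurate reflection arrangement). This is precisely case (iii).

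It remains to treat the $34$ exceptional groups, and here I would argue by a finite, largely computational enumeration of restriction types, following the tables used in \cite{mueckschroehrle:accurate}. Each restriction that is a reflection or intermediate arrangement is handled by Theorem \ref{thm:complex-flag} or Lemma \ref{lem:akl-flag}; each sporadic ``new'' arrangement I would verify directly, either via Lemma \ref{lem:flag-accuracy} by exhibiting a hyperplane whose restriction is flag-accurate with the correct exponents, or, when the exponents have the shape $(1,e,\ldots,e)$, via Lemma \ref{lem:samexp}. The outcome is that every proper restriction of an exceptional reflection arrangement is flag-accurate with the single exception of the hyperplane restriction $\Ac(G_{34})^H$: this rank-$5$ arrangement is accurate and divisionally free, yet admits no flag-accuracy witness. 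Certifying this last failure is exactly the omission in \cite[Thm.~5.12]{mueckschroehrle:accurate} that the present statement corrects, and it accounts for cases (i) (all exceptional $G \ne G_{34}$) and (ii) ($G = G_{34}$, $\Ac \ne \Ac(G)^H$).

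The main obstacle is this exceptional analysis: it is not a single clean argument but a finite verification across all $34$ groups, demanding both the complete list of restriction types and, for the sporadic restrictions, an explicit search for (or exclusion of) a flag-accuracy witness. The most delicate point is the non-flag-accuracy of $\Ac(G_{34})^H$, where one must confirm that although divisional flags exist, \emph{every} candidate flag fails the exponent-matching condition $\exp(\Ac^{X_d}) = (e_1,\ldots,e_d)_\le$ at some stage --- that is, one must certify the \emph{absence} of a witness rather than produce one. A secondary, purely structural point to be nailed down is that in the monomial case the essential restrictions are genuinely single intermediate arrangements rather than products, so that Lemma \ref{lem:akl-flag} applies verbatim.
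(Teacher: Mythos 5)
Your proposal follows the same route the paper takes (which it only sketches, noting that the arguments of \cite[Thm.~5.12]{mueckschroehrle:accurate} carry over): reduce via product-compatibility to irreducible essential restrictions, handle the monomial series through Table \ref{table2} and Lemma \ref{lem:akl-flag}, handle restrictions that are again reflection arrangements via Theorem \ref{thm:complex-flag}, and finish the exceptional groups by a finite check whose only new content is the failure of flag-accuracy for $\Ac(G_{34})^H$. This matches the paper's intended proof, including the correction of the omission concerning $G_{34}$.
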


Next we describe all ind-flag-accurate reflection arrangements and all ind-flag-accurate restrictions of reflection arrangements.
Since all inductively free reflection arrangements are hereditarily inductively free, owing to \cite[Thm.~1.2]{hogeroehrle:inductivelyfree} likewise for all inductively free  restrictions of reflection arrangements, by \cite[Thm.~1.3]{amendhogeroehrle:indfree},
the ind-flag-accurate reflection arrangements are simply the flag-accurate ones among the inductively free ones and likewise for the ind-flag-accurate  restrictions of  reflection arrangements. Consequently, 
Theorems \ref{thm:complex-ind-flag} and  \ref{thm:restr-ind-flag}
 are immediate from Theorem \ref{thm:complex-flag} and 
\cite[Thm.~1.1]{hogeroehrle:inductivelyfree}, respectively,  Theorem \ref{thm:restr-flag} and 
\cite[Thm.~1.2]{amendhogeroehrle:indfree}.\\

Next we revisit \cite[Ex.~5.5]{mueckschroehrle:accurate}
which gives an instance of an accurate but non-flag-accurate arrangement.

\begin{example}
	\label{ex:star-but-not-divfree}
	In \cite{HogeRoehrle19_ConjAbeAF} a free arrangement $\Dc$ is constructed within a rank 5 restriction of the Weyl arrangement of type $E_7$ which is not divisionally free with exponents $(1,5,5,5,5)$ and defining polynomial
	\begin{align*}
		Q(\Dc) = \, &x_2(x_1+x_3-x_5)(2x_1+x_2+x_3)(2x_1+x_2+2x_3+x_4-x_5)\\
		&x_5(x_1+x_3)(x_2+x_5)(2x_1+x_2+2x_3+x_4)(2x_1+x_3-x_5)\\
		&(2x_1+2x_2+2x_3+x_4)(x_2+x_3+x_4)(x_1+x_2+x_3+x_4)\\
		&(x_3+x_4)(x_1+x_2+x_3)x_1(x_1+x_3+x_4)(2x_1+x_2+x_3-x_5)\\
		&(x_2+x_3+x_4+x_5)(x_1-x_5)(x_1-x_4-x_5)x_4.
	\end{align*}
	
	One can check that $\Dc $ is still accurate:
	Only the restriction to $H =\ker(x_4)$ has exponents $(1,5,5,5)$ and only the restrictions to
	\begin{align*}
		X_1 & = \ker(2x_1+x_2+x_3) \cap \ker(2x_1+x_2+2x_3+x_4-x_5), \\
		X_2 & = \ker(x_2+x_5) \cap \ker(x_2+x_3+x_4)
	\end{align*}
	are free with exponents $\exp(\Dc^{X_1}) = \exp(\Dc ^{X_2}) = (1,5,5)$. 
	However, neither of those flats is contained in $H = \ker(x_4)$.
	Note further that 
	$ Y_1 = X_1 \cap H$ and also $Y_2 = X_2 \cap H$ are rank 3 flats with 
	$\exp(\Dc^{Y_1}) = \exp(\Dc^{Y_2}) = (1,5)$. 
	In particular, the lack of a suitable rank $2$ flat lying between $Y_1$ (or $Y_2$)
	and $H$ prevents $\Dc$ from being divisionally free.
	In particular, $\Dc$ is not flag-accurate.
	Specifically, $\Dc$ is only $2$-accurate.
	
	It is also easily seen that the free but non-divisionally free rank $7$ arrangement
	$\Bc$ constructed in \cite{HogeRoehrle19_ConjAbeAF} as a certain subarrangement of the Weyl arrangement of type $E_7$ is also still accurate. But again it is not flag-accurate.	
	
	In \cite[\S 6]{CunMue19_MATfree}, Cuntz and M\"ucksch checked that both $\Dc$ and $\Bc$ fail to be MAT-free.
\end{example}

\subsection{Ideal arrangements}
\label{SSec_WeylIdeal}

For general information about Weyl groups and their root systems, see \cite{bourbaki:groupes}.

Let $W$ be a Weyl group acting as a reflection group on the real vector space $V = \RR^\ell$.
Let $\Phi := \Phi(W) \subseteq V^*$ be a (reduced) root system for $W$ and $\Phi^+ \subseteq \Phi$ a positive system
with simple roots $\Delta \subseteq \Phi^+$.
The \emph{rank} of $W$ respectively $\Phi$ is $\rk(W) := \rk(\Phi) := \dim (\RR\Phi)$.
We have $\Phi^+ = (\sum_{\alpha \in \Delta} \ZZ_{\geq 0} \alpha) \cap \Phi$, i.e.\
if $\beta \in \Phi^+$ then there are integers $n_\alpha \in \ZZ_{\geq0}$ such that
$\beta = \sum_{\alpha \in \Delta} n_\alpha \alpha$.
Then the \emph{height} of $\beta$ is defined by
$$
\h(\beta) := \sum_{\alpha\in\Delta}n_\alpha.
$$

The partial order $\leq$ on $\Phi^+$ is defined by
$$
\beta \leq \gamma \, :\iff \, \gamma-\beta \in \sum_{\alpha \in \Delta} \ZZ_{\geq 0}\alpha.
$$

A subset $\Ic \subseteq \Phi^+$ is an \emph{ideal} if it is a (lower) order ideal 
in the poset $(\Phi^+,\leq)$, i.e.\ for $\alpha \in \Ic$ and $\beta \in \Phi^+$
with $\beta \leq \alpha$, we have $\beta \in \Ic$.

The \emph{Weyl arrangement} $\Ac(W)$
is the hyperplane arrangement in $V$ defined by 
$$
\Ac = \Ac(W) := \{\ker(\beta) \mid \beta \in \Phi^+ \}.
$$

\begin{definition}[{\cite{ABCHT16_FreeIdealWeyl}}]
	\label{DEF_ideal}	
	If $\Ic \subseteq \Phi^+$ is an order ideal then 
	\[
	\Ac_\Ic := \{ \ker(\beta) \mid \beta \in \Ic \} \subseteq \Ac(W)
	\]
	is called an 
	\emph{ideal (sub)arrangement}.
\end{definition}

We denote by $m_\Ic$ the maximal height of a root in $\Ic$.
For $1 \leq k \leq m_\Ic$,
let $$\pi_{k,\Ic} := \{ \ker(\alpha) \mid \alpha \in \Ic, \h(\alpha) = k \}$$ 
and let 
\[
\pi_\Ic := \left(\pi_{1,\Ic}| \cdots | \pi_{m_\Ic,\Ic}\right),
\] %
be the \emph{root-height partition} of $\Ac_\Ic$. 
Set $\pi_{j,\Ic} = \emptyset$ for $j > m_\Ic$.

Next we recall the principal result from \cite[Thm.~1.1]{ABCHT16_FreeIdealWeyl} (Ideal-free Theorem) in our terminology.

\begin{theorem}
	\label{thm:IdealFree}
	Let $\Ac_{\Ic} \subseteq \Ac(W)$ be the ideal subarrangement of the Weyl arrangement $\Ac(W)$
	for an order ideal $\Ic \subseteq \Phi^+$. 
	Then $\Ac_\Ic$ is MAT-free with MAT-partition
	$\pi_\Ic = (\pi_{1,\Ic} |\cdots| \pi_{m_\Ic,\Ic} )$ and exponents 
	\[
	\exp\left(\Ac_\Ic\right) = \left(e_1^{\Ic},\ldots,e_{\ell}^{\Ic}\right),
	\] 
	where
	\begin{align*}
		e_r^{\Ic} = |\{ j \mid |\pi_{j,\Ic}| \geq \ell-r+1\}|.
	\end{align*}
\end{theorem}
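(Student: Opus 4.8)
The plan is to prove the statement by induction on the height, applying the Multiple Addition Theorem (Theorem \ref{Thm_MAT}) exactly once at each level. The exponent formula is then not a separate task: once we know that the root-height partition $\pi_\Ic$ is an MAT-partition, the expression for $e_r^\Ic$ is precisely the dual-partition description of the exponents of an MAT-free arrangement in Remark \ref{rem:MAT-free}(ii). So the entire content is to check that each passage from $\Ac_k := \{\ker(\beta) \mid \beta \in \Ic,\ \h(\beta) \le k\}$ to $\Ac_{k+1} = \Ac_k \cup \pi_{k+1,\Ic}$ is a genuine MAT-step, i.e.\ that conditions (1)--(3) of Definition \ref{Def_MATfree} hold for every $0 \le k \le m_\Ic - 1$.

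First I would record the setup and the two easier conditions. Since $\Ic$ is an order ideal, the presence of a root of height $k+1$ forces roots of each height $1, \ldots, k$ into $\Ic$; hence $(|\pi_{1,\Ic}|, \ldots, |\pi_{k,\Ic}|)$ has $k$ nonempty parts and, by the inductive hypothesis that $\Ac_k$ is MAT-free, its largest exponent equals $k$. Condition (1) asks that the roots of height $k+1$ lying in $\Ic$ be linearly independent; this rests on the root-system fact that positive roots of a common height are linearly independent, so a fortiori those inside $\Ic$ are, giving $\rk(\pi_{k+1,\Ic}) = |\pi_{k+1,\Ic}|$. For condition (2) one needs a vector $v$ in $X := \bigcap_{\alpha \in \pi_{k+1,\Ic}} \ker(\alpha)$ with $\beta(v) \ne 0$ for every $\beta \in \Ic$ of height $\le k$; as there are only finitely many such $\beta$ and none of the corresponding hyperplanes contains $X$, a generic point of $X$ works.

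The crux is condition (3): for each $\alpha \in \Ic$ with $\h(\alpha) = k+1$ and $H = \ker(\alpha)$ one must show
$$|\Ac_k| - \bigl|(\Ac_k \cup \{H\})^{H}\bigr| = k.$$
I would read the left-hand side as a collision count. Two hyperplanes $\ker(\beta), \ker(\gamma) \in \Ac_k$ restrict to the same flat of $(\Ac_k\cup\{H\})^{H}$ exactly when $\beta, \gamma$ are coplanar with $\alpha$, i.e.\ lie in a common rank-two subsystem $\Psi = \Phi \cap \operatorname{span}(\alpha,\beta)$ through $\alpha$. Grouping the roots of $\Ic$ of height $\le k$ by the plane they span with $\alpha$, the displayed difference becomes a sum of $(c_\Psi - 1)$ over planes carrying at least one such root, where $c_\Psi$ counts the roots of $\Ic$ of height $\le k$ inside $\Psi$. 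The key observation is that a plane contributes a positive amount only when $\alpha$ is non-minimal in the root order of $\Psi$; in that case the rank-two poset structure (two minimal roots beneath a chain) forces every root of $\Psi$ of smaller height to lie strictly below $\alpha$, hence to be $<\alpha$ in $\Phi$ and so automatically in $\Ic$. Consequently $c_\Psi$, and thus the whole sum, is independent of the particular ideal $\Ic$ and equals its value in the full positive system. What remains is the purely root-theoretic identity that, summed over all irreducible rank-two subsystems through $\alpha$, these contributions total exactly $\h(\alpha)-1 = k$. This height identity is the real obstacle; I expect to establish it by reducing to the rank-two types $A_2, B_2, G_2$ (orthogonal pairs contribute nothing) and checking that the resulting counts telescope against $\h(\alpha)$.

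With (1)--(3) in hand, Theorem \ref{Thm_MAT} applies: $q = |\pi_{k+1,\Ic}|$ hyperplanes are added to the free arrangement $\Ac_k$ of top exponent $e = k$, and the theorem outputs that $\Ac_{k+1}$ is free with its top $q$ exponents raised to $(k+1)^q$ (the inequality $q \le p$ for the multiplicity $p$ of the exponent $k$ being part of the conclusion). Iterating from $\Ac_0 = \varnothing_\ell$ up to $\Ac_{m_\Ic} = \Ac_\Ic$ exhibits $\pi_\Ic$ as an MAT-partition, and the accumulated exponents are, level by level, exactly the dual-partition count $e_r^\Ic = |\{ j \mid |\pi_{j,\Ic}| \ge \ell - r + 1\}|$ of Remark \ref{rem:MAT-free}(ii), completing the proof.
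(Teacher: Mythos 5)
The paper does not actually prove this statement: Theorem \ref{thm:IdealFree} is quoted as the Ideal-free Theorem of \cite[Thm.~1.1]{ABCHT16_FreeIdealWeyl}, so there is no in-paper proof to compare against. Your outline does follow the strategy of the original proof: induct on height, verify conditions (1)--(3) of Definition \ref{Def_MATfree} for the root-height partition, and read the exponents off the dual partition via Remark \ref{rem:MAT-free}(ii). Your reductions for condition (1) (positive roots of a common height are linearly independent) and for the collision count in condition (3) (group the roots of $\Ic$ of height $\le k$ by the rank-two subsystem they span with $\alpha$, and observe that the ideal condition forces each local count to agree with its value in the full positive system) are the right ones.

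However, the proposal has a genuine gap precisely where the theorem's difficulty lies. You isolate the key height identity --- that the local collision contributions, summed over all irreducible rank-two subsystems through $\alpha$, equal $\h(\alpha)-1$ --- and then write that you \emph{expect} to establish it by a rank-two reduction and a telescoping count. That identity is the entire content of condition (3); it is the technical heart of \cite{ABCHT16_FreeIdealWeyl} (their local--global formula relating the height of $\alpha$ to the heights of $\alpha$ inside the rank-two subsystems containing it), and without a proof of it the argument is an outline rather than a proof. A second, smaller gap is condition (2): you claim that no hyperplane $\ker(\beta)$ with $\beta \in \Ic$, $\h(\beta)\le k$, contains $X=\bigcap_{\alpha\in\pi_{k+1,\Ic}}\ker(\alpha)$, and then invoke genericity. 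But the genericity step is the trivial part; the claim itself is equivalent to $\beta\notin\operatorname{span}\{\alpha\in\Ic \mid \h(\alpha)=k+1\}$, which does not follow from finiteness alone and requires its own root-theoretic argument.
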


Theorems \ref{Thm_MATRest} and \ref{thm:IdealFree} and the following theorem are the basis for 
Conjecture \ref{conj:Ideal-flag}.

\begin{theorem}
	\label{thm:IdealUpToRk8FA}
	Ideal arrangements of rank at most $8$ are  flag-accurate.
\end{theorem}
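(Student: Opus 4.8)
The plan is to reduce flag-accuracy of a rank-$\le 8$ ideal arrangement $\Ac_\Ic$ to a finite verification that exploits the MAT-freeness guaranteed by Theorem \ref{thm:IdealFree}. By Remark \ref{rem_MATRest-flag}(v), flag-accuracy is compatible with products, so I may assume $\Ac_\Ic$ is irreducible, hence that $W$ is of one of the irreducible types; since the rank is bounded by $8$, only finitely many types $W$ occur (namely $A_\ell$ through $A_8$, $B_\ell/C_\ell$ up to rank $8$, $D_\ell$ up to rank $8$, and the exceptional types $G_2, F_4, E_6, E_7, E_8$). For each such $W$ there are only finitely many order ideals $\Ic \subseteq \Phi^+$, so in principle the statement is a finite (if large) check. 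The theoretical content is to organize this check so that the classical-type families ($A$, $B/C$, $D$) are handled uniformly, leaving only the exceptional types to be confirmed computationally.

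First I would dispose of the low-complexity cases via the tools already assembled in Section \ref{sec:sc-FA}. By Remark \ref{rem:MAT-free}(ii) the exponents of $\Ac_\Ic$ are the dual block sizes of $\pi_\Ic$, and by Remark \ref{rem:MAT-free}(iii) the block sizes strictly decrease at the first step and weakly decrease thereafter. Whenever some MAT-step $\pi_{k,\Ic}$ has size at least $2$ and all later steps have size $1$, Corollary \ref{coro:MATRk4-FA} (in rank $\le 4$) or, more generally, the restriction statement of Theorem \ref{thm:MATRestGen} combined with Lemma \ref{lem:flag-accuracy} yields flag-accuracy directly: Theorem \ref{thm:MATRestGen} produces, inside a single MAT-step block, linearly independent hyperplanes $H_1,\dots,H_q$ whose successive intersections $X_i = \cap_{j\le i} H_j$ restrict $\Ac_\Ic$ to free arrangements realizing the truncated exponent tuples $(e_1,\dots,e_{\ell-i})_\le$. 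This is precisely the criterion of Lemma \ref{lem:flag-accuracy}, so it peels off the top block as a partial flag; one then recurses on the restricted arrangement $\Ac_\Ic^{X_q}$, which is again free and whose exponents are the required prefix.

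The key structural step is therefore an inductive descent on the flag. Starting from the top-dimensional witness hyperplanes inside the last nontrivial MAT-block, I build the flag $X_1 \supseteq X_2 \supseteq \cdots$ one flat at a time, at each stage checking that the restriction $\Ac_\Ic^{X_i}$ is free with exponents equal to the appropriate prefix $(e_1,\dots,e_{\ell-i})_\le$, and crucially that the next hyperplane can be chosen inside $\Ac_\Ic^{X_i}$ so that the flats remain a flag (this is the coaccuracy bookkeeping of Definition \ref{Def_TF}(iv)--(v)). For the classical types the restrictions $\Ac_\Ic^{X}$ to flats spanned by coordinate-type hyperplanes remain ideal-like or intermediate arrangements whose exponents are known explicitly, so the descent can be tracked by hand; this is where one invokes the uniform behavior of type $A/B/C/D$ ideals. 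The main obstacle — and the reason the theorem is stated only up to rank $8$ rather than for all ranks — is that the class of ideal arrangements is not closed under restriction (noted after Theorem \ref{Thm_HtRestIdeal}), so the restricted arrangement $\Ac_\Ic^{X_i}$ need not itself be an ideal arrangement, and its freeness and exponents cannot be read off from Theorem \ref{thm:IdealFree}. Consequently the descent must appeal to freeness of non-ideal restrictions, which in rank $\le 8$ is verified case-by-case (ultimately by computer for the exceptional types and for the flats where no uniform argument applies), whereas no such uniform mechanism is currently available in arbitrary rank — this gap is exactly what Conjecture \ref{conj:Ideal-flag} records.
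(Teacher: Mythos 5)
Your proposal follows essentially the same route as the paper: dispose of rank $\le 4$ via Corollary \ref{coro:MATRk4-FA}, use the last MAT-block of size $\ge 2$ (followed only by singleton blocks) together with Theorem \ref{thm:MATRestGen} and the mechanism of Corollary \ref{cor:MAT-stepRestriction} to build a partial flag realizing the correct exponent prefixes, recurse on the restriction to the intersection of that block, and fall back on a finite computer verification where this heuristic fails (which the paper reports happens only for a few hundred ideals in types $E_6$, $E_7$, $E_8$). The only divergence is your suggestion that the classical types can be tracked uniformly by hand, which the paper does not attempt (it checks all of rank $5$--$8$ by machine), but since the rank bound makes everything a finite check this does not affect correctness.
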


\begin{proof}
	Firstly, all ideal arrangements of rank at most $4$ are flag-accurate, by Theorem \ref{thm:IdealFree} and Corollary \ref{coro:MATRk4-FA}.
	
	For ideal arrangements of rank $5$ up to $8$, we used a computer to check flag-accuracy.
	The computation can be considerably simplified in a large number of cases by using Theorem \ref{thm:MATRestGen} as follows.
	In the root height partition (of an ideal arrangement of rank at least $5$) 
	there is always a block of size at least $2$ which is either the last block of the partition
	or one only followed by blocks of size $1$.
	Consequently, analogous to the proof of Corollary \ref{coro:MATRk4-FA},
	a partial flag up to the restriction of all the hyperplanes in this block can always be built,
	realizing subsets of the exponents in correct order by Theorem \ref{thm:MATRestGen}.
	Now, in most of the cases it turns out that the restriction to the intersection of all the hyperplanes in this block
	is itself flag-accurate, yielding the flag-accuracy of the whole ideal arrangement.
	Unfortunately, there are still cases of ideals in the root systems of type $E_6,E_7,E_8$ where this heuristic doesn't work.
	Hence a hard computer check in these cases (a few hundred) is unavoidable but readily manageable.
\end{proof}

\begin{remark}
	\label{rem:IdealUpToRk6IFA}
	Concerning Conjecture \ref{conj:Ideal-ind-flag},   we 
	were able to confirm with the aid of a computer that all 
	ideal arrangements $\Ac_\Ic$ of rank up to $6$ are indeed  ind-flag-accurate.
\end{remark}

\subsection{Crystallographic arrangements}

Crystallographic arrangements were first introduced and studied by Cuntz
and Heckenberger in the setting of finite Weyl groupoids,
culminating in a complete classification in \cite{CuntzHeckenberger2015_WeylGroupoids},
see also \cite{Cunty2011_CrystArr}.
They are generalization of Weyl arrangements or even of restrictions of Weyl arrangements
as every Weyl arrangement is crystallographic and the class of crystallographic arrangements
is closed under taking restrictions, cf.\ \cite[Prop.~5.3]{BC12}.

Let us recall the definition of a crystallographic arrangement.

\begin{definition}
	\label{def:CrystArr}
	Let $\Ac$ be a hyperplane arrangement in $V\cong \RR^\ell$.
	Denote the \emph{chambers} of $\Ac$, i.e.\ the connected components of $V \setminus \left(\cup_{H \in \Ac} H\right)$
	by $\Kc(\Ac)$.
	
	For $C \in \Kc(\Ac)$ define the \emph{walls} of $C$ as
	$$
	\Wc^C := \{ H \in \Ac \mid \langle H\cap\overline{C} \rangle = H \}.
	$$
	
	If $\Phi \subseteq V^*$ is a finite set such that $\Ac = \{ \ker(\alpha) \mid \alpha \in \Phi\}$
	and $\RR\alpha \cap \Phi = \{\pm \alpha \}$ for all $\alpha \in \Phi$
	then $\Phi$ is called a \emph{(reduced) root system} for $\Ac$.
	
	If $\Phi$ is a root system for $\Ac$, then for each $C$ set
	$$
	B_\Phi^C := \{ \alpha \in \Phi \mid \ker(\alpha) \in \Wc^C \text{ and } \alpha^{-1}(\RR_{>0})\supseteq  C \} \subseteq \Phi.
	$$
	
	If every $C \in \Kc(\Ac)$ is a simplicial cone 
	and there exists a root system $\Phi \subseteq V^*$ for $\Ac$ such that 
	$$
	\Phi \subseteq \sum_{\alpha \in B_\Phi^C} \ZZ\alpha \text{ for all }C \in \Kc(\Ac),
	$$
	then $\Ac$ is called \emph{crystallographic} and $\Phi$ a \emph{crystallographic root system}
	for $\Ac$.
\end{definition}

Crystallographic arrangements were classified by Cuntz and Heckenberger \cite{CuntzHeckenberger2015_WeylGroupoids}.
Perusing at the classification one observes that
each irreducible crystallographic arrangement of rank at least $4$ and each member of the infinite series
is a restriction of a Weyl arrangement, see  \cite[Thm.~3.7]{CuntzLenter2017_SimplCpxNichols}.
Hence, to derive the following result, 
by Theorem \ref{thm:restr-flag}, it suffices to check the finite list of sporadic 
irreducible rank $3$ arrangements, cf.\  \cite[App.~B.1]{CuntzHeckenberger2015_WeylGroupoids}.

\begin{theorem}
	\label{thm:CrystallographicFA}
	Crystallographic arrangements are ind-flag-accurate.
\end{theorem}

\begin{proof}
	By \cite[Cor.~5.15]{BC12}, all restrictions of crystallographic arrangements are inductively free.
	As mentioned above, in view of Theorem \ref{thm:restr-flag}, it suffices to check the 
	finite list of sporadic irreducible crystallographic arrangements of rank $3$.
	
	Let $\Ac$ be one of the $50$ sporadic arrangements listed in \cite[App.~B.1]{CuntzHeckenberger2015_WeylGroupoids}
	and set $\exp(\Ac) = (1,e_1,e_2)_{\leq}$.
	Then a straightforward calculation shows that there is an $H \in \Ac$,
	such that $\exp(\Ac^H) = (1,e_1)$.
	Hence, $\Ac$ is ind-flag-accurate, by Lemma \ref{lem:flag-accuracy}.
\end{proof}

\section{Extended Shi, extended Catalan, and ideal-Shi arrangements}
\label{Sec_IdealShiCatalan}
Recall the notation from Section \ref{SSec_WeylIdeal}. 
In addition let $$h := m_{\Phi^+}+1$$ be the \emph{Coxeter number} of $W$.

Embed $V \subseteq V' := \RR^{\ell+1}$ and let $z \in (V')^* \setminus \{0\}$
such that $V = \ker(z) =: H_z$, i.e.\ $z$ corresponds to the $(\ell+1)$-st coordinate.
For  $\alpha \in  \Phi$ and $j \in \ZZ$ define the hyperplanes
$$H_\alpha^j := \ker(\alpha-j) \subseteq V \quad \text{and} \quad \cc H_\alpha^j := \ker(\alpha-jz) \subseteq V'.$$

When $j=0$, we simply write $H_\alpha$ for $H_\alpha^0$. 
For integers $a\le b$, let $[a,b]:=\{n\in\ZZ\mid a \le n\le b\}$. 
The \emph{deformed Weyl arrangement} of $\Phi$ is defined by 
$$\Ac_\Phi^{[a,b]} := \{ {H}_{\alpha}^j \mid \alpha \in \Phi^+, \,j \in [a,b]\}.$$  
In particular, $\Ac_\Phi^{[0,0]}$ is identical to the Weyl arrangement $\Ac(W)$ defined above. 
When $[a,b] \ne [0,0]$, the cone $\cc\Ac_\Phi^{[a,b]}$ over the non-central arrangement $\Ac_\Phi^{[a,b]}$ is given by 
$$\cc\Ac_\Phi^{[a,b]} = \{\cc {H}_{\alpha}^j \mid \alpha \in \Phi^+, \,j \in [a,b]\} \cup \{H_z \}.$$  

The study of the combinatorics of deformations of Weyl arrangements was initiated  by Athanasiadis in \cite{Ath96_CharPolySubspaceArr}; among them are the so called \emph{extended Shi arrangements}.
We are interested in the following generalization, investigated by Abe and Terao in \cite{AbeTer15_IdealShi}, the so called \emph{ideal-Shi arrangements}.

\begin{definition}
	\label{Def_IdealShiCatalan}
	For $m \in \ZZ_{>0}$, the \emph{extended Shi arrangement} $\Shi^m$
	is defined as
	\[
	\Shi^m= \Shi^m (\Phi) := \Ac_\Phi^{[1-m,m]}.
	\]
	
	For $m \in \ZZ_{>0}$ and $\Ic \subseteq \Phi^+$ an ideal,  
	the \emph{ideal-Shi arrangement} $\Shi^m_\Ic$ 
	is defined as
	\[
	\Shi^m_\Ic   = \Shi^m_\Ic(\Phi) := \Shi^m \cup \  \left\{H_\beta^{-m} \mid \beta \in \Ic \right\}.
	\]
	
	In the special case when $\Ic = \Phi^+$ 
	we obtain the \emph{extended Catalan  arrangement} $$\Cat^m = \Cat^m  (\Phi) := \Shi_{\Phi^+}^m.$$
\end{definition}

In \cite{Yos04_CharaktFree}, Yoshinaga proved the following remarkable theorem, confirming a conjecture
by Edelman and Reiner \cite[Conj.~3.3]{ER96_FreeArrRhombicTilings}.

\begin{theorem}[{\cite[Thm.~1.2]{Yos04_CharaktFree}}]
\label{thm:Yoshinaga}
	Let $m \in \ZZ_{>0}$. 
	Then the cone  over the extended Shi arrangement $\Shi^m$ is free with exponents
	\[
	\exp\left(\cc\Shi^m\right) = (1,(mh)^\ell),
	\]
	and the cone over the extended Catalan arrangement $\Cat^m$  is free with exponents
	\[
	\exp\left(\cc\Cat^m\right) = (1,mh+e_1,\ldots,mh+e_\ell),
	\]
	where $(e_1,\ldots,e_\ell) = \exp(\Ac(W))$.
\end{theorem}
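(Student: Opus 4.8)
The plan is to apply Yoshinaga's freeness criterion (Theorem~\ref{thm:Yoshinaga's criterion}) to the cones $\cc\Shi^m$ and $\cc\Cat^m$ with respect to the hyperplane at infinity $H_z$. First I would identify the Ziegler restriction $\left((\cc\Ac)^{H_z}, \mathbf{m}^{H_z}\right)$. Since every hyperplane $\cc H_\alpha^j = \ker(\alpha - jz)$ meets $H_z = \ker(z)$ in the same flat $\ker(\alpha)\cap H_z$, the simple restriction $(\cc\Shi^m)^{H_z}$ (resp.\ $(\cc\Cat^m)^{H_z}$) is precisely the Weyl arrangement $\Ac(W)$ inside $H_z \cong V$. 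Counting the hyperplanes $\cc H_\alpha^j$ lying over a fixed $\ker(\alpha)$ -- these are indexed by $j \in [1-m,m]$ (resp.\ $j \in [-m,m]$) -- one finds that the Ziegler multiplicity $\mathbf{m}^{H_z}(X) = |\Ac_X|-1$ equals the constant $2m$ (resp.\ $2m+1$) on every wall. Thus the Ziegler restriction is the Coxeter multiarrangement $(\Ac(W),\mathbf{k})$ with constant multiplicity $\mathbf{k}\equiv 2m$ (resp.\ $\mathbf{k}\equiv 2m+1$).

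The decisive input is the freeness of this constant-multiplicity Coxeter multiarrangement together with the determination of its exponents: $(\Ac(W),2m)$ is free with $\exp = ((mh)^\ell)$, while $(\Ac(W),2m+1)$ is free with $\exp = (mh+e_1,\ldots,mh+e_\ell)$, where $(e_1,\ldots,e_\ell) = \exp(\Ac(W))$. This is where the genuine difficulty lies, and I expect it to be the main obstacle: it cannot be reached by Addition--Deletion and instead requires the differential-geometric machinery of the primitive derivation and the $\nabla$-operator acting on the module of invariant logarithmic derivations. A consistency check on exponents is reassuring: using $|\Phi^+| = \ell h/2$, the total multiplicity $2m|\Phi^+| = mh\ell$ matches the sum of $\ell$ copies of $mh$; and $(2m+1)|\Phi^+| = \ell mh + |\Phi^+|$ matches $\sum_i(mh+e_i)$ because $\sum_i e_i = |\Phi^+|$.

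With the Ziegler restriction understood, it remains to verify that $\cc\Shi^m$ (resp.\ $\cc\Cat^m$) is locally free in codimension three along $H_z$, i.e.\ that the localization $\Ac_X$ is free for every $X \subseteq H_z$ with $\codim_{V'}(X) = 3$. Such an $X$ has codimension $2$ inside $H_z$, hence corresponds to a rank-$2$ flat of $\Ac(W)$, i.e.\ to a rank-$2$ root subsystem of type $A_1\times A_1$, $A_2$, $B_2$, or $G_2$; the localization $\Ac_X$ is then the cone-at-infinity over the rank-$2$ deformed arrangement attached to that subsystem together with $H_z$. Each of these is a rank-$3$ central arrangement drawn from a short finite list, so I would dispatch their freeness by direct verification (via Addition--Deletion, or by combining the unconditional freeness of rank-$\le 2$ multiarrangements with a small rank-$3$ check). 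This step is technical but routine.

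Finally, since $\dim V' = \ell+1 \ge 3$ for $\ell \ge 2$, Theorem~\ref{thm:Yoshinaga's criterion} applies: freeness of the Ziegler restriction with exponents $(d_2,\ldots,d_\ell)$ together with local freeness in codimension three yields that the cone is free with $\exp = (1,d_2,\ldots,d_\ell)$. This gives exactly $(1,(mh)^\ell)$ for $\cc\Shi^m$ and $(1,mh+e_1,\ldots,mh+e_\ell)$ for $\cc\Cat^m$, as claimed. The remaining case $\ell = 1$ is trivial and checked by hand.
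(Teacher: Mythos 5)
The paper does not prove this statement; it is quoted verbatim from Yoshinaga \cite{Yos04_CharaktFree}, and your outline reproduces his argument faithfully: Ziegler restriction of the cone onto $H_z$ yields the Coxeter multiarrangement with constant multiplicity $2m$ (resp.\ $2m+1$), whose freeness and exponents come from the primitive-derivation machinery, and the criterion of Theorem \ref{thm:Yoshinaga's criterion} closes the argument once local freeness in codimension three along $H_z$ is checked on the rank-$2$ subsystems. You have also correctly located the genuine difficulty (the multi-Coxeter freeness, which is not reachable by Addition--Deletion) and the routine parts, so the proposal matches the source's approach.
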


We need the following special case of a result by Abe and Terao (cf.~\cite[Thm.~1.6]{AbeTer15_IdealShi}).
\begin{proposition}
	\label{Prop_Shi-SimplRoots}
	Let $\Sigma \subseteq \Delta$ be a subset of the simple roots of $\Phi^+$.
	Then the cone of the arrangement 
	\[
	\Shi^m_{-\Sigma} := \Shi^m \setminus \left\{H^m_\alpha \mid \alpha \in \Sigma\right\}
	\]
	is free with exponents given by
	\[
	\exp\left(\cc\Shi^m_{-\Sigma}\right) = \left(1,(mh-1)^{|\Sigma|},(mh)^{\ell-|\Sigma|}\right).
	\] 
\end{proposition}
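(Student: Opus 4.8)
The plan is to realize $\cc\Shi^m_{-\Sigma}$ as a member of the family of free deformed Weyl arrangements treated by Abe and Terao and to read off freeness and exponents from \cite[Thm.~1.6]{AbeTer15_IdealShi}. Concretely, for each positive root one prescribes an interval of admissible levels $j$: for $\alpha\in\Phi^+\setminus\Sigma$ this interval is $[1-m,m]$, while for $\alpha\in\Sigma$ it is shortened at the top to $[1-m,m-1]$. Since $\Sigma$ consists of \emph{simple} roots, these are exactly the minimal elements of $(\Phi^+,\le)$, so trimming the top level only at such roots respects the ideal-type monotonicity condition underlying the Abe--Terao freeness result. Once this identification is in place, their theorem yields that $\cc\Shi^m_{-\Sigma}$ is free, and it remains only to pin down the exponents.

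For the exponents I would argue by bookkeeping against the base case. By Theorem~\ref{thm:Yoshinaga} the full cone $\cc\Shi^m$ is free with $\exp(\cc\Shi^m)=(1,(mh)^\ell)$. Trimming the maximal level at one simple root lowers precisely one of the $\ell$ exponents equal to $mh$ by one, so removing the $|\Sigma|$ top hyperplanes $\cc H_{\alpha}^{m}$ ($\alpha\in\Sigma$) produces $\bigl(1,(mh-1)^{|\Sigma|},(mh)^{\ell-|\Sigma|}\bigr)$, as claimed. This is consistent with the Shi--Catalan interpolation of Theorem~\ref{thm:Yoshinaga}: moving toward the boundary of the range by cutting one extremal level changes a single exponent by one.

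A self-contained alternative, using only the tools recalled above, is iterated Addition--Deletion from the free arrangement $\cc\Shi^m$. Ordering $\Sigma=\{\alpha_1,\dots,\alpha_s\}$ and setting $\Ac_k:=\cc\Shi^m\setminus\{\cc H_{\alpha_1}^{m},\dots,\cc H_{\alpha_k}^{m}\}$, I would delete $\cc H_{\alpha_k}^{m}$ at the $k$-th stage and invoke Theorem~\ref{thm:AD}: knowing that $\Ac_{k-1}$ is free, freeness of the restriction $\Ac_{k-1}^{\cc H_{\alpha_k}^{m}}$ with the complementary exponents would force $\Ac_k$ to be free with one more $mh$ dropped to $mh-1$. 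The main obstacle is precisely this restriction computation. The difficulty is that on $\cc H_{\alpha_k}^{m}$ the already-deleted hyperplanes $\cc H_{\alpha_1}^{m},\dots,\cc H_{\alpha_{k-1}}^{m}$ leave traces that are \emph{not} reproduced by any surviving Shi hyperplane: any functional reproducing the trace of $\cc H_{\alpha_i}^{m}$ would have the form $\beta-jz$ with $\beta$ a positive root supported on $\{\alpha_i,\alpha_k\}$ and $j=(c+\nu)m$ for its coordinates $c\ge 1,\ \nu\ge 0$, forcing either $\beta=\alpha_i$ (the removed one) or $j\ge 2m$, which lies outside the admissible range $[1-m,m]$. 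Hence the restriction genuinely shrinks at every stage and must be identified afresh, plausibly with a Shi-type arrangement of smaller rank attached to the subsystem generated by $\Delta\setminus\{\alpha_k\}$, and its exponents tracked through the induction. Organizing this bookkeeping uniformly across all stages is exactly what the Abe--Terao machinery accomplishes, which is why invoking \cite{AbeTer15_IdealShi} is the efficient route.
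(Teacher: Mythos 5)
Your primary route---realizing $\cc\Shi^m_{-\Sigma}$ as a special case of Abe--Terao's Theorem~1.6 by trimming the top level precisely over $\Sigma$, which is a lower ideal since simple roots are the minimal elements of $(\Phi^+,\le)$---is exactly how the paper handles this: the proposition is stated without proof as a direct instance of that result, whose exponent formula (subtracting the exponents of the ideal arrangement $\Ac_\Sigma$, namely $(0^{\ell-|\Sigma|},1^{|\Sigma|})$, from $(1,(mh)^{\ell})$) already yields $(1,(mh-1)^{|\Sigma|},(mh)^{\ell-|\Sigma|})$ without your heuristic bookkeeping. Your Addition--Deletion sketch is honestly flagged as incomplete and is not needed for the argument.
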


We recall the main result from \cite{mueckschroehrle:accurate} for this class of arrangements.

\begin{theorem}
	[{\cite[Thm.~1.8]{mueckschroehrle:accurate}}]
	\label{thm:idealshi}
	The cones of the extended Shi arrangements $\cc\Shi^m$ and ideal-Shi arrangements $\cc\Shi^m_\Ic$ are accurate.
	In particular, the cones over the extended Catalan arrangements $\cc\Cat^m$ are accurate.
\end{theorem}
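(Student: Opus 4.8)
The plan is to verify accuracy directly from Definition \ref{Def_TF}(ii): for each family I must produce, for every dimension $d$, a flat of dimension $d$ whose restriction is free with exponents equal to the length-$d$ initial segment (in increasing order) of the exponents of the cone. Freeness and the relevant exponents are already at hand --- $\exp(\cc\Shi^m) = (1,(mh)^\ell)$ and $\exp(\cc\Cat^m) = (1,mh+e_1,\dots,mh+e_\ell)$ by Theorem \ref{thm:Yoshinaga}, and the corresponding statement for $\cc\Shi^m_\Ic$ from the Abe--Terao theory --- so the entire content is the construction of the witnessing flats.

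For the extended Shi arrangement $\cc\Shi^m$ I would argue with Abe's criterion, Theorem \ref{thm:extshi-FA-cri}, applied in rank $\ell+1$ to the $\ell$ top hyperplanes $H_i := \cc H_{\alpha_i}^m$ attached to the simple roots $\alpha_i \in \Delta$, taking $d_1 = \dots = d_\ell = mh$. Hypothesis (i) is precisely Proposition \ref{Prop_Shi-SimplRoots} for $\Sigma = \{\alpha_i\}$, which gives $\exp(\cc\Shi^m \setminus \{H_i\}) = (1,(mh-1),(mh)^{\ell-1})$, and hypothesis (ii) is the case $\Sigma = \Delta$, giving $\exp(\cc\Shi^m_{-\Delta}) = (1,(mh-1)^\ell)$. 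The criterion then yields that $\cc\Shi^m$ is even flag-accurate, with explicit witness $X_i = \bigcap_{j=i}^{\ell} \cc H_{\alpha_j}^m$, and a fortiori accurate. This already anticipates the stronger Theorem \ref{Thm_ResShiCatTF-flag}.

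For the ideal-Shi arrangements $\cc\Shi^m_\Ic$, and hence for the extended Catalan case $\Ic = \Phi^+$, the analogous flag argument is unavailable --- flag-accuracy here is still only conjectural (Problem \ref{prob:ideal-Shi-FA}, Conjecture \ref{conj:Cat}) --- so I would instead exploit the MAT-step restriction machinery. The idea is to present $\cc\Shi^m_\Ic$ as obtained from a free arrangement by a succession of MAT-steps with ordered partition $\pi = (\pi_1|\cdots|\pi_n)$ of the added hyperplanes (organised by the deformation level $j$ and the root-height filtration of $\Ic$, in the spirit of Theorem \ref{thm:IdealFree}). Theorem \ref{thm:MATRestGen} then furnishes, for each block $\pi_k$ and each codimension $q$ with $|\pi_{k+1}| \le q \le |\pi_k|$, a subset $\Cc \subseteq \pi_k$ of size $q$ whose intersection $X = \bigcap_{H \in \Cc} H$ gives a free restriction $(\cc\Shi^m_\Ic)^{X}$ whose exponents are exactly the required initial segment. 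Since the block sizes satisfy $|\pi_1| > |\pi_2| \ge \cdots \ge |\pi_n| \ge 1$ (Remark \ref{rem:MAT-free}(iii)) with $|\pi_1|$ equal to the rank, the intervals $[|\pi_{k+1}|,|\pi_k|]$, taken with $|\pi_{n+1}| := 0$, tile the whole range of admissible codimensions; collecting one flat for each value realizes every initial segment and hence witnesses accuracy. When the base can be taken to be the empty arrangement, i.e.\ when $\cc\Shi^m_\Ic$ is MAT-free, this is subsumed by Theorem \ref{Thm_MATRest}; otherwise one finishes by induction on the rank of the free base.

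The hard part will be the MAT-presentation itself for the ideal part: checking that adding the hyperplanes of $\cc\Shi^m_\Ic$ level-by-level satisfies the three conditions of an MAT-step (Theorem \ref{Thm_MAT}) and that the resulting block sizes are the dual partition of the exponents, so that Remark \ref{rem:MAT-free}(ii) and Theorem \ref{thm:MATRestGen} deliver the truncated exponent tuples in the correct order. This combinatorial verification is also exactly the reason why only accuracy, and not flag-accuracy, emerges here: the flats produced by Theorem \ref{thm:MATRestGen} are drawn from different blocks $\pi_k$ and carry no reason to nest into a single flag, which is what separates Theorem \ref{thm:idealshi} from the flag-accuracy results and leaves Conjecture \ref{conj:Cat} open.
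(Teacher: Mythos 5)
Your treatment of the extended Shi arrangements is correct, and it is exactly the argument this paper uses for the stronger Theorem \ref{thm:ExtShi-FA}: Proposition \ref{Prop_Shi-SimplRoots} supplies hypotheses (i) and (ii) of Theorem \ref{thm:extshi-FA-cri}, giving flag-accuracy and a fortiori accuracy of $\cc\Shi^m$. (Note that the paper does not reprove Theorem \ref{thm:idealshi}; it quotes it from \cite{mueckschroehrle:accurate}, whose proof indeed runs through the MAT-restriction machinery of Theorem \ref{thm:MATRestGen} that you invoke.) Your Catalan case also goes through, because for $\Ic=\Phi^+$ the first added block consists of all $\ell$ simple-root hyperplanes.

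The gap is in the general ideal-Shi case, precisely at your tiling claim. You argue that the intervals $[|\pi_{k+1}|,|\pi_k|]$ cover all admissible codimensions because of Remark \ref{rem:MAT-free}(iii) and ``$|\pi_1|$ equal to the rank'', but that remark applies only to MAT-free arrangements, i.e.\ those built by MAT-steps from $\varnothing_\ell$, and $\cc\Shi^m_\Ic$ is not known to be MAT-free (so your fallback via Theorem \ref{Thm_MATRest} is vacuous). The available MAT-presentation builds $\cc\Shi^m_\Ic$ from the \emph{nonempty} free base $\cc\Shi^m$ by adding the blocks $\pi_{k,\Ic}=\{\cc H_\beta^{-m}\mid\beta\in\Ic,\ \h(\beta)=k\}$, and then $|\pi_1|=|\Ic\cap\Delta|$, which can be as small as $1$ (take $\Ic=\{\alpha\}$ a single simple root): Theorem \ref{thm:MATRestGen} then produces witnessing flats only in dimensions $\ell+1$ and $\ell$, and every smaller dimension is unaccounted for. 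Your ``induction on the rank of the free base'' cannot close this, because a restriction of $\cc\Shi^m_\Ic$ to a flat also contains the restricted ideal hyperplanes, so accuracy of the base $\cc\Shi^m$ says nothing about the exponents of those restrictions. The missing idea is to start the MAT-presentation one step earlier, at $\Ac':=\cc\Shi^m_{-\Delta}$, prepending the block $\pi_0:=\{\cc H_\alpha^m\mid\alpha\in\Delta\}$ of size exactly $\ell$: by Proposition \ref{Prop_Shi-SimplRoots} both $\cc\Shi^m_{-\Delta}$ and $\cc\Shi^m_{-(\Delta\setminus\{\alpha_j\})}$ are free with the displayed exponents, so Theorem \ref{thm:AD} forces $|\Ac'|-|\Ac''_j|=mh-1$ for each $j$, and conditions (1), (2) of Theorem \ref{Thm_MAT} are checked directly at a point where every simple root takes the value $m$; hence adding $\pi_0$ is an MAT-step carrying $(1,(mh-1)^\ell)$ to $(1,(mh)^\ell)$, while the subsequent ideal blocks are MAT-steps by Abe--Terao \cite{AbeTer15_IdealShi}. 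With $|\pi_0|=\ell$ in front, the intervals of Theorem \ref{thm:MATRestGen} cover every codimension $0\le q\le\ell$ for an \emph{arbitrary} ideal $\Ic$, and accuracy of $\cc\Shi^m$, $\cc\Shi^m_\Ic$ and $\cc\Cat^m$ follows uniformly.
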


The following example illustrates that there exist  arrangements that are both
accurate and divisionally free but not flag-accurate. 
(We are going to encounter another example arising from graphic arrangements in Corollary \ref{coro:acc-nfa}.)

\begin{example}
	\label{ex:shif4}
	Let $\Delta = \{\alpha_1,\alpha_2,\alpha_3,\alpha_4\}$ be a simple system of the root
	system $\Phi$ of type $F_4$, corresponding to the labeling of the Dynkin diagram as in \cite[Planche VIII]{bourbaki:groupes}.
	Let $\Phi^+$ be the set of positive roots with respect to $\Delta$ and $\Ic$ is the ideal consisting of the roots
	\[
		\Ic = \{\alpha_2,\alpha_3,\alpha_4,\alpha_2+\alpha_3, \alpha_3+\alpha_4\}.
	\]
	 
	Then $\Ac := \cc\Shi^1_{-\Ic} = \cc\Shi^1 \setminus \{H_\alpha^1 \mid \alpha \in \Ic\}$ is
	free with exponents $\exp(\Ac) = (1,10,10,11,12)$.
	Moreover, the arrangement $\Ac$ is accurate and also divisionally free.
	
	A calculation reveals that there are exactly four hyperplanes $H \in \Ac$ whose restriction $\Ac^H$ is still accurate.
	But none of these restrictions contains a rank $2$ intersection $X \in L(\Ac)$ such that $\Ac^X$ is accurate.
	Consequently, no restriction of $\Ac$ to a hyperplane is flag-accurate and 
	by Lemma \ref{lem:flag-accuracy} $\Ac$ is not flag-accurate.
\end{example}

The main objective in this subsection is to show that the cones over the extended Shi arrangement of arbitrary type and the cones over the extended Catalan arrangement of type $A$, $B$ or $C$ are flag-accurate.

We first introduce some notion which allows us to specify the nature of a witness for accuracy in extended Shi arrangements based on simple roots.

\begin{definition}\label{def:SRW}
	Suppose that $[a,b] \ne [0,0]$ and the cone $\cc\Ac_\Phi^{[a,b]}$ is  flag-accurate. 
	A witness $X_{\ell-1} \subseteq \cdots \subseteq  X_2  \subseteq X_1 \subseteq V'=\RR^{\ell+1} $ for the flag-accuracy of $\cc\Ac_\Phi^{[a,b]}$ is called a \emph{simple root witness} if there exist simple roots $\alpha_1,\ldots,\alpha_{\ell-1} \in \Delta$ and integers $j_1,\ldots, j_{\ell-1} \in \ZZ$ such that $X_s = \bigcap_{i=1}^{s} \cc H_{\alpha_i}^{j_i}$ for each $1 \le s \le \ell-1$. 
\end{definition}

\begin{theorem}
	\label{thm:ExtShi-FA}
The cone  over the extended Shi arrangement $\Shi^m$ is  flag-accurate with a simple root witness. 
\end{theorem}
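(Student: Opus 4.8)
The plan is to prove flag-accuracy of $\cc\Shi^m$ by exhibiting an explicit simple root witness, using the criterion of Theorem \ref{thm:extshi-FA-cri} as the engine. By Theorem \ref{thm:Yoshinaga} we know $\exp(\cc\Shi^m) = (1,(mh)^\ell)$, so every nontrivial exponent equals $mh$; this is exactly the situation of Lemma \ref{lem:samexp} at the top level, but to descend the whole flag we need more than divisional freeness at one stage --- we must build a genuine flag of restrictions whose exponents peel off $(1,(mh)^{\ell-1}), (1,(mh)^{\ell-2}), \ldots$ in order. The natural candidates for the hyperplanes to remove are the $H^m_\alpha$ for $\alpha$ ranging over the simple roots $\Delta = \{\alpha_1,\ldots,\alpha_\ell\}$, precisely because Proposition \ref{Prop_Shi-SimplRoots} controls the exponents of $\Shi^m$ after deleting such hyperplanes: deleting the $|\Sigma|$ hyperplanes indexed by a subset $\Sigma \subseteq \Delta$ lowers exactly $|\Sigma|$ of the top exponents by one, giving $\exp(\cc\Shi^m_{-\Sigma}) = (1,(mh-1)^{|\Sigma|},(mh)^{\ell-|\Sigma|})$.

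The strategy is therefore to verify the hypotheses of Theorem \ref{thm:extshi-FA-cri} with the ambient arrangement $\Ac = \cc\Shi^m$ and the distinguished hyperplanes $H_i := \cc H^m_{\alpha_i}$ for $1 \le i \le \ell$ (note $\cc\Shi^m$ has rank $\ell+1$, so we need $\ell$ distinguished hyperplanes). For condition (i) of that theorem, I would apply Proposition \ref{Prop_Shi-SimplRoots} with $\Sigma = \{\alpha_i\}$ a singleton: this says $\cc\Shi^m \setminus \{\cc H^m_{\alpha_i}\}$ is free with exponents $(1,(mh-1)^1,(mh)^{\ell-1})$, which is exactly the shape $(1,d_1,\ldots,d_i-1,\ldots,d_\ell)$ demanded, since here all $d_j = mh$. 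For condition (ii), I would apply Proposition \ref{Prop_Shi-SimplRoots} with $\Sigma = \Delta$, the full simple system, giving freeness of $\cc\Shi^m \setminus \{\cc H^m_{\alpha} \mid \alpha \in \Delta\}$ with exponents $(1,(mh-1)^\ell)$, again matching the required $(1,d_1-1,\ldots,d_\ell-1)$. Thus Theorem \ref{thm:extshi-FA-cri} applies directly and yields flag-accuracy of $\cc\Shi^m$ together with the explicit witness $X_s = \bigcap_{i=s}^{\ell} \cc H^m_{\alpha_i}$, which is manifestly a simple root witness in the sense of Definition \ref{def:SRW}.

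The one genuine obstacle is the transversality/independence bookkeeping: Theorem \ref{thm:extshi-FA-cri} requires the $H_i$ to be \emph{distinct} hyperplanes of $\Ac$, and the concluding witness statement requires the intersections $X_s$ to form an honest flag of flats of the correct codimensions, i.e. the $\ell$ linear forms $\alpha_1 - mz, \ldots, \alpha_\ell - mz$ must be linearly independent so that $\codim X_s = \ell - s + 1$ steps down by one at each stage. This is where the choice of \emph{simple} roots is essential: since $\Delta$ is a basis of $\RR\Phi = V^*$, the forms $\alpha_i - mz$ on $V' = V \oplus \RR z$ are linearly independent, and none coincides, so all the distinctness and codimension conditions hold automatically. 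I would also record that each $\cc H^m_{\alpha_i}$ genuinely lies in $\cc\Shi^m$ (it does, as $\alpha_i \in \Phi^+$ and $m \in [1-m,m]$), and check the increasing-exponent hypothesis $1 \le d_1 \le \cdots \le d_{\ell}$ of the second half of Theorem \ref{thm:extshi-FA-cri}, which is trivial here since all $d_j = mh$. Assembling these observations, the theorem follows with essentially no computation beyond the two invocations of Proposition \ref{Prop_Shi-SimplRoots} and the linear independence of a simple system.
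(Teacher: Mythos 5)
Your proposal is correct and follows essentially the same route as the paper, which likewise derives the result by combining Theorem \ref{thm:extshi-FA-cri} with Proposition \ref{Prop_Shi-SimplRoots} applied to singletons and to the full simple system; your write-up merely spells out the bookkeeping (rank shift to $\ell+1$, distinctness and linear independence of the forms $\alpha_i - mz$) that the paper leaves implicit. The paper also notes a second, equally short route via Lemma \ref{lem:samexp} and the divisional freeness of $\cc\Shi^m$, but that is offered only as an alternative.
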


\begin{proof}
Owing to Theorem \ref{thm:extshi-FA-cri} and  Proposition \ref{Prop_Shi-SimplRoots} we derive that $\cc\Shi^m$ is  flag-accurate with a simple root witness. 
Alternately, in view of the nature of the set of exponents of $\cc\Shi^m$ (see Theorem \ref{thm:Yoshinaga}), 
the flag-accuracy of  $\cc\Shi^m$  can also be deduced from Lemma \ref{lem:samexp} and the fact that $\cc\Shi^m$ is divisionally free, thanks to  \cite[Thm.~ 6.1]{abe:divfree}.
\end{proof}

 \begin{theorem}
	\label{thm:ExtCat-FA}
	If $\Phi$ is of type $A$, $B$ or $C$, then the cone over the extended Catalan arrangement $\Cat^m  (\Phi)$  is  flag-accurate with a simple root witness. 
\end{theorem}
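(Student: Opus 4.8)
The plan is to reduce the flag-accuracy of the cone $\cc\Cat^m(\Phi)$ to a verification of the two hypotheses of the criterion in Theorem~\ref{thm:extshi-FA-cri}, exactly as was done for the Shi case in Theorem~\ref{thm:ExtShi-FA}. Recall from Theorem~\ref{thm:Yoshinaga} that $\cc\Cat^m$ is free with exponents $(1,mh+e_1,\ldots,mh+e_\ell)_\le$, where the $e_i$ are the exponents of $\Ac(W)$. To apply Theorem~\ref{thm:extshi-FA-cri} I would seek distinct hyperplanes $H_1,\ldots,H_\ell \in \cc\Cat^m$ — most naturally the hyperplanes $\cc H_{\alpha_i}^{j_i}$ attached to simple roots $\alpha_i \in \Delta$, so as to produce a \emph{simple root witness} in the sense of Definition~\ref{def:SRW} — such that each single deletion $\Ac \setminus \{H_i\}$ is free with the exponent $d_i = mh + e_i$ dropped by one, and such that the total deletion $\Ac \setminus \{H_1,\ldots,H_\ell\}$ is free with all of $d_1,\ldots,d_\ell$ dropped by one. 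Since $\cc\Cat^m = \cc\Shi^m \cup \{\cc H_\alpha^{-m} \mid \alpha \in \Phi^+\}$, a convenient choice is to delete the hyperplanes $\cc H_{\alpha_i}^{-m}$ for simple $\alpha_i$, whose removal moves one back towards an ideal-Shi arrangement whose freeness and exponents are controlled by the Abe--Terao results (Proposition~\ref{Prop_Shi-SimplRoots} and \cite{AbeTer15_IdealShi}).

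The first step is therefore to identify, for each simple root $\alpha_i$, a single-deletion arrangement and to compute its exponents. I expect that deleting $\cc H_{\alpha_i}^{-m}$ yields the cone of an ideal-Shi arrangement $\Shi^m_{\Ic_i}$ for the ideal $\Ic_i = \Phi^+ \setminus \{\alpha_i\}$ (removing a simple root leaves a lower order ideal), and that by the Abe--Terao freeness theorem this is free with one exponent lowered by one. The second step is the simultaneous deletion of all $\cc H_{\alpha_i}^{-m}$, $\alpha_i \in \Delta$: this should be the cone over $\Shi^m_{\Ic_0}$ with $\Ic_0 = \Phi^+ \setminus \Delta$, again an ideal, whose freeness and exponents $(1, d_1 - 1, \ldots, d_\ell - 1)$ I would extract from the same ideal-Shi machinery. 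Once both exponent computations match the pattern required in Theorem~\ref{thm:extshi-FA-cri}(i)--(ii), flag-accuracy with the simple root witness $X_s = \bigcap_{i=s}^{\ell} \cc H_{\alpha_i}^{-m}$ follows immediately from that theorem.

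The main obstacle I anticipate is precisely the restriction to types $A$, $B$, $C$ in the statement, which signals that the clean exponent-drop behavior above does \emph{not} hold uniformly across all types — equivalently, that the naive simultaneous deletion need not yield the exponents $(1,d_1-1,\ldots,d_\ell-1)$ in general. The combinatorial heart of the argument is to verify that for types $A$, $B$, $C$ the ideals $\Ic_i$ and $\Ic_0$ obtained by removing simple roots are genuinely order ideals of $\Phi^+$ \emph{and} that the block sizes of their root-height partitions drop the exponents in the required way; this is where the type-by-type structure of the positive root poset enters. I would check this via the explicit description of $\Phi^+$ and its height partition in each classical type (using the coordinates in \cite[Planche I--III]{bourbaki:groupes}), confirming that each simple root sits at the top of a height layer whose removal lowers exactly the intended exponent. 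Types $D$, $E$, $F_4$, $G_2$ presumably fail this combinatorial pattern, explaining the type restriction and the residual Conjecture~\ref{conj:Cat}.
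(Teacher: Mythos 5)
Your plan rests on the claim that deleting $\cc H_{\alpha_i}^{-m}$ for a simple root $\alpha_i$ from $\cc\Cat^m = \cc\Shi^m_{\Phi^+}$ lands you back in the ideal-Shi family, i.e.\ that $\Phi^+\setminus\{\alpha_i\}$ and $\Phi^+\setminus\Delta$ are order ideals. This is false for $\ell\geq 2$: ideals here are \emph{lower} order ideals of $(\Phi^+,\leq)$, and the simple roots are precisely the \emph{minimal} elements of that poset, so removing $\alpha_i$ while keeping any root $\gamma>\alpha_i$ destroys the ideal property. (The roots you may remove from $\Phi^+$ while staying inside the ideal family are the \emph{maximal} ones, e.g.\ the highest root --- the opposite end of the poset from where you are deleting.) Consequently the Abe--Terao machinery gives no freeness or exponent information about your single deletions $\Ac\setminus\{\cc H_{\alpha_i}^{-m}\}$ or the total deletion, and neither hypothesis of Theorem~\ref{thm:extshi-FA-cri} is verified. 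This is why the analogy with the Shi case breaks down: Proposition~\ref{Prop_Shi-SimplRoots} concerns deleting the hyperplanes $H_\alpha^{m}$ at the \emph{top} level from $\Shi^m$, a situation Abe--Terao do cover, and it is unavailable for the level $-m$ simple-root hyperplanes of $\Cat^m$. Your closing diagnosis of the type restriction is also off: the paper conjectures flag-accuracy for \emph{all} types (Conjecture~\ref{conj:Cat}); types $D$ and the exceptional ones are excluded only because the requisite freeness computations could not be completed there, not because a combinatorial exponent-drop pattern fails.

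The paper's actual route is quite different and restriction-based rather than deletion-based. For each classical type it introduces an interpolating family in explicit coordinates ($\Bb^p_\ell(m,a)$, $\Ccal^p_\ell(m,a)$, and the digraphic $\Ec^p_\ell(n,a,m)$ for type $A$), proves freeness with computed exponents by induction --- via Addition-Deletion and recursive freeness in type $B$, via Ziegler restrictions and Yoshinaga's criterion (Theorem~\ref{thm:Yoshinaga's criterion}) in type $C$, and via inductive freeness in type $A$ --- and then builds a witness by restricting to the hyperplanes $x_i-x_{i+1}=-az$ and $x_\ell=-mz$ (resp.\ $2x_\ell=-mz$), checking that each successive restriction is again a member of the family with the first segment of the exponents. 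Flag-accuracy then follows from Lemma~\ref{lem:flag-accuracy}. If you want to salvage a deletion-style argument, you would need freeness of the deletions you name from some source other than the ideal-Shi theorems, and that is essentially the hard content the paper supplies by hand.
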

\begin{proof}
 The proof follows from Theorems \ref{thm:CatB-FA}, \ref{thm:CatC-FA} below, and Corollary \ref{cor:CatA-FA}. 
\end{proof}

\begin{notation}
	\label{notation}
In what follows, when writing the defining equation of a set of hyperplanes, e.g., for $N\subseteq \mathbb{Z}$ by setting $x_{i} =N$  and  $x_{i} =  Nz$, we mean the affine coordinate hyperplanes $x_{i}=n$ and their homogenizations $x_{i} =nz$ for all  $ n \in N$. 	
\end{notation}

We need an extension of Theorem \ref{thm:Yoshinaga} for extended Catalan arrangements of type $B$.

\begin{theorem}
 \label{thm:CatB-RF}
Let $a, m \ge 0$,  $\ell \ge 1$ and $0 \le p \le \ell$. 
Let $\Bb^p_\ell(m,a)$ be the arrangement consisting of the hyperplanes
\begin{align*}
x_{i}\pm x_{j} &= [-a , a]\quad  (1 \leq i < j \leq \ell), \\
x_i&=  [1-m , m] \quad (1 \leq   i \leq p), \\
x_i&=[-m , m]  \quad (p<  i \leq \ell).
\end{align*}
Then the cone $\cc\Bb^p_\ell(m,a)$
is recursively free with exponents 
$$ \exp\left(\cc\Bb^p_\ell(m,a)\right) = (1,  2\ell-p-1, 1,3,5,\ldots, 2\ell-3) +  (0,  (2m+2a\ell-2a)^{\ell} ).$$ 
 \end{theorem}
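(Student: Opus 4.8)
The statement asserts recursive freeness of $\cc\Bb^p_\ell(m,a)$ together with an explicit exponent formula. The plan is to proceed by a double induction, on the rank $\ell$ and on the "truncation parameters" $m$ and $a$, peeling off hyperplanes one (or several) at a time and invoking the Addition-Deletion Theorem (Theorem \ref{thm:AD}) in each step, thereby exhibiting the arrangement as recursively free in the sense of Definition \ref{def:IF}. The exponent formula is the bookkeeping device that makes each addition-deletion triple match up: one must verify at every stage that the candidate exponents of $\Ac$, $\Ac'$, and $\Ac''$ satisfy the numerology of Theorem \ref{thm:AD}.

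\textbf{Key steps.} First I would treat the base cases. For $a=m=0$ the cone $\cc\Bb^p_\ell(0,0)$ is (a coning of) the Coxeter arrangement of type $B_\ell$ with some coordinate hyperplanes possibly absent when $p<\ell$; its freeness and exponents are classical and agree with the claimed formula (the term $(1,2\ell-p-1,1,3,\ldots,2\ell-3)$ should reduce to the Weyl exponents with the modular coatom $H_z$ contributing the entry $2\ell-p-1=|\cc\Bb^p_\ell(0,0)\setminus(\cc\Bb^p_\ell(0,0))_{H_z}|$ via Proposition \ref{prop:modular coatom}). Small-rank cases $\ell=1$ (where any multiarrangement on two points is free) anchor the induction. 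Then, for the inductive step, I would remove a carefully chosen hyperplane $H$ — a natural candidate is an extreme hyperplane in one of the families, say $x_1=m$ or $x_1-x_2=a$ — so that the deletion $\Ac'$ is one of the arrangements $\cc\Bb^{p'}_\ell(m',a')$ with smaller parameters (already free by induction) and the restriction $\Ac''=\Ac^H$ is, up to linear equivalence and essentialization, an arrangement of the same shape in one fewer variable, i.e. of type $\cc\Bb^{p''}_{\ell-1}(m,a)$. The crucial computation is to identify the combinatorial type of $\Ac^H$: setting $x_1=m$ (or $x_1=x_2+a$) inside the defining equations and collecting coincidences among the resulting affine hyperplanes. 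Having matched $\exp(\Ac')$ and $\exp(\Ac'')$ against the claimed $\exp(\Ac)$, Theorem \ref{thm:AD} yields freeness of $\Ac$ with the asserted exponents, and since each deletion and restriction is itself recursively free by induction, clauses (ii) and (iii) of Definition \ref{def:IF} propagate recursive freeness up to $\Ac$.

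\textbf{Main obstacle.} The genuine difficulty is the restriction computation: after imposing one defining equation one must determine exactly which pairs of hyperplanes of the deleted arrangement become equal on $H$, hence what multiplicities and what parameter values $(p'',m'',a'')$ the restriction carries. In the $B$/$C$ setting the equations $x_i\pm x_j=c$ collapse in ways that depend subtly on which hyperplane is chosen, and the count must produce precisely the exponent multiset $(1,2(\ell-1)-p''-1,1,3,\ldots,2\ell-5)+(0,(2m+2a(\ell-1)-2a)^{\ell-1})$ so that the difference with $\exp(\Ac)$ is a single decremented entry as Theorem \ref{thm:AD} demands. I expect to need several distinct inductive branches according to whether one is decrementing $m$, decrementing $a$, or lowering $\ell$, and to verify in each branch that the parameter shift keeps the restriction inside the same family $\cc\Bb$. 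Once these restriction identities are established, the exponent arithmetic and the application of Theorems \ref{thm:AD} and \ref{def:IF} are routine, and the recursive (rather than merely inductive) freeness follows because the restrictions need not satisfy $\exp(\Ac'')\subseteq\exp(\Ac')$ in increasing order but do fit clause (iii) of Definition \ref{def:IF}.
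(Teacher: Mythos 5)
Your overall toolkit (induction, Addition--Deletion, identifying restrictions within the family) is the right one, and your candidate deletion $x_1=m$ is, up to the sign symmetry $x_i\mapsto -x_i$, exactly the move the paper makes. But the specific inductive scheme you propose has a gap that would stop the argument. You plan to induct on $m$ and $a$ as well as $\ell$, decrementing $a$ by deleting a hyperplane such as $x_1-x_2=a$. That deletion leaves the family $\{\cc\Bb^{p}_{\ell}(m,a)\}$ entirely: the remaining arrangement no longer has the symmetric shape $x_i\pm x_j=[-a,a]$ for all pairs, so neither your deletion nor its restriction is of the form $\cc\Bb^{p'}_{\ell'}(m',a')$, and no induction on $a$ within this family is available. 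The paper never touches $a$ (or decrements $m$): the only moving parameters are $\ell$ and $p$, ordered lexicographically by $(\ell,\ell-p)$. The hyperplane deleted is always one of the coordinate hyperplanes $x_{p+1}=-mz$, so that $\Ac'=\cc\Bb^{p+1}_{\ell}(m,a)$, and the decisive computation you leave open is the restriction identity $\Ac^{H}=\cc\Bb^{0}_{\ell-1}(m+a,a)$ --- note that $m$ \emph{increases} to $m+a$ here (the hyperplanes $x_i\pm x_{p+1}=c$, $c\in[-a,a]$, collapse onto $x_i\in[-(m+a),m+a]$ on $H$), which is harmless only because the induction is anchored on $\ell$, not on $m$.

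A second, related omission is the case $p=\ell$, which is where recursive (as opposed to inductive) freeness actually enters. For $\cc\Bb^{\ell}_{\ell}(m,a)$ there is no coordinate hyperplane left to delete inside the family; the paper instead realizes it as $\cc\Bb^{\ell-1}_{\ell}(m,a)\setminus\{H_\ell\}$ and applies the \emph{deletion} part of Theorem \ref{thm:AD}, i.e.\ clause (iii) of the definition of $\mathcal{RF}$, using that both $\cc\Bb^{\ell-1}_{\ell}(m,a)$ and its restriction to $H_\ell$ are already known to be recursively free. Your closing remark attributes the need for clause (iii) to an ordering issue with $\exp(\Ac'')\subseteq\exp(\Ac')$, which is not the actual reason; without handling the $p=\ell$ case by this reverse step the induction on $(\ell,\ell-p)$ does not close.
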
 
 
 \begin{proof}
Define the lexicographic order on the set of pairs
$$\{ (\ell, \ell-p) \mid 0\le \ell- p \le \ell, \, \ell \ge 1\}.$$
We prove that $\Ac: =\textbf{c}\Bb^p_\ell(m, a)$ belongs to $\mathcal{RF}$ with the  desired exponents by induction on $ (\ell, \ell-p) $.
When $\ell=1$, this is obvious. So suppose $\ell\ge2$.

Case $1$. Suppose that $\ell-p\ge 1$. 
Let   $H \in \Ac$ denote the hyperplane $x_{p+1} = -mz$. 
Then $\Ac'=\Ac\setminus \{H\} = \textbf{c}\Bb^{p+1}_\ell(m,a)$ is recursively free with exponents 
$$ \exp\left(\textbf{c}\Bb^{p+1}_\ell(m,a)\right) = (1,  2\ell-p-2, 1,3,5,\ldots, 2\ell-3) +  (0,  (2m+2a\ell-2a)^{\ell} ),$$ 
by the induction hypothesis. 
Moreover, $\Ac''=\Ac^{H}$ consists of the following hyperplanes 
\begin{align*}
z &= 0 , \\
x_{i}\pm x_{j} &=  [-a , a]z\quad  (1 \leq i < j \leq \ell, i\ne p+1,j\ne p+1 ), \\
x_i&=  [-(m+a) , m+a]z \quad (1 \leq   i \leq \ell, i \ne p+1).
\end{align*}
Thus $\Ac'' =   \textbf{c}\Bb^{0}_{\ell-1}(m+a, a)$ is recursively free with exponents 
$$ \exp\left(\textbf{c}\Bb^{0}_{\ell-1}(m+a, a)\right) = (1,  2m+2a\ell-2a +2i-1 )_{i=1}^{\ell-1},$$  
by the induction hypothesis. 
Therefore, by the addition part of Theorem \ref{thm:AD}, also $\Ac =\textbf{c}\Bb^p_\ell(m, a)$ is recursively free with the  desired exponents. 

Case $2$. Suppose that  $\ell=p$. 
The arrangement  in question is $\textbf{c}\Bb^\ell_\ell(m,a)$ given by
\begin{align*}
z &= 0 , \\
x_{i}\pm x_{j} &=  [-a , a]z\quad  (1 \leq i < j \leq \ell), \\
x_i&=  [1-m , m]z \quad (1 \leq   i \leq \ell).
\end{align*}
We need to prove   that $\textbf{c}\Bb^\ell_\ell(m,a)$ belongs to $\mathcal{RF}$ with exponents 
$$ \exp\left(\textbf{c}\Bb^\ell_\ell(m,a)\right) = (1,  \ell-1, 1,3,\ldots, 2\ell-3) +  (0,  (2m+2a\ell-2a)^{\ell} ).$$ 
By Case $1$, we have $\textbf{c}\Bb^{\ell-1}_\ell(m,a)\in \mathcal{RF}$ with exponents 
$$\exp\left(\textbf{c}\Bb^{\ell-1}_\ell(m,a)\right) = (1,  \ell, 1,3,\ldots, 2\ell-3) +  (0,  (2m+2a\ell-2a)^{\ell} ).$$ 
Note that $\textbf{c}\Bb^\ell_\ell(m,a) =\textbf{c}\Bb^{\ell-1}_\ell(m,a) \setminus \{H_\ell\}$, where $H_\ell\in \textbf{c}\Bb^{\ell-1}_\ell(m,a)$ denotes the hyperplane $x_\ell = -mz$. 
Again by Case $1$, $\left(\textbf{c}\Bb^{\ell-1}_\ell(m,a)\right)^{H_\ell}=  \textbf{c}\Bb^{0}_{\ell-1}(m+a, a)$.
Thus $\left(\textbf{c}\Bb^{\ell-1}_\ell(m,a)\right)^{H_\ell}$ belongs to $\mathcal{RF}$ with exponents $(1,  2m+2a\ell-2a +2i-1 )_{i=1}^{\ell-1}$, by our induction hypothesis. 
By applying the deletion part of Theorem \ref{thm:AD}, we deduce  that  
$ \textbf{c}\Bb^\ell_\ell(m,a)$ is recursively free with the desired exponents. 
\end{proof} 

We have a similar result for type $C$ but the proof is more complicated than the one in type $B$. 
\begin{theorem}
 \label{thm:CatC-F}
Let $a, m\ge0$, $\ell \ge 1$ and $0 \le p \le \ell$. 
Let $\Ccal^p_\ell(m,a)$ be the arrangement consisting of the hyperplanes
\begin{align*}
x_{i}\pm x_{j} &= [-a , a]\quad  (1 \leq i < j \leq \ell), \\
2x_i&=  [1-m , m] \quad (1 \leq   i \leq p), \\
2x_i&=[-m , m]  \quad (p<  i \leq \ell).
\end{align*}
Then the cone $\cc\Ccal^p_\ell(m,a)$ is free with exponents 
$$ \exp\left(\cc\Ccal^p_\ell(m,a)\right) = (1,  2\ell-p-1, 1,3,5,\ldots, 2\ell-3) +  (0,  (2m+2a\ell-2a)^{\ell} ).$$ 
 
 \end{theorem}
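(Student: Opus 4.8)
The plan is to establish freeness via \emph{Yoshinaga's criterion} (Theorem~\ref{thm:Yoshinaga's criterion}), applied to the cone along the hyperplane at infinity $H_z=\ker(z)$, exploiting that the relevant Ziegler restriction cannot tell apart the type~$B$ coordinate hyperplanes $x_i=jz$ and the type~$C$ ones $2x_i=jz$. Accordingly I would argue by induction on $\ell$. The cases $\ell\le 2$ are handled directly: here $\cc\Ccal^p_\ell(m,a)$ has rank at most $3$, every restriction occurring in the type~$B$ argument of Theorem~\ref{thm:CatB-RF} now has rank at most $2$ and is therefore automatically free with the forced exponents, so the addition--deletion induction on $(\ell,\ell-p)$ of that proof goes through verbatim (the ``mixed'' restriction phenomenon described below does not yet arise). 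For the inductive step I assume $\ell\ge 3$, so that Yoshinaga's criterion is available in the ambient space $V'=\RR^{\ell+1}$.

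The first substantial step is to compute the Ziegler restriction $(\Ac^{H_z},\mathbf m^{H_z})$ of $\Ac:=\cc\Ccal^p_\ell(m,a)$ onto $H_z$. Since $H_z$ is the only hyperplane of $\Ac$ containing it, the underlying arrangement $\Ac^{H_z}$ is the reflection arrangement of type $B_\ell=C_\ell$, and for each root direction $\alpha$ the multiplicity $\mathbf m^{H_z}(\ker\alpha)=|\Ac_{\ker\alpha\cap H_z}|-1$ equals the number of affine hyperplanes of $\Ccal^p_\ell(m,a)$ parallel to $\ker\alpha$. The decisive observation is that these parallel counts --- namely $2a+1$ for each direction $x_i\pm x_j$, and $2m$ or $2m+1$ for each direction $x_i$ according as $i\le p$ or $i>p$ --- are \emph{identical} for $\Ccal^p_\ell(m,a)$ and $\Bb^p_\ell(m,a)$, because $x_i=jz$ and $2x_i=jz$ determine the same hyperplane direction and occur in equal numbers. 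Hence $\cc\Ccal^p_\ell(m,a)$ and $\cc\Bb^p_\ell(m,a)$ possess \emph{the same} Ziegler restriction along $H_z$.

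By Theorem~\ref{thm:CatB-RF} the cone $\cc\Bb^p_\ell(m,a)$ is free, so the forward direction of Theorem~\ref{thm:Yoshinaga's criterion} shows that this common Ziegler multiarrangement is free, with exponents obtained from $\exp(\cc\Bb^p_\ell(m,a))$ by deleting the exponent $1$. It then remains to verify that $\Ac=\cc\Ccal^p_\ell(m,a)$ is \emph{locally free in codimension three along $H_z$}; granting this, the reverse direction of Theorem~\ref{thm:Yoshinaga's criterion} yields that $\Ac$ is free with exponents $(1,d_2,\ldots,d_{\ell+1})$, i.e.\ with exactly the exponents of $\cc\Bb^p_\ell(m,a)$, which is the asserted formula. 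I note that this route produces freeness but not recursive freeness, consistent with the weaker conclusion claimed for type~$C$.

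The main obstacle, and the point where type~$C$ genuinely diverges from type~$B$, is the verification of local freeness in codimension three along $H_z$. The flats $X\in L(\Ac)$ with $X\subseteq H_z$ and $\codim X=3$ correspond to the rank-two subsystems of $B_\ell$, which are of type $A_1\times A_1$, $A_2$, or $B_2$, and $\Ac_X$ consists of $H_z$ together with the affine families in the corresponding directions. I would check freeness case by case: for $A_1\times A_1$ the localization is (linearly equivalent to) a supersolvable product of pencils; for $A_2$ it is the cone of an extended Catalan arrangement of type $A_2$, free by Theorem~\ref{thm:Yoshinaga}; and for $B_2$ it is precisely $\cc\Ccal^{p'}_2(m,a)$ for some $0\le p'\le 2$, free by the base case of the induction. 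It is worth stressing that these localizations are \emph{not} generic --- the cone over three generic lines in the plane already fails to be free --- so freeness rests essentially on the special concurrences of the finite-type pattern, and isolating the $B_2$ case and matching it with the $\ell=2$ instance is the technical heart of the argument. Once local freeness is secured, Yoshinaga's criterion closes the induction.
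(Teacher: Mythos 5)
Your argument is correct, but it is organized quite differently from the paper's. The paper proves Theorem~\ref{thm:CatC-F} by the same addition--deletion induction on $(\ell,\ell-p)$ as in type $B$, deleting the hyperplane $2x_{p+1}=-mz$; the whole difficulty is then concentrated in showing that the restriction to that hyperplane --- a member $\cc\widetilde\Ccal_{\ell-1}(m,a,1)$ of an auxiliary family --- is free. For that the paper uses exactly your Ziegler-restriction comparison, but one level down (against $\cc\Ccal^{0}_{\ell-1}(m+a,a)$), invoking Theorem~\ref{thm:Yoshinaga's criterion} and reducing local freeness via a lemma of Abe and Terao \cite{AbeTer11_ShiCat} to the freeness of $\cc\Cat^a(A_2)$ and of the rank-$3$ family $\cc\widetilde\Ccal_2(m,a,n)$, the latter requiring a separate addition argument by induction on $n$. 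You instead apply Yoshinaga's criterion once, at the top level, to $\cc\Ccal^p_\ell(m,a)$ itself, observing that it shares its Ziegler restriction along $H_z$ with the already-established type $B$ arrangement $\cc\Bb^p_\ell(m,a)$ of Theorem~\ref{thm:CatB-RF}; this imports freeness of the multirestriction (with the right exponents) at no cost and reduces everything to the codimension-three check, which your classification of the rank-two subsystems of $B_\ell$ into $A_1\times A_1$, $A_2$ and $B_2$ handles correctly, the $B_2$ localizations being precisely the $\ell=2$ instances $\cc\Ccal^{p'}_2(m,a)$ from your base case. Your route avoids the $\widetilde\Ccal$ family entirely and is arguably cleaner for the freeness statement alone; what the paper's route buys is explicit identification of the restrictions as $\widetilde\Ccal$-arrangements, which it then reuses in the proof of flag-accuracy in Theorem~\ref{thm:CatC-FA}. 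One point you should make explicit in the base case $\ell=2$: the rank-two restrictions are automatically free, but to run Theorem~\ref{thm:AD} you must still count their hyperplanes to see that the exponents are $(1,\,2m+2a+1)$; this parallel-class count does come out right, exactly as in type~$B$.
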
 
 
 \begin{proof}
Define the lexicographic order on 
$$\{ (\ell, \ell-p) \mid 0\le \ell- p \le \ell, \, \ell \ge 1\}.$$
We prove that $\Ac: =\textbf{c}\Ccal^p_\ell(m, a)$ is free  with the  desired exponents by induction on $ (\ell, \ell-p) $.
When $\ell=1$, the result  is obvious. So suppose $\ell\ge2$.

Case $1$. Suppose $\ell-p\ge 1$. 
Let   $H \in \Ac$ denote the hyperplane $2x_{p+1} = -mz$. 
Then $\Ac\setminus \{H\} = \textbf{c}\Ccal^{p+1}_\ell(m,a)$ is free  with exponents 
$$ \exp\left(\textbf{c}\Ccal^{p+1}_\ell(m,a)\right) = (1,  2\ell-p-2, 1,3,5,\ldots, 2\ell-3) +  (0,  (2m+2a\ell-2a)^{\ell} ),$$ 
by our induction hypothesis. 

For $a, m, n\ge0$ and $\ell \ge 1$, define the arrangement $\widetilde\Ccal_\ell(m,a,n)$  consisting of the hyperplanes
\begin{align*}
x_{i}\pm x_{j} &=  [-a , a]\quad  (1 \leq i < j \leq \ell), \\
2x_i&=  [-m, m] \cup \{\pm(m+2), \pm(m+4), \ldots, \pm(m+2na)\} \quad (1 \leq   i \leq \ell).
\end{align*}

One can check that $\Ac^{H}=\textbf{c} \widetilde\Ccal_{\ell-1}(m, a, 1)$ (we need to treat the two cases $m\ge a$ and $m<a$ separately). 
We show that $\Ac^{H}$ is free  with exponents $(1,  2m+2a\ell-2a +2i-1 )_{i=1}^{\ell-1}$. 
Then we apply Theorem \ref{thm:AD} to deduce that $\Ac =\textbf{c}\Ccal^p_\ell(m, a)$ is free  with the  desired exponents.  

The case   $\ell=2$ is clear. Suppose $\ell\ge3$.
Consider $ \textbf{c}\Ccal^{0}_{\ell-1}(m+a, a)$. 
Note that thanks to our induction hypothesis, the arrangement $ \textbf{c}\Ccal^{0}_{\ell-1}(m+a, a)$ is  free  with exponents 
$$\exp\left(\textbf{c}\Ccal^{0}_{\ell-1}(m+a, a)\right) = (1,  2m+2a\ell-2a +2i-1 )_{i=1}^{\ell-1}.$$ 
Moreover, $\Ac^{H}$ and $ \textbf{c}\Ccal^{0}_{\ell-1}(m+a, a)$ share the  Ziegler restriction onto the hyperplane at infinity $H_\infty = \ker z$. 
By Theorem \ref{thm:Yoshinaga's criterion}, it suffices to prove that $\Ac^{H}$ is  locally free in codimension three along $H_\infty$. 
By \cite[Lem.~3.1]{AbeTer11_ShiCat} (cf.~also the case (2-ii) in the proof of  \cite[Prop.~2.4]{AbeTer15_ShiSRB}), 
this is the case once we know that the cone $\cc \Cat^a (A_2)$ over the type $A_2$ extended Catalan arrangement  and $\textbf{c} \widetilde\Ccal_{2}(m, a, 1)$ are both free. 
The former is known to be free by Theorem \ref{thm:Yoshinaga} (see also Corollary \ref{cor:CatA-FA}). 

Concerning the freeness of the latter, we actually show a more general statement, namely that  $\textbf{c} \widetilde\Ccal_{2}(m, a, n)$ is free for $n \ge 0$  with exponents 
$$\exp\left(\textbf{c} \widetilde\Ccal_{2}(m, a, n)\right) =  (1,2m+2na+2a+1,2m+2na+2a+3).$$ 
Then the special case $n=1$ gives the desired result for $\textbf{c} \widetilde\Ccal_{2}(m, a, 1)$. 
We argue by induction on $n \ge 0$. 
Since $ \textbf{c}\Ccal^{0}_{\ell-1}(m, a)$ is also free by our induction hypothesis on $ (\ell, \ell-p) $, the arrangement  $\textbf{c} \widetilde\Ccal_{2}(m, a, 0)= \textbf{c}\Ccal^{0}_{2}(m, a)$ consisting of 
\begin{align*}
z &= 0 , \\
x_1\pm x_2 &=  [-a , a]z, \\
2x_i&=  [-m , m]z \quad (1 \leq   i \leq 2)
\end{align*}
is a localization of  $ \textbf{c}\Ccal^{0}_{\ell-1}(m, a)$ hence free with  exponents $ (1,2m+2a+1,2m+2a+3)$. 
Thus the base case $n=0$ is clear. 

Next we show that if $\textbf{c} \widetilde\Ccal_{2}(m, a, n)$ is free then so is $\textbf{c} \widetilde\Ccal_{2}(m, a, n+1)$ for $n\ge 0$. 
Define $4a$ hyperplanes by setting
\begin{align*}
K_{4s-3}  & : 2x_2 = (m+2na+2s)z,  \quad K_{4s-2} : 2x_1 = (m+2na+2s)z, \\
K_{4s-1}  & : 2x_2 = -(m+2na+2s)z,  \quad K_{4s} : 2x_1 = -(m+2na+2s)z,
\end{align*}
for each $1 \le s \le a$.
 By adding the hyperplanes $K_1, K_2, \ldots, K_{4a}$ to $ \textbf{c} \widetilde\Ccal_{2}(m, a, n) $ in this order and by applying Theorem \ref{thm:AD} to each addition step, we are able to conclude that  $\textbf{c} \widetilde\Ccal_{2}(m, a, n+1)$ is free with the desired exponents.

Case $2$. Suppose  $\ell=p$. 
The arrangement  in question is $\textbf{c}\Ccal^\ell_\ell(m,a)$ given by
\begin{align*}
z &= 0 , \\
x_{i}\pm x_{j} &=  [-a , a]z\quad  (1 \leq i < j \leq \ell), \\
2x_i&=  [1-m , m]z \quad (1 \leq   i \leq \ell).
\end{align*}
We aim to prove   that $\textbf{c}\Ccal^\ell_\ell(m,a)$ is free  with exponents 
$$ \exp\left(\textbf{c}\Ccal^\ell_\ell(m,a)\right) = (1,  \ell-1, 1,3,\ldots, 2\ell-3) +  (0,  (2m+2a\ell-2a)^{\ell} ).$$ 
According to Case $1$, $\textbf{c}\Ccal^{\ell-1}_\ell(m,a)$ is free  with exponents 
$$\exp\left(\textbf{c}\Ccal^{\ell-1}_\ell(m,a)\right) = (1,  \ell, 1,3,\ldots, 2\ell-3) +  (0,  (2m+2a\ell-2a)^{\ell} ).$$ 
Note that $\textbf{c}\Ccal^\ell_\ell(m,a) =\textbf{c}\Ccal^{\ell-1}_\ell(m,a) \setminus \{H_\ell\}$, where $H_\ell\in \textbf{c}\Ccal^{\ell-1}_\ell(m,a)$ denotes the hyperplane given by $2x_\ell = -mz$. 
Again by Case $1$, $\left(\textbf{c}\Ccal^{\ell-1}_\ell(m,a)\right)^{H_\ell} $ is free  with exponents $(1,  2m+2a\ell-2a +2i-1 )_{i=1}^{\ell-1}$. 
Applying the deletion part of Theorem \ref{thm:AD}, we infer that  
$ \textbf{c}\Ccal^\ell_\ell(m,a) $ is free  with the desired exponents. 
  \end{proof} 

Now we are ready to give the proofs for the flag-accuracy of the extended Catalan arrangements of type $B$ and $C$.
\begin{theorem}
	\label{thm:CatB-FA}
	For integers $m,a, \ell$ as in Theorem \ref{thm:CatB-RF}, the arrangement $\cc\Bb^0_\ell(m,a)$ is flag-accurate. 	
In particular, the extended Catalan arrangement $ \Cat^m (B_\ell)$ of type $B_\ell$ (Definition \ref{Def_IdealShiCatalan}), equal to $\Bb^0_\ell(m,m)$, has  flag-accurate cone with exponents 
$$\exp\left(\cc\Cat^m (B_\ell)\right) = (1, 2m\ell +2i-1 )_{i=1}^{\ell}.$$ 
\end{theorem}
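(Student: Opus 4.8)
The plan is to prove the stronger statement that $\cc\Bb^0_\ell(m,a)$ is flag-accurate for all $\ell \ge 1$ and all $m,a \ge 0$ by induction on $\ell$, using the restriction criterion of Lemma \ref{lem:flag-accuracy}. First I would rewrite the exponents supplied by Theorem \ref{thm:CatB-RF} in increasing order. Setting $N := 2m + 2a(\ell-1)$, the formula
$$\exp(\cc\Bb^0_\ell(m,a)) = (1, 2\ell-1, 1, 3, \ldots, 2\ell-3) + (0, N^\ell)$$
sorts to $(1, N+1, N+3, \ldots, N+2\ell-1)_\le$ whenever $N \ge 1$; the degenerate case $N=0$ forces $m=a=0$, i.e.\ the type $B_\ell$ Coxeter arrangement, which is flag-accurate by Corollary \ref{cor:coxeter-ind-flag-acc}. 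In the main case the top exponent is $N+2\ell-1$, so by the ``In particular'' part of Lemma \ref{lem:flag-accuracy} it suffices to exhibit a single hyperplane $H$ whose restriction is flag-accurate with exponents $(1,N+1,\ldots,N+2\ell-3)$, i.e.\ the tuple obtained by deleting this top exponent.

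The hyperplane I would use is $H : x_1 = -mz$, which lies in $\cc\Bb^0_\ell(m,a)$ since $p=0$ forces all the level hyperplanes $x_1 = [-m,m]$ to be present. The restriction computation already carried out in Case $1$ of the proof of Theorem \ref{thm:CatB-RF} (specialized to $p=0$, so $p+1=1$) identifies $\cc\Bb^0_\ell(m,a)^{\{x_1 = -mz\}}$ with $\cc\Bb^0_{\ell-1}(m+a,a)$. The crucial bookkeeping is that $N$ is invariant under the parameter shift $(\ell,m) \mapsto (\ell-1, m+a)$: indeed $2(m+a)+2a(\ell-2) = 2m+2a(\ell-1) = N$. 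Hence $\cc\Bb^0_{\ell-1}(m+a,a)$ is free by Theorem \ref{thm:CatB-RF} with exponents $(1, N+1, N+3, \ldots, N+2(\ell-1)-1) = (1, N+1, \ldots, N+2\ell-3)$, which is precisely the top-truncated exponent tuple of $\cc\Bb^0_\ell(m,a)$.

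To close the induction, the base case $\ell=1$ gives the rank-$2$ central arrangement $\cc\Bb^0_1(m,a)$, which is flag-accurate because every rank-$2$ arrangement is. For the inductive step, the induction hypothesis applied with parameters $(\ell-1, m+a, a)$ yields flag-accuracy of $\cc\Bb^0_{\ell-1}(m+a,a)$; combined with the exponent match above and the freeness of $\cc\Bb^0_\ell(m,a)$, Lemma \ref{lem:flag-accuracy} delivers flag-accuracy of $\cc\Bb^0_\ell(m,a)$. The ``in particular'' assertion is then the specialization $a=m$: here $N = 2m+2m(\ell-1) = 2m\ell$, so $\Cat^m(B_\ell) = \Bb^0_\ell(m,m)$ has cone with exponents $(1, 2m\ell+1, 2m\ell+3, \ldots, 2m\ell+2\ell-1) = (1, 2m\ell+2i-1)_{i=1}^\ell$, as claimed.

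The only genuine content beyond bookkeeping is the identification of the restriction $\cc\Bb^0_\ell(m,a)^{\{x_1=-mz\}}$ with $\cc\Bb^0_{\ell-1}(m+a,a)$ together with the $N$-invariance of the exponents. Both are already available from the proof of Theorem \ref{thm:CatB-RF}, so the apparent obstacle — matching a single hyperplane restriction to the top-truncated exponent tuple — dissolves into that earlier computation, and the induction then runs formally through Lemma \ref{lem:flag-accuracy}.
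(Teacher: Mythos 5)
Your proposal is correct and is essentially the paper's own argument: the paper builds the explicit flag $X_n=\bigcap_{j=n}^{\ell}H_j$ with $H_\ell\colon x_\ell=-mz$ and $H_i\colon x_i-x_{i+1}=-az$, for which $\Ac^{X_n}=\cc\Bb^{0}_{n-1}(m+(\ell-n+1)a,a)$ — exactly the Case~1 restriction identity and the $N$-invariance you exploit, just written down as one witness instead of unrolled as an induction through Lemma \ref{lem:flag-accuracy}. The only cosmetic differences are the choice of coordinate ($x_1$ versus $x_\ell$, immaterial by symmetry) and your separate (and in fact unnecessary, since the induction also covers $N=0$) appeal to Corollary \ref{cor:coxeter-ind-flag-acc} in the degenerate case.
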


\begin{proof}
 The flag-accuracy of $\cc\Bb^0_\ell(m,a)$ can be shown by constructing a witness as hinted in Case $1$ in the proof of  Theorem \ref{thm:CatB-RF}. 
Define $\ell$ hyperplanes $H_1, \ldots, H_\ell \in \cc\Bb^0_\ell(m,a)$ as follows:
$$H_\ell: x_\ell = -mz, \quad H_i: x_{i} - x_{i+1} = -az \quad (1 \le i \le \ell-1).$$
Set $X_n : = \bigcap_{j=n}^{\ell} H_j$ for $1 \le n \le \ell$. 
Then $X_1 \subseteq X_2 \subseteq \cdots \subseteq  X_\ell \subseteq V'=\RR^{\ell+1} $ and $\dim_{V'}(X_n) =n$.
Moreover,  for each $1 \le n \le \ell$, one may show that 
$$\Ac^{X_n} =   \textbf{c}\Bb^{0}_{n-1}(m+(\ell-n+1)a, a).$$ 
Hence, by  Theorem \ref{thm:CatB-RF}, $ \Ac^{X_n} \in \mathcal{RF}$ with exponents $(1,  2m+2a\ell-2a +2i-1 )_{i=1}^{n-1}$. 
Thus  $(X_1, X_2 , \ldots, X_\ell, V' )$  is a witness for the flag-accuracy of $\cc\Bb^0_\ell(m,a)$.
\end{proof}

\begin{theorem}
	\label{thm:CatC-FA}
	For integers $m,a, \ell$ as in Theorem \ref{thm:CatC-F}, the arrangement $\cc\Ccal^0_\ell(m,a)$ is flag-accurate. 	
In particular, the extended Catalan arrangement $ \Cat^m (C_\ell)$ of type $C_\ell$ (Definition \ref{Def_IdealShiCatalan}) equal to $\Ccal^0_\ell(m,m)$ has  flag-accurate cone with exponents 
$$\exp\left(\cc\Cat^m (C_\ell)\right) = (1, 2m\ell +2i-1 )_{i=1}^{\ell}.$$ 
\end{theorem}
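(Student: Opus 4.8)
The plan is to mirror the proof of the type-$B$ statement, Theorem \ref{thm:CatB-FA}, producing an explicit witness for flag-accuracy whose flats are cut out by one coordinate hyperplane together with simple-root (difference) hyperplanes. Concretely, I would set $\Ac := \cc\Ccal^0_\ell(m,a)$ and define
\[
H_\ell : 2x_\ell = -mz, \qquad H_i : x_i - x_{i+1} = -az \quad (1 \le i \le \ell-1),
\]
all of which lie in $\Ac$ and are linearly independent (the $H_i$ are present whenever $a \ge 1$; the case $a=0$ reduces to a product and is immediate by Remark \ref{rem_MATRest-flag}(v)). Then put $X_n := \bigcap_{j=n}^{\ell} H_j$, so that $X_1 \subseteq \cdots \subseteq X_\ell \subseteq V' = \RR^{\ell+1}$ is a flag with $\dim_{V'}(X_n) = n$. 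On $X_n$ the fixed coordinates satisfy $x_j = -(m/2 + (\ell-j)a)z$ for $n \le j \le \ell$, so the restriction lives in the coordinates $x_1,\ldots,x_{n-1},z$.

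The first point is to identify these restrictions. In contrast to type $B$, where each restriction is again a plain cone $\cc\Bb^0$, in type $C$ the half-integral positions forced by the $2x_i$-coefficients produce the modified arrangements $\widetilde\Ccal$ from Theorem \ref{thm:CatC-F}. By a direct substitution (splitting, as in the proof of Theorem \ref{thm:CatC-F}, the sum- and difference-hyperplanes that meet the fixed block) I would show
\[
\Ac^{X_n} = \cc\widetilde\Ccal_{n-1}(m,a,\ell-n+1);
\]
equivalently, each one-step restriction sends $\cc\widetilde\Ccal_d(m,a,N)$, restricted to its extreme coordinate hyperplane $2x_d = -(m+2Na)z$, to $\cc\widetilde\Ccal_{d-1}(m,a,N+1)$. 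Since $\cc\Ccal^0_\ell(m,a) = \cc\widetilde\Ccal_\ell(m,a,0)$, the flag begins correctly, and each difference hyperplane $H_i$ becomes precisely the extreme coordinate hyperplane of the arrangement obtained at the previous stage.

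The remaining ingredient is freeness with the correct exponents for all these intermediate arrangements. I would establish the generalization of Theorem \ref{thm:CatC-F} that
\[
\exp\left(\cc\widetilde\Ccal_d(m,a,N)\right) = \left(1,\ (2i-1) + 2m + 2(d-1)a + 2Na\right)_{i=1}^{d}
\]
for all $d \ge 1$ and $N \ge 0$ (the case $N=0$ being Theorem \ref{thm:CatC-F}), by the same induction via Yoshinaga's criterion (Theorem \ref{thm:Yoshinaga's criterion}): each $\cc\widetilde\Ccal_d(m,a,N)$ shares its Ziegler restriction at infinity with a suitable lower arrangement already known to be free, and local freeness in codimension three is reduced to the rank-$2$ and rank-$3$ cases $\cc\widetilde\Ccal_2(m,a,n)$ and $\cc\Cat^a(A_2)$ settled in the proof of Theorem \ref{thm:CatC-F}. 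A direct check then shows that the exponents of $\Ac^{X_n} = \cc\widetilde\Ccal_{n-1}(m,a,\ell-n+1)$ simplify to the prefix $\left(1,\ (2i-1) + (2m+2a\ell-2a)\right)_{i=1}^{n-1}$ of $\exp(\Ac)$, because the quantity $2(d-1)a + 2Na$ stays constant equal to $2a\ell-2a$ along the flag. Hence $(X_1,\ldots,X_\ell,V')$ is a witness for flag-accuracy by Lemma \ref{lem:flag-accuracy}, and specializing $a=m$ (so that $\Ccal^0_\ell(m,m) = \Cat^m(C_\ell)$) yields $\exp\left(\cc\Cat^m(C_\ell)\right) = (1, 2m\ell + 2i-1)_{i=1}^{\ell}$.

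The main obstacle is the general freeness statement for $\widetilde\Ccal_d(m,a,N)$: the proof of Theorem \ref{thm:CatC-F} only treated $d \le 2$ (all $N$) together with the values $N=1$ arising from single restrictions, whereas the flag visits every pair $(d,N) = (n-1,\ell-n+1)$. Carrying the local-freeness-in-codimension-three reduction through in this full generality, and verifying the restriction identity $\Ac^{X_n} = \cc\widetilde\Ccal_{n-1}(m,a,\ell-n+1)$ (including the two cases $m \ge a$ and $m < a$ that already complicated Theorem \ref{thm:CatC-F}), is the technical heart of the argument.
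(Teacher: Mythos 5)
Your proposal is correct and follows essentially the same route as the paper: the paper's proof of Theorem \ref{thm:CatC-FA} uses exactly the hyperplanes $H_\ell\colon 2x_\ell=-mz$ and $H_i\colon x_i-x_{i+1}=-az$, the flag $X_n=\bigcap_{j=n}^{\ell}H_j$, the identification $\bigl(\cc\Ccal^0_\ell(m,a)\bigr)^{X_n}=\cc\widetilde{\Ccal}_{n-1}(m,a,\ell-n+1)$ via the one-step restriction rule, and establishes the general freeness of $\cc\widetilde{\Ccal}_\ell(m,a,n)$ with the same exponents by comparing Ziegler restrictions with $\cc\Ccal^0_\ell(m+na,a)$ and reducing local freeness in codimension three to the cases $\cc\widetilde\Ccal_{2}(m,a,n)$ and $\cc\Cat^a(A_2)$ already settled in the proof of Theorem \ref{thm:CatC-F}. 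You have also correctly located the technical heart of the argument (the freeness of $\cc\widetilde{\Ccal}_d(m,a,N)$ for all pairs $(d,N)$ visited by the flag), which is precisely the point the paper addresses first.
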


\begin{proof}
The case   $\ell\le 2$ is already done in Case $1$ of the proof of Theorem \ref{thm:CatC-F}. 
Suppose $\ell\ge3$.
Recall the arrangement  $\cc\widetilde{\Ccal}_\ell(m,a,n)$ therein.
First observe that $\cc\widetilde{\Ccal}_\ell(m,a,n)$ is free with 
$$\exp \left( \cc\widetilde{\Ccal}_\ell(m,a,n) \right) =\exp\left(\cc\Ccal^0_\ell(m+na,a)\right) = (1,  2m+2a (\ell+n-1) +2i-1 )_{i=1}^{\ell}.$$
This is because $\cc\widetilde{\Ccal}_\ell(m,a,n)$ and $\cc\Ccal^0_\ell(m+na,a)$ share the  Ziegler restriction onto $H_\infty $, and the former is  locally free in codimension three along $H_\infty$, by an argument  similar to the one used earlier to prove the local freeness of $\textbf{c} \widetilde\Ccal_{\ell-1}(m, a, 1)$ in Theorem \ref{thm:CatC-F} (here the freeness of $\textbf{c} \widetilde\Ccal_{2}(m, a, n)$ for $n \ge 0$ is crucial). 

Now define $\ell$ hyperplanes $H_1, \ldots, H_\ell \in \cc\Ccal^0_\ell(m,a)$ as follows:
$$H_\ell: 2x_\ell = -mz, \quad H_i: x_{i} - x_{i+1} = -az \quad (1 \le i \le \ell-1).$$
Set $X_n : = \bigcap_{j=n}^{\ell} H_j$ for $1 \le n \le \ell$. 
Then $X_1 \subseteq X_2 \subseteq \cdots \subseteq  X_\ell \subseteq V'=\RR^{\ell+1} $ and $\dim_{V'}(X_n) =n$.
Moreover,  for each $1 \le n \le \ell$, one can show that 
$$\left(\cc\Ccal^0_\ell(m,a)\right)^{X_n} =  \cc\widetilde{\Ccal}_{n-1}(m,a,\ell-n+1).$$ 
To see this notice that 
$$\left(\cc\widetilde{\Ccal}_\ell(m,a,n)\right)^K = \cc\widetilde{\Ccal}_{\ell-1}(m,a,n+1),$$
where $K \in \cc\widetilde{\Ccal}_\ell(m,a,n)$ denotes the hyperplane $2x_\ell = -(m+2na)z$.

Thus  for each $1 \le n \le \ell$, $\left(\cc\Ccal^0_\ell(m,a)\right)^{X_n}$ is free with 
$$\exp \left((\cc\Ccal^0_\ell(m,a))^{X_n} \right) = \exp \left( \textbf{c}\Ccal^{0}_{n-1}(m+(\ell-n+1)a, a) \right)= (1,  2m+2a\ell-2a +2i-1 )_{i=1}^{n-1}.$$
Hence  $(X_1, X_2 , \ldots, X_\ell, V' )$  is a witness for the flag-accuracy of $\cc\Ccal^0_\ell(m,a)$.
\end{proof}

Unfortunately, we are unable to show the flag-accuracy of the extended Catalan arrangement of type $D$. 
We propose a potential approach.
 \begin{conjecture}
 \label{thm:CatD-F}
Let $a \ge 0$,  $\ell \ge 1$ and $0 \le r \le \ell$. 
Let $\Dcal^r_\ell(a)$ be the arrangement consisting of the hyperplanes
\begin{align*}
2x_i&=  [-2a , 0] \quad (1 \le    i \le  r), \\
x_{i}+ x_{j} &= [-3a , a]\quad  (1 \le  i < j \le  r), \\
x_{i}- x_{j} &= [-2a , 2a]\quad  (1 \le  i < j \le  r), \\
x_{i}\pm x_{j} &= [-2a , a]\quad  (1 \le  i \le r < j \le  \ell), \\
x_{i}\pm x_{j} &= [-a , a]\quad  (r+1 \le i< j \le  \ell).
\end{align*}
Then the cone $\cc\Dcal^r_\ell(a)$ is free with exponents 
$$ \exp\left(\cc\Dcal^r_\ell(a)\right) = (1,  \ell+r-1, 1,3,5,\ldots, 2\ell-3) +  (0,  (2a\ell+2ar-2a)^{\ell} ).$$ 
 \end{conjecture}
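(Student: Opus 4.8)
The plan is to prove freeness by induction, following the blueprint of the type~$B$ and type~$C$ results (Theorems~\ref{thm:CatB-RF} and~\ref{thm:CatC-F}): the Addition--Deletion Theorem (Theorem~\ref{thm:AD}) should drive the recursion, while Yoshinaga's criterion (Theorem~\ref{thm:Yoshinaga's criterion}) is held in reserve for the restriction steps whose freeness cannot be read off directly. I would run the induction lexicographically on the pair $(\ell,r)$ with $a$ fixed, anchoring it at the two ends $\ell=1$ (where the cone is a rank-two arrangement, hence automatically free) and $r=0$, where $\cc\Dcal^0_\ell(a)$ is exactly the cone over the type~$D_\ell$ extended Catalan arrangement $\Cat^a(D_\ell)$ and is free by Theorem~\ref{thm:Yoshinaga}.

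For the inductive step with $r\ge 1$, the natural move is to restrict to the extreme single-coordinate hyperplane $H:2x_r=-2az$, in analogy with the hyperplane $2x_{p+1}=-mz$ used in the proof of Theorem~\ref{thm:CatC-F}. Setting $x_r=-az$ eliminates the coordinate $x_r$ and lowers $\ell$ by one; each pairwise relation $x_r\pm x_j$ collapses to a single-coordinate condition on the surviving coordinate. The essential new feature, absent in types~$B$ and~$C$, is that all three asymmetric ranges $[-3a,a]$, $[-2a,2a]$ and $[-2a,a]$ feed into this collapse simultaneously, so the single-coordinate values produced on each remaining coordinate form a spread-out, non-consecutive set. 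Hence $\Dcal^r_\ell(a)^H$ is not itself a member of the $\Dcal$-family, and I would introduce a type~$D$ analogue $\widetilde\Dcal$ of the family $\widetilde\Ccal_\ell(m,a,n)$ from the proof of Theorem~\ref{thm:CatC-F} --- carrying extra parameters that record this spread --- designed so that it is closed both under the coordinate-eliminating restriction and under the single-hyperplane additions needed to rebuild $\cc\Dcal^r_\ell(a)$ from $\cc\Dcal^{r-1}_\ell(a)$.

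Freeness of the auxiliary arrangements $\widetilde\Dcal$ would be established exactly as for $\widetilde\Ccal$: such an arrangement shares its Ziegler restriction onto the hyperplane at infinity $H_\infty=\ker z$ with a genuine family member of the same dimension, so by Theorem~\ref{thm:Yoshinaga's criterion} it suffices to verify local freeness in codimension three along $H_\infty$. This reduces to the freeness of the finitely many rank-three localizations that occur, which I would obtain by a sub-induction on the spread parameter, adding $O(a)$ hyperplanes at a time via Theorem~\ref{thm:AD}, mirroring the addition of $K_1,\ldots,K_{4a}$ in the proof of Theorem~\ref{thm:CatC-F}. Once the restriction is identified with an explicit member of $\widetilde\Dcal$ and the deletion term is seen to stay inside the (enlarged) family, Theorem~\ref{thm:AD} upgrades everything to freeness of $\cc\Dcal^r_\ell(a)$, and a comparison of degrees yields the stated exponents, the single shifted exponent $\ell+r-1+2a(\ell+r-1)$ in position two tracking the promotion of the $r$-th coordinate; the boundary case $r=\ell$ is then handled through the deletion half of Theorem~\ref{thm:AD}, as in Case~$2$ of types~$B$ and~$C$.

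I expect the decisive obstacle to be the codimension-three local-freeness verification together with the bookkeeping needed to make $\widetilde\Dcal$ genuinely closed under restriction. In type~$C$ only two rank-three localizations appeared, namely $\cc\Cat^a(A_2)$ (free by Theorem~\ref{thm:Yoshinaga}, cf.\ Corollary~\ref{cor:CatA-FA}) and the small arrangement $\cc\widetilde\Ccal_2(m,a,1)$; in type~$D$ the mixed signs $x_i\pm x_j$ combined with the three distinct asymmetric ranges produce a substantially richer list of rank-three pieces --- $A_2$-, $B_3$- and $D_3$-flavoured truncations with spread --- and each must be shown to be free with the correct exponents before the criterion applies. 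Pinning down the precise multi-parameter shape of $\widetilde\Dcal$ so that the analogues of the type~$C$ identities $\Ac^H=\cc\widetilde\Ccal_{\ell-1}(m,a,1)$ and $(\cc\widetilde\Ccal_\ell(m,a,n))^K=\cc\widetilde\Ccal_{\ell-1}(m,a,n+1)$ hold is the combinatorial heart of the argument, and is presumably the point at which the difficulty becomes severe enough that the statement is recorded here only as a conjecture.
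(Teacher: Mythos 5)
The statement you are addressing is recorded in the paper only as Conjecture \ref{thm:CatD-F}: the authors explicitly say they are unable to prove it and have checked it by computer only for $(\ell,a)\in\{(2,1),(2,2),(3,1),(3,2),(4,1),(4,2)\}$ and all $0\le r\le\ell$ (Remark \ref{rem:Conj->CatD}). So there is no proof in the paper to compare against, and your text is not a proof either. It is a plan that correctly locates the two anchor cases --- $r=0$ gives $\cc\Cat^a(D_\ell)$, free by Theorem \ref{thm:Yoshinaga}, and $r=\ell$ is affinely equivalent to $\cc\Ccal^0_\ell(a,2a)$, free by Theorem \ref{thm:CatC-F} --- and correctly diagnoses where the type-$C$ strategy breaks down, but it defers every step that carries mathematical content.

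Concretely, three things are missing, and each is a genuine gap rather than bookkeeping. First, the auxiliary family $\widetilde\Dcal$ is never defined; without an explicit parametrization you cannot state, let alone verify, the closure identities analogous to $\Ac^H=\cc\widetilde\Ccal_{\ell-1}(m,a,1)$ and $(\cc\widetilde\Ccal_\ell(m,a,n))^K=\cc\widetilde\Ccal_{\ell-1}(m,a,n+1)$ on which the whole induction rests. Second, restricting to $2x_r=-2az$ collapses, on each surviving coordinate, contributions from three different asymmetric intervals ($[-3a,a]$, $[-2a,2a]$, $[-2a,a]$) at once, and you give no argument that the resulting multisets of single-coordinate values have a uniform shape capturable by a one-parameter ``spread''; this combinatorial identity is precisely the hard core, and its apparent intractability is presumably why the statement is left as a conjecture. (The restriction the paper does identify as staying within the family is $x_{r+1}-x_{r+2}=a$, which sends $\Dcal^r_\ell(a)$ to $\Dcal^{r+1}_{\ell-1}(a)$; but that alone does not feed an Addition--Deletion induction either, since the corresponding deletion leaves the family.) Third, the codimension-three local-freeness check along $H_\infty$ is asserted to reduce to finitely many rank-three localizations handled by a sub-induction, yet none of these localizations is identified and none is shown to be free. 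Until the family $\widetilde\Dcal$ is written down, the restriction identities are proved, and the rank-three pieces are classified and shown free with the correct exponents, the statement remains exactly what the paper says it is: a conjecture.
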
 

 Note that $\Dcal^0_\ell(a)$ equals the extended Catalan arrangement $ \Cat^a (D_\ell)$  of type $D_\ell$ (see Definition \ref{Def_IdealShiCatalan} and see also the arrangement $\cc\Bb^\ell_\ell(0,a)$ in Theorem \ref{thm:CatB-RF}), and $\Dcal^\ell_\ell(a)$ is affinely equivalent to  the arrangement  $\Ccal^0_\ell(a,2a)$ from Theorem \ref{thm:CatC-F} via $x_i \mapsto x_i -\frac{a}2$. 
 Thus the cases $r=0$ and $r=\ell$ are already done.
 Note also that if $0 \le r \le \ell-2$, then
 the restriction of $\Dcal^r_\ell(a)$ to the  hyperplane $x_{r+1}-x_{r+2} = a$ is affinely equivalent to $\Dcal^{r+1}_{\ell-1}(a)$. 
 When $r =\ell-1$,  the restriction of $\Dcal^{\ell-1}_\ell(a)$ to the hyperplane $x_{\ell-1}-x_{\ell} = a$ is affinely equivalent (via $x_i \mapsto x_{i+1} -\frac{a}2$ $(1 \le i \le \ell-2)$, $x_\ell \mapsto x_1-\frac{a}2$) to the  arrangement  $\Fc_{\ell-1}(a,0)$ from Conjecture  \ref{thm:CatD-odd-FA} below.
 
 \begin{conjecture}
 \label{thm:CatD-odd-FA}
Let $a, n \ge 0$ and $\ell \ge 1$. 
Let $\Fc_\ell(a,n)$ be the arrangement consisting of the hyperplanes
\begin{align*}
2x_1&= [-(4n+3)a , a],\\
2x_i&=[-a , a]  \quad (2 \le  i \leq \ell), \\
x_{i}\pm x_{j} &= [-2a , 2a]\quad  (2 \leq i < j \leq \ell), \\
x_{1} \pm x_{i} &= [-(2n+3)a , 2a]\quad  (2 \leq i  \leq \ell).
\end{align*}
Then the cone $\cc\Fc_\ell(a,n)$ is flag-accurate with exponents
 $$\exp\left(\cc\Fc_\ell(a,n)\right) = (1,  4a (\ell+n) +2i-1 )_{i=1}^{\ell}.$$
\end{conjecture}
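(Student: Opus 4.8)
The plan is to follow the template established for types $B$ and $C$ in Theorems \ref{thm:CatB-RF}--\ref{thm:CatC-FA}: first prove that $\cc\Fc_\ell(a,n)$ is free with the asserted exponents, and then exhibit an explicit witness flag built by successively restricting to hyperplanes already present in the arrangement. The presence of the auxiliary parameter $n$ -- which plays precisely the role of the third index in the family $\widetilde{\Ccal}_\ell(m,a,n)$ from the proof of Theorem \ref{thm:CatC-F} -- strongly suggests running a double induction, with the outer parameter $\ell$ and the inner parameter $n$. The base case $\ell=1$ is a central pencil in the plane and is free with exponents $(1,4a(n+1)+1)$, matching the claimed formula. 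I would treat $\Fc$ as logically prior to the type $D$ family: Conjecture \ref{thm:CatD-F} refers to $\Fc_\ell(a,n)$ through the affine equivalence $\Dcal^{\ell-1}_\ell(a)^{\,x_{\ell-1}-x_\ell=a}\cong\Fc_{\ell-1}(a,0)$ recorded above, so once $\Fc$ is settled it can be fed back to attack Conjecture \ref{thm:CatD-F}.

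For the freeness step, the inner induction on $n$ is governed by the observation that passing from $\Fc_\ell(a,n)$ to $\Fc_\ell(a,n+1)$ raises every non-unit exponent by $4a$ and adjoins exactly $4a\ell$ new hyperplanes, all lying in the $x_1$-directions (the $2x_1$-range grows by $4a$, while each of the $2(\ell-1)$ ranges $x_1\pm x_i$ grows by $2a$). I would add these hyperplanes one at a time and invoke the Addition-Deletion Theorem (Theorem \ref{thm:AD}) at each step, exactly as in the passage $\widetilde{\Ccal}_2(m,a,n)\to\widetilde{\Ccal}_2(m,a,n+1)$. The outer induction on $\ell$ would be carried out by restricting to a suitable hyperplane and recognising the restriction as a lower-rank member of the family. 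Where a restriction fails to be manifestly free, I would fall back on Yoshinaga's criterion (Theorem \ref{thm:Yoshinaga's criterion}): identify a simple arrangement sharing the Ziegler restriction of $\cc\Fc_\ell(a,n)$ onto $H_\infty=\ker z$, and verify local freeness in codimension three along $H_\infty$, which by \cite[Lem.~3.1]{AbeTer11_ShiCat} reduces to the freeness of the rank-$2$ and rank-$3$ localizations supplied by the induction.

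With freeness established, I would construct the witness flag by peeling off the ``ordinary'' coordinates $x_2,\dots,x_\ell$ via difference hyperplanes and finally collapsing onto the distinguished coordinate $x_1$, in direct analogy with Theorems \ref{thm:CatB-FA} and \ref{thm:CatC-FA}. Concretely, one sets $H_\ell:\,2x_\ell=-az$ and $H_i:\,x_i-x_{i+1}=-2az$ for $2\le i\le\ell-1$, together with a final hyperplane in an $x_1$-direction, defines $X_s:=\bigcap_{j=s}^{\ell}H_j$, and checks that each restriction $(\cc\Fc_\ell(a,n))^{X_s}$ is free with the truncated exponents $(1,4a(\ell+n)+2i-1)_{i=1}^{s-1}$, landing again inside the $\Fc$- or $\widetilde{\Ccal}$-families. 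As the bottom of the flag is a rank-$\le 2$ arrangement, hence automatically flag-accurate, Lemma \ref{lem:flag-accuracy} then propagates flag-accuracy up the entire flag.

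The principal obstacle is the deliberate asymmetry carried by the distinguished coordinate $x_1$: the lopsided ranges $[-(4n+3)a,a]$ for $2x_1$ and $[-(2n+3)a,2a]$ for $x_1\pm x_i$ force the multiarrangement at infinity to have non-uniform multiplicities, so there is no off-the-shelf type $C$ Catalan model sharing the Ziegler restriction of $\cc\Fc_\ell(a,n)$ -- in contrast to the clean comparison available in type $C$. Consequently, both the Yoshinaga comparison and the determination of the exact affine type of each restriction $(\cc\Fc_\ell(a,n))^{X_s}$ must be carried out by hand, and (precisely as in the proof of Theorem \ref{thm:CatC-F}, with its $m\ge a$ versus $m<a$ dichotomy) one expects to split into several subcases according to the relative sizes of the parameters. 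Controlling this case analysis uniformly in $\ell$ and $n$, so that the inductive family genuinely closes up under the relevant restrictions, is the crux of the matter and is almost certainly the reason the statement is recorded here as a conjecture rather than a theorem.
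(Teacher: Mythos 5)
First, note that the paper does not prove this statement: it is recorded as Conjecture \ref{thm:CatD-odd-FA} and is verified only by computer for small parameters (Remark \ref{rem:Conj->CatD}). The only argument the paper supplies is a reduction: \emph{assuming} $\cc\Fc_\ell(a,n)$ is free with the stated exponents for all $a,n,\ell$, the restriction to the single hyperplane $H=\ker(x_1-x_2-2az)$ is linearly equivalent to $\cc\Fc_{\ell-1}(a,n+1)$, whose exponents $(1,4a(\ell+n)+2i-1)_{i=1}^{\ell-1}$ are exactly the truncation required by Lemma \ref{lem:flag-accuracy}; induction on $\ell$ then yields flag-accuracy. Your proposal correctly identifies freeness as the crux and honestly concedes that you cannot establish it uniformly, so, like the paper, you have a reduction and a plan rather than a proof. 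That is the principal gap, and it is the same gap the authors leave open.

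Second, your proposed witness flag is not the right one. You model it on types $B$ and $C$, taking $H_\ell:2x_\ell=-az$ and $H_i:x_i-x_{i+1}=-2az$ for $2\le i\le\ell-1$, i.e.\ you peel off the ordinary coordinates and keep the distinguished coordinate $x_1$ for last. Restricting to $2x_\ell=-az$ forces $x_\ell=-\tfrac{a}{2}z$, so the hyperplanes $x_i\pm x_\ell=cz$ restrict to $x_i=(c\mp\tfrac{a}{2})z$; the resulting arrangement acquires shifted half-integer levels in every ordinary coordinate and does not land back in the $\Fc$- or $\widetilde{\Ccal}$-families, so even granting freeness you cannot read off the truncated exponents along your flag. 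The recursion that actually closes up is the opposite one: restrict to $x_1-x_2=2az$, which merges $x_1$ into $x_2$, enlarges the lopsided ranges by exactly $4a$ (resp.\ $2a$), and reproduces $\Fc_{\ell-1}(a,n+1)$ with $x_2$ as the new distinguished coordinate --- this is precisely why the parameter $n$ appears in the definition of $\Fc_\ell(a,n)$ at all. Your instinct that $n$ plays the role of the index in $\widetilde{\Ccal}_\ell(m,a,n)$ is on the right track, but the correct flag increments $n$ at every step instead of postponing the $x_1$-direction to the end.
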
 
 
 Note that the flag-accuracy of $\cc\Fc_\ell(a,n)$ follows from its freeness and exponents. 
 Indeed, assume that  $\Ac : = \cc\Fc_\ell(a,n)$ is free with exponents $ (1,  4a (\ell+n) +2i-1 )_{i=1}^{\ell}$ for any $a \ge 1$,   $n\ge 0$,  $\ell \ge 1$. 
 Let   $H \in \Ac$ denote the hyperplane $x_{1}-x_2 = 2az$. 
Then the restriction $\Ac^{H}$ is linearly equivalent to $\cc\Fc_{\ell-1}(a,n+1)$. 
Thus $\Ac^{H}$ is free  with exponents $ (1,  4a (\ell+n) +2i-1 )_{i=1}^{\ell-1}$. 
The strategy is now to use induction on $\ell$ and apply Lemma \ref{lem:flag-accuracy} to deduce the flag-accuracy of $\cc\Fc_\ell(a,n)$.

\begin{remark}
	\label{rem:Conj->CatD}
	We have verified Conjectures \ref{thm:CatD-F} and \ref{thm:CatD-odd-FA} by computer in the following few cases:  for
	 $(\ell, a)= (2,1), (2,2), (3,1), (3,2), (4,1)$ and $(4,2)$ and all $0 \le r \le \ell$ in 
	 the first instance and all values for $n \le 2$ in the second.
	
	We remark that if these conjectures are true, then we may conclude that the extended Catalan arrangement  $\Cat^m (D_\ell)$ of type $D_\ell$ for $m \ge 0$ and  $\ell \ge 2$ has  flag-accurate cone with exponents 
	$$
	\exp\left(\cc\Cat^m (D_\ell)\right) = (1,  \ell-1, 1,3,5,\ldots, 2\ell-3) +  (0,  (2m\ell -2m)^{\ell} ).$$ 

Let us verify this claim. Let $\Ac: = \cc\Cat^m (D_\ell)$. 
Recall that  $\cc H_\alpha^j = \ker(\alpha-jz)$ for $\alpha \in  \Phi$ and $j \in \ZZ$ and  
also that $ \alpha_i=x_i-x_{i+1}\, (1 \le i \le \ell-1)$, and $\alpha_{\ell}=x_{\ell-1}+x_{\ell}$.  

When $\ell=2p$, define $p$ subspaces $X_1, \ldots, X_{p} \in L(\Ac)$ as follows:
$$X_k : = \bigcap_{j=1}^{k} \cc H^m_{\alpha_{2j-1}} \mbox{ for }   1 \le k \le p.$$	
One may show that for every $1 \le k \le p$
$$\Ac^{X_{k}} = \cc\Dcal^k_{\ell-k}(m).$$
If Conjecture \ref{thm:CatD-F} is true, then $\Ac^{X_{k}}$ is free with exponents $$\exp\left(\Ac^{X_{k}}\right) = (1,  \ell-1, 1,3,5,\ldots, 2\ell-2k-3) +  (0,  (2m\ell -2m)^{\ell} ).$$ 
As noted before, $\Dcal^p_p(m)=\Ccal^0_p(m,2m)$ (cf.~Theorem \ref{thm:CatC-F}). 
Moreover, by Theorem \ref{thm:CatC-FA}, $\Ac^{X_{p}} = \cc\Ccal^0_p(m,2m)$ is flag-accurate. 
Thus by  Lemma \ref{lem:flag-accuracy}, $\Ac$ is flag-accurate.

When $\ell=2p+1$, define $p+1$ subspaces $X_1, \ldots, X_{p+1} \in L(\Ac)$ as follows:
$$X_k : = 
\begin{cases}
\bigcap_{j=1}^{k} \cc H^m_{\alpha_{2j}} &  \mbox{ for }   1 \le k \le p ,\\
X_p \cap \cc H^m_{\alpha_{1}} & \mbox{ for }   k = p+1.
\end{cases}
$$	
Using an argument similar to the one in the previous case and the notation from Conjecture  \ref{thm:CatD-odd-FA}, one can show that
$$\Ac^{X_{k}} = \cc\Dcal^k_{\ell-k}(m)  \quad\text{  for } 1 \le k \le p, \text{ and }\quad \Ac^{X_{p+1}} =\cc\Fc_{p}(m,0). 
$$
If Conjecture \ref{thm:CatD-odd-FA} is true, then $\Ac^{X_{p+1}}  $ is flag-accurate with exponents
 $ (1,  4mp +2i-1 )_{i=1}^{p}.$
Thus by  Lemma \ref{lem:flag-accuracy}, $\Ac$ is flag-accurate. 
\end{remark}

\begin{remark}
	\label{rem:exceptional-Cat}
	The extended Catalan arrangement  of type $G_2$ is clearly flag-accurate by Theorem \ref{thm:idealshi} and Lemma \ref{lem:flag-accuracy}. 
	For the other exceptional types $F_4, E_6, E_7$, and $E_8$, our computer could only produce a result in the case of $F_4$ which shows that the cone $\cc\Cat^1 (F_4)$ is indeed flag-accurate. 
	\end{remark}

 The discussion above motivates 
 Conjecture \ref{conj:Cat}. 
 
\section{Flag-accurate graphic arrangements}
\label{sect:graph}

In this section we examine flag-accuracy among free graphic arrangements.
For basics on the latter, we refer to \cite[Sec.~2.4]{OrTer92_Arr}.

Let  $\KK$ be an arbitrary field. 
 Let $G = (V_G,E_G)$ be a simple graph (i.e., no loops and no multiple edges) with vertex set $V_G = \{v_{1}, \dots, v_{\ell}\}$ and edge set $E_G$.
 \begin{definition}
The  \textit{graphic arrangement} $\Ac_G$ in $\Bbb K^\ell$ is defined by
$$\Ac_G:= \{H_{ij}: = \ker(x_i - x_j) \mid \{v_i,v_j\} \in E_G \}.$$
\end{definition}
A simple graph $G$ is  \textit{chordal} if it does not  contain an induced cycle $C_n$ of length $n > 3$, i.e.~if $G$ is \emph{$C_{n}$-free} for $n >3$ for short.
The freeness of graphic arrangements is characterized by chordality:

\begin{theorem}[\cite{St72}, {\cite[Thm.~ 3.3]{ER94_FreeAB}}]
\label{thm:free-chordal}
The graphic arrangement $\Ac_G$ is free if and only if $G$ is chordal.
\end{theorem}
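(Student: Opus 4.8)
The plan is to prove the two implications separately. For the \emph{only if} direction I would argue by contraposition, realizing an induced non-chordal cycle as a \emph{localization} of $\Ac_G$ and combining the fact that freeness passes to localizations with the non-freeness of cycle arrangements. For the \emph{if} direction I would induct on the number of vertices, using a perfect elimination ordering of a chordal graph to build $\Ac_G$ hyperplane by hyperplane and applying the Addition-Deletion Theorem (Theorem \ref{thm:AD}) at each step; this simultaneously yields inductive freeness and recovers the exponents.

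For \emph{free} $\Rightarrow$ \emph{chordal}: suppose $G$ is not chordal, so it contains an induced cycle $C_n$ on a vertex subset $S$ with $n>3$. First I would record the dictionary between induced subgraphs and localizations. Let $X_S\in L(\Ac_G)$ be the flat obtained by setting $x_a=x_b$ for all $v_a,v_b\in S$; this is a genuine flat since $C_n$ is connected. A hyperplane $H_{ij}$ contains $X_S$ precisely when $\{v_i,v_j\}$ is an edge with $i,j\in S$, so the localization $(\Ac_G)_{X_S}$ is exactly the graphic arrangement $\Ac_{C_n}$ of the induced cycle. As freeness is inherited by localizations \cite[Thm.~4.37]{OrTer92_Arr}, it suffices to show $\Ac_{C_n}$ is not free for $n>3$. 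For this I would use that the characteristic polynomial of a graphic arrangement equals the chromatic polynomial of the graph (cf.~\cite{OrTer92_Arr}), so that $\chi(\Ac_{C_n},t)=(t-1)^n+(-1)^n(t-1)=(t-1)\bigl((t-1)^{n-1}+(-1)^n\bigr)$, and observe that the second factor has a non-real root once $n\ge 4$. By Terao's Factorization Theorem the characteristic polynomial of a free arrangement splits into integer linear factors; this contradiction forces $G$ to be chordal.

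For \emph{chordal} $\Rightarrow$ \emph{free}: recall the classical fact that a chordal graph admits a perfect elimination ordering, i.e.\ we may order $V_G=\{v_1,\dots,v_\ell\}$ so that for each $k$ the set $N_k:=\{v_i\mid i<k,\ \{v_i,v_k\}\in E_G\}$ of earlier neighbours of $v_k$ is a clique. Writing $G_k$ for the induced subgraph on $\{v_1,\dots,v_k\}$, I would prove by induction on $k$ that $\Ac_{G_k}$ is inductively free. The inductive step passes from $G_{k-1}$ to $G_k$ by adding the $d_k:=|N_k|$ hyperplanes $H_{v_k w}$, $w\in N_k$, one at a time. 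The key combinatorial observation is that when we restrict to one of these, say $H_{v_k w}=\{x_{v_k}=x_w\}$, every previously added hyperplane $H_{v_k w'}$ becomes $H_{w w'}$, which is already present because $w,w'$ lie in the clique $N_k$; hence each such restriction is a copy of $\Ac_{G_{k-1}}$, free by induction. Feeding this into Theorem \ref{thm:AD} at every addition shows $\Ac_{G_k}$ is free, with the top exponent incremented by one at each of the $d_k$ steps, so that $\exp(\Ac_G)$ is assembled from the back-degrees $d_1,\dots,d_\ell$. Alternatively one may invoke that chordal graphs yield supersolvable arrangements \cite{St72} and apply Theorem \ref{thm:exp-ss}.

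The main obstacle, in the chordal direction, is precisely the combinatorial lemma that identifying a vertex with an earlier neighbour creates no new edges: this is where simpliciality (the clique condition) is essential and what makes every intermediate restriction equal to the previous arrangement, after which the exponent bookkeeping in Theorem \ref{thm:AD} is routine. In the non-chordal direction the only substantive point is the non-freeness of $\Ac_{C_n}$; the one step needing care is verifying that $(t-1)^{n-1}+(-1)^n$ genuinely has a non-real root for all $n\ge 4$, which I would handle by splitting into cases: for $n$ even the factor is $(t-1)^{n-1}-1$, whose roots are $n-1\ge 3$ shifted roots of unity and hence mostly non-real, while for $n$ odd it is $(t-1)^{n-1}+1$ with $n-1\ge 4$ even, so it has no real root at all.
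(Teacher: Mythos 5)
The paper does not prove this theorem; it is quoted from Stanley and Edelman--Reiner, and your argument is precisely the standard proof of that classical result: localize at the flat of an induced cycle and invoke Terao's factorization to rule out freeness, and use a (reversed) perfect elimination ordering together with the Addition--Deletion Theorem --- equivalently, supersolvability plus Theorem \ref{thm:exp-ss} --- for the converse. Both halves are structurally sound: the identification $(\Ac_G)_{X_S}=\Ac_{C_n}$ for an induced cycle, the heredity of freeness under localization, and the observation that simpliciality of $v_k$ forces each intermediate restriction to be a copy of $\Ac_{G_{k-1}}$ (so the exponent bookkeeping in Theorem \ref{thm:AD} closes) are exactly the points that matter, and you have them all.

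One detail to repair: your final parity case-split is reversed. From $\chi(\Ac_{C_n},t)=(t-1)\bigl((t-1)^{n-1}+(-1)^n\bigr)$, for $n$ \emph{even} the second factor is $(t-1)^{n-1}+1$ with $n-1$ odd, which has the single real root $t=0$ and $n-2\ge 2$ non-real roots; for $n$ \emph{odd} it is $(t-1)^{n-1}-1$ with $n-1$ even, which has real roots $t=0,2$ and $n-3\ge 2$ non-real roots. In particular the clause ``no real root at all'' never applies to the factor that actually occurs; the uniform and correct statement is that for $n\ge 4$ the roots $1+\zeta$ with $\zeta^{n-1}=\mp 1$ include non-real numbers, which already contradicts Terao's factorization theorem. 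With that correction the proof is complete.
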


\begin{figure}[ht!b]
	\def\sc {1.0}
	\begin{tikzpicture}[scale=\sc]
		\node (10) at (0.0,0.0) {\tiny $10$};
		\draw[black] (10) circle[radius=6.25pt];
		\node (1) at (0.0,-2.) {\tiny $1$};
		\draw[black] (1) circle[radius=6.25pt];
		\node (2) at (1.7320508075688774,-1.) {\tiny $2$};
		\draw[black] (2) circle[radius=6.25pt];
		\node (3) at (1.7320508075688774,1.) {\tiny $3$};
		\draw[black] (3) circle[radius=6.25pt];
		\node (4) at (0.0,2.) {\tiny $4$};
		\draw[black] (4) circle[radius=6.25pt];
		\node (5) at (-1.7320508075688774,1.) {\tiny $5$};
		\draw[black] (5) circle[radius=6.25pt];
		\node (6) at (-1.7320508075688774,-1.) {\tiny $6$};
		\draw[black] (6) circle[radius=6.25pt];
		\node (7) at ($0.35*(2)-0.35*(10)$) {\tiny $7$};
		\draw[black] (7) circle[radius=6.25pt];
		\node (8) at ($0.35*(4)-0.35*(10)$) {\tiny $8$};
		\draw[black] (8) circle[radius=6.25pt];
		\node (9) at ($0.35*(6)-0.35*(10)$) {\tiny $9$};
		\draw[black] (9) circle[radius=6.25pt];
		
		\draw[shorten <=6.25pt, shorten >=6.25pt] ($(1)$) -- ($(2)$);
		\draw[shorten <=6.25pt, shorten >=6.25pt] ($(2)$) -- ($(3)$);
		\draw[shorten <=6.25pt, shorten >=6.25pt] ($(3)$) -- ($(4)$);
		\draw[shorten <=6.25pt, shorten >=6.25pt] ($(4)$) -- ($(5)$);
		\draw[shorten <=6.25pt, shorten >=6.25pt] ($(5)$) -- ($(6)$);
		\draw[shorten <=6.25pt, shorten >=6.25pt] ($(6)$) -- ($(1)$);
		\draw[shorten <=6.25pt, shorten >=6.25pt] ($(1)$) -- ($(10)$);
		\draw[shorten <=6.25pt, shorten >=6.25pt] ($(3)$) -- ($(10)$);
		\draw[shorten <=6.25pt, shorten >=6.25pt] ($(5)$) -- ($(10)$);
		\draw[shorten <=6.25pt, shorten >=6.25pt] ($(1)$) -- ($(3)$);
		\draw[shorten <=6.25pt, shorten >=6.25pt] ($(3)$) -- ($(5)$);
		\draw[shorten <=6.25pt, shorten >=6.25pt] ($(5)$) -- ($(1)$);
		
		\draw[shorten <=6.25pt, shorten >=6.25pt] ($(3)$) -- ($(8)$);
		\draw[shorten <=6.25pt, shorten >=6.25pt] ($(8)$) -- ($(10)$);
		\draw[shorten <=6.25pt, shorten >=6.25pt] ($(5)$) -- ($(9)$);
		\draw[shorten <=6.25pt, shorten >=6.25pt] ($(9)$) -- ($(10)$);
		\draw[shorten <=6.25pt, shorten >=6.25pt] ($(1)$) -- ($(7)$);
		\draw[shorten <=6.25pt, shorten >=6.25pt] ($(7)$) -- ($(10)$);
		
		\node (11) at (2.3,0.0) {\tiny $v$};
		\draw[black] (11) circle[radius=6.25pt];
		\draw[shorten <=6.25pt, shorten >=6.25pt, dashed] ($(1)$) -- ($(11)$);
		\draw[shorten <=6.25pt, shorten >=6.25pt, dashed] ($(2)$) -- ($(11)$);
		\draw[shorten <=6.25pt, shorten >=6.25pt, dashed] ($(3)$) -- ($(11)$);
		\draw[shorten <=6.25pt, shorten >=6.25pt, dashed] ($(10)$) -- ($(11)$);
		
	\end{tikzpicture}    
	\caption{A chordal graph $G$ giving rise to a graphic arrangement which is free, but not accurate,
		and an extension $G'$ by one vertex $v$ resulting in a graphic arrangement which is ind-flag-accurate.}
	\label{fig:AGexLocRestr}
\end{figure}
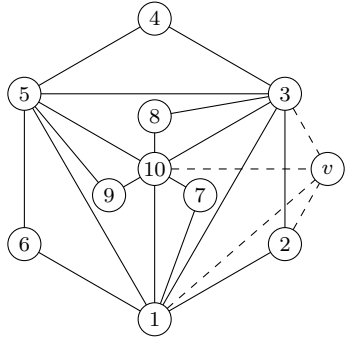

We begin with an example from \cite{mueckschroehrle:accurate}.
\begin{example}
	\label{ex:FA-Rest-Loc}
	Let $G$ be the chordal graph shown in Figure \ref{fig:AGexLocRestr}.
	It was shown in \cite[Ex.~5.7]{mueckschroehrle:accurate} that $\Ac_G$ is not accurate and so it is not flag-accurate.
	Extending $G$ by one additional vertex $v$ and the
	edges indicated by the dashed lines in Figure \ref{fig:AGexLocRestr} yields a new chordal graph $G'$.
	The corresponding graphic arrangement $\Ac_{G'}$ is free with exponents 
	$\exp(\Ac_{G'}) = (0,1,2^6,3^3)$.
	It is not hard to see, by contracting the appropriate edges, 
	that $\Ac_{G'}$ is flag-accurate. But for $H' = \ker(x_1-x_v) \in \Ac_{G'}$ (the hyperplane corresponding to the new edge $(1,v)$ of $G'$), 
	we have $\Ac_{G'}^{H'} = \Ac_G$. Moreover,  for $X = \bigcap_{1\leq i < j \leq 10} \ker(x_i-x_j)$ we also obtain $\left(\Ac_{G'}\right)_X \cong \Ac_G$.
	This shows that in general, flag-accuracy is neither inherited by restrictions, nor by localizations.
\end{example}

In \cite{TranTsujie22_MATfreeG}, Tsujie and the third author gave a  characterization of MAT-free graphic arrangements by means of  \emph{strongly chordal graphs}  -- the graphs that are chordal and \emph{$n$-sun-free} for $n\ge3$.
Recall that an  \emph{$n$-sun $ S_{n} $} ($n \ge 3$) is a (chordal) graph with vertex set $V_{S_{n}} = [n] \cup \{v_{1}, \dots, v_{n}\} $ and edge set 
\begin{align*}
E_{S_{n}} = \left\{\{i,j\} \mid 1 \leq i < j \leq n\right\}\ \cup \ \left\{\{v_{i}, j\} \mid 1 \leq i \leq n, j \in \{i, i+1\}\right\}, 
\end{align*}
where the vertices $1$ and $ n+1$ are identified; 
 see Figure \ref{fig:Qell} for illustrations of $S_3$ and $S_4$.
 
 \begin{theorem}
 	[{\cite[Thm.~2.10]{TranTsujie22_MATfreeG}}]
\label{thm:MAT-free-strong-chordal}
The graphic arrangement $\Ac_G$ is MAT-free if and only if $G$ is strongly chordal.
\end{theorem}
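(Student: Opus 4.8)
The plan is to reduce both properties to combinatorial conditions on an ordered partition of the edge set of $G$ and then match them against a strong elimination ordering. First I would set up the dictionary between the arrangement operations in Definition \ref{Def_MATfree} and operations on the graph. For a candidate ordered partition $\pi = (\pi_1 | \cdots | \pi_n)$ of $E_G$ write $G_k$ for the graph with edge set $\pi_1 \cup \cdots \cup \pi_k$, so that $\Ac_k = \Ac_{G_k}$. Restriction to $H_{ij} = \ker(x_i - x_j)$ is contraction of the edge $\{v_i, v_j\}$; a set of hyperplanes is linearly independent exactly when the corresponding edges form a forest; and the coincidence count satisfies $|\Ac_k| - |(\Ac_k \cup \{H_{ij}\})^{H_{ij}}| = |N_{G_k}(v_i) \cap N_{G_k}(v_j)|$, the number of common neighbours of the endpoints in $G_k$. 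Under this dictionary the three MAT-conditions become: (1) each block $\pi_{k+1}$ is a forest; (2) no edge of $G_k$ joins two vertices lying in a single connected component of $(V_G, \pi_{k+1})$; and (3) every edge $\{v_i, v_j\} \in \pi_{k+1}$ has exactly $k$ common neighbours in $G_k$. Since MAT-free arrangements are free by Remark \ref{rem:MAT-free}(ii), Theorem \ref{thm:free-chordal} already forces chordality, so the entire content of the statement is that the sun-free condition is equivalent to the existence of a partition satisfying (1)--(3).

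For the implication that a strongly chordal $G$ yields an MAT-free $\Ac_G$, I would use that $G$ admits a strong elimination ordering $v_1 < \cdots < v_\ell$, equivalently that $G$ can be reduced to the empty graph by successively deleting \emph{simple} vertices. The staircase property of such an ordering -- the closed neighbourhoods met along the way are nested by inclusion -- is exactly the structure needed to group the edges into blocks $\pi_k$ on which the common-neighbour count is constant. Concretely I would assign each edge a level dictated by this ordering and verify that, at the stage $G_k$, both endpoints of any level-$(k+1)$ edge see precisely $k$ earlier common neighbours, giving (3); the forest condition (1) and the internal-edge condition (2) then follow from the acyclicity built into the ordering. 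This exhibits $\pi$ as an MAT-partition, mirroring the way the root-height partition works for ideal Weyl arrangements in Theorem \ref{thm:IdealFree}.

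For the converse I would start from an MAT-partition $\pi$ of $\Ac_G$. Freeness gives chordality, so it remains to rule out an induced $n$-sun. Rather than localizing (MAT-freeness is not known to pass to induced subarrangements, so the hereditary shortcut is unavailable), I would read a vertex ordering off $\pi$ and show that condition (3) forces it to be a strong elimination ordering, whence $G$ is strongly chordal. The decisive point is that the uniformity in (3) forbids the neighbourhood pattern of a sun: in $S_n$ each outer vertex $v_i$ is adjacent only to the two clique vertices $i, i+1$, and the closed neighbourhoods $N[i]$ and $N[i+1]$ are incomparable (they contain $v_{i-1}$ and $v_{i+1}$ respectively), so $S_n$ possesses no simple vertex. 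Translated back, this non-nestedness makes it impossible to assign levels to the sun's edges keeping the common-neighbour counts simultaneously constant within blocks and compatible with (2), contradicting the existence of $\pi$.

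The main obstacle is condition (3). Conditions (1) and (2) are soft and follow from acyclicity, but (3) is a rigid uniformity demand: within one block every edge must have the \emph{same} common-neighbour count in the graph assembled so far. Proving that this uniformity is always realizable for a strongly chordal graph, and conversely that its realizability forces the nested-neighbourhood (staircase) structure characterizing strong elimination orderings, is the technical heart of the argument and is precisely where the gap between chordal and strongly chordal enters. Matching the block structure of the MAT-partition to the levels of a strong elimination ordering in both directions -- engineering an edge-level function on $G$ that plays the role the root-height does in Theorem \ref{thm:IdealFree} -- is the step I expect to require the most care.
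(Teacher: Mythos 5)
A preliminary caveat: the paper does not prove this statement — it is quoted from Tran–Tsujie \cite{TranTsujie22_MATfreeG} — so your proposal can only be measured against the cited argument, which (as its title indicates) proceeds by packaging the MAT conditions into a purely graph-theoretic notion of an edge-labeling and then characterizing the graphs admitting such a labeling. Your dictionary is correct and is exactly the right starting point: restriction to $H_{ij}$ is contraction of $\{v_i,v_j\}$, the rank condition is the forest condition, and $|\Ac_k|-|(\Ac_k\cup\{H_{ij}\})^{H_{ij}}|$ equals the number of common neighbours of $v_i,v_j$ in $G_k$, so the theorem does reduce to the existence of an ordered partition of $E_G$ satisfying your (1)--(3).

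Both directions, however, have genuine gaps. In the forward direction you never say what the level of an edge actually is; ``a level dictated by this ordering'' is the entire construction, and the claim that (1) and (2) ``follow from the acyclicity built into the ordering'' is unjustified — condition (2) forbids any edge of $G_k$ from joining two vertices in the same tree of $\pi_{k+1}$, which is a real constraint on the construction rather than a formal consequence of having an ordering. The converse is where the proposal would fail as written. You rightly observe that MAT-partitions are not known to restrict to induced subarrangements, so localizing at a sun is unavailable; but your substitute — ``read a vertex ordering off $\pi$'' together with the assertion that the non-simplicity of the sun's vertices ``translates back'' to an obstruction — is the hereditary argument in disguise. The counts in (3) are taken in all of $G_k$, so common neighbours outside the sun can repair exactly the incomparability of closed neighbourhoods you invoke; the incompatibility of $S_n$ with (1)--(3) cannot be read off from the sun alone, and no canonical vertex ordering is extracted from an edge partition. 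The missing idea, which is the actual content of \cite{TranTsujie22_MATfreeG}, is to replace the global count in (3) by local conditions on an edge-labeling (checkable edge by edge and triangle by triangle) that are equivalent to (1)--(3) and \emph{are} inherited by induced subgraphs; only then does the problem reduce to showing that $S_n$ admits no such labeling. Without that localization step the converse does not go through.
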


In the case of graphic arrangements, the concepts of supersolvability, inductive freeness, divisional freeness, almost accuracy and freeness are essentially equivalent. 
In particular, ind-flag-accuracy coincides with  flag-accuracy. 
However, a characterization of (flag-)accurate graphic arrangements is unknown.
We present in this section some classes of flag-accurate graphic arrangements.

Next, we first fix some notation which is used throughout the section. 
For a positive integer $n$ let $[n]:=\{1,2,\ldots,n\}$. 
For an edge $e \in E_G$ denote by  $H_e$  the hyperplane that is defined by $e$.
For a subset  $F \subseteq E_G$ let $X_F := \bigcap_{e \in F} H_e\in L(\Ac_G)$.
 
The following special \emph{$\Qc$-family} of graphs is of particular interest. For, the set of graphic arrangements stemming from $\Qc$ is closed under restrictions to a hyperplane, see Remark \ref{rem:AQell}.
 
\begin{definition}[$\Qc$-family]
	\label{def:AQell}
	For $\ell\ge 2$, let $K_\ell$  be the complete (undirected) graph  on $[\ell]$.
	 Let $M = (m_{ij} \mid 1 \leq i < j \leq \ell) \in \ZZ^{\ell(\ell-1)/2}_{\ge0}$ be a tuple of non-negative integers of cardinality $\ell(\ell-1)/2$ where the element $m_{ij} \in M$ corresponds to the edge $\{i,j\}$ of $K_\ell$. 
	 By $\Qc$ we denote the following family  of chordal graphs whose elements $Q_\ell=Q_\ell(M) \in \Qc$ are defined as follows: $Q_\ell$ consists of an ``inner" complete graph $K_\ell$, and to each edge $\{i,j\}$ of $K_\ell$ we attach $m_{ij} \ge 0$ many ``outer" triangles ($3$-cycles).  
	 (We may also think of $Q_\ell(M)$ as an edge-weighted graph with weight $m_{ij}$ in each edge $\{i,j\}$ of the complete graph $K_\ell$.)
	 See Figure \ref{fig:Qell} for an example.	 
	 \end{definition}
	 
	 \begin{example}
	\label{ex:n-sun} 
	The $n$-sun is a member in the $\Qc$-family given by $S_n = Q_n(M)$ where $m_{ij} = 1$ if $j=i+1$ for $1 \leq i \leq n$ and $m_{ij} = 0$ otherwise. 
\end{example}

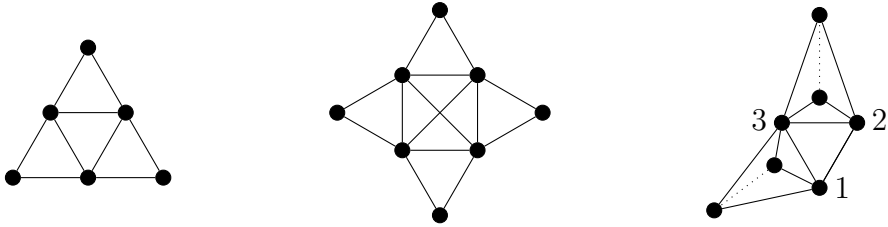
\begin{figure}[htbp]
\centering
\begin{subfigure}{.3\textwidth}
  \centering
\begin{tikzpicture}
\draw (0,0) node[v](1){};
\draw (1,0) node[v](2){};
\draw (2,0) node[v](3){};
\draw (0.5,0.865) node[v](4){};
\draw (1.5,0.865) node[v](5){};
\draw (1,1.73) node[v](6){};
\draw (6)--(4)--(1)--(2)--(3)--(5)--(6);
\draw (4)--(2)--(5)--(4);
\end{tikzpicture}
\end{subfigure}%
\begin{subfigure}{.3\textwidth}
  \centering
\begin{tikzpicture}
\draw (0,0) node[v](x1){};
\draw (1,0) node[v](x2){};
\draw (1,1) node[v](x3){};
\draw (0,1) node[v](x4){};
\draw (0.5,-0.865) node[v](y1){};
\draw (1.865,0.5) node[v](y2){};
\draw (0.5,1.865) node[v](y3){};
\draw (-0.865,0.5) node[v](y4){};
\draw (x1)--(x2)--(x3)--(x4)--(x1)--cycle;
\draw (x1)--(x3);
\draw (x2)--(x4);
\draw (x1)--(y1)--(x2)--(y2)--(x3)--(y3)--(x4)--(y4)--(x1)--cycle;
\end{tikzpicture}
\end{subfigure}
\begin{subfigure}{.3\textwidth}
  \centering
\begin{tikzpicture}
    \node (v1) at (1.3,0) {1};
        \node (v2) at (1.8,.9) {2};
            \node (v2) at (.2,.9) {3};
\draw (0.4,0.3) node[v](0){};
\draw (-0.4,-0.3) node[v](1){};
\draw (1,0) node[v](2){};
\draw (0.5,0.865) node[v](4){};
\draw (1.5,0.865) node[v](5){};
\draw (1,1.2) node[v](6){};
\draw (1,2.3) node[v](7){};
\draw (6)--(4)--(1)--(2)--(5)--(6);
\draw (4)--(2)--(5)--(4);
\draw (4)--(7)--(5);
\draw (4)--(0)--(2);
\draw[dotted] (0)--(1);
\draw[dotted] (6)--(7);
\end{tikzpicture}
\end{subfigure}%
\caption{From left to  right : $S_3$, $S_4$ and $Q_3(M)$ with $m_{12}=0, m_{13}>0, m_{23}>0$.}
\label{fig:Qell}
\end{figure}

	 \begin{remark}
	\label{rem:AQell}
	It is not hard to see that the graphic arrangement $\Ac_{Q_\ell}$ defined by the graph $Q_\ell$ from Definition \ref{def:AQell} is free with exponents 
	$$\exp(\Ac_{Q_\ell}) = (2^{|m|},\ell-1,\ell-2,\ldots,2,1,0),$$ where  $|m|: = \sum_{m_{ij} \in M} m_{ij}$.
	We observe that regarding their exponents, up to symmetry there are three types of restrictions of $\Ac_{Q_\ell}$ to a hyperplane, or equivalently, three types of edge-contractions of $Q_\ell$. 
	Below  we give an edge representative of each type:
\begin{enumerate}[(I)]
\item $e \in E_{K_\ell}$ with $m_e =0$, i.e., $e$ is an edge of the inner complete graph $K_\ell$ with no outer triangles. 
In this case, the contraction $Q_\ell/e$ of $e$ on $Q_\ell$ results in a graph $Q_{\ell-1}(M')$ in the $\Qc$-family  for a tuple $M'$ of cardinality $(\ell-1)(\ell-2)/2$. 
Furthermore, the restriction of $\Ac_{Q_\ell}$ to the hyperplane $H_e$ defined by the edge $e$ is free and for $\ell \ge 2$,  
$$\exp\left(\Ac^{H_e}_{Q_\ell}\right) =\exp(\Ac_{Q_\ell})\setminus \{\ell-1\}.$$ 

\item  $e \in E_{K_\ell}$  with $m_e >0$. 
In this case, we have  
$$\exp\left(\Ac^{H_e}_{Q_\ell}\right) = (2^{|m|-m_e},1^{m_e},\ell-2,\ldots,2, 1,0).$$
In particular, $\exp\left(\Ac^{H_e}_{Q_\ell}\right)\ne \exp(\Ac_{Q_\ell})\setminus \{\ell-1\}$ if $\ell \ge 3$. 

\item $e \in E_{Q_\ell} \setminus E_{K_\ell}$. 
In this case, the contraction $Q_\ell/e$ is of the form $Q_{\ell-1}(M') \in \Qc$ with $M' = (M\setminus\{m_e\} )\cup\{m_e-1\}$. 
Thus, 
$$\exp\left(\Ac^{H_e}_{Q_\ell}\right) = (2^{|m|-1},\ell-1,\ell-2,\ldots,2, 1,0).$$ 
In particular, $\exp\left(\Ac^{H_e}_{Q_\ell}\right) \ne \exp(\Ac_{Q_\ell}) \setminus \{\ell-1\}$ if $\ell \ge 4$. 
\end{enumerate}	
	By the observation above, it is easily seen that  if $\ell\le 3$ then $\Ac_{Q_\ell(M)}$ is flag-accurate for any finite tuple $M$ of non-negative integers. 
	To see this, contract the edges of type (III) successively. 
\end{remark}

The following theorem gives a sufficient condition for the arrangements in the  $\Qc$-family to be flag-accurate.
\begin{theorem} 
\label{thm:Qfamily}
Let $M = (m_{ij}) \in \ZZ^{\ell(\ell-1)/2}_{\ge0}$ be a tuple of non-negative integers  such that $m_{ij} = 0$ for $\{i,j\} \notin \{\{1,2\},\ldots, \{\ell-1,\ell\},\{\ell,1\} \}$ (i.e.~the edges not in the great circle of $K_\ell$ have weight $0$). 
Then the graphic arrangement $\Ac_{Q_\ell(M)}$ is flag-accurate.
\end{theorem}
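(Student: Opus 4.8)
The plan is to argue by induction on $\ell$, peeling off the two largest exponents at each stage by means of two weight-zero inner edge contractions (type~(I) in Remark~\ref{rem:AQell}) and thereby descending from the great-circle family on $\ell$ vertices to the great-circle family on $\ell-2$ vertices. The base cases $\ell\le 3$ are already settled by Remark~\ref{rem:AQell}, where every $\Ac_{Q_\ell(M)}$ is flag-accurate. Throughout I use the recursive form of Lemma~\ref{lem:flag-accuracy}: $\Ac_{Q_\ell(M)}$ is flag-accurate as soon as there is a hyperplane $H$ with $\Ac_{Q_\ell(M)}^H$ flag-accurate and $\exp(\Ac_{Q_\ell(M)}^H)=\exp(\Ac_{Q_\ell(M)})\setminus\{\ell-1\}$, i.e.\ with the single largest exponent removed (for $\ell\ge 4$ one has $\ell-1>2$, so the top exponent is unambiguous and occurs with multiplicity one).

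Fix $\ell\ge 4$ and take the chord $\{1,3\}$ of the inner $K_\ell$, which has weight $0$ by the great-circle hypothesis. By Remark~\ref{rem:AQell}(I) the restriction $\Ac_{Q_\ell(M)}^{H_{\{1,3\}}}$ equals $\Ac_{Q_{\ell-1}(M')}$ for an explicit tuple $M'$ and has exponents $\exp(\Ac_{Q_\ell(M)})\setminus\{\ell-1\}$. Tracking which edges become parallel under the contraction shows that the nonzero weights of $M'$ now lie on a Hamiltonian cycle on $\{1',4,5,\dots,\ell\}$ (with $1'$ the merged vertex) together with one pendant edge $\{1',2\}$; thus $M'$ is \emph{not} of great-circle type and the induction hypothesis does not apply to it directly. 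The key point is that the chord $\{2,4\}$ is an inner edge of $K_{\ell-1}$ carrying weight $0$ in $M'$, so a second type-(I) contraction along $H_{\{2,4\}}$ again removes the (now) top exponent $\ell-2$ and produces $\Ac_{Q_{\ell-2}(M'')}$.

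The heart of the argument, and the only real obstacle, is to verify that this second contraction restores the great-circle shape rather than leaving a cycle-plus-forest that would escape the family. Merging $2$ and $4$ fuses the pendant edge $\{1',2\}$ with the cycle edge $\{1',4\}$ into a single edge, and one checks that the surviving nonzero weights form exactly one Hamiltonian cycle $1'-2'-5-6-\dots-\ell-1'$ on the $\ell-2$ vertices $\{1',2',5,\dots,\ell\}$, with the total triangle count $|m|$ unchanged. Hence $M''$ is again of great-circle type, so $\Ac_{Q_{\ell-2}(M'')}$ is flag-accurate by the induction hypothesis. Applying Lemma~\ref{lem:flag-accuracy} twice, first to pass from $Q_{\ell-1}(M')$ to $Q_{\ell-2}(M'')$ and then from $Q_\ell(M)$ to $Q_{\ell-1}(M')$, yields the flag-accuracy of $\Ac_{Q_\ell(M)}$ and closes the induction. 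All exponent bookkeeping along the way is mechanical once the combinatorial claim is in place, since at each step we contract a weight-zero inner edge and invoke Remark~\ref{rem:AQell}(I); I expect the care needed in that combinatorial claim (the precise fate of the merged and pendant edges for general $\ell$) to be the bulk of the write-up.
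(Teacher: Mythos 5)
Your argument is correct and is essentially the paper's own proof: both induct on $\ell$ in steps of two by contracting the weight-zero chords $\{1,3\}$ and $\{2,4\}$, verify that the double contraction lands back in the great-circle family $Q_{\ell-2}(M'')$, and invoke Lemma \ref{lem:flag-accuracy} together with the exponent bookkeeping of Remark \ref{rem:AQell}(I). The only cosmetic difference is that the paper treats $\ell=4$ by a single contraction down to $Q_3$, whereas you fold it into the uniform two-step descent; both work.
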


\begin{proof}
We argue by induction on $\ell$. 
The case $\ell \le 3$ is clear from Remark \ref{rem:AQell}. 
When $\ell =4$ we may contract an edge of type (I) (which reduces the problem to the case $\ell=3$) and then apply Lemma \ref{lem:flag-accuracy}. 
Suppose $\ell \ge 5$. 
Note that $m_{13} = m_{24} = 0$. 
In view of Remark \ref{rem:AQell}, we may compute $\exp\left(\Ac^{H_{13}}_{Q_\ell}\right) = \exp(\Ac_{Q_\ell})\setminus \{\ell-1\}$ and $\exp\left(\Ac^{H_{13} \cap H_{24}}_{Q_\ell}\right)=\exp(\Ac_{Q_\ell})\setminus \{\ell-1, \ell-2\}$. 
Moreover, the contraction $(Q_\ell/\{1,3\})/\{2,4\} = Q_{\ell-2}(M'')$ is a graph in the $\Qc$-family where $M''$ satisfies  the assumption of the theorem. 
By our induction hypothesis, $Q_{\ell-2}(M'')$ is flag-accurate.  
Again, Lemma \ref{lem:flag-accuracy} completes the proof.
\end{proof}

The following is an immediate consequence of Theorem \ref{thm:Qfamily}.
\begin{corollary}
	\label{cor:n-sun}
$\Ac_{S_n}$ is flag-accurate for $n \ge 3$.
\end{corollary}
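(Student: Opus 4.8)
The plan is to recognize that the $n$-sun $S_n$ is already realized as a member of the $\Qc$-family whose weight pattern is exactly the one permitted by Theorem \ref{thm:Qfamily}, so that the corollary follows by a direct appeal to that theorem with no additional argument required.

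First I would invoke Example \ref{ex:n-sun}, which identifies $S_n$ with $Q_n(M) \in \Qc$, where the multiplicity tuple $M = (m_{ij})$ is given by $m_{ij} = 1$ whenever $j = i+1$ (under the cyclic identification of the vertices $1$ and $n+1$) and $m_{ij} = 0$ otherwise. Concretely, the edges of the inner complete graph $K_n$ that carry a nonzero weight are precisely $\{1,2\}, \{2,3\}, \ldots, \{n-1,n\}, \{n,1\}$, i.e.\ each outer triangle of the sun is attached to a consecutive edge on the great circle of $K_n$.

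Second, I would observe that this weight pattern meets the hypothesis of Theorem \ref{thm:Qfamily} verbatim: the set of edges carrying a nonzero $m_{ij}$ is contained in $\{\{1,2\},\ldots,\{n-1,n\},\{n,1\}\}$, so $m_{ij} = 0$ for every edge $\{i,j\}$ off the great circle. Applying Theorem \ref{thm:Qfamily} with $\ell = n$ then immediately gives that $\Ac_{Q_n(M)} = \Ac_{S_n}$ is flag-accurate for every $n \ge 3$.

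There is essentially no obstacle here, since all the combinatorial work (the induction on $\ell$, the contraction of type-(I) edges, and the exponent bookkeeping via Remark \ref{rem:AQell}) is already carried out in the proof of Theorem \ref{thm:Qfamily}; the corollary is purely a matter of matching $S_n$ to the admissible great-circle weight pattern. If one wanted the stronger ind-flag-accuracy statement, one would note, as remarked earlier in this section, that for graphic arrangements flag-accuracy and ind-flag-accuracy coincide, but this is not needed for the assertion as stated.
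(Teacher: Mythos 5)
Your proposal is correct and matches the paper exactly: the paper states the corollary as an immediate consequence of Theorem \ref{thm:Qfamily}, using precisely the identification $S_n = Q_n(M)$ from Example \ref{ex:n-sun} with nonzero weights only on the great-circle edges. No further comment is needed.
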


An example of a non-accurate arrangement is given in
Example \ref{ex:FA-Rest-Loc}.
The following proposition extends this example.

\begin{proposition}
	\label{prop:uni-sun}
 Let $\ell \ge 4$ and $M = (m_{ij}) \in  \ZZ^{\ell(\ell-1)/2}_{>0}$ be a tuple of positive integers. 
Then the graphic arrangement $\Ac_{Q_\ell(M)}$ is not (flag-)accurate.
\end{proposition}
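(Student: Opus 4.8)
The plan is to prove the stronger statement that $\Ac_{Q_\ell(M)}$ is already not accurate; since flag-accuracy implies accuracy by Remark~\ref{rem_MATRest-flag}(i), non-flag-accuracy follows immediately. To disprove accuracy it suffices to exhibit a single dimension for which \emph{no} flat of that dimension carries the prescribed truncated exponents, and the plan is to do this in codimension one. This is the convenient level, because the codimension-one flats of $\Ac_{Q_\ell(M)}$ are exactly its hyperplanes $H_e$ (with $e$ an edge of $Q_\ell(M)$), and the exponents of the corresponding restrictions $\Ac_{Q_\ell(M)}^{H_e}$ are listed completely in Remark~\ref{rem:AQell}.

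First I would record the exponents. Writing $N:=\ell+|m|$ for the number of vertices of $Q_\ell(M)$, hence for the ambient dimension, Remark~\ref{rem:AQell} gives $\exp(\Ac_{Q_\ell(M)})=(2^{|m|},\ell-1,\ell-2,\ldots,1,0)$, whose increasing rearrangement is
\[
(e_1,\ldots,e_N)_\le=(0,1,\underbrace{2,\ldots,2}_{|m|+1},3,4,\ldots,\ell-1).
\]
The hypothesis $\ell\ge 4$ enters here and only here: it guarantees that the top value $e_N=\ell-1\ge 3$ is strictly larger than $2$ and occurs with multiplicity one, and likewise that the value $1$ occurs with multiplicity one. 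If $\Ac_{Q_\ell(M)}$ were accurate with witness $(X_1,\ldots,X_N)$, then $X_{N-1}$ would be a hyperplane $H_e$ with $\exp(\Ac_{Q_\ell(M)}^{H_e})=(e_1,\ldots,e_{N-1})_\le$, i.e.\ the displayed list with its unique top entry $\ell-1$ deleted; in particular this target multiset has maximum $\ell-2$, exactly one entry equal to $1$, and $|m|+1$ entries equal to $2$.

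The final step is to rule out every edge $e$ using the positivity of $M$. Since all weights $m_{ij}$ are positive, no edge is of type~(I) in Remark~\ref{rem:AQell}, so $e$ is of type~(II) or~(III). If $e$ is of type~(II), then $\exp(\Ac_{Q_\ell(M)}^{H_e})=(2^{|m|-m_e},1^{m_e},\ell-2,\ldots,1,0)$ has $m_e+1\ge 2$ entries equal to $1$, contradicting that the target has exactly one; if $e$ is of type~(III), then $\exp(\Ac_{Q_\ell(M)}^{H_e})=(2^{|m|-1},\ell-1,\ldots,1,0)$ has maximum $\ell-1$, contradicting that the target has maximum $\ell-2$. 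Hence no admissible $X_{N-1}$ exists and $\Ac_{Q_\ell(M)}$ is not accurate. I expect no genuine obstacle in this argument; the only point requiring care is the multiset bookkeeping, together with the observation that $\ell\ge 4$ is precisely the threshold at which $\ell-1$ separates from the block of $2$'s --- for $\ell\le 3$ the arrangement is in fact (flag-)accurate, as already noted in Remark~\ref{rem:AQell}.
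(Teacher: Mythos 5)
Your proof is correct and follows the same route as the paper, whose entire proof is the one-line observation that positivity of $M$ leaves no edge of type (I) in the sense of Remark \ref{rem:AQell}; you have simply made explicit the multiset bookkeeping (types (II) and (III) fail to realize $\exp(\Ac_{Q_\ell})\setminus\{\ell-1\}$) that the paper delegates to that remark. The only cosmetic quibble is that the multiplicity-one of the exponent $1$ holds for all $\ell\ge 2$ and is not what $\ell\ge 4$ buys you; the hypothesis is needed only so that $\ell-1>2$ is the unique maximum and so that type (III) restrictions, whose maximum stays $\ell-1$, are excluded.
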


\begin{proof}
There is no edge of type (I).
\end{proof}

It is obvious that if an arrangement is $k$-coaccurate, then it is $(k+1)$-coaccurate (see Definition \ref{Def_TF}). 
Now we show that for any given $ k \ge 1$ there exists a graphic arrangement that is $(k+1)$-coaccurate but not $k$-coaccurate.
 
\begin{definition}
	\label{def:AQ4-ext}
		Let $ k \ge 1$. 
 Let  $G$ be the graph defined as follows: $G$ consists of the graph $Q_4(M)$ in the $\Qc$-family with $m_{ij}=k$ for all $\{i,j\} \in E_{K_4}$ and we draw $k$ many $4$-complete graphs $D_i$ with vertex set $\{1,2,3,v_i\}$ for $1 \le i \le k$.  
 See Figure \ref{fig:k=1} for the case $k=1$.	
	 \end{definition}

	 	\begin{figure}[htbp]
\centering
\begin{tikzpicture}
\draw (0,0) node[v](x1){};
\draw (1,0) node[v](x2){};
\draw (1,1) node[v](x3){};
\draw (0,1) node[v](x4){};
\draw (0.5,-0.865) node[v](y1){};
\draw (1.865,0.5) node[v](y2){};
\draw (0.5,1.865) node[v](y3){};
\draw (-0.865,0.5) node[v](y4){};

\draw (0.5,0.2) node[v](z1){};
\draw (0.5,0.8) node[v](z2){};
\draw (-0.5,1.7) node[v](v1){};

\draw (x1)--(x2)--(x3)--(x4)--(x1)--cycle;
\draw (x1)--(x3);
\draw (x2)--(x4);
\draw (x1)--(y1)--(x2)--(y2)--(x3)--(y3)--(x4)--(y4)--(x1)--cycle;
\draw (x1)--(z1)--(x3);
\draw (x1)--(v1)--(x3);
\draw (x4)--(v1);
\draw (x2)--(z2)--(x4);

\end{tikzpicture}
\caption{A graph whose corresponding graphic arrangement is accurate but not flag-accurate.}
\label{fig:k=1}
\end{figure}
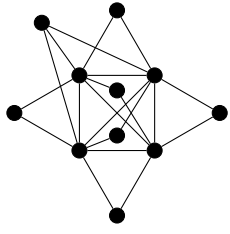

\begin{theorem} 
\label{thm:Q4-ext}
Let $ k \ge 1$. 
If $G$ is the graph from Definition \ref{def:AQ4-ext}, then the graphic arrangement $\Ac_G$  is $(k+1)$-coaccurate but not $k$-coaccurate. 
\end{theorem}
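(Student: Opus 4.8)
The plan is to translate the (co)accuracy of $\Ac_G$ entirely into the language of edge-contractions. First I would record the exponents: reading off the back-degrees in a simplicial vertex order (the inner $K_4$ on $\{1,2,3,4\}$ contributes $0,1,2,3$; each apex $v_i$, forming a $K_4$ with $\{1,2,3\}$, contributes a $3$; and each of the $6k$ outer-triangle vertices contributes a $2$) gives that $\Ac_G$ is free (Theorem \ref{thm:free-chordal}) with rank $\ell = 7k+3$ and $\exp(\Ac_G)=(1,2^{6k+1},3^{k+1})_\le$. Next I would use that the flats of $L(\Ac_G)$ are the contractions $G/F$, that $\Ac_G^{X_F}=\Ac_{G/F}$, and that a flat of dimension $d$ is a contraction onto $d+1$ vertices. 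Then a nested prefix $X_1\subseteq\cdots\subseteq X_m$ of an accuracy witness, with $\exp(\Ac_G^{X_j})=(e_1,\dots,e_j)$ for all $j\le m$, is exactly a flag-accuracy witness for $\Ac_{G/F}$, where $G/F=X_m$. Hence the longest such prefix has length $m$ iff $m$ is maximal subject to: some contraction of $G$ onto $m+1$ vertices has exponents $(e_1,\dots,e_m)$ and a flag-accurate graphic arrangement; in that case $G$ is $(\ell-m)$-coaccurate. Since $\ell-k-1=6k+2$ has $(e_1,\dots,e_{6k+2})=(1,2^{6k+1})$ and $\ell-k=6k+3$ has $(e_1,\dots,e_{6k+3})=(1,2^{6k+1},3)$, the theorem amounts to the two assertions below.

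For $(k+1)$-coaccuracy I would exhibit a flag of length $6k+2$. Contracting the triangle edge $\{1,2\}$ collapses every $K_4$ of $G$ (all of them contain $\{1,2\}$), producing a graph with exponents $(1^{k+1},2^{6k+1})$; contracting the $k$ pendant vertices created on $\{1,2\}$ then yields a chordal $H'$ onto $6k+3$ vertices with $\exp(\Ac_{H'})=(1,2^{6k+1})$. As this has the shape $(1,e^{\dots})$ with $e=2$, and $\Ac_{H'}$ is a free (hence inductively free) graphic arrangement, Lemma \ref{lem:samexp} makes $\Ac_{H'}$ flag-accurate, and Lemma \ref{lem:flag-accuracy} then promotes the chain $X_1\subseteq\cdots\subseteq X_{6k+2}=H'$ to a genuine flag realizing $(e_1,\dots,e_{6k+2})$. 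I would complete this to an accuracy witness by taking, for $d\ge 6k+3$, the contractions of $G$ that retain exactly $d-6k-2$ of the $K_4$'s (that is, $Q_4(M)$ with the corresponding number of apexes), which realize $(1,2^{6k+1},3^{\,d-6k-2})$; this simultaneously shows $\Ac_G$ is accurate and $(k+1)$-coaccurate.

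For the failure of $k$-coaccuracy I would show that \emph{no} contraction $H=G/F$ onto $6k+4$ vertices with $\exp(\Ac_H)=(1,2^{6k+1},3)$ is flag-accurate. The profile $(1,2^{6k+1},3)$ has exactly one part $3$ and exactly one part $1$, so $H$ contains a single $K_4$ and a single back-degree-$1$ vertex. The structural claim is that every such $H$ is isomorphic to $Q_4(M)$ with all six weights equal to $k$. Indeed, every $K_4$ of $G$ uses the fully decorated triangle $\{1,2,3\}$; to leave only one $K_4$ one must destroy the inner $K_4$ $\{1,2,3,4\}$ or else the apex $K_4$'s, and destroying $\{1,2,3,4\}$ forces contracting one of its edges, each of which carries $k$ outer triangles, thereby creating $k$ new back-degree-$1$ vertices and violating the single-$1$ condition. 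Hence the inner $K_4$ together with all $6k$ outer triangles must survive, while all $k$ apexes are contracted away — and apex edges carry no outer triangle, so these contractions are clean. Thus $H\cong Q_4(M)$ with $m_{ij}=k>0$, which by Proposition \ref{prop:uni-sun} (with $\ell=4$ and all weights positive) is not accurate, in particular not flag-accurate. By Lemma \ref{lem:flag-accuracy} a flag of length $6k+3$ would furnish exactly such a flag-accurate $H$, so none exists; the longest prefix is $6k+2$, and with the monotonicity of coaccuracy noted before Definition \ref{def:AQ4-ext} this gives that $\Ac_G$ is $(k+1)$-coaccurate but not $k$-coaccurate.

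The main obstacle is the structural characterization in the last paragraph, namely that $\exp(\Ac_H)=(1,2^{6k+1},3)$ forces $H\cong Q_4(M)$. The delicate points are ruling out contraction sequences that temporarily raise the multiplicity of the exponent $1$ (by contracting a decorated inner edge) and later lower it again, and ruling out $K_4$'s created anew by merging vertices. I would control both by a bookkeeping argument on the multiplicities of $1$, $2$, and $3$ under a single edge-contraction: an apex contraction removes one $3$ and preserves the multiplicities of $1$ and $2$; contracting a decorated inner edge raises the multiplicity of $1$ by $k$; removing a decoration (a type-(III) contraction in the sense of Remark \ref{rem:AQell}) lowers the multiplicity of $2$ by one; and the triangle-edge contraction removes all $k+1$ threes at once. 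A short accounting then shows that dropping the multiplicity of $3$ from $k+1$ to $1$ within only $k$ contractions, while keeping the multiplicities of $1$ and $2$ fixed, is possible solely by performing $k$ apex contractions, pinning down $H$ up to isomorphism.
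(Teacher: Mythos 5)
Your proposal is correct and follows essentially the same route as the paper's proof: the same exponent computation $(0,1,2^{6k+1},3^{k+1})$, the same classification of edge-contractions by their effect on the multiplicities of $1$, $2$, $3$, the same key claim (the paper's Claim \ref{cla:typei}) that realizing $(1,2^{6k+1},3)$ in at most $k$ contractions forces $k$ apex contractions and hence lands on $Q_4(M)$ with all weights $k$, which fails to be flag-accurate by Proposition \ref{prop:uni-sun}, and the same construction of the codimension-$(k+1)$ flat by contracting an edge of the triangle $\{1,2,3\}$ and the resulting degree-one vertices to reach a flag-accurate $Q_3$. The only cosmetic differences are the choice of triangle edge ($\{1,2\}$ versus the paper's $\{2,3\}$) and invoking Lemma \ref{lem:samexp} rather than Remark \ref{rem:AQell} for the flag-accuracy of the final $Q_3$ restriction.
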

\begin{proof}
It is easily seen that   $\Ac_G$ is free with $\exp(\Ac_G) = (3^{k+1},2^{6k+1},1,0)$.	
We observe that regarding exponents up to symmetry there are three types of edge-contractions of $G$ whose edge representatives are given below:
\begin{enumerate}[(i)]
\item $e$ is an edge of a $4$-complete graph $D_i$ but not an edge of $Q_4(M)$, i.e., $e=\{v_i, j\}$ for some $1 \le i \le k$ and $1 \le j \le 3$. 
In this case, contracting the edge $e$ of $G$ simply removes the edges $\{v_i, j\}$ for $1 \le j \le 3$ which yields
 $\exp\left(\Ac^{H_e}_G\right) = \exp(\Ac_G)\setminus \{3\}$. 

\item  $e$  is an edge of the inner complete graph $K_4$ of $Q_4(M)$ (similar to type (II) in Remark \ref{rem:AQell}).  
In this case, the contraction $G/e$ produces $k+1$ many exponents $1$ in $\exp(\Ac^{H_e}_G)$. 
To study the contraction more explicitly, we distinguish two sub-types: 
(iia): $e \in  \{\{1,4\}, \{2,4\},\{3,4\} \}$ and (iib): $f \in  \{\{1,2\}, \{2,3\},\{3,1\} \}$. 
Then  $\exp\left(\Ac^{H_e}_G\right) =(3^{k},2^{5k+1},1^{k+1},0)$ and $\exp\left(\Ac^{H_f}_G\right) =(2^{6k+1},1^{k+1},0)$. 

\item $e$  is an edge of an outer triangle in $Q_4(M)$ but not an edge of $K_4$ (similar to type (III) in Remark \ref{rem:AQell}). 
In this case,  $\exp\left(\Ac^{H_e}_G\right) = \exp(\Ac_G)\setminus \{2\}$. 
\end{enumerate}	

\begin{claim}
	\label{cla:typei}
 Let $1 \le p \le k$. 
 For any  $F \subseteq E_G$ with $|F| =p$, we have $\exp\left(\Ac_G^{X_F}\right) = (3^{k-p+1},2^{6k+1},1,0)$, where $X_F = \bigcap_{e \in F} H_e$ if and only if all edges in $F$ are of type (i).
\end{claim}

\begin{proof}[Proof of Claim  \ref{cla:typei}]
The reverse implication is clear since contracting $p$ edges of type (i) in any order removes $p$ many exponents $3$ of $\Ac_G$. 

Write $F=\{e_1,e_2,\ldots,e_p\}$ so that $e_1<e_2<\cdots<e_p$ is the contracting order in $G/F$, i.e., $G/F = (((G/e_1)/e_2)\cdots )/e_p$. 
Suppose that $F$ contains an edge of type (ii) and let $e_q$ for $q \le p$ be the first edge of this type in the contracting order above. 
 Denote $F':= \{e_1<\cdots<e_q\} \subseteq F$.  
  Note that $G/F = G'/(F\setminus F')$  and  $ \Ac_{G'} = \Ac_G^{X_{F'}}$ where $G'=G/F'$.
 Since there are at most $q-1$ many edges of type (iii) in $F'$, we must have $1^{k+2-q} \in \exp(\Ac_{G'})$. 
 In order to achieve the desired exponents of $\Ac_G^{X_F}$, we have to remove $k-q+1$ many exponents $1$ from $\exp(\Ac_{G'})$ but this is impossible since $|F\setminus F'| = p-q < k-q+1$. 
Hence $F$ has no edge of type (ii). 
 It is thus easily seen that $F$ has no edge of type (iii) either. 
 This completes the proof of  the forward implication. 
\end{proof}

We return to the proof of Theorem \ref{thm:Q4-ext}.
First we show that the graphic arrangement $\Ac_G$  is not $k$-coaccurate. 
Suppose to the contrary that $\Ac_G$  is $k$-coaccurate. 
Then by definition there exists  $X_k \in L(\Ac_G)$ of codimension $k$ such that $\Ac^{X_k}_G$ is flag-accurate with $\exp\left(\Ac_G^{X_k}\right) = (3,2^{6k+1},1,0)$. 
By Claim \ref{cla:typei}, the underlying graph of $\Ac_G^{X_k}$  must have the form $Q_4(M')$ with $m_e = k$ for all $m_e \in M'$. 
However, this arrangement is not (flag-)accurate, by Proposition \ref{prop:uni-sun}, a contradiction.

Finally, we show that $\Ac_G$  is $(k+1)$-coaccurate. 
By Claim \ref{cla:typei},  for each $1 \le p \le k$ there exists $X_p \in L(\Ac_G)$ of codimension $p$ such that 
 $\exp\left(\Ac_G^{X_p}\right) = (3^{k-p+1},2^{6k+1},1,0)$. 
 It suffices to find $X_{k+1} \in L(\Ac_G)$ of codimension $k+1$ such that 
 $\exp\left(\Ac_G^{X_{k+1}}\right) = (2^{6k+1},1,0)$ and $\Ac_G^{X_{k+1}}$ is flag-accurate. 
This can be done as follows: Let $\{2,3, u_i\}$ for $1 \le i \le k$ be the set of the $k$ outer triangles based in $\{2,3\}$ (an edge of type (iib)), we first contract the edge $\{2,3\}$ of $G$ then contract the $k$ edges $\{3, u_i\}$ in any order. 
The resulting graph is of the form $Q_3(M'')$ where $M'' = (k, 2k,3k)$. 
The corresponding graphic arrangement has exponents $(2^{6k+1},1,0)$ and is  flag-accurate, by Remark \ref{rem:AQell}. 
\end{proof}

The following is a direct consequence of Theorem \ref{thm:Q4-ext} by taking $k=1$ (cf.~Figure \ref{fig:k=1}).
\begin{corollary}
	\label{coro:acc-nfa}
Accurate graphic arrangements need not be flag-accurate. 
\end{corollary}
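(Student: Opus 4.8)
The plan is to read off the corollary directly from Theorem \ref{thm:Q4-ext} by specializing to the smallest nontrivial value of the parameter, namely $k = 1$. For this value the graph $G$ of Definition \ref{def:AQ4-ext} is the one depicted in Figure \ref{fig:k=1}, and Theorem \ref{thm:Q4-ext} tells us that its graphic arrangement $\Ac_G$ is $2$-coaccurate but not $1$-coaccurate. All the real content---the computation of $\exp(\Ac_G) = (3^2,2^7,1,0)$, the analysis of the three edge-contraction types, and Claim \ref{cla:typei}---has already been carried out there, so no further structural work is needed; what remains is purely a matter of unwinding the definitions in Definition \ref{Def_TF}.

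The two steps I would carry out are the following. First, I would note that $2$-coaccuracy forces accuracy: by Definition \ref{Def_TF}(iv) the existence of a witness for the $2$-coaccuracy of $\Ac_G$ is in particular the existence of a witness $(X_1,\ldots,X_\ell)$ for the accuracy of $\Ac_G$, whence $\Ac_G$ is accurate. Second, I would invoke the equivalence recorded in Definition \ref{Def_TF}(v), that flag-accuracy coincides with $1$-coaccuracy; since Theorem \ref{thm:Q4-ext} asserts that $\Ac_G$ is \emph{not} $1$-coaccurate, it follows at once that $\Ac_G$ is not flag-accurate. Combining the two, $\Ac_G$ is an accurate graphic arrangement that fails to be flag-accurate, which is exactly the assertion of the corollary.

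There is no genuine obstacle in this argument; everything of substance was established in proving Theorem \ref{thm:Q4-ext}. The only point requiring care is to quote the coaccuracy bookkeeping of Definition \ref{Def_TF} in the correct direction, so that the \emph{positive} statement ``$2$-coaccurate'' is read off as ``accurate'' while the \emph{negative} statement ``not $1$-coaccurate'' is translated into ``not flag-accurate'' (rather than mistakenly into a statement denying accuracy).
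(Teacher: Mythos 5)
Your proposal is correct and matches the paper's proof, which likewise deduces the corollary by specializing Theorem \ref{thm:Q4-ext} to $k=1$ (the graph of Figure \ref{fig:k=1}) and reading off that $2$-coaccuracy gives accuracy while the failure of $1$-coaccuracy is exactly the failure of flag-accuracy. Your more explicit unwinding of the definitional bookkeeping in Definition \ref{Def_TF} is exactly what the paper leaves implicit in calling this a ``direct consequence.''
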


\begin{remark}
	\label{rem:k-acc}
 Theorem \ref{thm:Q4-ext} implies that for a given $ k \ge 1$ there exists a graphic arrangement that is $(6k+3)$-accurate but not $(6k+4)$-accurate. 
 It would be interesting to know whether there exists an arrangement that is $k$-accurate but not $(k+1)$-accurate for \emph{any} $ k \ge 1$.
	\end{remark}

Next we show that the operation of adding a dominating vertex to a graph preserves $k$-coaccuracy. 
\begin{theorem} 
\label{thm:G+v}
  Let $G$ be a simple graph on $\ell$ vertices. 
  Denote by $G+v$ the graph obtained from $G$ by adding a new (dominating) vertex  $v\notin V_G$ adjacent to all the vertices of $G$, i.e., $V_{G+v} = V_G \cup \{v\}$ and $E_{G+v} = E_G \cup \{\, \{u,v\} \mid u \in V_G\}$. 
  Let $1 \le k \le \ell-1$. 
If $\Ac_G$  is   $k$-coaccurate, then so is $\Ac_{G+v}$. 
In particular, if $\Ac_G$  is (flag-)accurate, then so is $\Ac_{G+v}$. 
\end{theorem}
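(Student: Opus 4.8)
The plan is to transport an accuracy witness for $\Ac_G$ to one for $\Ac_{G+v}$ by \emph{lifting} each witness flat and shifting its dimension up by one. Since $\Ac_G$ is assumed $k$-coaccurate it is in particular free, so $G$ is chordal by Theorem \ref{thm:free-chordal}; adding a dominating vertex preserves chordality (an induced cycle through $v$ of length $>3$ would have a chord at $v$, as $v$ is adjacent to all other cycle vertices), so $\Ac_{G+v}$ is free and supersolvable as well. I would first record two structural facts and then assemble the witness. Throughout I may assume $G$ is connected: a disconnected $G$ yields a product arrangement and the property is compatible with products, while $G+v$ is always connected.

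The first fact is an exponent formula. Ordering the vertices of $G+v$ by a perfect elimination ordering of $G$ followed by $v$ at the end (this is again a perfect elimination ordering, since the later neighbours of any vertex $u_i$ together with $v$ still form a clique as $v$ dominates) and applying Theorem \ref{thm:exp-ss}, one reads off
\[
\exp(\Ac_{G+v}) = \{0\}\cup\bigl(\exp(\Ac_G)+1\bigr)
\]
as multisets, where $+1$ means increasing every entry by one: each $G$-vertex gains exactly one extra earlier neighbour, namely $v$, while $v$ itself contributes a new exponent $0$. The second fact concerns restrictions. A flat $X\in L(\Ac_G)$ corresponds to a partition $\pi$ of $V_G$ into $G$-connected blocks, with $\Ac_G^X=\Ac_{G/\pi}$ and $\dim X$ equal to the number of blocks. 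Lifting $\pi$ to $\tilde\pi:=\pi\cup\{\{v\}\}$ gives a flat $\tilde X\in L(\Ac_{G+v})$ with $\dim\tilde X=\dim X+1$, and since $v$ remains adjacent to every contracted block we get $\Ac_{G+v}^{\tilde X}=\Ac_{(G/\pi)+v}$. Combining this with the exponent formula applied to $G/\pi$ yields
\[
\exp\bigl(\Ac_{G+v}^{\tilde X}\bigr)=\{0\}\cup\bigl(\exp(\Ac_G^{X})+1\bigr).
\]

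Now I assemble the witness. Write $\exp(\Ac_G)=(e_1,\ldots,e_\ell)_\le$ with $e_1=0$ the unique zero (as $G$ is connected) and $\exp(\Ac_{G+v})=(f_1,\ldots,f_{\ell+1})_\le$; the exponent formula gives $f_1=0$ and $f_{d+1}=e_d+1$ for all $1\le d\le\ell$, and the same bookkeeping shows that if $\exp(\Ac_G^{X_d})=(e_1,\ldots,e_d)_\le$ then $\exp(\Ac_{G+v}^{\tilde X_d})=(f_1,\ldots,f_{d+1})_\le$ — here it is crucial that $e_1=0$ is the unique minimum, so that prepending a fresh $0$ after the shift preserves sorted order and matches the first $d+1$ global exponents. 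Given an accuracy witness $(X_1,\ldots,X_\ell)$ for $\Ac_G$ whose initial segment $(X_1,\ldots,X_{\ell-k})$ is a flag, I set $Y_1$ to be the one-dimensional center of $\Ac_{G+v}$ and $Y_{d+1}:=\tilde X_d$ for $1\le d\le\ell$. Then $\dim Y_j=j$ and $\exp(\Ac_{G+v}^{Y_j})=(f_1,\ldots,f_j)_\le$ for all $j$, so $(Y_1,\ldots,Y_{\ell+1})$ is an accuracy witness for $\Ac_{G+v}$. Because $X\mapsto\tilde X$ preserves inclusions and the center $Y_1$ lies in every flat, the flag $(X_1,\ldots,X_{\ell-k})$ lifts to the flag $(Y_1,\tilde X_1,\ldots,\tilde X_{\ell-k})=(Y_1,\ldots,Y_{(\ell+1)-k})$, of length $(\ell+1)-k$. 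By Definition \ref{Def_TF} this exhibits $\Ac_{G+v}$ as $k$-coaccurate; taking $k=1$ gives the flag-accurate case, and the witness alone gives the accurate case.

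The main obstacle I expect is the exponent bookkeeping in the third paragraph: one must verify that the ``shift-by-one-and-prepend-a-zero'' operation on exponent multisets commutes with selecting the sorted first $d+1$ terms, which rests on the connectivity of $G$ (and of each contraction $G/\pi$) to guarantee a single zero exponent. Checking the two structural facts — the perfect-elimination exponent formula and the identity $\Ac_{G+v}^{\tilde X}=\Ac_{(G/\pi)+v}$ — is routine graph theory once phrased via connected partitions, and inclusion-preservation of the lift is immediate. Some care is also needed with the (monotone) reading of $k$-coaccuracy in Definition \ref{Def_TF}, but the construction directly produces a witness realizing a bottom flag of the required length $(\ell+1)-k$, which is exactly what is needed.
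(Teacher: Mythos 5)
Your proof is correct and follows essentially the same route as the paper's: the witness flats for $\Ac_{G+v}$ are the lifts of those for $\Ac_G$ (same defining edge sets, with codimension preserved in the larger ambient space), the identity $(G+v)/\pi=(G/\pi)+v$ identifies the restrictions, and the exponent shift $\exp(\Ac_{H+v})=\{0\}\cup\bigl(\exp(\Ac_H)+1\bigr)$ handles the bookkeeping, exactly as in the paper. The only blemish is the opening reduction to connected $G$, which is neither a valid reduction (adding a dominating vertex to a disjoint union does not decompose as a product of the pieces' extensions) nor needed, since the sorted-prefix bookkeeping only requires every entry of $\exp(\Ac_G^{X_d})+1$ to be positive, not that $0$ occur exactly once.
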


\begin{proof}
Suppose that $\Ac_G$  is   $k$-coaccurate. 
In particular, $\Ac_G$  is  free and we may assume  $\exp(\Ac_G) =(0, d_2, \ldots, d_\ell)_\le$. 
One can show that  $\Ac_{G+v}$  is also free (since any perfect elimination ordering of $G$ induces a perfect elimination ordering of $G+v$) and compute $\exp(\Ac_{G+v})=(0, 1, d_2+1, \ldots, d_\ell+1)_\le$. 

 Since $\Ac_G$  is   $k$-coaccurate, by definition there exist subsets $F_p \subseteq E_G$ for  $0 \le p \le \ell-1$ with  $\codim_{\KK^\ell}(X_{F_p}) =p$ such that  $\exp\left(\Ac_G^{X_{F_p}}\right) =(0, d_2, \ldots, d_{\ell-p})_{\le}$ and $F_k \subseteq F_{k+1} \subseteq  \cdots \subseteq F_{\ell-1}$. 

For any $F \subseteq E_G$,  it is easily seen that  $\codim_{\KK^\ell}(X_F) = \codim_{\KK^{\ell+1}}(X_F)$ and we have $(G/F) + v = (G+v)/F$.
Thus $\exp\left(\Ac_{G+v}^{X_{F_p}}\right) =(0, 1, d_2+1, \ldots, d_{\ell-p}+1)_{\le}$. 
This implies that $(X_{F_{\ell-1}} \subseteq \cdots \subseteq X_{F_1} \subseteq X_{F_0})$ is a witness for the accuracy of $\Ac_{G+v}$, and  $(X_{F_{\ell-1}} \subseteq \cdots \subseteq X_{F_{k}})$ is a witness for the flag-accuracy of $\Ac_{G+v}^{X_{F_k}} $.
Hence $\Ac_{G+v}$  is   $k$-coaccurate. 
\end{proof}

\begin{definition}
	\label{def:graph-P}
	Let $\Gc$ denote the smallest class of simple graphs satisfying the following four conditions:
		\begin{enumerate}[(1)]
		\item $K_1 \in \Gc$, i.e., any one-vertex graph is in $\Gc$.
		\item If $G,H \in \Gc$ then $G \sqcup H \in \Gc$, i.e., the disjoint union of two graphs in $\Gc$ is again a graph in $\Gc$.
		\item If $G,H \in \Gc$ then any new graph $Q$ formed by identifying a vertex in $G$ and a vertex in $H$ as a new vertex in $Q$ (often known as a \emph{vertex identification} of $G$ and $H$) is in $\Gc$. 
		\item If $G \in \Gc$ and $v\notin V_G$ is a new vertex then $G + v \in \Gc$, i.e., adding one dominating vertex to a graph in $\Gc$ gives a graph in $\Gc$.	
		\end{enumerate}
	 \end{definition}

	 	 \begin{example}
	\label{ex:TP} 
	\emph{Trivially perfect graphs} are the graphs that can be constructed by conditions (1), (2) and (4) in Definition \ref{def:graph-P}. Thus trivially perfect graphs are examples of the graphs in the class $\Gc$.
		\end{example}
	
	\begin{theorem} 
\label{thm:P-FA}
  If $G \in \Gc$ then  $\Ac_G$ is flag-accurate. 
  In particular, if $G$ is trivially perfect, then  $\Ac_G$ is flag-accurate. 
\end{theorem}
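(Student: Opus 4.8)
The plan is to run a structural induction along the recursive construction of the class $\Gc$, treating the four conditions of Definition \ref{def:graph-P} as the steps. The base case is $K_1$: here $\Ac_{K_1} = \varnothing_1$ is the empty arrangement in $\KK^1$, which is trivially flag-accurate. For the inductive step I would show that each of the three constructions (2)--(4) preserves flag-accuracy, assuming it for the input graph(s). Freeness is maintained automatically throughout, since products of free arrangements are free and since Theorem \ref{thm:G+v} already establishes freeness under the addition of a dominating vertex; equivalently, every graph in $\Gc$ is chordal, so $\Ac_G$ is free by Theorem \ref{thm:free-chordal}.

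The key observation is that conditions (2) and (3) both produce \emph{products} of arrangements, after which I invoke Remark \ref{rem_MATRest-flag}(v) (a product is flag-accurate if and only if each factor is). For the disjoint union in (2), the edges of $G$ and of $H$ involve disjoint sets of coordinates, so $\Ac_{G \sqcup H} = \Ac_G \times \Ac_H$, and flag-accuracy of both factors gives flag-accuracy of the product. For the vertex identification in (3), say $u \in V_G$ is glued to $w \in V_H$, producing the shared vertex $p$ of $Q$. Passing to the coordinates $x_p$ together with $y_a := x_a - x_p$ for $a \in V_G \setminus \{u\}$ and $z_c := x_c - x_p$ for $c \in V_H \setminus \{w\}$, every edge of $G$ becomes a hyperplane in the $y$-variables alone and every edge of $H$ a hyperplane in the $z$-variables alone. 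Hence $\Ac_Q = \varnothing_1 \times \Bc \times \Cc$, where $\Bc$ lives in the $y$-space and $\Cc$ in the $z$-space; applying the same coordinate change to $G$ and to $H$ separately shows $\Ac_G = \varnothing_1 \times \Bc$ and $\Ac_H = \varnothing_1 \times \Cc$ (so that $\Bc \cong \Ac_G^{\ess}$ and $\Cc \cong \Ac_H^{\ess}$ when $G$ and $H$ are connected). By Remark \ref{rem_MATRest-flag}(v), $\Bc$ and $\Cc$ are flag-accurate because $\Ac_G$ and $\Ac_H$ are, and a second application of the same remark yields flag-accuracy of $\Ac_Q$.

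Condition (4) is handled directly by Theorem \ref{thm:G+v}: if $\Ac_G$ is flag-accurate then so is $\Ac_{G+v}$. This closes the induction and proves that $\Ac_G$ is flag-accurate for every $G \in \Gc$. The final assertion is then immediate, since every trivially perfect graph lies in $\Gc$, being built from $K_1$ using only the disjoint-union and dominating-vertex operations (cf.\ Example \ref{ex:TP}). I expect the only step requiring genuine verification to be the product decomposition in case (3): one must check that gluing at a single shared vertex $p$ really does separate the edge-equations of $G$ from those of $H$ after the affine change of coordinates centered at $p$, and that the two resulting factors are exactly (the essentializations underlying) $\Ac_G$ and $\Ac_H$. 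The remaining cases reduce to already-established results, namely Remark \ref{rem_MATRest-flag}(v) and Theorem \ref{thm:G+v}.
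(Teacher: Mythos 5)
Your proposal is correct and follows essentially the same route as the paper's (much terser) proof: the paper likewise observes that conditions (2) and (3) of Definition \ref{def:graph-P} correspond to taking products of graphic arrangements and then invokes Remark \ref{rem_MATRest-flag}(v) together with Theorem \ref{thm:G+v}. Your explicit coordinate-change verification that vertex identification yields a product decomposition is a correct elaboration of a step the paper leaves implicit.
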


\begin{proof}
Conditions  (2) and (3) in Definition \ref{def:graph-P} both correspond to the operation of taking products of two (graphic) arrangements. 
Apply Remark \ref{rem_MATRest-flag}(v)  and Theorem \ref{thm:G+v}.
\end{proof}

We close this section with a comment on strong chordality and flag-accuracy. 
It is not hard to see that strong chordality is closed under each operation defining the class $\Gc$. 
Thus any graph in $\Gc$ is strongly chordal. However, the converse is not true (see Figure \ref{fig:SC-nP}).
Moreover, we have seen in Corollary \ref{cor:n-sun} that the $n$-suns (that are not strongly chordal) give rise to flag-accurate arrangements. 
We propose a conjecture based on these observations, which, in view of 
Theorem \ref{thm:MAT-free-strong-chordal}, is the graphic counterpart of Problem \ref{prob:MAT-FA}.

	\begin{conjecture} 
\label{conj:SC<FA}
  If $G$ is a strongly chordal graph, then  $\Ac_G$ is flag-accurate. 
\end{conjecture}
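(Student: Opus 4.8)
The plan is to prove the statement by induction on the number of vertices $\ell = |V_G|$, using the reduction criterion of Lemma \ref{lem:flag-accuracy}. Since a graphic arrangement is free precisely when its graph is chordal (Theorem \ref{thm:free-chordal}) and every strongly chordal graph is chordal, $\Ac_G$ is free; write $\exp(\Ac_G) = (e_1, \ldots, e_\ell)_\le$. Because strongly chordal graphs are closed under induced subgraphs, the induction hypothesis is available for every strongly chordal graph on fewer vertices. By the "in particular" part of Lemma \ref{lem:flag-accuracy}, it suffices to produce a single edge $e \in E_G$ such that the restriction $\Ac_G^{H_e}$, which is the graphic arrangement $\Ac_{G/e}$ of the contraction $G/e$, satisfies two requirements: first, $G/e$ is again strongly chordal, so that $\Ac_{G/e}$ is flag-accurate by the inductive hypothesis; and second, $\exp(\Ac_{G/e}) = (e_1, \ldots, e_{\ell-1})_\le$, i.e.\ the contraction deletes exactly the top exponent $e_\ell$.

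First I would pin down the top exponent combinatorially. Fixing a perfect elimination ordering and reading off the exponents of $\Ac_G$ as the back-degrees of the vertices (via the supersolvable $\mathrm{M}$-chain structure of Theorem \ref{thm:exp-ss}), one sees that the largest exponent equals $\omega(G) - 1$, where $\omega(G)$ is the clique number: each back-degree $d_i$ satisfies $d_i + 1 = |\{v_i\} \cup (N(v_i)\cap\{\text{later vertices}\})| \le \omega(G)$, with equality at the first vertex of any maximum clique. Consequently the first restriction in the flag must delete one copy of $\omega(G) - 1$, and this forces $e$ to lie inside a maximum clique $K$: contracting $e = \{a,b\} \subseteq K$ with $N(a) \cap N(b) = K \setminus \{a,b\}$ removes exactly $\omega(G) - 1$ edges, and one checks that it lowers by one the number of back-degrees equal to $\omega(G) - 1$, yielding $\exp(\Ac_{G/e}) = (e_1, \ldots, e_{\ell-1})_\le$. (That contraction of any edge preserves chordality, hence freeness of $\Ac_{G/e}$, follows from a short induced-cycle argument: an induced cycle through the merged vertex of $G/e$ lifts to an induced cycle of length $\ge 4$ in $G$.)

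The remaining and decisive point is to choose $a, b \in K$ so that $G/e$ is not merely chordal but strongly chordal, i.e.\ remains $n$-sun-free for all $n \ge 3$. Here I would exploit a strong elimination ordering $v_1, \ldots, v_\ell$ of $G$, which exists exactly because $G$ is strongly chordal. Taking $K$ a maximum clique and letting $v_p, v_q$ be its two lowest-indexed vertices, the later-neighborhoods of the vertices of $K$ are nested by the strong elimination property, so that $N[v_p]$ and $N[v_q]$ are compatible along the ordering. The claim to establish is that the ordering induced on the vertices of $G/e$ (with $v_p, v_q$ identified) is again a strong elimination ordering, which would immediately give strong chordality of $G/e$ and close the induction.

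The main obstacle is precisely this last step. Edge contraction does not preserve strong chordality in general, and it is a genuine subtlety to guarantee that a contraction inside a maximum clique, forced on us by the exponent requirement, creates no induced $n$-sun. Indeed, Proposition \ref{prop:uni-sun} shows that for the sun-carrying graphs $Q_\ell(M)$ with all weights positive no top-exponent-removing contraction into a strongly chordal graph exists, so the heart of the conjecture is that sun-freeness is the exact obstruction to flag-accuracy. Verifying that a strong elimination ordering always supplies a clique-edge whose contraction keeps the ordering strong is therefore the crux; this is the graphic incarnation of the still-open Problem \ref{prob:MAT-FA}, and the alternative route through the MAT-partition of $\Ac_G$ together with Theorem \ref{thm:MATRestGen} (realizing the initial exponent-segments and then arranging the resulting restrictions into a flag) runs into the very same difficulty.
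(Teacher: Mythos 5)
This statement is Conjecture \ref{conj:SC<FA}; the paper does not prove it. It is posed as an open problem (the graphic counterpart of Problem \ref{prob:MAT-FA}), supported only by the verified families in Theorems \ref{thm:Qfamily} and \ref{thm:P-FA} and Corollary \ref{cor:n-sun}. Your proposal is therefore not being measured against a proof in the paper, and, as you yourself concede in the final paragraph, it is a strategy rather than a proof: the inductive framework via Lemma \ref{lem:flag-accuracy}, the identification of the top exponent as $\omega(G)-1$ from a perfect elimination ordering, and the observation that the first restriction must be the contraction of an edge $e=\{a,b\}$ lying in a maximum clique with $N(a)\cap N(b)=K\setminus\{a,b\}$ are all sound and consistent with the paper's Remark \ref{rem:AQell} and Proposition \ref{prop:uni-sun}. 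But the decisive step is missing on two counts. First, even granting that the contraction removes the right number of hyperplanes, you have not shown that the full multiset of exponents of $\Ac_{G/e}$ equals $(e_1,\ldots,e_{\ell-1})_\le$ rather than merely having the right sum; the type (II) contractions in Remark \ref{rem:AQell} show that clique-edge contractions can scramble the lower exponents. Second, and more fundamentally, you give no argument that a strongly chordal graph always contains such an edge whose contraction is again strongly chordal (or at least again flag-accurate), and edge contraction does not preserve strong chordality in general. Until that existence statement is established, the induction does not close, and the conjecture remains open.

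One small caution on the reduction itself: Lemma \ref{lem:flag-accuracy} requires the restriction $\Ac_G^{H_e}$ to be flag-accurate, and your induction hypothesis only covers strongly chordal contractions. If the "good" edge produces a contraction that is chordal but not strongly chordal, flag-accuracy of $\Ac_{G/e}$ is not supplied by the hypothesis, so you would need either to strengthen the induction to a larger contraction-closed class of flag-accurate graphs or to prove the strong-chordality preservation outright. This is exactly where the known examples (the graph of Theorem \ref{thm:Q4-ext}, which is accurate but not flag-accurate) live, so the difficulty is genuine and not an artifact of the method.
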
	
	
	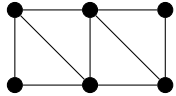
\begin{figure}[htbp]
\centering
\begin{tikzpicture}
\draw (0,0) node[v](1){};
\draw (1,0) node[v](2){};
\draw (2,0) node[v](3){};
\draw (0,1) node[v](4){};
\draw (1,1) node[v](5){};
\draw (2,1) node[v](6){};
\draw (1)--(2)--(3)--(6)--(5)--(4)--(1);
\draw (4)--(2)--(5)--(3);
\end{tikzpicture}
\caption{A strongly chordal graph that is not in the class $\Gc$.}
\label{fig:SC-nP}
\end{figure}

\section{Flag-accurate $\psi$-digraphic arrangements}
\label{sect:digraph-FA}
 
In this section we examine ind-flag-accuracy  in the class of \emph{$\psi$-digraphic} arrangements which was introduced by Abe, Tsujie and the last author \cite{AbeTranTsujie21_ShiIsh}.

We fix some notation throughout this section. 
Our base field  is $\RR$. 
By $ G = (V_{G}, E_{G}) $ we denote a directed graph or \emph{digraph}  on $ V_{G} = [\ell]$. 
A directed edge $(i,j) \in E_{G}$ is considered to be \emph{directed from $i$ to $j$}.
A  \emph{vertex-weighted digraph} is a pair  $(G,\psi)$ where $G$ is a digraph on $ [\ell]$ and a map $ \psi \colon [\ell] \to 2^{\mathbb{Z}} $, called a \emph{weight} on $G$. 
A weight   $ \psi$ is called an \emph{interval weight} if each image of $\psi$  is an integral interval, i.e., $ \psi(i) = [a_{i}, b_{i}]\subseteq \mathbb{Z}$ where $a_{i}\le b_{i}$ are integers for every $ i \in [\ell] $. 

We sometimes use the notation $(G,\psi(i)) $ for $(G,\psi) $ when we want to emphasize the precise evaluation  $\psi(i)$. 
In particular, if $\psi$ is a constant map with image $U$, we write $(G,U)$. 

\subsection{$\psi$-digraphic arrangements}
\label{subsect:digraph}
Next we define $ \psi $-digraphic arrangements and recall their basic properties following  \cite{AbeTranTsujie21_ShiIsh}.

\begin{definition}
Let $(G,\psi)$ be a vertex-weighted digraph.
The \emph{$ \psi $-digraphic arrangement} $ \Ac(G,\psi) $ in $ \mathbb{R}^{\ell} $ is defined by 
$$
\Ac(G,\psi) :=
\Cox(A_\ell) 
\cup \{ x_{i}-x_{j}=1  \mid (i,j) \in E_{G}\}
\cup \{  x_{i}=\psi(i) \mid 1 \leq i   \leq \ell\}, 
$$
where $\Cox(A_\ell) :=\{ x_{i}-x_{j}=0 \mid 1 \leq i < j \leq \ell\} $ is the  \emph{Coxeter (or Weyl) arrangement of type $A_{\ell-1}$} (see also Section \ref{SSec_WeylIdeal}). 
\end{definition}
Thus a $ \psi $-digraphic arrangement can be regarded as a deformation of a subarrangement of a Coxeter arrangement of type $B$.
 
The following digraphs play a crucial role in this  section.

\begin{definition}
\label{def:digraphs}
The \emph{transitive tournament} $  T_{[\ell]}$, \emph{complete digraph}
 $K_{[\ell]}$,  and \emph{edgeless digraph} $ \overline{K}_{[\ell]}$ on $ [\ell]$ are defined by 
\begin{align*}
E_{T_{[\ell]}} &:= \{(i,j) \mid 1 \leq i < j \leq \ell\},\\
E_{K_{[\ell]}} &:= \{(i,j) \mid i,j \in [\ell], \ i \neq j\},\\
E_{\overline{K}_{[\ell]}} &:= \emptyset.
\end{align*}
For simplicity we often use the notation $T_{\ell},  \overline{K}_{\ell}, K_{\ell}$\footnote{As we are only concerned with digraphs in this section, for simplicity of notation we continue to use $K_\ell$  for complete digraphs (previously used for complete undirected graphs in Sect.~\ref{sect:graph}).}
for $  T_{[\ell]} , \overline{K}_{[\ell]}, K_{[\ell]}$, respectively. 

For $1 \leq   k \leq \ell$, define the digraphs $ T_{\ell}^{k}$, $ K_{\ell}^{k}$ and $ \widehat K_{\ell}^{k}$ on $ [\ell] $ by 
\begin{align*} 
E_{T_{\ell}^{k}} & :=\{(i,j) \mid 1 \leq i < j \leq \ell - k + 1\}, \\
E_{K_{\ell}^{k}} & :=\{(i,j) \mid i,j \in [\ell - k + 1], \ i \neq j \}, \\
E_{\widehat K_{\ell}^{k}} & :=\{(i,j) \mid i,j \in [\ell - k], \ i \neq j \} \cup \{ (\ell - k + 1,i ) \mid i \in [\ell - k ] \}.
\end{align*}
\end{definition}

A \emph{simplicial vertex} in a simple undirected graph is a vertex whose  neighbors are mutually adjacent. 
The following is a counterpart of a simplicial vertex in a vertex-weighted digraph. 
\begin{definition}
\label{def:simplicial}
Let $(G,\psi)$ be a vertex-weighted digraph on $ [\ell] $. 
Let $v$ be a vertex in $G$ and let $X_v\in L(\mathbf{c}\Ac(G,\psi) )$ be the intersection of the following hyperplanes:
\begin{align*}
z&= 0, \\
x_{i}-x_{j} &= 0 \qquad (i,j\in [\ell]\setminus\{v\}), \\
x_{i}-x_{j} &= z \qquad ((i,j) \in E_{G}, i,j\in [\ell]\setminus\{v\}), \\
 x_{i} &= \psi(i)z\qquad (i\in [\ell]\setminus\{v\}). 
\end{align*}
The vertex $ v $ is said to be \emph{simplicial} in $ (G,\psi) $ if $X_v$ is a modular coatom of $ \mathbf{c}\Ac(G,\psi) $.
\end{definition}

Let  $ G\setminus v $ denote the subgraph obtained from $G$ by removing $ v $ and the edges incident on $v$.  
Thus 
$$\left( \mathbf{c}\Ac(G,\psi) \right)_{X_v}=\mathbf{c}\Ac\left(G\setminus v, \psi|_{[\ell]\setminus\{v\}}\right) \times \varnothing_{1}.$$ 

\begin{proposition}[{\cite[Prop.~3.12]{AbeTranTsujie21_ShiIsh}}]
 \label{prop:isolated}
 Let $(G,\psi)$ be a vertex-weighted digraph on $ [\ell] $ and $ v $ an isolated vertex of $ G $. 
If $ \psi(v) \subseteq \psi(i) $ for every $ i \in [\ell] $, then $ v $ is simplicial in $ (G,\psi) $. 
\end{proposition}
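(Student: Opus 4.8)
The plan is to verify the definition of simplicial directly, namely to show via the modular coatom criterion (Proposition \ref{prop:MC-criterion}) that the localization $(\mathbf{c}\Ac(G,\psi))_{X_v}$ is a modular coatom of $\mathbf{c}\Ac(G,\psi)$. First I would record the containment test: since $e_v\in X_v$, a hyperplane $H\in\mathbf{c}\Ac(G,\psi)$ satisfies $H\supseteq X_v$ if and only if the coefficient of $x_v$ in a defining form of $H$ vanishes. From this one checks by a short computation (using $z=0$, $x_i=x_j$ for $i,j\neq v$, and $x_i=\psi(i)z$ for $i\neq v$) that $X_v$ has corank one, so that $X_v$ is indeed a coatom and Proposition \ref{prop:MC-criterion} applies.

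The decisive structural step, and the only place where the isolation of $v$ enters, is the identification of the hyperplanes that do \emph{not} contain $X_v$. Because $v$ is isolated, $G$ has no edge incident to $v$, so there is no edge hyperplane $x_v-x_j=z$; hence the defining forms with nonzero $x_v$-coefficient are exactly $x_v-x_j$ for $j\in[\ell]\setminus\{v\}$ (coming from $\Cox(A_\ell)$) and $x_v-az$ for $a\in\psi(v)$ (the weight hyperplanes at $v$). Thus $\mathbf{c}\Ac(G,\psi)\setminus(\mathbf{c}\Ac(G,\psi))_{X_v}$ consists precisely of the hyperplanes $A_j:=\ker(x_v-x_j)$ with $j\neq v$ and $B_a:=\ker(x_v-az)$ with $a\in\psi(v)$.

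Next I would check the criterion for every pair of distinct such hyperplanes. On $A_j\cap A_k$ one has $x_j=x_v=x_k$, so this intersection is contained in the Coxeter hyperplane $\ker(x_j-x_k)$, which lies in $(\mathbf{c}\Ac(G,\psi))_{X_v}$. On $B_a\cap B_b$ with $a\neq b$ one gets $az=x_v=bz$, forcing $z=0$, so the intersection is contained in $H_\infty=\ker(z)$, which also contains $X_v$. The essential mixed case is $A_j\cap B_a$: there $x_j=x_v=az$, so the intersection lies in $\ker(x_j-az)$, and this is a genuine hyperplane of $\mathbf{c}\Ac(G,\psi)$ through $X_v$ exactly because $a\in\psi(v)\subseteq\psi(j)$ by hypothesis. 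Since these three cases exhaust all pairs, Proposition \ref{prop:MC-criterion} yields that $X_v$ is a modular coatom, i.e.\ $v$ is simplicial in $(G,\psi)$.

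I expect no step to require real computation; the two subtle points are the bookkeeping that isolates the non-containing hyperplanes (where the hypothesis that $v$ is isolated is exactly what removes the problematic forms $x_v-x_j=z$, which would otherwise break the criterion) and the verification in the mixed case $A_j\cap B_a$ that the covering hyperplane $\ker(x_j-az)$ actually belongs to the arrangement. This last point is the crux of the argument, and it is precisely what the weight containment $\psi(v)\subseteq\psi(i)$ guarantees.
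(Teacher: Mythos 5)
Your argument is correct. The paper states Proposition \ref{prop:isolated} as a quotation from \cite[Prop.~3.12]{AbeTranTsujie21_ShiIsh} and gives no proof of its own, but your verification --- identifying the hyperplanes outside the localization as exactly the $\ker(x_v-x_j)$ and $\ker(x_v-az)$ for $a\in\psi(v)$ (the isolation of $v$ being what removes the edge hyperplanes $x_v-x_j=z$), then checking the modular coatom criterion of Proposition \ref{prop:MC-criterion} pair by pair, with the weight containment $\psi(v)\subseteq\psi(j)$ supplying the covering hyperplane $\ker(x_j-az)$ in the mixed case --- is precisely the standard argument, and matches the proof in the cited source.
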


The following is immediate from Proposition \ref{prop:modular coatom}.

\begin{corollary}
\label{cor:simplicial}
 Let $(G,\psi)$ be a vertex-weighted digraph on $ [\ell] $ and $ v $ a simplicial  vertex of $(G,\psi)$. 
Then the following statements hold. 
\begin{itemize}
\item[(i)] The cone $ \mathbf{c}\Ac(G,\psi) $ is supersolvable (resp., (inductively) free) if and only if $ \mathbf{c}\Ac(G\setminus v,\psi|_{[\ell]\setminus\{v\}}) $ is supersolvable (resp., (inductively) free). 
In this case, 
$$\exp ( \mathbf{c}\Ac(G,\psi)  )   = \exp \left( \mathbf{c}\Ac(G\setminus v,\psi|_{[\ell]\setminus\{v\}})  \right)  \cup \{|\psi(v)|+e+\ell-1\},$$ where  $ e $ denotes the number of edges incident on $ v $.
\item[(ii)]  If  $\mathbf{c}\Ac\left(G\setminus v, \psi|_{[\ell]\setminus\{v\}}\right)$ is ((ind-)flag-)accurate whose exponents do not exceed  $|\psi(v)|+e+\ell-1$, then $ \mathbf{c}\Ac(G,\psi) $ is ((ind-)flag-)accurate.
\end{itemize}
\end{corollary}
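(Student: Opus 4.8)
The plan is to obtain the statement as a direct application of Proposition~\ref{prop:modular coatom} to the central arrangement $\Ac := \mathbf{c}\Ac(G,\psi)$ and the flat $X := X_v$. By the definition of a simplicial vertex (Definition~\ref{def:simplicial}), $X_v$ is a modular coatom of $\Ac$, so the hypotheses of Proposition~\ref{prop:modular coatom} are met. Moreover the localization is identified just before the statement as $\Ac_{X_v} = \mathbf{c}\Ac(G\setminus v, \psi|_{[\ell]\setminus\{v\}}) \times \varnothing_1$. The only genuine computation is the value of $|\Ac \setminus \Ac_{X_v}|$, which governs both the new exponent in Proposition~\ref{prop:modular coatom}(ii) and the bound in part~(iii).

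First I would carry out this count. On $X_v$ one has $z=0$ and $x_i = 0$ for all $i \neq v$, so a hyperplane of $\Ac$ contains $X_v$ precisely when its defining form does not involve the coordinate $x_v$; equivalently, $\Ac \setminus \Ac_{X_v}$ consists exactly of the hyperplanes involving $x_v$. These are the $\ell-1$ hyperplanes $x_i - x_j = 0$ with $v \in \{i,j\}$, the $e$ hyperplanes $x_i - x_j = z$ coming from the edges of $G$ incident on $v$, and the $|\psi(v)|$ hyperplanes $x_v = nz$ with $n \in \psi(v)$. Hence $|\Ac \setminus \Ac_{X_v}| = |\psi(v)| + e + \ell - 1$.

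For part~(i), I would invoke Proposition~\ref{prop:modular coatom}(ii): $\Ac$ is supersolvable (resp.~(inductively) free) if and only if $\Ac_{X_v}$ is. Since supersolvability, freeness and inductive freeness are compatible with products and the empty factor $\varnothing_1$ trivially enjoys each property, this is in turn equivalent to the same property for $\mathbf{c}\Ac(G\setminus v, \psi|_{[\ell]\setminus\{v\}})$. For the exponents, Proposition~\ref{prop:modular coatom}(ii) yields $\exp(\Ac) \cup \{0\} = \exp(\Ac_{X_v}) \cup \{|\Ac \setminus \Ac_{X_v}|\}$; substituting $\exp(\Ac_{X_v}) = \exp(\mathbf{c}\Ac(G\setminus v, \psi|_{[\ell]\setminus\{v\}})) \cup \{0\}$ together with the computed value of $|\Ac \setminus \Ac_{X_v}|$ and cancelling one copy of $0$ from each side gives the claimed formula.

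For part~(ii), I would apply Proposition~\ref{prop:modular coatom}(iii). By Remark~\ref{rem_MATRest-flag}(v) the factor $\varnothing_1$ does not affect ((ind-)flag-)accuracy, so $\Ac_{X_v}$ is ((ind-)flag-)accurate exactly when $\mathbf{c}\Ac(G\setminus v, \psi|_{[\ell]\setminus\{v\}})$ is, and the exponents of the two arrangements differ only by an extra $0$, which trivially satisfies the bound $|\psi(v)|+e+\ell-1$. Thus the hypothesis placed on $\mathbf{c}\Ac(G\setminus v, \psi|_{[\ell]\setminus\{v\}})$ is precisely the hypothesis of Proposition~\ref{prop:modular coatom}(iii) for $\Ac_{X_v}$, and the conclusion follows. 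I do not anticipate any real obstacle here: the argument is entirely bookkeeping — the count of $|\Ac \setminus \Ac_{X_v}|$ and the routine transfer of properties across the product with $\varnothing_1$ — which is why the corollary is flagged as immediate.
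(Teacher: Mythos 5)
Your proposal is correct and follows exactly the route the paper intends: the corollary is stated as ``immediate from Proposition~\ref{prop:modular coatom}'', applied to the modular coatom $X_v$ whose localization is identified as $\mathbf{c}\Ac(G\setminus v,\psi|_{[\ell]\setminus\{v\}})\times\varnothing_1$ right before the statement, and your count $|\Ac\setminus\Ac_{X_v}|=|\psi(v)|+e+\ell-1$ together with the transfer of properties across the product with $\varnothing_1$ is precisely the bookkeeping the authors suppress.
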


\subsection{$N$-Ish arrangements}
\label{subsect:Nish}

Our first set of examples of $ \psi $-digraphic arrangements is the class of  \emph{$N$-Ish arrangements}  due to Abe, Suyama and Tsujie \cite{AST17} which we  now recall. 
Let $N = (N_1, N_2,\ldots, N_\ell)$ be an $\ell$-tuple of finite sets $N_i\subseteq\ZZ$ (not necessarily integer intervals). 
	 \begin{example}
	\label{ex:N-Ish} 
The  \emph{(essentialized) $N$-Ish arrangement} $\Ac(N)$ is the arrangement  in $ \mathbb{R}^{\ell} $ defined by 
$$
\Ac(N):=
\Cox(A_\ell) 
\cup \{  x_{i}= N_i \mid 1 \leq i   \leq \ell\}.
$$
Thus any $N$-Ish arrangement is a $ \psi $-digraphic arrangement: $\Ac(N) = \Ac(G,\psi)$ where $G =  \overline{K}_{\ell}$ is the edgeless digraph on $ [\ell]$ and $\psi(i) =N_i$ for each $i$.

\end{example}

\begin{theorem}[{\cite[Thm.~ 1.3]{AST17}}]
 \label{thm:nested}
The following are equivalent:
 \begin{itemize}
\item[(i)]  The cone $\cc\Ac(N)$ is supersolvable.
\item[(ii)] The cone $\cc\Ac(N)$  is inductively free.
\item[(iii)] The cone $\cc\Ac(N)$  is  free.
\item[(iv)] $N$ is nested, i.e.,  there exists a permutation $w$ of $[\ell]$ such that $  N_{w(i)} \subseteq N_{w(i-1)}$ for every $2 \le  i \leq \ell$.
\end{itemize}
In this case, the exponents of  $\cc\Ac(N)$ are given by $(1,  | N_{w(i)}| + i-1 )_{i=1}^\ell$ where  $w$ is any permutation of $[\ell]$ such that $  N_{w(i)} \subseteq N_{w(i-1)}$ for every $2 \le  i \leq \ell$.

 \end{theorem}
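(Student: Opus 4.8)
The plan is to establish the cycle of implications $(i)\Rightarrow(ii)\Rightarrow(iii)\Rightarrow(iv)\Rightarrow(i)$, and to read off the exponents during the proof of $(iv)\Rightarrow(i)$. The first two implications are immediate from the theory already recalled: $(i)\Rightarrow(ii)$ is Theorem \ref{thm:exp-ss}, and $(ii)\Rightarrow(iii)$ holds because $\mathcal{IF}\subseteq\mathcal{RF}$ and every recursively free arrangement is free. Thus the substance lies in $(iv)\Rightarrow(i)$ and, above all, in $(iii)\Rightarrow(iv)$.

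For $(iv)\Rightarrow(i)$ I would induct on $\ell$ via the simplicial-vertex machinery of Subsection \ref{subsect:digraph}. Since $N$ is nested, after relabelling the coordinates I may assume $N_1\supseteq N_2\supseteq\cdots\supseteq N_\ell$. In the edgeless digraph $\overline{K}_\ell$ the vertex $\ell$ is isolated and carries the smallest weight, $\psi(\ell)=N_\ell\subseteq N_i$ for all $i$, so Proposition \ref{prop:isolated} shows $\ell$ is simplicial in $(\overline{K}_\ell,N)$. Corollary \ref{cor:simplicial}(i) then reduces the supersolvability of $\cc\Ac(N)$ to that of $\cc\Ac(\overline{K}_{\ell-1},(N_1,\ldots,N_{\ell-1}))$, which is again nested, and it contributes the exponent $|\psi(\ell)|+e+\ell-1=|N_\ell|+\ell-1$ (here $e=0$). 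The base case $\ell=1$ is a central pencil in $\RR^2$, supersolvable with exponents $(1,|N_1|)$. Peeling off the vertices $\ell,\ell-1,\ldots,1$ in turn builds an $\mathrm{M}$-chain and simultaneously accumulates the exponents as $(1,|N_{w(i)}|+i-1)_{i=1}^{\ell}$, which, since exponents are unique, yields the asserted formula for any admissible permutation $w$.

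The \emph{main obstacle} is $(iii)\Rightarrow(iv)$, and I would argue by contraposition. If $N$ is not nested, the sets $N_1,\ldots,N_\ell$ do not form a chain under inclusion, so there are indices $i\neq j$ with $N_i\not\subseteq N_j$ and $N_j\not\subseteq N_i$; in particular both are nonempty, and writing $p:=|N_i|$, $q:=|N_j|$, $s:=|N_i\cap N_j|$ we have $s<\min(p,q)$. Since freeness passes to localizations (\cite[Thm.~4.37]{OrTer92_Arr}), it suffices to exhibit a non-free localization. Localizing $\cc\Ac(N)$ at the codimension-three flat $W:=\{z=0\}\cap\{x_i=0\}\cap\{x_j=0\}$ collects exactly the hyperplanes of $\cc\Ac(N)$ containing $W$, namely $x_i=x_j$, the planes $x_i=cz$ $(c\in N_i)$, the planes $x_j=cz$ $(c\in N_j)$, and $z=0$; its essentialization is the rank-three cone $\Bc$ over the two-coordinate $N$-Ish arrangement in the variables $x_i,x_j$. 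This reduces the whole implication to showing that $\Bc$ is non-free whenever $N_i$ and $N_j$ are incomparable.

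The subtlety here, and the reason the characteristic polynomial alone cannot close the argument, is that $\chi(\Bc,t)$ may factor over $\ZZ$ while $\Bc$ is non-free: for instance $N_i=\{0,1\}$, $N_j=\{10,\ldots,15\}$ yields $\chi(\Bc,t)=(t-1)(t-4)(t-5)$. I would therefore detect non-freeness through the sharp rank-three form of Yoshinaga's criterion (see \cite{Yos04_CharaktFree} and Theorem \ref{thm:Yoshinaga's criterion}), comparing the Ziegler restriction with the roots of $\chi(\Bc,t)/(t-1)$. The Ziegler restriction of $\Bc$ onto $H_\infty=\{z=0\}$ is the rank-two multiarrangement of the three concurrent lines $\{x_i=0\}$, $\{x_j=0\}$, $\{x_i=x_j\}$ with multiplicities $p$, $q$, $1$; being rank two it is free \cite[Cor.~7]{Z89}, and an elementary computation (the standard three-line formula) gives the product of its exponents as $pq+\max(p,q)$. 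On the other hand, the two roots of the quadratic $\chi(\Bc,t)/(t-1)$ multiply to $pq+p+q-s$. These products agree precisely when $\max(p,q)=p+q-s$, that is, when $s=\min(p,q)$, which is exactly the condition that $N_i$ and $N_j$ be comparable. Hence incomparability forces $\Bc$ non-free, so $\cc\Ac(N)$ is non-free, completing the contrapositive. Carrying out this two-coordinate freeness criterion cleanly, via the rank-two multiarrangement exponents and the rank-three instance of Yoshinaga's criterion, is the technical heart of the proof.
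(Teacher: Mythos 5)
Your proof is correct, but note that the paper itself contains no proof of this statement: Theorem \ref{thm:nested} is imported verbatim from \cite[Thm.~1.3]{AST17}, so there is no internal argument to compare against, and what you have written is an independent derivation. The implications (i)$\Rightarrow$(ii)$\Rightarrow$(iii) are indeed formal, and your (iv)$\Rightarrow$(i) via Proposition \ref{prop:isolated} and Corollary \ref{cor:simplicial}(i) is exactly the modular-coatom peeling that the paper uses elsewhere for $\psi$-digraphic arrangements; the bookkeeping $(1,|N_{w(i)}|+i-1)_{i=1}^\ell$ comes out right, and the formula is independent of the admissible permutation $w$ because two such permutations differ only within blocks of equal sets. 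For the substantive implication (iii)$\Rightarrow$(iv), your reduction to the rank-three localization at $W=H_z\cap\{x_i=0\}\cap\{x_j=0\}$ is sound ($W$ is a flat because incomparability forces $N_i,N_j\neq\emptyset$, and localization preserves freeness), and the non-freeness test is valid: if $\Bc$ were free with $\exp(\Bc)=(1,d_2,d_3)$, Terao's factorization theorem would give $d_2d_3=pq+p+q-s$, the constant term of $\chi(\Bc,t)/(t-1)$, while Ziegler's theorem would force $(d_2,d_3)$ to equal the exponents of the rank-two multiarrangement with multiplicities $(p,q,1)$ on three concurrent lines, whose product is $pq+\max(p,q)$ in both the unbalanced case ($p\ge q+1$, exponents $(p,q+1)$) and the balanced case ($p=q$, exponents $(p+1,p)$). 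Since $\max(p,q)=p+q-s$ holds precisely when $s=\min(p,q)$, i.e.\ when $N_i$ and $N_j$ are comparable, incomparability kills freeness. You only use the easy (``only if'') directions of Ziegler/Yoshinaga together with Terao factorization, all available from the references recalled in the paper, and your example where $\chi$ factors over $\ZZ$ while the arrangement is non-free correctly justifies why the Ziegler comparison, rather than the characteristic polynomial alone, is the right detector.
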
 
 
 We call an  $\ell$-tuple $N$ as above \emph{strictly nested} if there exists a permutation $w$ of $[\ell]$ such that $  N_{w(i)} \subsetneq N_{w(i-1)}$ for every $2 \le  i \leq \ell$.
Our next lemma asserts that strict nestedness is sufficient for ind-flag-accuracy of $\cc\Ac(N)$. 

 \begin{lemma}
\label{lem:N-Ish-FA}
Let $N = (N_1, N_2,\ldots, N_\ell)$ with $ N_\ell  \subsetneq \cdots  \subsetneq N_2  \subsetneq N_1$. 
Then $\cc\Ac(N)$ is ind-flag-accurate.
\end{lemma}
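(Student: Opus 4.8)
The plan is to induct on $\ell$, peeling off the single largest exponent by one restriction and invoking Lemma~\ref{lem:flag-accuracy}. After relabelling I may assume $N_\ell \subsetneq \cdots \subsetneq N_2 \subsetneq N_1$, so that the cardinalities satisfy $|N_\ell| < \cdots < |N_1|$. By Theorem~\ref{thm:nested} (taking $w = \id$) the cone $\cc\Ac(N)$ is inductively free with exponents $\exp(\cc\Ac(N)) = (1, |N_i| + i - 1)_{i=1}^\ell$. A one-line monotonicity check, using $|N_i| \ge |N_{i+1}| + 1$, shows that the sequence $|N_i| + i - 1$ is non-increasing in $i$; together with $|N_1| \ge |N_\ell| + \ell - 1 \ge 1$ this makes $|N_1|$ the largest exponent, attained at the first vertex. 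The base case $\ell = 1$ is immediate, since then $\cc\Ac(N)$ has rank two and every rank-two arrangement is ind-flag-accurate.

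For the inductive step I would fix an integer $c \in N_1 \setminus N_2$, which is nonempty because $N_2 \subsetneq N_1$; as $N_2 \supseteq N_3 \supseteq \cdots \supseteq N_\ell$, this $c$ lies in none of $N_2, \ldots, N_\ell$. Set $H := \ker(x_1 - cz) \in \cc\Ac(N)$. The key computation is to identify the restriction $\cc\Ac(N)^H$: on $H$ the equations $x_1 = x_j$ become $x_j = cz$ for $j \ge 2$, the hyperplanes $x_1 = c'z$ with $c' \ne c$ collapse onto $z = 0$, and the remaining hyperplanes restrict to $\Cox(A_{\ell-1})$ on $x_2, \ldots, x_\ell$ together with the coordinate hyperplanes $x_j = c'z$, $c' \in N_j$. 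Hence $\cc\Ac(N)^H = \cc\Ac(N')$, where $N' := (N_2 \cup \{c\}, \ldots, N_\ell \cup \{c\})$ is an $N$-Ish datum (in the sense of Example~\ref{ex:N-Ish}) on the $\ell - 1$ vertices $\{2, \ldots, \ell\}$. Because $c \notin N_j$ for $j \ge 2$, adjoining $c$ preserves the strict inclusions, so $N'$ is again strictly nested; by Theorem~\ref{thm:nested} its exponents are $\exp(\cc\Ac(N')) = (1, |N_j \cup \{c\}| + (j-2))_{j=2}^\ell = (1, |N_j| + (j-1))_{j=2}^\ell$, which is exactly $\exp(\cc\Ac(N))$ with the top entry $|N_1|$ deleted.

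It then follows from the induction hypothesis applied to $N'$ that $\cc\Ac(N')$ is ind-flag-accurate, and since $\exp(\cc\Ac(N)^H)$ equals $\exp(\cc\Ac(N))$ with its largest exponent removed, Lemma~\ref{lem:flag-accuracy} yields that $\cc\Ac(N)$ is ind-flag-accurate. Inductive freeness of both $\cc\Ac(N)$ and its restriction $\cc\Ac(N') = \cc\Ac(N)^H$ is guaranteed by Theorem~\ref{thm:nested}, so the inductive-freeness version of the lemma indeed applies. The main obstacle I anticipate is the explicit identification of $\cc\Ac(N)^H$ as a coned, strictly-nested $N$-Ish arrangement and the accompanying exponent bookkeeping, specifically verifying that adjoining the single value $c$ to each of $N_2, \ldots, N_\ell$ both preserves strict nestedness and shifts the exponents so that precisely the top exponent $|N_1|$ is peeled off. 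Once this identification is in place, the induction closes routinely.
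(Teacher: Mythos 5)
Your proof is correct and takes essentially the same route as the paper: the paper fixes $a_i \in N_i \setminus N_{i+1}$, sets $X_i = \bigcap_{j\le i}\ker(x_j - a_jz)$, and identifies $\cc\Ac(N)^{X_i}$ directly as the strictly nested $N$-Ish arrangement on $(N_j \sqcup \{a_1,\dots,a_i\})_{j>i}$, which is exactly the $i$-fold iterate of your single restriction step. Phrasing it as an induction on $\ell$ via Lemma~\ref{lem:flag-accuracy} rather than exhibiting the whole witness flag at once is only a cosmetic difference.
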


\begin{proof}
Note that $ | N_{i} | -  | N_{i+1} | \ge 1$ for each $1 \le i \le \ell-1$. 
By Theorem \ref{thm:nested},  $\Ac: = \cc\Ac(N)$ is supersolvable hence inductively free with exponents 
$ (1,  | N_{\ell} | + \ell-1, \ldots,   | N_{2} | + 1, | N_{1} |)_{\le}. $
For each $1 \le i \le \ell-1$ fix $a_i \in N_{i}  \setminus N_{i+1}$, and set $M_i : = \{ a_j \mid 1 \le j \le i\}$. 
Also, let $H_i$ denote the hyperplane $x_i =a_iz$, and set $X_i : = \bigcap_{j=1}^{i} H_j$. 
Then $X_{\ell-1} \subseteq \cdots \subseteq  X_2  \subseteq X_1 \subseteq V'=\RR^{\ell+1} $ and $\dim_{V'}(X_i) =\ell+1-i$.
Moreover,  for each $1 \le i \le \ell-1$, one can show that $\Ac^{X_i}$ is identical to the  nested $N$-Ish arrangement defined by
$$N_\ell  \sqcup M_i \subsetneq \cdots  \subsetneq N_{i+2} \sqcup M_i  \subsetneq N_{i+1} \sqcup M_i .$$
Once again, according to Theorem \ref{thm:nested},  $\Ac^{X_i}$ is inductively free with exponents given by 
$ (1,  | N_{\ell} | + \ell-1, \ldots,   | N_{i+1} | + i)_{\le}. $
Thus $\cc\Ac(N)$ is ind-flag-accurate with a witness $(X_{\ell-1}, \ldots , X_2, X_1, V' )$.
\end{proof}

We complete this subsection by defining two   operations on vertex-weighted digraphs with interval weights needed for our subsequent discussion.

\begin{definition}
A vertex $ v $ in a digraph $ G $ is called a
\emph{source} (resp.~\emph{sink})
if $ (v,u) \in E_{G} $ (resp., $ (u,v) \in E_{G} $) for every $ u \in V_{G}\setminus\{v\} $. 
\end{definition}
 
\begin{definition}\label{deformation coking}
Let $(G,\psi)$ be a vertex-weighted digraph with non-empty interval weight, that is, $ G $ is a digraph on $ [\ell] $ and $ \psi \colon [\ell] \to 2^{\mathbb{Z}} $ is a  map such that  $ \emptyset \ne \psi(i) = [a_{i}, b_{i}]\subseteq \mathbb{Z}$ where $a_{i}\le b_{i}$ are integers for every $ i \in [\ell] $. Let $v$ be a vertex in $G$.

Next we define the \emph{mutation} of $(G,\psi)$ with respect to a sink or source.
\begin{enumerate}[(i)]
\item Suppose that $v$ is a sink in $ G $. 
The \emph{mutation}
of $(G,\psi)$ (w.r.t.~$v$) is a new vertex-weighted digraph $ (G^{\prime}, \psi^{\prime}) $ where $ G^{\prime}  =( [\ell] , E_{G'})$ is a digraph   and $ \psi^{\prime} $ is a weight given by
\begin{align*}
E_{G^{\prime}} :=E_{G}\setminus\{(i,v) \mid i \in [\ell]\setminus\{v\}\}, \qquad
\psi^{\prime}(i) :=\begin{cases}
[a_{i}-1, b_{i}] & (i \in [\ell]\setminus\{v\}), \\
[a_{v}, b_{v}] & (i=v). 
\end{cases}
\end{align*}
\item Dually, suppose that  $v$ is a source in $ G $. 
The \emph{mutation}
of $(G,\psi)$ (w.r.t.~$v$) is a new vertex-weighted digraph $ (G'', \psi'') $ given by
\begin{align*}
E_{G''} :=E_{G}\setminus\{(v,i) \mid i \in [\ell]\setminus\{v\}\}, \qquad
\psi''(i) :=\begin{cases}
[a_{i}, b_{i}+1] & (i \in [\ell]\setminus\{v\}), \\
[a_{v}, b_{v}] & (i=v). 
\end{cases}
\end{align*}
\end{enumerate}
\end{definition}

Subsequently, when speaking of a \emph{mutation} or of an \emph{evolution} 
of a $ \psi $-digraphic arrangement $ \Ac(G,\psi) $, we mean that mutation is applied to the underlying vertex-weighted digraph $(G,\psi)$. 
In particular, a mutation 
w.r.t.~the sink (source) $v$ induces a (set) bijection on the underlying arrangements $  \Ac(G,\psi) \to  \Ac(G^{\prime}, \psi^{\prime}) $ (resp., $  \Ac(G,\psi) \to  \Ac(G'', \psi'') $).

What we call a source, sink and mutation with respect to one of the former is called a ``king'', ``coking'' and ``(co)king elimination operation'', respectively, in \cite{AbeTranTsujie21_ShiIsh}.

\subsection{Shi genealogy}
\label{subsect:Shi}

Evolution leads to many ind-flag-accurate $ \psi $-digraphic arrangements. 
Among them is a deformation of the Shi arrangement of type $A$ which we are about to introduce.

 \begin{definition}
\label{def:Shi-ally}
Let $1 \le k \le \ell$ and recall   the digraph $ T_{\ell}^{k}$ from Definition \ref{def:digraphs}. 
Fix $\ell$ pairs of integers $c_i \le d_i$ for $1 \le i \le \ell$. 
Let  
$$\Aa^{k}_\ell:=  \Ac\left(T_{\ell}^{k}, \psi_{\ell}^{k}\right)  \quad \text{for } 1 \le k \le \ell
$$  
be the $ \psi $-digraphic arrangements defined by the following recurrence relation:
\begin{enumerate}[(i)]
\item  $\Aa^{1}_\ell =\Ac\left(T_{\ell}^1, \psi^1_\ell\right) $ with $ \psi^1_\ell(i) = [c_{i}, d_{i}] $  for each $ i \in [\ell] $.

\item For $k=1,2,\ldots, \ell-1$ the arrangement $\Aa^{ k+1}_\ell$ is obtained from $\Aa^{ k}_\ell$ by applying 
mutation w.r.t.~the sink
$ \ell-k+1 $ of the induced subgraph  $ \left(T_{\ell}^{k}[\ell-k+1], \psi_{\ell}^{k}|_{[\ell-k+1]}\right)$ of $\left(T_{\ell}^{k}, \psi_{\ell}^{k}\right)$ by $[\ell-k+1]$, and keeping the isolated vertices  $ i \in [\ell-k+2, \ell] $ with their weights unchanged. 
The relation induces a bijection between the arrangements $\Aa^{ k}_\ell$ and $\Aa^{ k+1}_\ell$, denoted $\tau_{\ell-k+1} :  \Aa^{ k}_\ell \to \Aa^{ k+1}_\ell.$
\end{enumerate}
We call the arrangements $\Aa^{k}_\ell$  \emph{Shi descendants}, and call the sequence 

$$ \Aa^{1}_\ell \stackrel{ \tau_{\ell} }{\longrightarrow} \Aa^{2}_\ell \stackrel{ \tau_{\ell-1} }{\longrightarrow} \cdots \stackrel{ \tau_{2} }{\longrightarrow}  \Aa^{\ell}_\ell$$
a \emph{Shi descendant sequence} with \emph{origin  $\Aa^{1}_\ell$} and \emph{end $\Aa^{\ell}_\ell$}. 
\end{definition}

The solution of the recurrence relation from Definition \ref{def:Shi-ally} is given below.
\begin{proposition} 
\label{prop:Shi-ally-formula}
With the notation in Definition \ref{def:Shi-ally}, for each $1 \le k \le \ell$, we have
$$
 \psi_{\ell}^{k}(i) =  \varphi_{\ell}^{k}(i), \text{ where } \varphi_{\ell}^{k}(i) := [1-\min\{\ell-i+1, k\}+c_i, d_i ] \quad  (1 \leq i \leq \ell). 
$$
In other words, $\Aa^{k}_\ell$ consists of the following hyperplanes:
\begin{align*}
x_{i} - x_{j} &= 0\quad  (1 \leq i < j \leq \ell), \\
x_{i} - x_{j} &= 1 \quad (1 \leq i < j \leq \ell+1-k), \\
x_i&= [1-k+c_i, d_i ] \quad (1 \leq i \leq \ell+1-k), \\
x_i&= [-\ell + i+ c_i, d_i ] \quad (\ell+1-k < i \leq \ell).
\end{align*}
\end{proposition}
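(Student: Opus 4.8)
The plan is to establish the closed form $\psi_\ell^k(i) = \varphi_\ell^k(i)$ by induction on $k$, and then to read off the explicit list of hyperplanes directly from this formula. First I would settle the base case $k=1$ of Definition \ref{def:Shi-ally}(i): since $\ell - i + 1 \ge 1$ for every $1 \le i \le \ell$, we have $\min\{\ell - i + 1, 1\} = 1$, so $\varphi_\ell^1(i) = [1 - 1 + c_i, d_i] = [c_i, d_i] = \psi_\ell^1(i)$, as required.

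For the inductive step I would unwind the mutation in Definition \ref{def:Shi-ally}(ii). The passage from $\Aa^{k}_\ell$ to $\Aa^{k+1}_\ell$ applies the sink-mutation of Definition \ref{deformation coking}(i) to the induced subgraph on $[\ell-k+1]$ with sink $v = \ell - k + 1$, while leaving the isolated vertices $\ell-k+2,\ldots,\ell$ untouched. Concretely this has exactly two effects on the weights (the upper endpoints $d_i$ never move): the lower endpoint of $\psi_\ell^k(i)$ drops by one for $1 \le i \le \ell - k$, and $\psi_\ell^k(i)$ is unchanged for $\ell - k + 1 \le i \le \ell$. I would then verify that $\varphi_\ell^{k+1}$ records precisely these changes by splitting on the value of the minimum. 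For $1 \le i \le \ell - k$ one has $\ell - i + 1 \ge k + 1$, so $\min\{\ell-i+1,\,k\} = k$ while $\min\{\ell-i+1,\,k+1\} = k+1$, whence $\varphi_\ell^{k+1}(i)$ agrees with $\varphi_\ell^{k}(i)$ except that its lower endpoint is lowered by one; for $\ell - k + 1 \le i \le \ell$ one has $\ell - i + 1 \le k$, so the minimum equals $\ell - i + 1$ for both $k$ and $k+1$, leaving the weight fixed. Simultaneously, the sink-mutation deletes exactly the edges $(i, \ell-k+1)$ with $i \in [\ell - k]$, turning $E_{T_\ell^k}$ into $E_{T_\ell^{k+1}}$ (as $\ell - (k+1) + 1 = \ell - k$), confirming that the underlying digraph is the expected one.

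Finally, to obtain the explicit hyperplane description I would substitute $\varphi_\ell^k$ into the definition of a $\psi$-digraphic arrangement, splitting the vertices according to whether $\ell - i + 1 \ge k$ or $\ell - i + 1 < k$. The condition $i \le \ell + 1 - k$ is equivalent to $\ell - i + 1 \ge k$, which forces $\min\{\ell-i+1,\,k\} = k$ and hence the equations $x_i = [1 - k + c_i,\, d_i]$; the complementary range $\ell + 1 - k < i \le \ell$ gives $\min\{\ell-i+1,\,k\} = \ell - i + 1$ and hence $x_i = [-\ell + i + c_i,\, d_i]$. The edge hyperplanes $x_i - x_j = 1$ for $1 \le i < j \le \ell + 1 - k$ come from $E_{T_\ell^k}$, and the Coxeter hyperplanes $\Cox(A_\ell)$ are present at every stage of the recurrence. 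I expect the only delicate point to be the careful bookkeeping of the index ranges and the behaviour of the $\min$ across the inductive step; there is no conceptual obstacle beyond matching these ranges correctly.
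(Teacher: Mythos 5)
Your proposal is correct and follows essentially the same route as the paper's proof: induction on $k$ starting from $\psi_\ell^1=\varphi_\ell^1$, with the inductive step checking case by case (vertices $i\le \ell-k$, the sink $v=\ell-k+1$, and the untouched isolated vertices) that the sink-mutation turns $\varphi_\ell^k$ into $\varphi_\ell^{k+1}$ and $T_\ell^k$ into $T_\ell^{k+1}$. The final substitution into the definition of a $\psi$-digraphic arrangement to obtain the explicit hyperplane list is likewise the same bookkeeping the paper leaves implicit.
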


\begin{proof}
It is clear that $\psi^1_\ell = \varphi^1_\ell$. 
 Fix $1 \le k \le \ell-1$ and set $v:= \ell-k+1 $. 
Each vertex $ i \in [v+1, \ell] $ is isolated and its weight is the same in both $ \left(T_{\ell}^{k}, \varphi_{\ell}^{k}\right) $ and $\left(T_{\ell}^{k+1}, \varphi_{\ell}^{k+1}\right)$ given by $ \varphi_{\ell}^{k}(i) =  \varphi_{\ell}^{k+1}(i) = [-\ell + i+ c_i, d_i ]$.
It suffices to prove that  after applying  a mutation to
$ \left(T_{\ell}^{k}[v], \varphi_{\ell}^{  k}|_{[v]}\right) $ w.r.t.~the 
sink $v $, we obtain  $\left(T_{\ell}^{k+1}[v], \varphi_{\ell}^{ k+1}|_{[v]}\right)$.
It is clear that $ (T_{\ell}^{k}[v])^{\prime} = T_{\ell}^{k+1}[v] $. 
Moreover, 
$$( \varphi_{\ell}^{ k}|_{[v]})^{\prime} ( v ) = [1-k+c_v, d_v ] =  \varphi_{\ell}^{ k+1}|_{[v]} ( v ),$$
and for each $i \in [\ell-k]$
$$( \varphi_{\ell}^{ k}|_{[v]})^{\prime} ( i ) = [-k+c_i, d_i ] = \varphi_{\ell}^{ k+1}|_{[v]} ( i ),$$
Thus $( \varphi_{\ell}^{ k}|_{[v]})^{\prime} =  \varphi_{\ell}^{ k+1}|_{[v]}$, as desired.  
 \end{proof}

One of the main examples of Shi descendant sequences is the sequence of \emph{arrangements between Shi and Ish}  introduced recently by Duarte and Guedes de Oliveira \cite{DG18, DG19}. 
Let $2 \leq k \leq \ell$. 
Let $\Ss_{\ell}^{k}$ be the  arrangement consisting of the following hyperplanes:
\begin{align*}
x_{i} - x_{j} &= 0\quad  (1 \leq i < j \leq \ell), \\
x_{1}-x_{j} &= i \quad (1 \leq i < j \leq \ell, i < k), \\
x_{i}-x_{j} &= 1 \quad (k \leq i < j \leq \ell).
\end{align*} 
In particular, $\Ss_{\ell}^1 = \Ss_{\ell}^{2}$ is known as the \emph{type $A$ Shi arrangement} due to Shi \cite[Chap.~ 7]{Shi86}, whose essentialization agrees with $\Shi^1(A_{\ell-1}) $  from Definition \ref{Def_IdealShiCatalan}. 
In addition, $ \Ss_{\ell}^{\ell}  $ is known as the \emph{type $A$ Ish arrangement} due to Armstrong \cite{Arms13}. 
The Shi and Ish arrangements share many common properties, e.g., see  \cite{AR12}. 
The arrangements $\Ss_{\ell}^{k} $ interpolate between the Shi and Ish arrangements as $k$ varies. 

\begin{proposition}[{\cite[Prop.~2.11]{AbeTranTsujie21_ShiIsh}}]
\label{prop:Akl}
If $ 1 \leq k \leq \ell $ then
$$\Ss_{\ell+1}^{k+1} = \Aa_{\ell}^{k}[-1,0]\times \varnothing_{1},$$
where $\Aa_{\ell}^{k}[-1,0]$ denotes the Shi descendant with $c_i=-1$, $d_i=0$ for each $1 \le i \le \ell$.
\end{proposition}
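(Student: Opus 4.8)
The plan is to reduce the claimed identity to a statement about essentializations and then to exhibit a single explicit relabeling of coordinates. First I would write out $\Ss_{\ell+1}^{k+1}$ from its definition (replacing $\ell$ by $\ell+1$ and $k$ by $k+1$): it consists of the hyperplanes $x_i-x_j=0$ for $1\le i<j\le \ell+1$, the hyperplanes $x_1-x_j=i$ for $1\le i\le k$ and $i<j\le\ell+1$, and the hyperplanes $x_i-x_j=1$ for $k+1\le i<j\le\ell+1$. Every hyperplane has the form $x_a-x_b=c$, so $\Ss_{\ell+1}^{k+1}$ is invariant under the translations along $\mathbf 1=(1,\dots,1)$ and has rank $\ell$; hence $\Ss_{\ell+1}^{k+1}=\big(\Ss_{\ell+1}^{k+1}\big)^{\mathrm{ess}}\times\varnothing_1$, and the essentialization is computed by restricting to the transversal $\{x_1=0\}$ with coordinates $u_m:=x_{m+1}$ for $1\le m\le\ell$. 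Substituting $x_1=0$, I would record that the essentialization consists of the Coxeter hyperplanes $u_i-u_j=0$, the hyperplanes $u_a-u_b=1$ for $k\le a<b\le\ell$, and the coordinate hyperplanes giving $u_m\in[-\min(m,k),0]$ for each $m$ (the value $0$ coming from $x_1=x_{m+1}$ and the values $-1,\dots,-\min(m,k)$ from $x_1-x_{m+1}=i$).

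Next I would recall the explicit shape of $\Aa_\ell^k[-1,0]$ from Proposition \ref{prop:Shi-ally-formula} with $c_i=-1$, $d_i=0$: the Coxeter hyperplanes $x_i-x_j=0$, the hyperplanes $x_i-x_j=1$ for $1\le i<j\le\ell-k+1$, and coordinate hyperplanes giving $x_i\in[-k,0]$ for $1\le i\le\ell-k+1$ and $x_i\in[-(\ell-i+1),0]$ for $\ell-k+1<i\le\ell$. The point I want to highlight is the structural match: in both arrangements there is a distinguished block of $\ell-k+1$ coordinates, all carrying the \emph{saturated} weight $[-k,0]$, which support a full transitive tournament of ``$=1$'' hyperplanes (all oriented low-index-minus-high-index), while the remaining $k-1$ coordinates are isolated with respect to the $=1$ relations and carry the pairwise distinct weights $[-1,0],\dots,[-(k-1),0]$.

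Finally I would produce the linear equivalence via a single coordinate permutation $\sigma$ of $[\ell]$, namely $\sigma(p)=p+k-1$ for $1\le p\le\ell-k+1$ and $\sigma(p)=\ell+1-p$ for $\ell-k+2\le p\le\ell$. Relabeling the coordinates of $\Aa_\ell^k[-1,0]$ as $y_1,\dots,y_\ell$, I would define the linear isomorphism by $y_p\mapsto u_{\sigma(p)}$. I would check that $\sigma$ is a bijection sending $\{1,\dots,\ell-k+1\}$ order-preservingly onto $\{k,\dots,\ell\}$ and the complement onto $\{1,\dots,k-1\}$. Under this relabeling the Coxeter family is preserved automatically; the weight of $y_p$ matches that of $u_{\sigma(p)}$ because $\min(\ell-p+1,k)=\min(\sigma(p),k)$ both on the block (where both equal $k$) and on the complementary range; and each ``$=1$'' hyperplane $y_p-y_q=1$ with $p<q\le\ell-k+1$ is carried to $u_{\sigma(p)}-u_{\sigma(q)}=1$ with $\sigma(p)<\sigma(q)$ in $\{k,\dots,\ell\}$, exactly matching the essentialized $\Ss_{\ell+1}^{k+1}$. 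Since $\sigma$ realizes a linear isomorphism carrying $\Aa_\ell^k[-1,0]$ onto $\big(\Ss_{\ell+1}^{k+1}\big)^{\mathrm{ess}}$, the identity $\Ss_{\ell+1}^{k+1}=\Aa_\ell^k[-1,0]\times\varnothing_1$ follows in the sense of linear equivalence used throughout the paper. The main obstacle is precisely this last compatibility: matching the weights forces a reversal-type relabeling which at first sight reverses the orientation of the ``$=1$'' hyperplanes; this is reconciled only because the coordinates appearing in those hyperplanes all carry the identical saturated weight $[-k,0]$, giving the freedom to choose $\sigma$ order-preserving on that block rather than the naive full reversal.
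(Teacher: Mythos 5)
Your verification is correct. The paper itself gives no proof of Proposition \ref{prop:Akl} --- it is quoted from \cite[Prop.~2.11]{AbeTranTsujie21_ShiIsh} --- so there is no in-paper argument to compare against; what you supply is a sound self-contained check. Your two key steps both hold up: (a) since every hyperplane of $\Ss_{\ell+1}^{k+1}$ has the form $x_a-x_b=c$, the arrangement is invariant under translation along $(1,\dots,1)$ and splits off a $\varnothing_1$ factor, with essentialization computed on $\{x_1=0\}$ exactly as you record (the weight of $u_m$ is $[-\min(m,k),0]$, with the $0$ coming from $x_1=x_{m+1}$); and (b) comparing against Proposition \ref{prop:Shi-ally-formula} with $c_i=-1$, $d_i=0$, your permutation $\sigma$ matches weights on both blocks ($\min(\ell-p+1,k)=\min(\sigma(p),k)$ in each range) and carries the transitive tournament of ``$=1$'' hyperplanes on $\{1,\dots,\ell-k+1\}$ order-preservingly onto the one on $\{k,\dots,\ell\}$. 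A hyperplane count ($\binom{\ell}{2}+\binom{\ell-k+1}{2}+(k+1)(\ell-k+1)+\frac{k(k+1)}{2}-1$ on both sides) confirms nothing is missed. Your closing remark is also the right one to make explicit: the equality is in the sense of affine/linear equivalence that the paper adopts throughout (Section \ref{ssect:hyper}), and the freedom to take $\sigma$ order-preserving on the tournament block, rather than a full reversal, is available precisely because all coordinates in that block carry the same saturated weight $[-k,0]$.
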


\begin{theorem}[{\cite[Thm.~ 1.6]{AbeTranTsujie21_ShiIsh}}]
\label{thm:ATT-main1}
If $1 \leq k \leq \ell$, then the cone $\textbf{c} \Ss_{\ell}^{k} $ is free with exponents $(0, 1, \ell^{\ell-1})$. 
Moreover, if $\ell \ge 3$ then only $\textbf{c} \Ss_{\ell}^{\ell} $ is supersolvable  among the arrangements $\textbf{c} \Ss_{\ell}^{k} $.
\end{theorem}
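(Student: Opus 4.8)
The plan is to argue in the $\psi$-digraphic picture and to propagate freeness along a Shi descendant sequence. By Proposition \ref{prop:Akl} the arrangement $\Ss_{\ell}^{k+1}$ agrees, up to the inert factor $\varnothing_1$, with the Shi descendant $\Aa_{\ell-1}^{k}[-1,0]$, and moreover $\Ss_\ell^1 = \Ss_\ell^2$; since neither freeness together with its exponents nor supersolvability is affected by a product with $\varnothing_1$, it suffices to treat the Shi descendants $\Aa_m^k$ with weights $c_i=-1,\,d_i=0$ for $1 \le k \le m$. I would fix $m$ and induct on $k$ along $\Aa_m^1 \to \cdots \to \Aa_m^m$. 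For the base case, the origin $\Aa_m^1$ carries the full transitive tournament $T_m^1$ and, after the identification of Proposition \ref{prop:Akl}, corresponds to the type $A$ Shi arrangement $\Ss_{m+1}^2 = \Ss_{m+1}^1$; its cone is free with exponents $(1,(m+1)^{m})$ by Yoshinaga (Theorem \ref{thm:Yoshinaga}, with Coxeter number $h=m+1$), which is exactly the claimed shape $(0,1,\ell^{\ell-1})$ for $\cc\Ss_\ell^k$ once the inherited $0$-exponent from the braid direction is recorded.

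The inductive step, which I expect to be the main obstacle, is to show that a single mutation $\tau_{m-k+1}\colon \Aa_m^k \to \Aa_m^{k+1}$ with respect to the sink $v = m-k+1$ preserves both freeness and the exponents of the cone. By Definition \ref{def:Shi-ally} and Proposition \ref{prop:Shi-ally-formula} this mutation deletes the $m-k$ hyperplanes $x_i - x_v = 1$ (for $1 \le i \le m-k$) and simultaneously inserts the $m-k$ new weight hyperplanes produced by lowering each affected interval endpoint by one (Definition \ref{deformation coking}); it is thus a genuine exchange of hyperplanes, and the two cones do \emph{not} share their Ziegler restriction onto $H_\infty$ (the deleted hyperplanes have direction $x_i = x_v$ while the inserted ones have direction $x_i = 0$), so no formal shortcut is available. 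I would prove the step by the Addition--Deletion Theorem \ref{thm:AD}, performed one hyperplane at a time: letting $\Bc$ denote the common subarrangement, I would add the deleted family back to reconstruct the inductively free $\cc\Aa_m^k$ and, in parallel, add the inserted family to build $\cc\Aa_m^{k+1}$, verifying at each single addition that the relevant restriction is free with the exponent prescribed by Proposition \ref{prop:Shi-ally-formula}. An alternative route is Yoshinaga's criterion (Theorem \ref{thm:Yoshinaga's criterion}): for each $k$ one checks that the Ziegler restriction of the essentialized cone onto $H_\infty$ is free with exponents $(\ell^{\ell-1})$ and that the cone is locally free in codimension three along $H_\infty$, the latter reducing to a finite list of rank-$3$ verifications. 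Either way the technical weight lies in controlling, uniformly in $k$ and $m$, how the codimension-two and -three intersection data change under the exchange of hyperplanes.

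For the supersolvability dichotomy I would examine the two ends separately. At $k=m$ the digraph $T_m^m$ is edgeless and Proposition \ref{prop:Shi-ally-formula} yields the strictly decreasing weights $\varphi_m^m(i) = [\,i-m-1,\,0\,]$; hence $\Aa_m^m$ is a strictly nested edgeless ($N$-Ish) arrangement and its cone is supersolvable by Theorem \ref{thm:nested}, recovering the supersolvability of $\cc\Ss_\ell^\ell$. Conversely, for $1 \le k \le \ell-1$ and $\ell \ge 3$ I would show that no $\cc\Ss_\ell^k$ is supersolvable: a supersolvable chain would produce a modular coatom at its top, but because $\ell-k+1 \ge 2$ the tournament $T_\ell^k$ still carries directed edges, and the Bj\"orner--Edelman--Ziegler criterion (Proposition \ref{prop:MC-criterion}) is violated, since one can exhibit two hyperplanes outside any candidate coatom whose codimension-two intersection fails to be covered inside it. This supersolvability part is comparatively routine once the freeness step is in hand, resting only on the nestedness computation at $k=\ell$ and the modular-coatom criterion for $k<\ell$.
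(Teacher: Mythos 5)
Your reduction to the Shi descendants via Proposition \ref{prop:Akl} and your identification of the two ends of the sequence (the Shi arrangement, free by Theorem \ref{thm:Yoshinaga}, and the strictly nested $N$-Ish arrangement at $k=\ell$, supersolvable by Theorem \ref{thm:nested}) are correct and consistent with the paper. The problem is that everything in between is exactly the part you leave open. Your inductive step asserts that a single mutation $\tau_{m-k+1}\colon \Aa_m^k \to \Aa_m^{k+1}$ preserves freeness and exponents, and you propose to verify this by applying Theorem \ref{thm:AD} ``one hyperplane at a time'' through the exchange. This does not work as stated: a mutation removes $m-k$ hyperplanes and inserts $m-k$ different ones, so the one-at-a-time route passes through intermediate arrangements that are neither $\Aa_m^k$ nor $\Aa_m^{k+1}$, and Theorem \ref{thm:AD} gives no conclusion unless you independently establish freeness of each intermediate deletion and of each restriction with the right exponents --- none of which you do, and none of which is automatic. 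This preservation statement is precisely the content of \cite[Thms.~3.1 and 4.1]{AbeTranTsujie21_ShiIsh} (cf.\ Remark \ref{rem:CKZ}), i.e.\ it is the theorem, not a routine step. Your fallback via Theorem \ref{thm:Yoshinaga's criterion} has the same character: since (as you correctly note) the two cones do not share a Ziegler restriction onto $H_\infty$, you would have to prove, for each $k$, freeness of a new multirestriction with balanced exponents $(\ell^{\ell-1})$ plus local freeness in codimension three --- again the substantive work, not carried out.

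Note that the route the paper actually takes for the generalization (Theorems \ref{thm:A-H} and \ref{thm:Shi-ally-FA}, which contain $\cc\Ss_{\ell}^{k}$ as the case $m=1$, $d=0$ of the $\ell$-th row, or $m=d=0$ of the $0$-th row) avoids the mutation-by-mutation induction altogether: for fixed $k$ the vertices $\ell-k+2,\dots,\ell$ are isolated and simplicial (Proposition \ref{prop:Shi-pth-row}), so the cone of the $k$-th descendant is obtained from the cone of a lower-rank \emph{origin} by $k-1$ modular coatom extensions, and Corollary \ref{cor:simplicial} delivers freeness and the exponents in one stroke, with the origin handled by a direct Athanasiadis-style addition--deletion induction. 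Finally, your non-supersolvability argument for $1\le k\le \ell-1$, $\ell\ge 3$ is also only an assertion: to refute supersolvability you must rule out \emph{every} modular coatom admitting a supersolvable localization, not merely exhibit a bad pair of hyperplanes for ``any candidate coatom'' without specifying how all candidates are enumerated and excluded.
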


 Now we define the protagonist of this subsection, a special subclass of Shi descendants at the same time extending ``vertically" the arrangements between Shi and Ish above.
 
 \begin{definition}
\label{def:Shi-ally-ext}
Let  $d, m\ge 0$, $1 \le k \le \ell$, $0 \le p \le \ell$ be integers. 
Let  $\Aa^{p,k}_\ell(m,d)  $ be the $ \psi $-digraphic arrangement defined by 
$$ 
\Aa^{p,k}_\ell(m,d) := \Ac\left(T_{\ell}^{k}, \psi_{\ell}^{p,k}\right),
$$
where $ T_{\ell}^{k}$ is the digraph from Definition \ref{def:digraphs} and the map $ \psi_{\ell}^{p,k} \colon [\ell] \to 2^{\mathbb{Z}} $ is given by 
$$
 \psi_{\ell}^{p,k}(i)  :=\begin{cases}
[1-\min\{\ell-i+1, k\} - m , d ] & (1 \leq   i \leq p), \\
[-\min\{\ell-i+1, k\} - m , d] & (p<  i \leq \ell). 
\end{cases}
$$
In other words, $ \Aa^{p,k}_\ell(m,d) $ consists of the following hyperplanes:
\begin{align*}
x_{i} - x_{j} &= 0\quad  (1 \leq i < j \leq \ell), \\
x_{i} - x_{j} &= 1 \quad (1 \leq i < j \leq \ell+1-k), \\
x_i&= [1-\min\{\ell-i+1, k\} - m , d ]  \quad (1 \leq   i \leq p), \\
x_i&= [-\min\{\ell-i+1, k\} - m , d]  \quad (p<  i \leq \ell).
\end{align*}
\end{definition}

 A convenient way to view the arrangements above is to place them in an $(\ell+1)\times \ell$ matrix $(a_{p,k})_{0 \le p \le \ell, 1 \le k \le \ell}$ with entry $a_{p,k}$ for the arrangement $\Aa^{p,k}_\ell (m,d)$. 
We call this matrix the  \emph{Shi descendant matrix}: 
$$
\begin{pmatrix}
    \Aa^{0,1}_\ell (m,d)& \overset{\small{k\, \text{varies}}}{\longrightarrow} & \Aa^{0,\ell}_\ell (m,d)\\
\downarrow    \scriptsize{\text{$p$ varies}}  & \vdots  & \vdots  \\
\Aa^{\ell,1}_\ell (m,d) & \cdots & \Aa^{\ell,\ell}_\ell (m,d) 
          \end{pmatrix}.
       $$
      
      The proposition below asserts that each arrangement $\Aa^{p,k}_\ell(m,d)  $ is indeed a Shi descendant.
\begin{proposition}
 \label{prop:Shi-pth-row}
Fix $0 \le p \le \ell$.
The  arrangements $\Aa^{p,k}_\ell(m,d)$ in the $p$-th row of the Shi descendant matrix constitute the Shi descendant  sequence

$$ \Aa^{p,1}_\ell (m,d) \ \stackrel{ \tau_{\ell} }{\longrightarrow}  \Aa^{p,2}_\ell (m,d)  \stackrel{ \tau_{\ell-1} }{\longrightarrow}  \cdots  \stackrel{ \tau_{2} }{\longrightarrow}  \Aa^{p,\ell}_\ell (m,d).
$$

Moreover, for fixed $1 \le k \le \ell$ each vertex $n \in [\ell-k+2, \ell] $ is isolated and simplicial in  $ \left(T_{\ell}^{k}[n], \psi_{\ell}^{p,k}|_{[n]}\right)$, the induced subgraph of $\left(T_{\ell}^{k}, \psi_{\ell}^{p,k}\right)$ by $[n]=\{1,2,\ldots,n\}$.
 \end{proposition}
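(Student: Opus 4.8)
The plan is to obtain part 1 directly from the explicit solution of the Shi descendant recurrence recorded in Proposition \ref{prop:Shi-ally-formula}, and to derive part 2 from the isolated-vertex criterion for simpliciality in Proposition \ref{prop:isolated}.

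For part 1, I would first pin down the origin data realizing the $p$-th row. Setting $d_i := d$ for all $i$ and
\[
c_i := \begin{cases} -m & (1 \le i \le p),\\ -1-m & (p < i \le \ell), \end{cases}
\]
(so that $c_i \le d_i$, since $m, d \ge 0$), the substitution $k=1$ into the formula of Definition \ref{def:Shi-ally-ext}, using $\min\{\ell-i+1,1\} = 1$, shows that the origin $\Aa_\ell^{1} = \Ac(T_\ell^1,\psi_\ell^1)$ built from these $c_i,d_i$ coincides with $\Aa_\ell^{p,1}(m,d)$. Now Proposition \ref{prop:Shi-ally-formula} computes the $k$-th term of the resulting Shi descendant sequence as $\Ac(T_\ell^k,\varphi_\ell^k)$ with $\varphi_\ell^k(i) = [1-\min\{\ell-i+1,k\}+c_i,\, d_i]$. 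Substituting the chosen $c_i,d_i$ yields exactly the weight $\psi_\ell^{p,k}$ of Definition \ref{def:Shi-ally-ext}, so the $k$-th term equals $\Aa_\ell^{p,k}(m,d)$. This identifies the $p$-th row with the asserted Shi descendant sequence, the bijections being the $\tau_{\ell-k+1}$ from Definition \ref{def:Shi-ally}.

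For part 2, isolatedness is immediate: the edges of $T_\ell^k$ all lie among $\{1,\dots,\ell-k+1\}$, so any vertex $n \ge \ell-k+2$ is isolated in $T_\ell^k$, hence also in the induced subgraph $T_\ell^k[n]$. To invoke Proposition \ref{prop:isolated} (with $[n]$ in the role of the full vertex set) I then need $\psi_\ell^{p,k}(n) \subseteq \psi_\ell^{p,k}(i)$ for all $i \in [n]$. Since every weight interval shares the right endpoint $d$, writing $\psi_\ell^{p,k}(i) = [\ell_i, d]$ reduces this to showing that the left endpoint is maximised at $i = n$, i.e.\ $\ell_n \ge \ell_i$ for all $i \le n$. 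I would verify this by splitting on the value of $\min\{\ell-i+1,k\}$: for $i \le \ell-k+1$ it equals $k$, giving $\ell_i = \epsilon_i - k - m$, while for $i \ge \ell-k+1$ it equals $\ell-i+1$, giving $\ell_i = \epsilon_i - (\ell-i+1) - m$, where $\epsilon_i \in \{0,1\}$ records whether $i \le p$. Using $n \ge \ell-k+2$ one computes $\ell_n - \ell_i = (\epsilon_n-\epsilon_i) + (n-i) \ge 0$ in the second range and $\ell_n - \ell_i \ge (\epsilon_n-\epsilon_i) + 1 \ge 0$ in the first; Proposition \ref{prop:isolated} then yields that $n$ is simplicial.

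The only slightly delicate step is this endpoint comparison: because $\ell_i$ drops by $1$ across the boundary $i = p \to p+1$, the sequence $(\ell_i)$ is not monotone, so I cannot argue by monotonicity. Instead the hypothesis $n \ge \ell-k+2$ must be used to supply the slack of $+1$ that absorbs the possible $\epsilon_i$-discrepancy. All remaining steps are direct substitutions into the formulas already established above.
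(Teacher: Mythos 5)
Your proposal is correct and follows essentially the same route as the paper: part 1 by matching the origin data $(c_i,d_i)$ against the closed-form solution in Proposition \ref{prop:Shi-ally-formula}, and part 2 by verifying the weight containments $\psi_{\ell}^{p,k}(n)\subseteq\psi_{\ell}^{p,k}(i)$ for $i\in[n]$ and invoking Proposition \ref{prop:isolated}. The only cosmetic difference is that the paper splits into the cases $p\le \ell+1-k$ and $p>\ell+1-k$ and reads off a nested chain of intervals, whereas you absorb both cases into a single left-endpoint comparison via the indicator $\epsilon_i$.
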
 

 \begin{proof}
The first statement is clear from Proposition \ref{prop:Shi-ally-formula}. 
To show the second statement we consider two cases: $p \le \ell + 1 -k$ and  $p > \ell + 1 -k$. 
If the former occurs, then by definition 
$$
 \psi_{\ell}^{p,k}(i)  =\begin{cases}
[1 - m -k, d ] & (1 \leq   i \leq p), \\
[ - m - k , d] & (p<  i \leq  \ell + 1 -k), \\
[ - m - \ell+i-1 , d] & (\ell + 1 - k <  i \leq  \ell ).
\end{cases}
$$
Therefore for each $i \in [\ell + 1 -k]$ we have
$$ \psi_{\ell}^{p,k}(i) \supseteq  \psi_{\ell}^{p,k}(\ell + 2 -k) \supseteq \cdots \supseteq  \psi_{\ell}^{p,k}(\ell).$$
Owing to Proposition \ref{prop:isolated}, we know that each  $n \in [\ell-k+2, \ell] $ is isolated and simplicial in  $ \left(T_{\ell}^{k}[n], \psi_{\ell}^{p,k}|_{[n]}\right)$. 
If the latter occurs, then
$$
 \psi_{\ell}^{p,k}(i) =\begin{cases}
[1 - m -k, d ] & (1 \leq   i \leq  \ell + 1 -k), \\
[ - m - \ell+i , d]  & ( \ell + 1 -k<  i \leq p), \\
[ - m - \ell+i-1 , d] & (p<  i \leq  \ell ).
\end{cases}
$$
Therefore, for each $i \in [\ell + 1 -k]$ we have
$$ \psi_{\ell}^{p,k}(i) \supseteq  \psi_{\ell}^{p,k}(\ell + 2 -k) \supseteq \cdots  \supseteq \psi_{\ell}^{p,k}(p)  = \psi_{\ell}^{p,k}(p+1)  \supseteq \cdots \supseteq  \psi_{\ell}^{p,k}(\ell).$$
The result follows from another application of Proposition \ref{prop:isolated}.
\end{proof}

Thus the arrangement $\Aa^{p,1}_\ell (m,d)$ in the first column of the Shi descendant  matrix given by
\begin{equation}
\label{eq:1stcol}
  \Aa^{p,1}_\ell (m,d): 
\begin{cases}
x_{i}- x_{j} &= [0 , 1]\quad  (1 \leq i < j \leq \ell), \\
x_i&=  [-m , d] \quad (1 \leq   i \leq p), \\
x_i&=[-m-1 , d]  \quad (p<  i \leq \ell),
\end{cases}
\end{equation}
 is the origin of the  Shi descendant  sequence in the $p$-th row and plays the role of a Shi-like arrangement. 
The end $\Aa^{p,\ell}_\ell (m,d)$  in the last column given by
\begin{equation}
\label{eq:lastcol}
  \Aa^{p,\ell}_\ell (m,d):
  \begin{cases}
x_{i}-x_{j} &= 0\quad  (1 \leq i < j \leq \ell), \\
x_i&=  [-m+i-\ell ,d] \quad (1 \leq   i \leq p), \\
x_i&=  [-m+i-\ell-1 ,d]   \quad (p<  i \leq \ell),
\end{cases}
\end{equation}
 is precisely a nested $N$-Ish arrangement hence it is  supersolvable  (Theorem \ref{thm:nested}). 
In particular, the   arrangements $\Aa_{\ell}^{k}[-1,0]$ in Proposition \ref{prop:Akl} can be found in the $\ell$-th row of the Shi descendant  matrix when $m=1, d=0$, or the $0$-th row when $m=0, d=0$. 
See Figure \ref{fig:ASI-arr} for the Shi descendant  matrix for $\ell=3$.

 \begin{figure}[htbp!]
\centering
\begin{subfigure}{.3\textwidth}
  \centering
\begin{tikzpicture}[scale=.5]
\draw (0,1.3) node[v](1){} node[above]{\tiny $\substack{ \textbf{1} \\ [-m-1,d]} $};
\draw (-0.75,0) node[v](2){} node[left]{\tiny$ \substack{ \textbf{2} \\ [-m-1,d]} $};
\draw (0.75,0) node[v](3){} node[right]{\tiny$\substack{ \textbf{3} \\ [-m-1,d] }$};
\draw[>=Stealth,->] (1) to (2);
\draw[>=Stealth,->] (1) to (3);
\draw[>=Stealth,->] (2) to (3);
\end{tikzpicture}
  \caption*{$ \Aa^{0,1}_3 (m,d) $}
\end{subfigure}%
\begin{subfigure}{.3\textwidth}
  \centering
\begin{tikzpicture}[scale=.5]
\draw (0,1.3) node[v](1){} node[above]{\tiny $ \substack{ \textbf{1} \\ [-m-2,d]} $};
\draw (-0.75,0) node[v](2){} node[left]{\tiny $\substack{ \textbf{2} \\ [-m-2,d]} $};
\draw (0.75,0) node[v](3){} node[right]{\tiny $\substack{ \textbf{3} \\ [-m-1,d]} $};
\draw[>=Stealth,->] (1) to (2);
\end{tikzpicture}
  \caption*{$ \Aa^{0,2}_3 (m,d) $}
\end{subfigure}%
\begin{subfigure}{.35\textwidth}
  \centering
\begin{tikzpicture}[scale=.5]
\draw (0,1.3) node[v](1){} node[above]{\tiny $ \substack{ \textbf{1} \\ [-m-3,d]} $};
\draw (-0.75,0) node[v](2){} node[left]{\tiny $\substack{ \textbf{2} \\ [-m-2,d]} $};
\draw (0.75,0) node[v](3){} node[right]{\tiny $\substack{ \textbf{3} \\ [-m-1,d]} $};
\end{tikzpicture}
  \caption*{$ \Aa^{0,3}_3 (m,d) $}
\end{subfigure}%

\bigskip
\begin{subfigure}{.3\textwidth}
  \centering
\begin{tikzpicture}[scale=.5]
\draw (0,1.3) node[v](1){} node[above]{\tiny $\substack{ \textbf{1} \\ [-m,d]} $};
\draw (-0.75,0) node[v](2){} node[left]{\tiny$ \substack{ \textbf{2} \\ [-m-1,d]} $};
\draw (0.75,0) node[v](3){} node[right]{\tiny$\substack{ \textbf{3} \\ [-m-1,d] }$};
\draw[>=Stealth,->] (1) to (2);
\draw[>=Stealth,->] (1) to (3);
\draw[>=Stealth,->] (2) to (3);
\end{tikzpicture}
  \caption*{$ \Aa^{1,1}_3 (m,d) $}
\end{subfigure}%
\begin{subfigure}{.3\textwidth}
  \centering
\begin{tikzpicture}[scale=.5]
\draw (0,1.3) node[v](1){} node[above]{\tiny $ \substack{ \textbf{1} \\ [-m-1,d]} $};
\draw (-0.75,0) node[v](2){} node[left]{\tiny $\substack{ \textbf{2} \\ [-m-2,d]} $};
\draw (0.75,0) node[v](3){} node[right]{\tiny $\substack{ \textbf{3} \\ [-m-1,d]} $};
\draw[>=Stealth,->] (1) to (2);
\end{tikzpicture}
  \caption*{$ \Aa^{1,2}_3 (m,d) $}
\end{subfigure}%
\begin{subfigure}{.35\textwidth}
  \centering
\begin{tikzpicture}[scale=.5]
\draw (0,1.3) node[v](1){} node[above]{\tiny $ \substack{ \textbf{1} \\ [-m-2,d]} $};
\draw (-0.75,0) node[v](2){} node[left]{\tiny $\substack{ \textbf{2} \\ [-m-2,d]} $};
\draw (0.75,0) node[v](3){} node[right]{\tiny $\substack{ \textbf{3} \\ [-m-1,d]} $};
\end{tikzpicture}
  \caption*{$ \Aa^{1,3}_3 (m,d) $}
\end{subfigure}%

\bigskip
\begin{subfigure}{.3\textwidth}
  \centering
\begin{tikzpicture}[scale=.5]
\draw (0,1.3) node[v](1){} node[above]{\tiny $\substack{ \textbf{1} \\ [-m,d]} $};
\draw (-0.75,0) node[v](2){} node[left]{\tiny$ \substack{ \textbf{2} \\ [-m,d]} $};
\draw (0.75,0) node[v](3){} node[right]{\tiny$\substack{ \textbf{3} \\ [-m-1,d] }$};
\draw[>=Stealth,->] (1) to (2);
\draw[>=Stealth,->] (1) to (3);
\draw[>=Stealth,->] (2) to (3);
\end{tikzpicture}
  \caption*{$ \Aa^{2,1}_3 (m,d) $}
\end{subfigure}%
\begin{subfigure}{.3\textwidth}
  \centering
\begin{tikzpicture}[scale=.5]
\draw (0,1.3) node[v](1){} node[above]{\tiny $ \substack{ \textbf{1} \\ [-m-1,d]} $};
\draw (-0.75,0) node[v](2){} node[left]{\tiny $\substack{ \textbf{2} \\ [-m-1,d]} $};
\draw (0.75,0) node[v](3){} node[right]{\tiny $\substack{ \textbf{3} \\ [-m-1,d]} $};
\draw[>=Stealth,->] (1) to (2);
\end{tikzpicture}
  \caption*{$ \Aa^{2,2}_3 (m,d) $}
\end{subfigure}%
\begin{subfigure}{.35\textwidth}
  \centering
\begin{tikzpicture}[scale=.5]
\draw (0,1.3) node[v](1){} node[above]{\tiny $ \substack{ \textbf{1} \\ [-m-2,d]} $};
\draw (-0.75,0) node[v](2){} node[left]{\tiny $\substack{ \textbf{2} \\ [-m-1,d]} $};
\draw (0.75,0) node[v](3){} node[right]{\tiny $\substack{ \textbf{3} \\ [-m-1,d]} $};
\end{tikzpicture}
  \caption*{$ \Aa^{2,3}_3 (m,d) $}
\end{subfigure}%

\bigskip
\begin{subfigure}{.3\textwidth}
  \centering
\begin{tikzpicture}[scale=.5]
\draw (0,1.3) node[v](1){} node[above]{\tiny $\substack{ \textbf{1} \\ [-m,d]} $};
\draw (-0.75,0) node[v](2){} node[left]{\tiny$ \substack{ \textbf{2} \\ [-m,d]} $};
\draw (0.75,0) node[v](3){} node[right]{\tiny$\substack{ \textbf{3} \\ [-m,d] }$};
\draw[>=Stealth,->] (1) to (2);
\draw[>=Stealth,->] (1) to (3);
\draw[>=Stealth,->] (2) to (3);
\end{tikzpicture}
  \caption*{$ \Aa^{3,1}_3 (m,d) $}
\end{subfigure}%
\begin{subfigure}{.3\textwidth}
  \centering
\begin{tikzpicture}[scale=.5]
\draw (0,1.3) node[v](1){} node[above]{\tiny $ \substack{ \textbf{1} \\ [-m-1,d]} $};
\draw (-0.75,0) node[v](2){} node[left]{\tiny $\substack{ \textbf{2} \\ [-m-1,d]} $};
\draw (0.75,0) node[v](3){} node[right]{\tiny $\substack{ \textbf{3} \\ [-m,d]} $};
\draw[>=Stealth,->] (1) to (2);
\end{tikzpicture}
  \caption*{$ \Aa^{3,2}_3 (m,d) $}
\end{subfigure}%
\begin{subfigure}{.35\textwidth}
  \centering
\begin{tikzpicture}[scale=.5]
\draw (0,1.3) node[v](1){} node[above]{\tiny $ \substack{ \textbf{1} \\ [-m-2,d]} $};
\draw (-0.75,0) node[v](2){} node[left]{\tiny $\substack{ \textbf{2} \\ [-m-1,d]} $};
\draw (0.75,0) node[v](3){} node[right]{\tiny $\substack{ \textbf{3} \\ [-m,d]} $};
\end{tikzpicture}
  \caption*{$ \Aa^{3,3}_3 (m,d) $}
\end{subfigure}%
\caption{The Shi descendant  matrix for $\ell=3$.}
\label{fig:ASI-arr}
\end{figure}
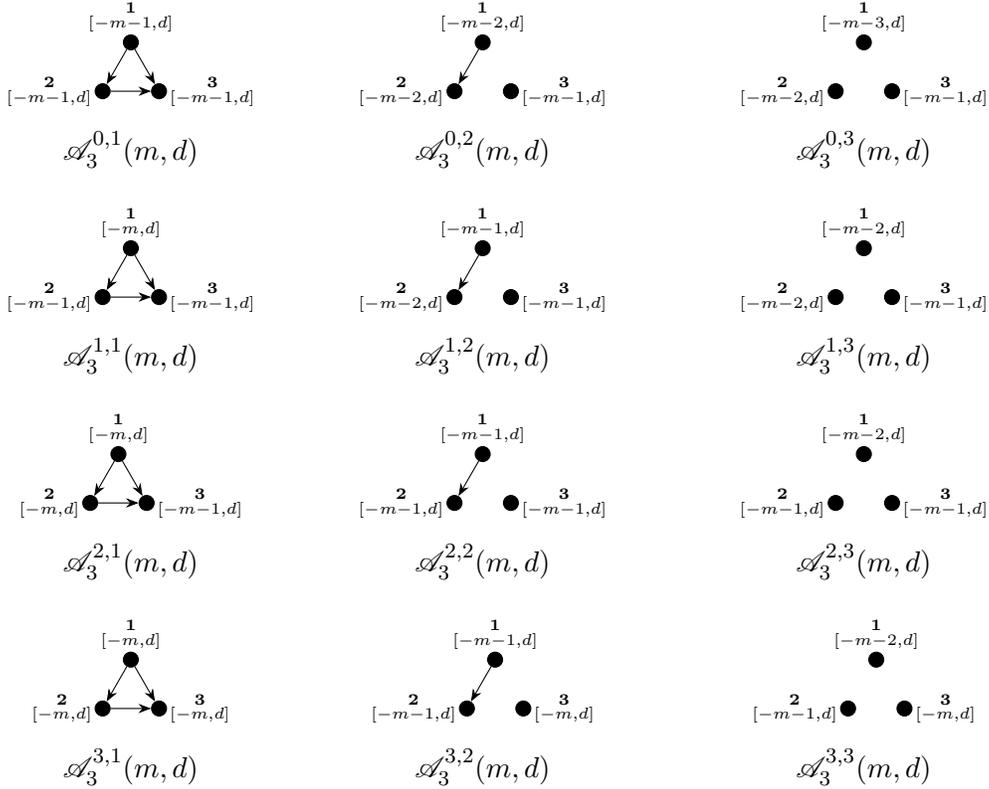

Our main result in this subsection is that all the arrangements in the Shi descendant  matrix are  ind-flag-accurate, which is given in Theorem \ref{thm:Shi-ally-FA}. 
Furthermore, all the arrangements in the same row have the same multiset of exponents, generalizing Theorem 
\ref{thm:ATT-main1}. 
Before addressing the proof of Theorem \ref{thm:Shi-ally-FA}, we comment on the freeness of the members in the Shi descendant  matrix.

	 \begin{remark}
	\label{rem:CKZ} 
	The freeness of the arrangements in the Shi descendant  matrix can be readily verified by using  \cite[Thms.~ 3.1 and 4.1]{AbeTranTsujie21_ShiIsh} (and the modular coatom technique in Corollary \ref{cor:simplicial}) which implies that the maps $\tau_{\ell}, \ldots, \tau_2$ in Proposition \ref{prop:Shi-pth-row} preserve freeness and characteristic polynomials of the arrangements involved. 
	This method was also used in the proof of  Theorem \ref{thm:ATT-main1}. 
	Thus the freeness and exponents of the Shi descendants follow from the supersolvability of the corresponding final terms, the ``supersolvable descendants'' in the last column. 
	However, to show the ind-flag-accuracy in the proof of Theorem \ref{thm:Shi-ally-FA}, we need  a different ``reverse'' approach: We first employ the ind-flag-accuracy of the Shi-like arrangements (the origins) in the first column, then observe that this property propagates to all the arrangements in the Shi descendant  matrix. 
\end{remark}

The Shi-like arrangements $\Aa^{p,1}_\ell (m,d)$ for $0 \le p \le \ell$ when $d=0$ are affinely equivalent to (the essentializations of) the inductively free arrangements in \cite[Thms.~3.1 and 3.3]{Atha98} due to Athanasiadis. 
The Shi arrangements therein were defined so that they contain sufficient deletions and restrictions in order to apply the addition-deletion theorem (Theorem \ref{thm:AD}) to guarantee the inductive freeness of all the arrangements in the family. 
This observation is also helpful for the proof of their ind-flag-accuracy which we show below as a generalization of \cite[Thm.~ 3.3]{Atha98}. 
Roughly speaking, in order to achieve the inductive freeness and ind-flag-accuracy, we consider the restriction to the ``$m$-hyperplane'' $x_i= -(m+1)z$ and the  ``$n$-hyperplane'' $x_i = (na-1)z$, respectively (see also Theorems \ref{thm:Cat-ext} and \ref{thm:nt-starter} for a similar observation for Catalan arrangements).

\begin{theorem}
 \label{thm:A-H}
Let $m\ge 0$, $a, n, \ell \ge 1$, and $0 \le p \le \ell$. 
Let $\Hc^{p}_\ell(m,a,n)$ be the  arrangement consisting of the following hyperplanes:
\begin{align*}
x_{i} - x_{j} &= [1-a, a] \quad  (1 \leq i < j \leq \ell), \\
x_i&=  [-m, na-1]  \quad (1 \leq   i \leq p), \\
x_i&=  [-m-1, na-1] \quad (p<  i \leq \ell).
\end{align*}
The cone over $\Hc^{p}_\ell(m,a,n)$
is  ind-flag-accurate with exponents 
$$ \exp\left(\cc\Hc^{p}_\ell(m,a,n)\right) = (1, 0^{p}, 1^{\ell-p}) +  (0,  (m + a (\ell+n-1))^{\ell} ).$$ 

As a consequence,  the origin $\Aa^{p,1}_\ell(m,d) = \Hc^{p}_\ell(m,1,d+1)$ in the first column of the Shi descendant matrix given in \eqref{eq:1stcol} has  ind-flag-accurate cone with exponents 
$$\exp\left(\cc\Aa^{p,1}_\ell(m,d)\right) = (1, (\ell + m+d)^{p}, (\ell + m+d+1)^{\ell-p}).$$ 
 \end{theorem}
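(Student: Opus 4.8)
I would prove the freeness, exponents, and ind-flag-accuracy of $\cc\Hc^{p}_\ell(m,a,n)$ simultaneously by a double induction, following the ``$\psi$-digraphic'' machinery already in place. The natural induction is on $\ell$ (the rank) with an inner induction on the pair $(p, n)$, or on a suitable lexicographic combination. The key structural observation is that $\Hc^{p}_\ell(m,a,n)$ is built from coordinate hyperplanes $x_i \in [\cdot, na-1]$ together with the braid-type hyperplanes $x_i - x_j \in [1-a,a]$, and that two operations reduce the complexity: deleting the ``lowest'' floor hyperplane $x_{p+1} = -(m+1)z$ to pass from the $(p)$-case to the $(p+1)$-case (this is a source/sink-type mutation reducing $\ell-p$), and restricting to the ``top'' hyperplane $x_i = (na-1)z$ to drop $\ell$ by one while bumping the parameters. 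I would verify, via the defining equations in Definition~\ref{def:Shi-ally-ext} and the mutation rules in Definition~\ref{deformation coking}, that these restrictions land inside the same family: restricting to $x_{p+1} = -(m+1)z$ should yield something affinely equivalent to $\Hc^{0}_{\ell-1}(m+a, a, n)$-type data (absorbing the removed coordinate into the remaining floors), exactly as in the type $B$/$C$ computations of Theorems~\ref{thm:CatB-RF} and \ref{thm:CatC-F}.

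\emph{Freeness and exponents.} For the freeness and the exponent formula $(1, 0^{p}, 1^{\ell-p}) + (0, (m+a(\ell+n-1))^{\ell})$, the cleanest route is the Addition–Deletion Theorem~\ref{thm:AD} applied to the floor hyperplane $H: x_{p+1} = -(m+1)z$: I would show $\cc\Hc^{p}_\ell$ is free iff both $\cc\Hc^{p+1}_\ell$ (the deletion) and the restriction $(\cc\Hc^{p}_\ell)^{H}$ (which I identify with a lower-rank member of the family) are free, with exponents matching up so that the third condition of Theorem~\ref{thm:AD} closes the induction. The base cases $\ell = 1$ and $p = \ell$ are immediate: when $p = \ell$ there are no edges of the ``$-m-1$'' type and the arrangement is essentially a nested $N$-Ish arrangement, hence supersolvable and free by Theorem~\ref{thm:nested}. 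The increment of exponent-$1$ multiplicity as $p$ decreases tracks precisely the addition of a floor hyperplane, which is the content of the $(0^p, 1^{\ell-p})$ term.

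\emph{Ind-flag-accuracy.} This is the heart of the matter and I would handle it by the ``reverse'' strategy flagged in Remark~\ref{rem:CKZ}: rather than propagating from the supersolvable end, I establish ind-flag-accuracy at the origin directly and use Lemma~\ref{lem:flag-accuracy}. Concretely, I would exhibit a single hyperplane $H \in \cc\Hc^{p}_\ell(m,a,n)$ — the top coordinate hyperplane $x_i = (na-1)z$ — whose restriction $(\cc\Hc^{p}_\ell)^{H}$ is again a member of the family of strictly smaller rank, say $\cc\Hc^{p'}_{\ell-1}(m,a,n+1)$ or similar after absorbing the value $na-1$ as in the type $C$ computation. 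By the inductive hypothesis this restriction is ind-flag-accurate, and by the already-established exponent formula $\exp((\cc\Hc^{p}_\ell)^{H}) = (e_1, \ldots, e_{\ell})_{\le}$ equals the truncation of $\exp(\cc\Hc^{p}_\ell)$, so Lemma~\ref{lem:flag-accuracy} (the ``in particular'' clause) upgrades inductive freeness plus this one good restriction to ind-flag-accuracy. The consequence for the Shi-descendant origin $\Aa^{p,1}_\ell(m,d)$ is then just the specialization $a=1$, $n=d+1$, whose exponents $(\ell+m+d)^p, (\ell+m+d+1)^{\ell-p}$ are obtained by substituting into $m + a(\ell+n-1) = m + \ell + d$ and adding the leading $1$ from the cone.

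\emph{The main obstacle.} The delicate step is verifying that the restriction to the top hyperplane $x_i = (na-1)z$ genuinely lands back in the family $\Hc^{\bullet}_{\ell-1}(\cdot,\cdot,\cdot)$ with the claimed parameters, with the correct truncated exponents. As in the type $C$ proof of Theorem~\ref{thm:CatC-F}, the restriction can merge several previously distinct affine levels, and one must check — possibly splitting into cases according to the relative sizes of $m$, $a$, and $n$ — that no spurious coincidences or omissions of hyperplanes occur and that the Ziegler restriction/local-freeness bookkeeping (Theorem~\ref{thm:Yoshinaga's criterion}) is consistent. I expect this combinatorial identification of the restricted arrangement, together with matching its exponents to the truncation, to be the only genuinely technical part; once it is in hand, Lemma~\ref{lem:flag-accuracy} and Theorem~\ref{thm:AD} do the rest mechanically.
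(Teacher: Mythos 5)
Your outer structure --- addition--deletion along the floor hyperplane $x_{p+1}=-(m+1)z$, with deletion $\cc\Hc^{p+1}_\ell(m,a,n)$ and restriction landing back in the family one rank lower, followed by Lemma~\ref{lem:flag-accuracy} applied to a single well-chosen restriction --- is exactly the paper's Case~1 and its flag-accuracy argument, and your identification of the restriction of the floor hyperplane as $\cc\Hc^{p}_{\ell-1}(m+a,a,n)$ and the exponent bookkeeping for the specialization $a=1$, $n=d+1$ are correct.

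The genuine gap is your base case $p=\ell$. You claim that $\Hc^{\ell}_\ell(m,a,n)$ is ``essentially a nested $N$-Ish arrangement, hence supersolvable and free by Theorem~\ref{thm:nested}.'' This is false: setting $p=\ell$ removes nothing from the deformed braid part, so the arrangement still contains all the hyperplanes $x_i-x_j=[1-a,a]$ for $i<j$, whereas an $N$-Ish arrangement contains only $\Cox(A_\ell)$ together with coordinate hyperplanes. For $a=1$ this is a Shi-type deformation, and such cones are free but \emph{not} supersolvable for $\ell\ge 3$ (cf.\ Theorem~\ref{thm:ATT-main1}, where only the Ish end of the descendant sequence is supersolvable). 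In the paper this terminal case is the hardest part of the proof: one first reduces via $\Hc^{\ell}_\ell(m,a,n)=\Hc^{0}_\ell(m-1,a,n)$ to $m=0$, and then proves inductive freeness of $\cc\Hc^{\ell}_\ell(0,a,n)$ by a separate induction on $n$ whose base case $n=1$ is Athanasiadis's nontrivial inductive-freeness theorem \cite[Thm.~3.3]{Atha98}, and whose inductive step adds the $a\ell$ hyperplanes $x_i=sz$, $na\le s\le (n+1)a-1$, in a carefully chosen order, verifying at each step that the restriction is linearly equivalent to a lower-rank member $\cc\Hc^{\ell-d}_{\ell-1}(s,a,1)$ of the family. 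Without this (or an equivalent substitute) your induction has no valid anchor, so the freeness claim --- and hence everything built on it --- does not close. A secondary, smaller issue: for the ind-flag-accuracy step you restrict to a ``top'' hyperplane $x_i=(na-1)z$ and leave the identification of the restriction as ``$\cc\Hc^{p'}_{\ell-1}(m,a,n+1)$ or similar''; this route can be made to work (choosing an index $i>p$ does give $\cc\Hc^{p}_{\ell-1}(m,a,n+1)$ with the correct truncated exponents), but the paper only needs it in the $p=\ell$, $m=0$, $n>1$ subcase and otherwise uses the floor hyperplane, whose restriction you have already computed; you should commit to one of these and verify the exponent match rather than leaving it open.
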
 

 \begin{proof}
 The  inductive  freeness of $\cc\Hc^{p}_\ell(m,a,n)$  when $n=1$ was shown in  \cite[Thm.~ 3.3]{Atha98} (after the transformation $x_{i}\mapsto x_{i}-x_0$ for all $i$) by an inductive method (see also \cite[Ex.~(2.6)]{JT84}). 
Our proof uses a similar argument but requires an additional treatment when $n>1$ (see Case $2$ below).
 
 Define the lexicographic order on 
$$\{ (\ell, \ell-p) \mid 0\le \ell- p \le \ell, \, \ell \ge 1\}.$$
First we prove that $\Ac :=\cc\Hc^p_\ell(m, a, n)$ belongs to  $\mathcal{IF}$ with the desired exponents, by induction on $ (\ell, \ell-p) $. 
When $\ell=1$, it is obvious. Suppose $\ell\ge2$.

Case $1$. First consider $0 \le p \le \ell-1$. 
Let  $H \in \Ac$ denote the hyperplane $x_{p+1} = -(m+1)z$. 
Then $\Ac'=\Ac\setminus \{H\} = \cc\Hc^{p+1}_\ell(m,a,n)\in \mathcal{IF}$ with exponents  $$\exp(\cc\Hc^{p+1}_\ell(m,a,n)) = (1, 0^{p+1}, 1^{\ell-p-1}) +  (0,  (m + a (\ell+n-1))^{\ell} )$$ by the induction hypothesis since $(\ell, \ell-p) > (\ell, \ell-p-1)$. 
Moreover, $\Ac''=\Ac^{H}$ consists of the following hyperplanes 
\begin{align*}
z &= 0 , \\
x_{i} - x_{j} &=  [1-a , a]z\quad  (1 \leq i < j \leq \ell, i\ne p+1,j\ne p+1 ), \\
x_i&=  [-(m+a) , na -1]z  \quad (1 \leq   i \leq p), \\
x_i&=  [-(m+a+1) , na -1]z \quad (p+1<  i \leq \ell).
\end{align*}
Thus $\Ac'' =    \cc\Hc^{p}_{\ell-1}(m+a, a,n)$. 
Hence   $\Ac'' \in \mathcal{IF}$ with exponents $$\exp(\cc\Hc^{p}_{\ell-1}(m+a, a,n)) =(1, 0^{p}, 1^{\ell-p-1}) +   (0,  (m + a (\ell+n-1))^{\ell-1} )$$  by the induction hypothesis since $(\ell, \ell-p) > (\ell-1,\ell-1)$. 
Therefore, by the addition part of Theorem \ref{thm:AD}, $\Ac =\cc\Hc^p_\ell(m, a, n)\in \mathcal{IF}$ with the desired exponents. 

Case $2$. Now consider  $\ell=p$. 
We may assume further that $m=0$  since $\Hc^{\ell}_\ell(m,a,n) = \Hc^{0}_\ell(m-1,a,n)$.
The arrangement  in question is $\Bc(n) := \cc\Hc^\ell_\ell(0,a,n)$ consisting of 
\begin{align*}
z &= 0 , \\
x_{i} - x_{j} &=  [1-a , a]z\quad  (1 \leq i < j \leq \ell), \\
x_i&=  [0,  na -1]z \quad (1 \leq   i \leq \ell).
\end{align*}

We are going to show that $\Bc(n)$ belongs to $\mathcal{IF}$  with exponents $ (1,   ( a (\ell+n-1))^{\ell} )$ by induction on $n$. 
The base case $n=1$ was already shown in  \cite[Thm.~3.3]{Atha98}.

Now we show that for $n\ge 1$ if $\Bc(n) \in \mathcal{IF}$ then $\Bc(n+1) \in \mathcal{IF}$ with the desired exponents. 
Let $0 \le k \le a$ and denote by $\Bc_k(n) $ the arrangement having the following hyperplanes 
\begin{align*}
z &= 0 , \\
x_{i} - x_{j} &=  [1-a , a]z\quad  (1 \leq i < j \leq \ell), \\
x_i&=  [0, na -1 +k]z \quad (1 \leq   i \leq \ell).
\end{align*}
In particular, we have  $\Bc_0(n) =  \Bc(n)$, and $\Bc_a(n) =  \Bc(n+1)$. 
Notice that for any $ \Bc_0(n) \subsetneq  \Dc \subseteq  \Bc_a(n)  $ and for any hyperplane $H \in \Dc$ of the form $x_d = sz$ for $1 \leq d \leq \ell$, $na \le s \le (n+1)a-1$, the restriction $\Dc^H$ is linearly equivalent to $\cc\Hc^{\ell-d}_{\ell-1}(s,a,1)$. 
Hence $\Dc^H \in \mathcal{IF}$  with exponents $ (1, 0^{\ell-d}, 1^{d-1}) +  (0,  (s + a (\ell-1))^{\ell-1} )$ by the induction hypothesis on $ (\ell, \ell-p) $. 
Now, thanks to Theorem \ref{thm:AD}, adding $\ell$ of the hyperplanes in $\Bc_1(n) \setminus \Bc_0(n) $ to $ \Bc_0(n) $ in the order $H_1, H_2,\ldots, H_\ell$, where $H_i$ denotes the hyperplane $x_i = naz$,  implies that $\Bc_1(n) \in \mathcal{IF}$  with exponents $ (1,   ( a (\ell+n-1)+1)^{\ell} )$.  
For fixed $0 \le k \le a-1$, adding the hyperplanes in $\Bc_{k+1}(n) \setminus \Bc_k(n) $ to $ \Bc_k(n) $, 
repeatedly,
in the order $x_1 = (na -1 +k)z$, $x_2= (na -1 +k)z, \ldots$, $x_\ell = (na -1 +k)z$, implies that $\Bc_{k+1}(n)  \in \mathcal{IF}$    with exponents $ (1,   ( a (\ell+n-1)+k)^{\ell} )$.
The case $k =a$ gives the desired result for $\Bc(n+1)$. 

This completes the proof of the inductive freeness of $\Ac =\cc\Hc^p_\ell(m, a, n)$. 

Finally, we show that $\Ac$ is ind-flag-accurate. 
We argue by induction on $\ell$. 
When $\ell=1$, it is obvious. Suppose $\ell\ge2$.
First consider the case $p =\ell$ and $m=0$. 
By the discussion above, $\Ac = \Bc(n) = \cc\Hc^\ell_\ell(0,a,n) \in \mathcal{IF}$ with $\exp(\Ac) =  (1,   ( a (\ell+n-1))^{\ell} )$. 
Let  $K_1, K_2 \in \Ac$ denote the hyperplanes $x_{\ell} = 0$, $x_{\ell} = (na-1)z$, respectively. 
If $n=1$ then  $\Ac^{K_1}= \textbf{c}\Hc^{\ell-1}_{\ell-1}(a, a, 1)$. 
If $n>1$ then  $\Ac^{K_2}= \textbf{c}\Hc^{\ell-1}_{\ell-1}(0, a, n+1)$. 
Hence  both  $\Ac^{K_1}, \Ac^{K_2}$ are  ind-flag-accurate   with  $\exp(\Ac^{K_1}) =(1, (a\ell )^{\ell-1})$, and $\exp(\Ac^{K_2}) =  (1,   ( a (\ell+n-1))^{\ell-1} )$, by the induction hypothesis on $\ell$.  
By Lemma \ref{lem:flag-accuracy}, $\Ac$  is ind-flag-accurate.

We can now assume $0 \le p \le \ell-1$ since $\Hc^{\ell}_\ell(m,a,n) = \Hc^{0}_\ell(m-1,a,n)$.
Then by Case $1$ in the proof of the  inductive freeness of $\Ac$, we have $\Ac^H =   \textbf{c}\Hc^{p}_{\ell-1}(m+a, a, n)$, where $H \in \Ac$ denotes the hyperplane $x_{p+1} = -(m+1)z$. 
Hence  $\Ac^H $ is ind-flag-accurate with exponents $ (1, 0^{p}, 1^{\ell-p-1}) +   (0,  (m + a (\ell+n-1))^{\ell-1} )$ by the induction hypothesis  on $\ell$. 
By Lemma \ref{lem:flag-accuracy}, $\Ac$  is ind-flag-accurate. 

This completes the proof of the theorem.
\end{proof}

	 \begin{remark}
	\label{rem:Shi-A-alter} 
 The cone over the extended Shi arrangement $ \Shi^m (A_\ell)$ of type $A_\ell$ (Definition \ref{Def_IdealShiCatalan}) is linearly equivalent to $\cc\Hc^{\ell}_\ell(m,m,1)$ hence it is  ind-flag-accurate. 
 Moreover, the inductive argument in the proof of Theorem \ref{thm:A-H} rederives the presence of a simple root witness for the ind-flag-accuracy of $\cc \Shi^m (A_\ell)$, as in  Theorem \ref{thm:ExtShi-FA}.
\end{remark}

We are finally in a position to present the main result of this subsection.

\begin{theorem}
 \label{thm:Shi-ally-FA}
Let $d, m\ge 0$, $1 \le k \le \ell$ and $0 \le p \le \ell$. 
The cone over the  arrangement $\Aa^{p,k}_\ell(m,d)$ in the Shi descendant matrix is ind-flag-accurate with exponents $$\exp\left(\Aa^{p,k}_\ell(m,d)\right) = (1, (\ell + m+d)^{p}, (\ell + m+d+1)^{\ell-p}).$$ 
 \end{theorem}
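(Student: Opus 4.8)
The plan is to argue by induction on $\ell$, transferring ind-flag-accuracy from the first column of the Shi descendant matrix to every other column by repeatedly deleting the isolated vertex $\ell$ through the modular coatom technique of Corollary~\ref{cor:simplicial}. The entire first column $k=1$ --- and hence the base case $\ell=1$ --- is already furnished by Theorem~\ref{thm:A-H}, which asserts that each origin $\Aa^{p,1}_\ell(m,d)$ has ind-flag-accurate cone with exactly the claimed exponents $(1,(\ell+m+d)^p,(\ell+m+d+1)^{\ell-p})$. So I may assume $\ell\ge 2$, $2\le k\le \ell$, and that the statement holds for all parameters on $\ell-1$ vertices.

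For the inductive step, recall from Proposition~\ref{prop:Shi-pth-row} that for $k\ge 2$ the vertex $\ell$ is isolated and simplicial in $(T_\ell^k,\psi_\ell^{p,k})$. The first step is to identify the resulting localization. Since $\ell>\ell-k+1$, deleting $\ell$ leaves the digraph $T_{\ell-1}^{k-1}$ on $[\ell-1]$, and the elementary identity $\min\{\ell-i+1,k\}=\min\{\ell-i,k-1\}+1$ shows that the restricted weight satisfies $\psi_\ell^{p,k}|_{[\ell-1]}=\psi_{\ell-1}^{p',k-1}$ with the shifted data $m'=m+1$, $d'=d$, and $p'=\min\{p,\ell-1\}$. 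Consequently
\[
\bigl(\cc\Aa^{p,k}_\ell(m,d)\bigr)_{X_\ell}=\cc\Aa^{p',k-1}_{\ell-1}(m+1,d)\times\varnothing_1,
\]
and since $(\ell-1)+(m+1)+d=\ell+m+d$, the induction hypothesis gives that this reduced cone is ind-flag-accurate with exponents $(1,(\ell+m+d)^{p'},(\ell+m+d+1)^{\ell-1-p'})$.

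It remains to invoke Corollary~\ref{cor:simplicial}. As $\ell$ is isolated we have $e=0$, while a short computation of $\psi_\ell^{p,k}(\ell)$ --- equal to $[-1-m,d]$ when $p<\ell$ and to $[-m,d]$ when $p=\ell$ --- yields the fresh exponent $|\psi(\ell)|+e+\ell-1$ equal to $\ell+m+d+1$ if $p<\ell$ and to $\ell+m+d$ if $p=\ell$. In both cases this fresh exponent is the largest one, it dominates all exponents of the reduced arrangement, and appending it to the reduced list reproduces precisely $(1,(\ell+m+d)^p,(\ell+m+d+1)^{\ell-p})$; thus Corollary~\ref{cor:simplicial}(ii) lifts ind-flag-accuracy back to $\cc\Aa^{p,k}_\ell(m,d)$ and closes the induction. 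This realizes the ``reverse'' propagation predicted in Remark~\ref{rem:CKZ}: accuracy is seeded in the first column and carried across the matrix not by the mutations $\tau_j$ (which need not preserve the intersection lattice) but by successive simplicial deletions. No deep obstruction arises; the only points demanding care are the weight-shift identity above and the boundary case $p=\ell$, where deleting the type-one vertex $\ell$ forces $p'=\ell-1$ and lowers the fresh exponent, so that the exponent matching and the ``do not exceed'' hypothesis of Corollary~\ref{cor:simplicial} must be checked separately.
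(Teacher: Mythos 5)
Your proof is correct and follows essentially the same route as the paper: both seed ind-flag-accuracy in the first column via Theorem \ref{thm:A-H} and propagate it across the matrix with the simplicial-vertex/modular-coatom technique of Corollary \ref{cor:simplicial}(ii). The only (cosmetic) difference is that you peel off one isolated vertex at a time, identifying the localization as $\cc\Aa^{p',k-1}_{\ell-1}(m+1,d)$ and inducting on $\ell$, whereas the paper passes directly to the induced subarrangement on $[\ell-k+1]$, identifies it as the origin $\cc\Aa^{p,1}_{\ell+1-k}(m+k-1,d)$, and then applies Corollary \ref{cor:simplicial}(ii) to all $k-1$ simplicial vertices in one sweep; your weight-shift identity $\min\{\ell-i+1,k\}=\min\{\ell-i,k-1\}+1$ and the boundary check at $p=\ell$ are both verified correctly.
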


 \begin{proof}
By Theorem \ref{thm:A-H}, it suffices to prove that for any fixed $0 \le p \le \ell$ the cone  $\Ac:=\cc\Aa^{p,k}_\ell(m,d)$  for $2 \le k \le \ell$ is ind-flag-accurate with exponents as above.

There are two cases: $p \le \ell + 1 -k$ and  $p > \ell + 1 -k$. 
Since the proofs are similar, we give a proof only for the former. 
First note that by Proposition \ref{prop:Shi-pth-row} the arrangement $\Bc := \textbf{c} \Ac\left(T_{\ell}^{k}[\ell-k+1], \psi_{\ell}^{p, k}|_{[\ell-k+1]}\right) $ is given by 
\begin{align*}
z &= 0 , \\
x_{i} - x_{j} &= 0,z \quad  (1 \leq i < j \leq\ell + 1 -k), \\
x_i&=  [1-m-k , d]z \quad (1 \leq   i \leq p), \\
x_i&= [-m-k , d]z   \quad (p<  i \leq \ell + 1 -k).
\end{align*}
Hence 
$$\Bc= \textbf{c}\Aa^{p,1}_{\ell + 1 -k}(m+k-1,d),$$
which is  ind-flag-accurate with exponents $  (1, (\ell + m+d)^{p}, (\ell + m+d+1)^{ \ell + 1 -k-p})$, by Theorem \ref{thm:A-H}. 

Thanks to Proposition \ref{prop:Shi-pth-row}, each isolated vertex $n \in [\ell-k+2, \ell] $ is  simplicial in  $ \left(T_{\ell}^{k}[n], \psi_{\ell}^{p,k}|_{[n]}\right)$ with $ \psi_{\ell}^{p,k}(n) = [-m-\ell+n-1, d]$. 
By applying Corollary \ref{cor:simplicial}(i) repeatedly to the $k-1$ simplicial vertices $\ell ,\ell-1, \ldots, \ell-k+2 $ in this order, we get  
$$ \Ac \in \mathcal{IF} \quad  \Leftrightarrow  \quad \Bc    \in \mathcal{IF} .$$  
Thus the inductive freeness of $ \Ac$ is clear. 
Moreover, $$\exp( \Ac ) = (1, (\ell + m+d)^{p}, (\ell + m+d+1)^{\ell-p}),$$
since  $| \psi_{\ell}^{p,k}(n) |+n-1 =\ell + m+d+1 $.

Note that $\Bc$ is ind-flag-accurate, none of its exponents exceeds $\ell + m+d+1 $. 
Upon applying Corollary \ref{cor:simplicial}(ii) repeatedly to the $k-1$ simplicial vertices $\ell-k+2 , \ldots, \ell $ in this order, we know that for each $n \in [\ell-k+2, \ell] $ the arrangement $\cc\Ac \left(T_{\ell}^{k}[n], \psi_{\ell}^{p,k}|_{[n]}\right)$ is ind-flag-accurate, none of its exponents exceeds $\ell + m+d+1 $. 
In particular, $ \Ac$ is  ind-flag-accurate. 
This completes the proof of the theorem.
\end{proof}
 
\subsection{Catalan genealogy}
\label{subsect:Cat}
In this subsection we show that the type $A$ Catalan arrangement can be regarded as the origin of a \emph{Catalan descendant sequence} similar to the one defined above in the setting of Shi genealogy in Section \ref{subsect:Shi}.
Several results analogous to ones from the previous subsection are obtained.

 \begin{definition}
\label{def:Cat-ally}
Let $1 \le k \le \ell$ and recall   the digraphs $ K_{\ell}^{k}$ and $ \widehat K_{\ell}^{k}$  from Definition \ref{def:digraphs}. 
Fix $\ell$ pairs of integers $c_i \le d_i$ for $1 \le i \le \ell$. 
Let  $$\Rc^{k}_\ell:=  \Ac\left(K_{\ell}^{k}, \psi_{\ell}^{k}\right) \quad \text{for} \quad 1 \le k \le \ell
$$ and 
$$\widehat\Rc^{k}_\ell:=  \Ac\left(\widehat K_{\ell}^{k}, \widehat \psi_{\ell}^{k}\right)\quad \text{for }\quad 1 \le k \le \ell-1$$   be the $ \psi $-digraphic arrangements defined by the following recurrence relation:
\begin{enumerate}[(i)]
\item  $\Rc^{1}_\ell =\Ac(K_{\ell}^1, \psi^1_\ell) $ with $ \psi^1_\ell(i) = [c_{i}, d_{i}] $  for each $ i \in [\ell] $.

\item For $k=1,2,\ldots, \ell-1$ the arrangement $\widehat\Rc^{k}_\ell$ is obtained from $\Rc^{ k}_\ell$ by applying mutation w.r.t.~the sink $ \ell-k+1 $ of the induced subgraph  $ \left(K_{\ell}^{k}[\ell-k+1], \psi_{\ell}^{k}|_{[\ell-k+1]}\right)$ of $\left(K_{\ell}^{k}, \psi_{\ell}^{k}\right)$ by $[\ell-k+1]$, and keeping the isolated vertices  $ i \in [\ell-k+2, \ell] $ with their weights unchanged. 
The relation induces a bijection between the arrangements $\Rc^{ k}_\ell$ and $\widehat\Rc^{k}_\ell$, denoted by $\tau_{\ell-k+1} :  \Rc^{ k}_\ell \to \widehat\Rc^{k}_\ell$.

\item For $k=1,2,\ldots, \ell-1$ the arrangement $\Rc^{ k+1 }_\ell$ is obtained from $\widehat \Rc^{k}_\ell$ by applying mutation w.r.t.~the source $ \ell-k+1 $ of the induced subgraph  $\left(\widehat K_{\ell}^{k}[\ell-k+1], \widehat\psi_{\ell}^{k}|_{[\ell-k+1]}\right)$ of $\left(\widehat K_{\ell}^{k},\widehat \psi_{\ell}^{k}\right)$ by $[\ell-k+1]$, and keeping the isolated vertices  $ i \in [\ell-k+2, \ell] $ with their weights unchanged. 
The relation induces a bijection between the arrangements $ \widehat\Rc^{k}_\ell$ and $\Rc^{ k +1}_\ell$, denoted by $\eta_{\ell-k+1} :  \widehat\Rc^{k}_\ell  \to \Rc^{ k+1}_\ell$.
\end{enumerate}
We call the arrangements $\Rc^{k}_\ell$, $ \widehat\Rc^{k}_\ell$ \emph{Catalan descendants}, and call the sequence 

$$ \Rc^{1}_\ell \stackrel{ \tau_{\ell} }{\longrightarrow} \widehat\Rc^{1}_\ell \stackrel{ \eta_{\ell} }{\longrightarrow} \Rc^{2}_\ell \stackrel{ \tau_{\ell -1} }{\longrightarrow} \cdots  \stackrel{ \tau_{2} }{\longrightarrow} \widehat\Rc^{\ell-1}_\ell \stackrel{ \eta_{2} }{\longrightarrow}  \Rc^{\ell}_\ell$$
a \emph{Catalan descendant sequence}. 
The arrangements $\Rc^{1}_\ell$, $ \widehat\Rc^{1}_\ell$, $\Rc^{\ell}_\ell$, $\widehat\Rc^{\ell-1}_\ell$ are called  \emph{origin, first descendant, end, penultimate descendant} of the sequence, respectively. 
\end{definition}

The solution of the recurrence relation from Definition \ref{def:Cat-ally} is given below.
\begin{proposition} 
\label{prop:Cat-ally-formula}
With the notation as in Definition \ref{def:Cat-ally}, for each $1 \le k \le \ell$ we have $ \psi_{\ell}^{k}(i) =  \varphi_{\ell}^{k}(i)$, where
\begin{align*}
\varphi_{\ell}^{k}(i) & :=[1-\min\{\ell-i+1, k\}+c_i, d_i + \min\{\ell-i+1, k\} -1] \quad  (1 \leq i \leq \ell), \\
& = 
\begin{cases}
[1-k +c_i , d_i + k-1 ] & (1 \leq   i \leq \ell-k+1), \\
[-\ell+i +c_i , d_i + \ell-i ] & (\ell-k+1 <  i \leq \ell). 
\end{cases}
\end{align*}
In addition, for each $1 \le k \le \ell-1$ we have $ \widehat \psi_{\ell}^{k}(i) =  \widehat \varphi_{\ell}^{k}(i)$, where
$$
 \widehat\varphi_{\ell}^{k}(i) :=
 \begin{cases}
[-k +c_i , d_i + k-1 ] & (1 \leq   i \leq \ell-k), \\
[-\ell+i +c_i , d_i + \ell-i] & (\ell-k <  i \leq \ell). 
\end{cases}
$$

\end{proposition}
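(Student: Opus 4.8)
The plan is to prove both weight formulas by induction on $k$, following the strategy of the proof of Proposition~\ref{prop:Shi-ally-formula}; the only new feature is that each step of the Catalan recurrence consists of \emph{two} mutations (a sink mutation $\tau_{\ell-k+1}$ followed by a source mutation $\eta_{\ell-k+1}$) instead of a single one, so the induction advances through the intermediate term $\widehat\Rc_\ell^k$. For the base case $k=1$ one has $\min\{\ell-i+1,1\}=1$ for all $1\le i\le \ell$, whence $\varphi_\ell^1(i)=[c_i,d_i]=\psi_\ell^1(i)$. For the inductive step, fix $1\le k\le \ell-1$, assume $\psi_\ell^k=\varphi_\ell^k$, and set $v:=\ell-k+1$. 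Exactly as in the Shi case, the vertices $i\in[v+1,\ell]$ are isolated throughout and their weights are fixed by both mutations, so it suffices to track the weights on $[v]$ and to check that the underlying digraphs transform as claimed.

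First I would treat the sink mutation $\tau_{v}\colon \Rc_\ell^k\to\widehat\Rc_\ell^k$. On the induced complete digraph $K_\ell^k[v]$ the vertex $v$ is a sink, and deleting the edges $(i,v)$ yields precisely $\widehat K_\ell^k[v]$, matching Definition~\ref{def:digraphs}. By Definition~\ref{deformation coking}(i) the mutation lowers the left endpoint of $\psi_\ell^k(i)$ by one for each $i\in[v-1]=[\ell-k]$ and leaves $\psi_\ell^k(v)$ and the isolated weights intact. Substituting $\psi_\ell^k=\varphi_\ell^k$ and comparing with the claimed $\widehat\varphi_\ell^k$ gives $\widehat\psi_\ell^k(i)=[-k+c_i,d_i+k-1]$ for $i\le \ell-k$ and $\widehat\psi_\ell^k(v)=[1-k+c_v,d_v+k-1]$, which is exactly $\widehat\varphi_\ell^k$.

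Next I would treat the source mutation $\eta_v\colon\widehat\Rc_\ell^k\to\Rc_\ell^{k+1}$. On $\widehat K_\ell^k[v]$ the vertex $v$ is a source, and deleting the edges $(v,i)$ produces $K_\ell^{k+1}[v]$. By Definition~\ref{deformation coking}(ii) this raises the right endpoint of $\widehat\psi_\ell^k(i)$ by one for each $i\in[\ell-k]$ and fixes $\widehat\psi_\ell^k(v)$; feeding in $\widehat\psi_\ell^k=\widehat\varphi_\ell^k$ yields $[-k+c_i,d_i+k]=\varphi_\ell^{k+1}(i)$ for $i\le\ell-k$, which closes the induction.

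The only point requiring care---and the main, though still routine, obstacle---is the bookkeeping at the boundary vertex $v=\ell-k+1$, which sits exactly on the cut-off between the two branches of the piecewise definitions of $\varphi_\ell^k$, $\widehat\varphi_\ell^k$ and $\varphi_\ell^{k+1}$. One must check that evaluating either branch at $v$ gives the consistent value $[1-k+c_v,d_v+k-1]$ (equivalently $[-\ell+v+c_v,\,d_v+\ell-v]$), so that as $k$ increases by one the index $v$ correctly migrates from the ``complete'' block $1\le i\le \ell-k+1$ into the ``isolated'' tail $\ell-k+1<i\le\ell$; it is precisely this compatibility that lets the unchanged weight $\psi_\ell^k(v)$ under $\tau_v$ and $\eta_v$ agree with the shifted piecewise formula one index-block earlier.
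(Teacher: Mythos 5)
Your proposal is correct and follows essentially the same route as the paper's own proof: induction on $k$, with the isolated vertices in $[\ell-k+2,\ell]$ held fixed and the two mutations (sink $\tau_v$, then source $\eta_v$ at $v=\ell-k+1$) tracked on the induced subgraph on $[v]$, verifying that the explicit formulas $\varphi_\ell^k$, $\widehat\varphi_\ell^k$ satisfy the same recurrence as $\psi_\ell^k$, $\widehat\psi_\ell^k$. The boundary bookkeeping at $v$ that you flag is exactly the computation the paper carries out when it checks $(\varphi_\ell^k|_{[v]})'(v)=(\widehat\varphi_\ell^k|_{[v]})''(v)=[1-k+c_v,d_v+k-1]$.
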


\begin{proof}
It is clear that $\psi^1_\ell = \varphi^1_\ell$. 
 Fix $1 \le k \le \ell-1$ and set $v:= \ell-k+1 $. 
Each vertex $ i \in [v+1, \ell] $ is isolated and its weight is the same in $ \left(K_{\ell}^{k}, \varphi_{\ell}^{k}\right) $, $\left(\widehat K_{\ell}^{k},\widehat \varphi_{\ell}^{k}\right)$ and $\left(K_{\ell}^{k+1}, \varphi_{\ell}^{k+1}\right)$ given by $ \varphi_{\ell}^{k}(i) = \widehat \varphi_{\ell}^{k} (i)= \varphi_{\ell}^{k+1}(i) = [-\ell + i+ c_i, d_i ]$.
It suffices to prove the statement firstly after applying  mutation to $ \left(K_{\ell}^{k}[v], \varphi_{\ell}^{  k}|_{[v]}\right) $ w.r.t.~the sink $v $, when we obtain  $\left(\widehat K_{\ell}^{k}[v], \widehat \varphi_{\ell}^{ k }|_{[v]}\right)$, and secondly after applying  mutation to $\left(\widehat K_{\ell}^{k}[v], \widehat \varphi_{\ell}^{ k }|_{[v]}\right)$ w.r.t.~the source $v $, after which we obtain  $\left(K_{\ell}^{k+1}[v], \varphi_{\ell}^{ k+1}|_{[v]}\right)$. 
It is clear that $ (K_{\ell}^{k}[v])' = \widehat K_{\ell}^{k}[v]$ and $ (\widehat K_{\ell}^{k}[v] )'' = K_{\ell}^{k+1}[v] $. 
Moreover, 
\begin{align*}
( \varphi_{\ell}^{ k}|_{[v]})^{\prime} ( v ) &= [1-k +c_v , d_v + k-1 ] =  \widehat \varphi_{\ell}^{ k }|_{[v]}(v) , \\
 ( \widehat \varphi_{\ell}^{ k }|_{[v]})'' (v) &= [1-k +c_v , d_v + k-1 ] =  \varphi_{\ell}^{ k+1}|_{[v]} (v),
\end{align*}
and for each $i \in [\ell-k]$
\begin{align*}
( \varphi_{\ell}^{ k}|_{[v]})^{\prime} ( i )  &= [-k +c_i , d_i + k-1 ] =  \widehat \varphi_{\ell}^{ k }|_{[v]}(i) , \\
 ( \widehat \varphi_{\ell}^{ k }|_{[v]})'' (i) &= [-k +c_i , d_i + k ] =  \varphi_{\ell}^{ k+1}|_{[v]} (i).
\end{align*}
Thus $( \varphi_{\ell}^{ k}|_{[v]})^{\prime} =   \widehat\varphi_{\ell}^{ k}|_{[v]}$ and $( \widehat\varphi_{\ell}^{ k}|_{[v]} )'' =  \varphi_{\ell}^{ k+1}|_{[v]}$, as desired.  
 \end{proof}
 
  Now we define the protagonist of this subsection, a special class of Catalan descendants.
 
 \begin{definition}
\label{def:Cat-ally-ext}
Let  $m\ge 0$,   $c\ge 1$, $1 \le k \le \ell$, and $0 \le p \le \ell$ be integers. 
Let  
$$\Rc^{p,k}_\ell(c,m) :=  \Ac(K_{\ell}^{k}, \psi_{\ell}^{p,k}) \quad \text{for}\quad 1 \le k \le \ell$$ 
and $$\widehat\Rc^{p,k}_\ell(c,m):=  \Ac\left(\widehat K_{\ell}^{k}, \widehat \psi_{\ell}^{p,k}\right) \quad \text{for}\quad 1 \le k \le \ell-1$$ be the Catalan descendants defined as follows: for each $0 \le p \le \ell$ the arrangement $\Rc^{p,1}_\ell(c,m)$ consists of the hyperplanes
\begin{align*}
x_{i}- x_{j} &= [-1 , 1]\quad  (1 \leq i < j \leq \ell), \\
x_i&=  [-c , m] \quad (1 \leq   i \leq p), \\
x_i&=[-c , m+1]  \quad (p<  i \leq \ell),
\end{align*}
and $\Rc^{p,1}_\ell(c,m)$ is the origin of the Catalan descendant sequence
$$ \Rc^{p,1}_\ell (c,m) \stackrel{ \tau_{\ell} }{\longrightarrow} \widehat\Rc^{p,1}_\ell (c,m) \stackrel{ \eta_{\ell} }{\longrightarrow} \Rc^{p,2}_\ell (c,m) \stackrel{ \tau_{\ell -1} }{\longrightarrow} \cdots  \stackrel{ \tau_{2} }{\longrightarrow} \widehat\Rc^{p,\ell-1}_\ell (c,m) \stackrel{ \eta_{2} }{\longrightarrow}  \Rc^{p,\ell}_\ell(c,m).$$
\end{definition}

 \begin{proposition}
 \label{prop:Cat-pth-row}
Fix $0 \le p \le \ell$.
For fixed $1 \le k \le \ell$ (resp., $1 \le k \le \ell-1$) each vertex $n \in [\ell-k+2, \ell] $ is isolated and simplicial in  $ \left(K_{\ell}^{k}[n], \psi_{\ell}^{p,k}|_{[n]}\right)$ (resp., $\left(\widehat K_{\ell}^{k}[n], \widehat \psi_{\ell}^{p,k}|_{[n]}\right)$).
 \end{proposition}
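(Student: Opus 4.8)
The plan is to reduce both assertions to the simpliciality criterion of Proposition~\ref{prop:isolated}, exactly as in the proof of the Shi-analogue Proposition~\ref{prop:Shi-pth-row}. First I would record that the isolation is immediate from the edge-set definitions in Definition~\ref{def:digraphs}: in $K_\ell^k$ every edge lies inside $[\ell-k+1]$, while in $\widehat K_\ell^k$ every edge lies inside $[\ell-k]$ or emanates from the source $\ell-k+1$; in either case no edge touches a vertex $n \in [\ell-k+2,\ell]$, so such an $n$ is isolated in the full digraph and hence in the induced subgraph on $[n]$.

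The substance lies in verifying the containment hypothesis of Proposition~\ref{prop:isolated}, namely that $\psi_{\ell}^{p,k}(n) \subseteq \psi_{\ell}^{p,k}(i)$ (resp.\ $\widehat\psi_{\ell}^{p,k}(n) \subseteq \widehat\psi_{\ell}^{p,k}(i)$) for every $i \in [n]$. For this I would substitute the origin's data $c_i = -c$ for all $i$, together with $d_i = m$ for $i \le p$ and $d_i = m+1$ for $i > p$, into the closed formula of Proposition~\ref{prop:Cat-ally-formula}, and then establish the nesting chain
$$\psi_{\ell}^{p,k}(i) \ \supseteq\ \psi_{\ell}^{p,k}(\ell-k+2) \ \supseteq\ \cdots\ \supseteq\ \psi_{\ell}^{p,k}(\ell) \qquad (1 \le i \le \ell-k+1),$$
and its $\widehat\psi$-counterpart. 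As in the Shi case I would split according to whether $p \le \ell-k+1$ or $p > \ell-k+1$. The two governing monotonicities are that on the block $i \le \ell-k+1$ the left endpoint is constant while the right endpoint is $d_i + k - 1$, and that on the block $i > \ell-k+1$ both endpoints of $[-\ell+i-c,\, d_i + \ell - i]$ move strictly inward as $i$ increases; the only delicate comparisons are at the junction $i = \ell-k+1$ versus $i=\ell-k+2$ and at the jump $d_i = m \to m+1$ across $i = p$, where a short check shows the right endpoints stay weakly decreasing (with equality in the borderline case).

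Granting the chain, the conclusion is immediate: for each $n \in [\ell-k+2,\ell]$ one has $\psi_{\ell}^{p,k}(n) \subseteq \psi_{\ell}^{p,k}(i)$ for all $i \in [n]$, so Proposition~\ref{prop:isolated}, applied inside $(K_\ell^k[n], \psi_{\ell}^{p,k}|_{[n]})$, shows that $n$ is simplicial there; the identical argument with $\widehat K_\ell^k$ and $\widehat\psi_{\ell}^{p,k}$ settles the parenthetical statement. I expect the main (though still routine) obstacle to be the bookkeeping of the interval endpoints across the two regimes and the $p$-transition---this is precisely where one must verify that the right endpoint $d_i + \ell - i$ does not increase as $i$ passes $p$, since there $d_i$ jumps up by one while $\ell - i$ drops by one, leaving the sum non-increasing.
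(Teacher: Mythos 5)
Your proposal is correct and follows essentially the same route as the paper: isolation is read off from the edge sets, the explicit intervals from Proposition \ref{prop:Cat-ally-formula} (with $c_i=-c$ and $d_i\in\{m,m+1\}$ according to $i\le p$) are shown to form the nesting chain $\psi_{\ell}^{p,k}(i)\supseteq\psi_{\ell}^{p,k}(\ell-k+2)\supseteq\cdots\supseteq\psi_{\ell}^{p,k}(\ell)$ in the two regimes determined by the position of $p$, and Proposition \ref{prop:isolated} is then invoked, with the same treatment of the $\widehat\psi$ case. The delicate points you flag (the junction between the two blocks and the right endpoint staying constant as $d_i$ jumps across $i=p$) are exactly the checks carried out in the paper.
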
 

 \begin{proof}
First  we show the  assertion for $\Rc^{p,k}_\ell(c,m)$. 
By Proposition \ref{prop:Cat-ally-formula}, if $p \le \ell  -k$, then 
$$
 \psi_{\ell}^{p,k}(i)  =\begin{cases}
[1 - c -k, m+k-1 ] & (1 \leq   i \leq p), \\
[1 - c -k, m+k ] & (p<  i \leq  \ell + 1 -k), \\
[ - c - \ell+i , m + \ell +1 - i] & (\ell + 1 - k <  i \leq  \ell ).
\end{cases}
$$
 If $p > \ell -k$, then
$$
 \psi_{\ell}^{p,k}(i) =\begin{cases}
[1 - c -k, m+k-1 ]  & (1 \leq   i \leq  \ell + 1 -k), \\
[ - c - \ell+i , m + \ell - i]   & ( \ell + 1 -k<  i \leq p), \\
[ - c - \ell+i , m + \ell +1 - i]  & (p<  i \leq  \ell ).
\end{cases}
$$
In either case for each $i \in [\ell + 1 -k]$, we have
$$ \psi_{\ell}^{p,k}(i) \supsetneq  \psi_{\ell}^{p,k}(\ell + 2 -k) \supsetneq \cdots \supsetneq  \psi_{\ell}^{p,k}(\ell).$$
Now we apply Proposition \ref{prop:isolated}.

The  assertion for $\widehat\Rc^{p,k}_\ell(c,m)$ is proved in a similar fashion. 
If $p \le \ell  -k$, then 
$$
 \widehat \psi_{\ell}^{p,k}(i)  =\begin{cases}
[- c -k, m+k-1 ] & (1 \leq   i \leq p), \\
[- c -k, m+k ] & (p<  i \leq  \ell  -k), \\
[ - c - \ell+i , m + \ell +1 - i] & (\ell - k <  i \leq  \ell ).
\end{cases}
$$
 If $p > \ell -k$, then
$$
 \widehat\psi_{\ell}^{p,k}(i) =\begin{cases}
[- c -k, m+k-1 ]  & (1 \leq   i \leq  \ell  -k), \\
[ - c - \ell+i , m + \ell - i]   & ( \ell  -k<  i \leq p), \\
[ - c - \ell+i , m + \ell +1 - i]  & (p<  i \leq  \ell ).
\end{cases}
$$
This completes the proof of the proposition.
\end{proof}

 \begin{remark}
	\label{rem:0-ell} 
 It is easily seen from Proposition \ref{prop:Cat-pth-row} that $ \Rc^{\ell,k}_\ell(c, m+1) =\Rc^{0,k}_\ell(c, m)$   for any $1 \le k \le \ell$, and $\widehat\Rc^{\ell,k}_\ell(c, m+1) = \widehat\Rc^{0,k}_\ell(c, m)$ for any $1 \le k \le \ell-1$.
\end{remark}

Analogous to the case of Shi descendants (Remark \ref{rem:CKZ}), the Catalan descendants $\Rc^{p,k}_\ell(c,m)$ and $\widehat\Rc^{p,k}_\ell(c,m)$ in the same  sequence from Definition \ref{def:Cat-ally-ext} have free cones with the same multiset of exponents.
Again this fact can be deduced from  \cite[Thms.~ 3.1 and 4.1]{AbeTranTsujie21_ShiIsh} by noting that the maps $\tau_{\ell},\eta_\ell, \ldots, \tau_2,\eta_2$ preserve both freeness and characteristic polynomials. 
	Thus the freeness and the exponents of the Catalan descendants follow from the supersolvability of the ends of the corresponding sequences (since they are nested $N$-Ish arrangements, see Proposition \ref{prop:Cat-finisher} below). 
	
	Our main result in this subsection asserts that the arrangements $\Rc^{p,k}_\ell(c,m)$ and $\widehat\Rc^{p,k}_\ell(c,m)$ have  ind-flag-accurate cones, which is given in Theorem \ref{thm:Cat-ally-FA} below. 
	The key point of the proof is the ind-flag-accuracy of the origin and first descendant of the Catalan descendant sequences. 
	
	In case  of the end $\Rc^{p,\ell}_\ell(c,m)$,
	the result follows from  Lemma \ref{lem:N-Ish-FA}. 
	We state that result separately. 
	
 \begin{proposition}
\label{prop:Cat-finisher}
 Let $0 \le p \le \ell$. 
 The arrangement $\Rc^{p,\ell}_\ell(c,m)$, consisting of the hyperplanes
\begin{align*}
x_{i}-x_{j} &= 0\quad  (1 \leq i < j \leq \ell), \\
x_i&= [ - c - \ell+i , m + \ell - i] \quad (1 \leq   i \leq p), \\
x_i&= [ - c - \ell+i , m + \ell +1 - i]   \quad (p<  i \leq \ell),
\end{align*}
has supersolvable and ind-flag-accurate cone with exponents $$\exp\left(\cc\Rc^{p,\ell}_\ell(c,m)\right) = (1,  \ell -p+1, 2, 3,\ldots, \ell) +  (0,  (m + \ell+c-1)^{\ell} ).$$
\end{proposition}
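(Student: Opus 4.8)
The plan is to recognize $\Rc^{p,\ell}_\ell(c,m)$ as a strictly nested $N$-Ish arrangement and then read off everything from Theorem \ref{thm:nested} and Lemma \ref{lem:N-Ish-FA}. The first step is to note that the digraph $K_\ell^\ell$ is edgeless: by Definition \ref{def:digraphs} one has $E_{K_\ell^k} = \{(i,j) \mid i,j \in [\ell-k+1],\ i \ne j\}$, so for $k=\ell$ the index set collapses to $[1]=\{1\}$ and $E_{K_\ell^\ell}=\emptyset$. Consequently the $\psi$-digraphic arrangement $\Ac(K_\ell^\ell,\psi_\ell^{p,\ell})$ carries no hyperplanes $x_i-x_j=1$, and reduces to $\Cox(A_\ell)$ together with the coordinate hyperplanes $x_i=\psi_\ell^{p,\ell}(i)$. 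In the terminology of Example \ref{ex:N-Ish} this is exactly the $N$-Ish arrangement $\Ac(N)$ with
\[
N_i=\begin{cases}[-c-\ell+i,\ m+\ell-i] & 1\le i\le p,\\[2pt] [-c-\ell+i,\ m+\ell+1-i] & p<i\le \ell.\end{cases}
\]

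Next I would verify strict nestedness in the given index order, namely $N_\ell\subsetneq\cdots\subsetneq N_1$. The left endpoint $-c-\ell+i$ strictly increases by $1$ at each step, while the right endpoint is weakly decreasing: it drops by $1$ at every step except the transition $i=p\to i=p+1$, where both right endpoints equal $m+\ell-p$. Since every interval is nonempty (for $c\ge1$, $m\ge0$ one checks $|N_\ell|\ge2$), the strictly increasing left endpoint forces $N_{i+1}\subsetneq N_i$ at each step. This places us directly under the hypothesis of Lemma \ref{lem:N-Ish-FA}, so $\cc\Rc^{p,\ell}_\ell(c,m)$ is ind-flag-accurate; moreover, by Theorem \ref{thm:nested} applied with the identity permutation $w=\id$, it is supersolvable, hence inductively free.

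Finally I would pin down the exponents. Theorem \ref{thm:nested} gives $\exp(\cc\Ac(N))=(1,\,|N_i|+i-1)_{i=1}^{\ell}$. A short computation of the interval lengths yields $|N_i|+i-1=m+c+2\ell-i$ for $i\le p$ and $m+c+2\ell-i+1$ for $i>p$. Running $i$ from $1$ to $\ell$ then produces the consecutive integers from $m+\ell+c+1$ to $m+2\ell+c-1$, together with an extra occurrence of $m+2\ell+c-p$ arising from the overlap of the two regimes at the index $i=p$. This is precisely the multiset encoded by $(1,\ \ell-p+1,\ 2,\ 3,\ \ldots,\ \ell)+(0,(m+\ell+c-1)^{\ell})$, as claimed. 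The main obstacle is this last bookkeeping step: the conceptual content (nestedness) is immediate, but one must treat the boundary index $i=p$ with care, since there the two regimes overlap and generate the repeated exponent $m+2\ell+c-p$ (and the degenerate cases $p=0$ and $p=\ell$, where one regime is empty, should be checked to see that the same multiset still results).
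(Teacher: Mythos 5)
Your proof is correct and follows the same route the paper takes: the paper justifies this proposition precisely by observing that $K_\ell^\ell$ is edgeless, so the end of the Catalan descendant sequence is a (strictly) nested $N$-Ish arrangement, and then citing Theorem \ref{thm:nested} for supersolvability and the exponents and Lemma \ref{lem:N-Ish-FA} for ind-flag-accuracy. Your verification of strict nestedness (strictly increasing left endpoints, weakly decreasing right endpoints) and the exponent bookkeeping, including the doubled value $m+2\ell+c-p$ at the boundary index $i=p$, are all accurate.
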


Next we study the ind-flag-accuracy  of the origins in their sequence in analogy to 
Theorem \ref{thm:A-H}.

\begin{theorem}
 \label{thm:Cat-ext}
Let $a, m \ge 0$, $n, \ell \ge 1$ and $0 \le p \le \ell$. 
Let $\Ec^{p}_\ell(n, a, m)$ be the  arrangement consisting of the following hyperplanes:
\begin{align*}
x_{i} - x_{j} &= [-a, a] \quad  (1 \leq i < j \leq \ell), \\
x_i&=  [-na, m]  \quad (1 \leq   i \leq p), \\
x_i&=  [-na, m+1] \quad (p<  i \leq \ell).
\end{align*}
The cone over $\Ec^{p}_\ell(n, a, m)$
is  ind-flag-accurate with exponents 
$$\exp\left(\cc\Ec^{p}_\ell(n, a, m)\right) = (1,  \ell -p+1, 2, 3,\ldots, \ell) +  (0,  (m + a (\ell+n-1))^{\ell} ).$$

As a consequence,  the  origin $\Rc^{p,1}_\ell(c, m) = \Ec^{p}_\ell(c,1, m)$ of the Catalan descendant sequence from Definition \ref{def:Cat-ally-ext} has  ind-flag-accurate cone with exponents 
$$\exp\left(\cc\Rc^{p,1}_\ell(c, m)\right) = (1,  \ell -p+1, 2, 3,\ldots, \ell) +  (0,  (m + \ell+c-1)^{\ell} ).$$ 
 \end{theorem}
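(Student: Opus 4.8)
The plan is to adapt the two-part induction used for the Shi origins in Theorem \ref{thm:A-H} to the symmetric Catalan setting, exploiting a mirror symmetry: where the Shi proof grows the \emph{top} range with $n$ and distinguishes the $p$-block at the \emph{bottom}, the family $\Ec^p_\ell(n,a,m)$ grows the \emph{bottom} range $[-na,\,\cdot\,]$ with $n$ and distinguishes the $p$-block at the \emph{top}. Accordingly, the inductive freeness will be driven by deleting/restricting at the ``$m$-hyperplane'' $x_{p+1}=(m+1)z$, while the ind-flag-accuracy will be driven by restricting at the ``$n$-hyperplane'' $x_i=-naz$. Throughout set $\Ac:=\cc\Ec^p_\ell(n,a,m)$ and $c_0:=m+a(\ell+n-1)$, so that the claimed exponents form the multiset $\{1\}\cup\{c_0+2,\dots,c_0+\ell\}\cup\{c_0+\ell-p+1\}$.

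First I would establish inductive freeness by induction on $(\ell,\ell-p)$ in lexicographic order, mimicking the two cases of Theorem \ref{thm:A-H}. In Case $1$, for $0\le p\le \ell-1$, put $H\colon x_{p+1}=(m+1)z$ and $\Ac'=\Ac\setminus\{H\}$. One checks directly that $\Ac'=\cc\Ec^{p+1}_\ell(n,a,m)$ and, after restricting to $H$ and merging each range $[-na,\,\cdot\,]$ with the interval $[m+1-a,m+1+a]$ produced by the Catalan differences, that $\Ac''=\Ac^H=\cc\Ec^{0}_{\ell-1}(n,a,m+a)$ (equivalently the all-equal $\cc\Ec^{\ell-1}_{\ell-1}(n,a,m+a+1)$). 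Since $(\ell,\ell-p)$ strictly dominates both $(\ell,\ell-p-1)$ and $(\ell-1,\ell-1)$, the induction hypothesis makes $\Ac',\Ac''$ inductively free; a one-line exponent check gives $\exp(\Ac'')=\{1,c_0+2,\dots,c_0+\ell\}\subseteq\exp(\Ac')=\{1,c_0+\ell-p\}\cup\{c_0+2,\dots,c_0+\ell\}$, so Theorem \ref{thm:AD} and the definition of $\mathcal{IF}$ give $\Ac\in\mathcal{IF}$ with the asserted exponents. Case $2$ is $p=\ell$; using $\Ec^\ell_\ell(n,a,m)=\Ec^0_\ell(n,a,m-1)$ reduces every $m\ge1$ to Case $1$, leaving the genuine base $\Bc(n):=\cc\Ec^\ell_\ell(n,a,0)$, to be handled by a secondary induction that grows the bottom one level at a time, exactly parallel to the ladder in the proof of Theorem \ref{thm:A-H}.

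For ind-flag-accuracy I would induct on $\ell$ and apply the one-hyperplane form of Lemma \ref{lem:flag-accuracy}, but with a restriction \emph{different} from the freeness one: restrict to the bottom coordinate $H\colon x_p=-naz$ when $p\ge1$, and to $H\colon x_\ell=-naz$ when $p=0$. Because the Catalan differences emanating from a vertex sitting at the level $-na$ reach down to $-(n+1)a$, this restriction deletes one vertex, pushes the remaining bottom ranges to $-(n+1)a$, and leaves the tops unchanged; a direct count then yields $\Ac^H=\cc\Ec^{p-1}_{\ell-1}(n+1,a,m)$ for $p\ge1$ and $\Ac^H=\cc\Ec^{0}_{\ell-1}(n+1,a,m)$ for $p=0$. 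The decisive point is that these have exponents $\{1,c_0+\ell-p+1\}\cup\{c_0+2,\dots,c_0+\ell-1\}$, which is precisely $\exp(\Ac)$ with its \emph{largest} entry $c_0+\ell$ (respectively $c_0+\ell+1$ when $p=0$) removed — exactly what Lemma \ref{lem:flag-accuracy} requires. As $\ell-1<\ell$, the induction hypothesis makes $\Ac^H$ inductively free and ind-flag-accurate, so Lemma \ref{lem:flag-accuracy} propagates ind-flag-accuracy to $\Ac$. The consequence is then immediate: specializing $a=1$, $n=c$ identifies $\Ec^p_\ell(c,1,m)$ with the origin $\Rc^{p,1}_\ell(c,m)$ and turns $c_0$ into $m+\ell+c-1$.

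The main obstacle is Case $2$ of the freeness step, namely the freeness of the base $\Bc(n)=\cc\Ec^\ell_\ell(n,a,0)$ for all $a,n$. Growing $n$ by one requires adding the $a$ new bottom levels $x_i=-(na+j)z$ $(1\le j\le a,\ 1\le i\le\ell)$ one hyperplane at a time via Theorem \ref{thm:AD}; the subtlety — absent in the Shi case because there the asymmetric differences $[1-a,a]$ meet the growing end differently — is that restricting to such a new deepest level produces remaining bottom ranges reaching $-(na+j)-a$, which need \emph{not} be multiples of $a$. I therefore expect to run this sub-induction for the slightly enlarged family $\Ec^p_\ell[b,a,m]$ with an arbitrary integer bottom $-b$ (so $\Ec^p_\ell(n,a,m)=\Ec^p_\ell[na,a,m]$), which \emph{is} closed under these level restrictions, and to anchor the ladder at $b=0$, where $\Ec^\ell_\ell[0,a,0]$ (Catalan differences together with the coordinate hyperplanes $x_i=0$) is inductively free by the Athanasiadis-type argument of \cite[Thms.~3.1 and 3.3]{Atha98} (alternatively by Theorem \ref{thm:Yoshinaga} via the Ziegler-restriction criterion, Theorem \ref{thm:Yoshinaga's criterion}, as in the type $C$ proof of Theorem \ref{thm:CatC-F}). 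Finally, the degenerate case $a=0$, where the Catalan part collapses to the braid arrangement and $\Ec^p_\ell(n,0,m)$ becomes a nested $N$-Ish arrangement, is supersolvable and handled directly by Theorem \ref{thm:nested} and Lemma \ref{lem:N-Ish-FA}.
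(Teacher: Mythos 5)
Your proposal is correct and follows essentially the same route as the paper: the same lexicographic induction on $(\ell,\ell-p)$ with deletion at $x_{p+1}=(m+1)z$ and restriction $\cc\Ec^{0}_{\ell-1}(n,a,m+a)$ for freeness, the same ladder induction on $n$ for the base case $p=\ell$, and the same restriction to a bottom-level hyperplane $x_i=-naz$ giving $\cc\Ec^{p-1}_{\ell-1}(n+1,a,m)$ for ind-flag-accuracy via Lemma \ref{lem:flag-accuracy}. The only real divergence is in the ladder step: where you enlarge the family to an arbitrary integer bottom $-b$ to absorb the off-grid restrictions, the paper instead observes that each such restriction is already linearly equivalent to $\cc\Ec^{\ell-1}_{\ell-1}(1,a,-s)$ — the flexible top parameter $m$ (via reflection) plays the role of your $b$ — so no enlarged family is needed; both fixes are sound.
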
 

 \begin{proof}
To show inductive freeness of $\Ac :=\cc\Ec^p_\ell(n, a, m) $, we use a similar argument  to the one from the proof of Theorem \ref{thm:A-H}. 
Define the lexicographic order on 
$$\{ (\ell, \ell-p) \mid 0\le \ell- p \le \ell, \, \ell \ge 1\}.$$
First we prove $\Ac \in \mathcal{IF}$ with the desired exponents by induction on $ (\ell, \ell-p) $. 
When $\ell=1$, it is obvious. Suppose $\ell\ge2$.

Case $1$. First consider $0 \le p \le \ell-1$. 
Let  $H \in \Ac$ denote the hyperplane $x_{p+1} = (m+1)z$. 
Then $\Ac'=\Ac\setminus \{H\} = \cc\Ec^{p+1}_\ell(n, a, m)\in \mathcal{IF}$ with exponents  
$$\exp\left(\cc\Ec^{p+1}_\ell(n, a, m)\right) = (1,  \ell -p, 2, 3,\ldots, \ell) +  (0,  (m + a (\ell+n-1))^{\ell} ),$$
 by the induction hypothesis since, $(\ell, \ell-p) > (\ell, \ell-p-1)$. 
Moreover, $\Ac''=\Ac^{H}$ consists of the following hyperplanes 
\begin{align*}
z &= 0 , \\
x_{i} - x_{j} &=  [-a , a]z\quad  (1 \leq i < j \leq \ell, i\ne p+1,j\ne p+1 ), \\
x_i&=  [-na , m+a+1 ]z \quad (1 \leq   i \leq \ell).
\end{align*}
Thus $\Ac'' =    \cc\Ec^{0}_{\ell-1}(n, a, m+a)$. 
Hence   $\Ac'' \in \mathcal{IF}$ with exponents 
$$\exp\left(\cc\Ec^{0}_{\ell-1}(n, a, m+a)\right) = (1, 2, 3,\ldots, \ell) +  (0,  (m + a (\ell+n-1))^{\ell-1} ),$$ again  by the induction hypothesis, since $(\ell, \ell-p) > (\ell-1,\ell-1)$. 
Therefore, by Theorem \ref{thm:AD}, $\Ac =\cc\Ec^p_\ell(n, a, m)\in \mathcal{IF}$ with the desired exponents. 

Case $2$. Now consider  $\ell=p$. 
We may assume further that $m=0$  since $\Ec^{\ell}_\ell(n, a, m) = \Ec^{0}_\ell(n, a, m-1)$.
The arrangement  in question is $$\Bc(n) := \cc\Ec^\ell_\ell(n, a, 0),$$ consisting of 
\begin{align*}
z &= 0 , \\
x_{i} - x_{j} &=  [-a , a]z\quad  (1 \leq i < j \leq \ell), \\
x_i&=  [-na , 0]z \quad (1 \leq   i \leq \ell).
\end{align*}
Let $\Csc$ be the cone over the type $A$ extended Catalan arrangement, consisting of the hyperplanes 
\begin{align*}
z &= 0 , \\
x_{i} - x_{j} &=  [-a , a]z\quad  (1 \leq i < j \leq \ell).
\end{align*}
Note that $\Csc$
is inductively free with exponents $ (1, 0, a\ell  +i )_{i=1}^{\ell-1}$, by our induction hypothesis, since it is  linearly equivalent to $\cc\Ec^{\ell-1}_{\ell-1}(1, a, a) \times \varnothing_1$ (via $x_1-x_{i}\mapsto x_{i-1}$).  

We are going to show that $\Bc(n) \in \mathcal{IF}$  with exponents $ (1, a (\ell+n-1)  +i )_{i=1}^{\ell}$, by induction on $n$. 
If $n=1$, by Theorem \ref{thm:AD}, adding $r$ of the hyperplanes $x_i = sz$ for $1 \leq   i \leq \ell$, $s \in [-a,0]$ in any order to $\Csc$ 
produces an inductively free arrangement with exponents $ (1, r, a\ell  +i )_{i=1}^{\ell-1}$.
Indeed, the restriction to the last hyperplane added at each step is linearly equivalent to $\cc\Ec^{\ell-1}_{\ell-1}(1,a,a)$. 
The case $r =a\ell  +\ell $ gives the desired result for $\Bc(1)$. 

Now we show that for $n\ge 1$, if $\Bc(n)$ is inductively free, then so is $\Bc(n+1)$ with the desired exponents. 
It requires a more delicate order of addition of the hyperplanes. 
Let $0 \le k \le a$ and denote by $\Bc_k(n) $ the arrangement having the following hyperplanes 
\begin{align*}
z &= 0 , \\
x_{i} - x_{j} &=  [-a , a]z\quad  (1 \leq i < j \leq \ell), \\
x_i&=  [-(na+k) , 0]z \quad (1 \leq   i \leq \ell).
\end{align*}
In particular, we have $\Bc_0(n) =  \Bc(n)$, and $\Bc_a(n) =  \Bc(n+1)$. 
Note that for any $ \Bc_0(n) \subsetneq  \Dc \subseteq  \Bc_a(n)  $ and for any hyperplane $H \in \Dc$ of the form $x_i = sz$ for some $-(na+a) \le s \le -(na+1)$, the restriction $\Dc^H$ is linearly equivalent to $\cc\Ec^{\ell-1}_{\ell-1}(1,a,-s)$. 
Hence $\Dc^H \in \mathcal{IF}$ with exponents $ (1, -(s+a) + a\ell  +i )_{i=1}^{\ell-1}$ by the induction hypothesis on $ (\ell, \ell-p) $.  
Now by Theorem \ref{thm:AD}, adding $\ell$ of the hyperplanes in $\Bc_1(n) \setminus \Bc_0(n) $ in any order to $ \Bc_0(n) $ shows that $\Bc_1(n)$ is inductively free with exponents $ (1, a (\ell+n-1)  +i +1)_{i=1}^{\ell}$.
Now,  
for fixed $0 \le k \le a-1$, adding the hyperplanes in $\Bc_{k+1}(n) \setminus \Bc_k(n) $ in any order to $ \Bc_k(n) $, implies that $\Bc_{k+1}(n)$ is inductively free with exponents $ (1, a (\ell+n-1)  + i +k+1)_{i=1}^{\ell}$. 
The case $k =a-1$ gives the desired result for $\Bc(n+1)$. 

This completes the proof of the inductive freeness of $\Ac =\cc\Ec^p_\ell(n, a, m)$. 

Finally, we derive that $\Ac$ is  ind-flag-accurate. 
We argue by induction on $\ell$. 
When $\ell=1$, the statement is obvious. Suppose $\ell\ge2$.
It suffices to consider $1\le p \le \ell$, since $\Ec^{\ell}_\ell(n, a, m+1) = \Ec^{0}_\ell(n, a, m)$.
Let  $K \in \Ac$ denote the hyperplane $x_{1} = -naz$. 
Then the restriction $\Ac^{K}$ consists of the following hyperplanes 
\begin{align*}
z &= 0 , \\
x_{i} - x_{j} &=  [-a , a]z\quad  (2 \leq i < j \leq \ell  ), \\
x_i&=  [-(n+1) a, m ]z \quad (2 \leq   i \leq p   ), \\
x_i&=[-(n+1) a, m+1] z \quad (p+1 \le  i \leq \ell).
\end{align*}
Thus $\Ac^K =   \cc\Ec^{p-1}_{\ell-1}(n+1, a, m)$. 
Hence  $\Ac^K $ is  ind-flag-accurate by the induction hypothesis with exponents 
$$\exp\left(\cc\Ec^{p-1}_{\ell-1}(n+1, a, m)\right) = (1,  \ell -p+1, 2, 3,\ldots, \ell-1) +  (0,  (m + a (\ell+n-1))^{\ell-1} ).$$ 
Finally, thanks to Lemma \ref{lem:flag-accuracy}, $\Ac$  is ind-flag-accurate. 
This completes the proof of the theorem.
\end{proof}

	 \begin{remark}
	\label{rem:p=0,1} 
	We remark that in the cases  $p = 0$, or $p = 1$, one may consider a different restriction to deduce the  ind-flag-accuracy of $\cc\Ec^p_\ell(n, a, m)$. 
Indeed, by Case $1$ in the proof of Theorem \ref{thm:Cat-ext} the restriction of $\cc\Ec^p_\ell(n, a, m)$ to the hyperplane given by $x_{p+1} = (m+1)z$ is identical to $\cc\Ec^{0}_{\ell-1}(n, a, m+a)$ whose multiset of exponents is $(1, m + a (\ell+n-1)  +i +1)_{i=1}^{\ell-1}$. 
\end{remark}

We give some comments on the cone  $\cc \Cat^m (A_\ell)$ over the extended Catalan arrangement of type $A_\ell$. 
It was shown to be inductively free by  Edelman and  Reiner \cite[Thm.~3.2]{ER96_FreeArrRhombicTilings}. 
Recently, Tsujie and Nakashima \cite[Cor.~4.2]{NT22} proved that $\cc \Cat^m (A_\ell)$ is even \emph{hereditarily inductively free} i.e., every restriction of $\cc \Cat^m (A_\ell)$ is inductively free. 
Our result below confirms the ind-flag-accuracy of $\cc \Cat^m (A_\ell)$, thanks to Theorem \ref{thm:Cat-ext}.

\begin{corollary}
	\label{cor:CatA-FA}
The extended Catalan arrangement $ \Cat^m (A_\ell)$ of type $A_\ell$ (Definition \ref{Def_IdealShiCatalan}) affinely equivalent to $\Ec^{\ell}_\ell(1, m, m)$ or $\Ec^{0}_\ell(1, m, m-1)$ with $m\ge1$ has  ind-flag-accurate cone with exponents $ (1,  m(\ell+1)  +i )_{i=1}^\ell$.  

Moreover,  the inductive argument in the proof of Theorem \ref{thm:Cat-ext} provides a simple root witness for the  ind-flag-accuracy of  $\cc \Cat^m (A_\ell)$ as follows.
Let $H_1, \ldots, H_\ell$ denote the hyperplanes $x_1 = -mz$ and $x_{i-1} - x_{i} = mz$ for $2 \le i \le \ell$ in $\cc\Ec^{0}_\ell(1, m, m-1)$, or equivalently  $x_i - x_{i+1} = mz$   ($1 \le i \le \ell$)  in $\cc \Cat^m (A_\ell)$. 
Set $X_i : = \bigcap_{j=1}^{i} H_j$ for $1 \le i \le \ell-1$. Then  $X_{\ell-1} \subseteq \cdots \subseteq  X_1$  is a witness for the  ind-flag-accuracy of $\cc \Cat^m (A_\ell)$.
\end{corollary}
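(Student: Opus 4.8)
The plan is to derive everything from Theorem~\ref{thm:Cat-ext} via an affine equivalence. First I would realize the type $A_\ell$ root system in $\RR^{\ell+1}$ with coordinates $x_0,x_1,\dots,x_\ell$ and positive roots $x_i-x_j$ for $0\le i<j\le \ell$, so that $\Cat^m(A_\ell)$ is cut out by the hyperplanes $x_i-x_j=k$ with $k\in[-m,m]$. Passing to the section $x_0=0$ essentializes the arrangement: the hyperplanes $x_0-x_j=k$ become $x_j=-k$ (sweeping out $x_j\in[-m,m]$), while the hyperplanes $x_i-x_j=k$ with $1\le i<j\le\ell$ are unchanged. In the coordinates $x_1,\dots,x_\ell$ this is exactly $\Ec^{0}_\ell(1,m,m-1)$, since there $[-a,a]=[-m,m]$ and $[-na,\,m+1]=[-m,m]$; by the identity $\Ec^{\ell}_\ell(n,a,m)=\Ec^{0}_\ell(n,a,m-1)$ recorded in the proof of Theorem~\ref{thm:Cat-ext}, it equals $\Ec^{\ell}_\ell(1,m,m)$ as well. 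Theorem~\ref{thm:Cat-ext} then immediately gives that $\cc\Cat^m(A_\ell)$ is ind-flag-accurate with exponents $(1,m(\ell+1)+i)_{i=1}^{\ell}$.

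For the explicit simple root witness I would unwind the induction on $\ell$ from the last part of the proof of Theorem~\ref{thm:Cat-ext}, applied to $\Ac:=\cc\Ec^{\ell}_\ell(1,m,m)$. With $H_1\colon x_1=-mz$ and $H_j\colon x_{j-1}-x_j=mz$ for $2\le j\le\ell$, solving the defining equations gives $X_i=\bigcap_{j=1}^{i}H_j=\{x_1=-mz,\,x_2=-2mz,\dots,\,x_i=-imz\}$, a flat of codimension $i$; the forms $x_1+mz,\,x_1-x_2-mz,\dots,\,x_{\ell-1}-x_\ell-mz$ are triangular, hence linearly independent. The crucial observation is that on the flat $X_{i-1}$, where $x_{i-1}=-(i-1)mz$, the simple root hyperplane $H_i\colon x_{i-1}-x_i=mz$ coincides with the coordinate hyperplane $x_i=-imz$. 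After the relabeling $x_i,\dots,x_\ell\mapsto x_1,\dots,x_{\ell-i+1}$, the latter is precisely the hyperplane $K\colon x_1=-naz$ with $n=i$, $a=m$ that drives the inductive step of Theorem~\ref{thm:Cat-ext}. Carrying out the corresponding restriction computation yields $\Ac^{X_i}=\cc\Ec^{\ell-i}_{\ell-i}(i+1,m,m)$ for $1\le i\le\ell-1$, which by Theorem~\ref{thm:Cat-ext} is free with exponents $(1,m(\ell+1)+1,\dots,m(\ell+1)+(\ell-i))$, i.e.\ exactly the first $(\ell+1)-i$ entries of $\exp(\Ac)$ in increasing order.

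To close the induction, note that $X_{\ell-1}$ has dimension $2$, so $\Ac^{X_{\ell-1}}$ is a free rank-$2$ arrangement and is therefore inductively free and trivially ind-flag-accurate. Since the $H_i$ are linearly independent and each intermediate restriction has the correct truncated exponents, Lemma~\ref{lem:flag-accuracy} (with $k=\ell-1$) shows that $\Ac$ is ind-flag-accurate with witness $X_{\ell-1}\subseteq\cdots\subseteq X_1$. Finally, translating $H_1\colon x_1=-mz$ back through the section $x_0=0$ to $x_0-x_1=mz$, every $H_i$ is a homogenized shift of a simple root $\alpha_i$ (with $\alpha_1=x_0-x_1$ and $\alpha_i=x_{i-1}-x_i$ for $i\ge2$, equivalently $x_i-x_{i+1}=mz$ in $\cc\Cat^m(A_\ell)$), and the witnessing flats $X_1,\dots,X_{\ell-1}$ are built from $H_1,\dots,H_{\ell-1}$, so the witness is a simple root witness in the sense of Definition~\ref{def:SRW}.

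The main obstacle is entirely bookkeeping: keeping track of the successive coordinate relabelings and verifying that the fixed ambient simple root hyperplanes $H_i$ restrict, on the flats $X_{i-1}$, to exactly the coordinate hyperplanes used in the induction of Theorem~\ref{thm:Cat-ext}. Once the restriction identity $\Ac^{X_i}=\cc\Ec^{\ell-i}_{\ell-i}(i+1,m,m)$ is in hand, the exponent truncation and the appeal to Lemma~\ref{lem:flag-accuracy} are routine.
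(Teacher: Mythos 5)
Your argument is correct and follows the paper's intended route exactly: the paper presents this corollary as an immediate consequence of Theorem \ref{thm:Cat-ext} (via the affine identification of the essentialized $\Cat^m(A_\ell)$ with $\Ec^{0}_\ell(1,m,m-1)=\Ec^{\ell}_\ell(1,m,m)$), with the simple root witness obtained by unwinding the induction in that theorem's proof, which is precisely what you do. Your explicit verification that $\Ac^{X_i}=\cc\Ec^{\ell-i}_{\ell-i}(i+1,m,m)$ and the resulting exponent truncation are the details the paper leaves implicit, and they check out.
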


Next we investigate the ind-flag-accuracy of the first descendants. 

\begin{theorem} 
\label{thm:nt-starter}
Let $m\ge 0$,  $c, \ell \ge 1$ and $0 \le p \le \ell$. 
The first descendant $\widehat\Rc^{p,1}_\ell(c,m)$ of the Catalan descendant sequence from Definition \ref{def:Cat-ally-ext} has  ind-flag-accurate cone with exponents 
$$\exp\left(\cc\widehat\Rc^{p,1}_\ell(c,m)\right) = (1,  \ell -p+1, 2, 3,\ldots, \ell) +  (0,  (m + \ell+c-1)^{\ell} ).$$ 
\end{theorem}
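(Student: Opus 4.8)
The plan is to prove both the inductive freeness and the ind-flag-accuracy of $\cc\widehat\Rc^{p,1}_\ell(c,m)$ by induction on $\ell$, using the one-hyperplane form of Lemma \ref{lem:flag-accuracy}. First I would spell out the arrangement explicitly: combining the mutation $\tau_\ell$ with Proposition \ref{prop:Cat-ally-formula}, the cone $\cc\widehat\Rc^{p,1}_\ell(c,m)$ consists of the hyperplanes $x_i-x_j=[-1,1]z$ for $1\le i<j\le\ell-1$, the half-interval hyperplanes $x_i-x_\ell=[-1,0]z$ for $1\le i\le\ell-1$ coming from the source vertex $\ell$, together with the weight hyperplanes $x_i=[-c-1,m]z$ for $1\le i\le p$, $x_i=[-c-1,m+1]z$ for $p<i\le\ell-1$, and $x_\ell=[-c,m+1]z$ when $p<\ell$ (for $p=\ell$ the source weight is $[-c,m]z$, a case handled identically or reduced via Remark \ref{rem:0-ell}). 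Since the mutations preserve freeness and characteristic polynomials, I already know this cone is free with the asserted exponents; what the argument must add is the \emph{inductive} freeness and the flag structure.

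The heart of the proof is the identification of two restrictions, each of which removes the current top exponent and lands in a family whose ind-flag-accuracy is already available. Restricting to $H_{\mathrm{top}}\colon x_\ell=(m+1)z$ forces $x_i\in[m,m+1]z$ for every inner vertex, so all inner weights collapse to $[-c-1,m+1]z$ and the restriction becomes the \emph{origin} $\cc\Rc^{0,1}_{\ell-1}(c+1,m)$, which is ind-flag-accurate by Theorem \ref{thm:Cat-ext}; a short exponent computation shows this drops exactly the special exponent $\ell-p+1+(m+\ell+c-1)=m+2\ell+c-p$, which is the largest precisely when $p\le 1$. Dually, restricting to $H_{\mathrm{bot}}\colon x_1=-(c+1)z$ extends the remaining weights by one step and yields the smaller \emph{first descendant} $\cc\widehat\Rc^{p-1,1}_{\ell-1}(c+1,m)$, ind-flag-accurate by the induction hypothesis; here the dropped exponent is the top bulk exponent $m+2\ell+c-1$, which is the largest precisely when $p\ge2$. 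Thus for $p\le1$ I would apply Lemma \ref{lem:flag-accuracy} with $H_{\mathrm{top}}$ and Theorem \ref{thm:Cat-ext}, and for $p\ge2$ with $H_{\mathrm{bot}}$ and the inductive hypothesis; the base cases $\ell\le2$ are of rank at most two and hence flag-accurate outright.

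The remaining, and in my view hardest, ingredient is establishing that $\cc\widehat\Rc^{p,1}_\ell(c,m)$ is genuinely \emph{inductively} free, since \cite{AbeTranTsujie21_ShiIsh} only guarantees that mutation preserves freeness and the characteristic polynomial, not inductive freeness. I would obtain this by a separate induction via Theorem \ref{thm:AD}: the restriction $\Ac^{H}$ to either $H_{\mathrm{top}}$ or $H_{\mathrm{bot}}$ is inductively free by the above, so it suffices to show that the matching deletion $\Ac\setminus\{H\}$ is inductively free. The obstruction is that these deletions are not themselves members of the $\widehat\Rc$ or $\Rc$ families — they have a single vertex whose weight interval is one step shorter — so I expect to introduce an auxiliary family interpolating the weight hyperplanes of the source vertex $\ell$ (respectively of vertex $1$) and add them back one at a time, checking at each step that the restriction to the newly added hyperplane is an already-known inductively free origin $\cc\Rc^{p,1}_{\ell-1}(c+1,\cdot)$. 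The bookkeeping for the source vertex, whose half-interval edges $x_i-x_\ell=[-1,0]z$ break the symmetry enjoyed by the $\Ec$-type arrangements of Theorem \ref{thm:Cat-ext}, is where the care is needed; this mirrors the auxiliary-arrangement device used in Case~2 of the proofs of Theorems \ref{thm:A-H} and \ref{thm:Cat-ext}.
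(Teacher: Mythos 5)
Your identification of the witness hyperplanes is essentially the paper's: the restriction to $x_1=-(c+1)z$, identified with $\cc\widehat\Rc^{p-1,1}_{\ell-1}(c+1,m)$, is exactly the step the paper uses for all $1\le p\le\ell$ (it disposes of $p=0$ via the identity $\widehat\Rc^{\ell,1}_\ell(c,m+1)=\widehat\Rc^{0,1}_\ell(c,m)$ rather than via your $H_{\mathrm{top}}$, but both choices drop the correct top exponent), so the flag-accuracy half of your argument is sound. The genuine gap is in the inductive freeness, which you correctly single out as the hard part but then only sketch. Your plan is built around deleting $H_{\mathrm{top}}$ or $H_{\mathrm{bot}}$ and repairing the fact that these particular deletions leave the $\widehat\Rc$-family by an unspecified ``auxiliary interpolating family''; this is never carried out, and it is not clear that the intermediate restrictions you would need are the already-known origins you claim (restricting to a weight hyperplane $x_\ell=jz$ of the source reintroduces the half-interval edges as weight hyperplanes $x_i\in\{(j-1)z,jz\}$, so the intermediate arrangements require their own analysis).

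The missing idea is to choose the deletion hyperplane so that the deletion \emph{stays} in the family: deleting $x_{p+1}=(m+1)z$ from $\cc\widehat\Rc^{p,1}_\ell(c,m)$ (for $0\le p\le\ell-2$) shortens the weight of vertex $p+1$ from $[-c-1,m+1]$ to $[-c-1,m]$ and therefore yields exactly $\cc\widehat\Rc^{p+1,1}_\ell(c,m)$, while the corresponding restriction is $\cc\widehat\Rc^{0,1}_{\ell-1}(c,m+1)$; this sets up a clean double induction on $(\ell,\ell-p)$ via Theorem \ref{thm:AD}. The induction must then be anchored at $p=\ell$ (and used again at $p=\ell-1$, where deleting $x_\ell=(m+1)z$ does land in the family, namely at $\cc\widehat\Rc^{\ell,1}_\ell(c,m)$): there the paper shows via Proposition \ref{prop:MC-criterion} that the source vertex $\ell$ determines a modular coatom equal to $\varnothing_1\times\cc\Ec^{\ell-1}_{\ell-1}(c+1,1,m)$ and invokes Proposition \ref{prop:modular coatom}(ii) together with Theorem \ref{thm:Cat-ext}. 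Neither the correct choice of deletion hyperplane nor the modular-coatom anchor appears in your proposal, so as written the inductive-freeness claim --- and hence the ``ind-'' part of the statement --- is not established.
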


\begin{proof}
First we derive the inductive freeness of $\Ac :=\cc\widehat\Rc^{p,1}_\ell(c,m)$.  
Note that when $p=\ell$, the arrangement $\Ac = \cc\widehat\Rc^{\ell,1}_\ell(c,m)$ consists of the hyperplanes 
\begin{align*}
z &= 0 , \\
x_{i} - x_{j} &= [-1, 1]z \quad  (1 \leq i < j \leq \ell-1), \\
x_{i} - x_{\ell} &= [-1, 0] z\quad  (1 \leq i  \leq \ell-1), \\
x_i&=  [-c-1, m]z  \quad (1 \leq   i  \leq \ell-1), \\
x_\ell&=  [-c, m] z.
\end{align*}
One may apply Proposition \ref{prop:MC-criterion} to show that the subarrangement $\Bc \subseteq \Ac$ consisting of 
\begin{align*}
z &= 0 , \\
x_{i} - x_{j} &= [-1, 1]z \quad  (1 \leq i < j \leq \ell-1), \\
x_i&=  [-c-1, m]z  \quad (1 \leq   i  \leq \ell-1), \\
\end{align*}
is a modular coatom of $\Ac$. 
In graphical terms, the vertex $\ell$ is a source and a simplicial vertex of the underlying digraph of $\Ac$. 
Moreover, according to Theorem \ref{thm:Cat-ext}, we see that $\Bc =\varnothing_1 \times \cc\Ec^{\ell-1}_{\ell-1}(c+1, 1, m)$ is inductively free with exponents  
$$\exp(\Bc) = (0,1, 1, 2, 3,\ldots, \ell-1) +  (0,0,  ( m + \ell+c-1  )^{\ell-1} ).$$ 
Note also that $| \Ac \setminus \Bc | = m + 2\ell+c-1$.
Now applying Proposition \ref{prop:modular coatom}(ii)
 allows us to deduce that $\Ac  \in \mathcal{IF}$ with the desired exponents. 

Suppose $0 \le p \le \ell-1$. 
Define the lexicographic order on 
$$\{ (\ell, \ell-p) \mid 1\le \ell- p \le \ell, \, \ell \ge 1\}.$$
We show $\Ac =\cc\widehat\Rc^{p,1}_\ell(c,m)  \in \mathcal{IF}$ with the desired exponents by induction on $ (\ell, \ell-p) $. 
When $\ell=1$, it is obvious. Suppose $\ell\ge2$.

Case $1$. First let $0 \le p \le \ell-2$. 
Then $\Ac$ consists of the hyperplanes 
\begin{align*}
z &= 0 , \\
x_{i} - x_{j} &= [-1, 1]z \quad  (1 \leq i < j \leq \ell-1), \\
x_{i} - x_{\ell} &= [-1, 0] z\quad  (1 \leq i  \leq \ell-1), \\
x_i&=  [-c-1, m]z  \quad (1 \leq   i \leq p), \\
x_i&=  [-c-1, m+1] z\quad (p<  i \leq \ell-1), \\
x_\ell&=  [-c, m+1] z.
\end{align*}
Let  $H \in \Ac$ denote the hyperplane $x_{p+1} = (m+1)z$. 
Then $\Ac\setminus \{H\} = \cc\widehat\Rc^{p+1,1}_\ell(c,m) \in \mathcal{IF}$ with exponents  $(1,  \ell -p, 2, 3,\ldots, \ell) +  (0,  ( m + \ell+c-1  )^{\ell} )$, by the induction hypothesis. 
Moreover, $\Ac^{H}=    \cc\widehat\Rc^{0,1}_{\ell-1}(c,m+1)$. 
Hence   $\Ac^{H} \in \mathcal{IF}$ with exponents 
$$\exp\left(\cc\widehat\Rc^{0,1}_{\ell-1}(c,m+1)\right) = (1, 2, 3,\ldots, \ell) +  (0,  ( m + \ell+c-1 )^{\ell-1} ),$$  by the induction hypothesis. 
Therefore, by Theorem \ref{thm:AD}, $\Ac  \in \mathcal{IF}$ with the desired exponents. 

Case $2$. Now consider $p=\ell-1$. Then $\Ac$ is given by replacing the fourth and fifth set of the hyperplanes in Case $1$ by 
$$
x_i=   [-c-1, m]z  \quad (1 \leq   i \leq  \ell-1).
$$
Let  $K \in \Ac$ denote the hyperplane $x_{\ell} = (m+1)z$. 
Then $\Ac\setminus \{K\} = \cc\widehat\Rc^{\ell,1}_\ell(c,m)$ is inductively free with exponents  $(1, 1, 2, 3,\ldots, \ell) +  (0,  ( m + \ell+c-1  )^{\ell} )$, by the discussion at the beginning. 
Moreover, $\Ac^{K}=   \cc\Ec^{\ell-1}_{\ell-1}(c+1, 1, m+1)$. 
By Theorem \ref{thm:Cat-ext},  $\Ac^{K} \in \mathcal{IF}$ with exponents $(1, 2, 3,\ldots, \ell) +  (0,  ( m + \ell+c-1 )^{\ell-1} )$. 
Therefore, by Theorem \ref{thm:AD}, $\Ac  \in \mathcal{IF}$ with the desired exponents.

 This completes the proof of the  inductive freeness of $\Ac = \cc\widehat\Rc^{p,1}_\ell(c,m)$. 

Finally, we show that $\Ac$ is ind-flag-accurate. 
We argue by induction on $\ell$. 
When $\ell=1$, it is obvious. Suppose $\ell\ge2$.
It suffices to consider  the case when $1\le p \le \ell$, since $\widehat\Rc^{\ell,1}_\ell(c, m+1) = \widehat\Rc^{0,1}_\ell(c, m)$ (see Remark \ref{rem:0-ell}). 
The proof is more straightforward than the one above, as we do not have to distinguish between different values of $p$. 
Let  $F \in \Ac$ denote the hyperplane $x_{1} = -(c+1)z$. 
Then $\Ac^F =   \cc\widehat\Rc^{p-1}_{\ell-1}(c+1, m)$. 
Hence  $\Ac^F $ is ind-flag-accurate with exponents 
$$\exp\left(\cc\widehat\Rc^{p-1}_{\ell-1}(c+1, m)\right) = (1,  \ell -p+1, 2, 3,\ldots, \ell-1) +  (0,  (m + \ell+c-1)^{\ell-1} ),$$ 
by the induction hypothesis. 
Owing to Lemma \ref{lem:flag-accuracy}, $\Ac$  is ind-flag-accurate. 
This completes the proof of the theorem.
 \end{proof}

We are now in a position to derive the main result of this subsection. 
We remark that unlike in the case of Shi genealogy, the modular coatoms from simplicial isolated vertices of the digraph do not automatically imply the ind-flag-accuracy of the Catalan descendants. 
It is crucial to explicitly construct a witness. 

\begin{theorem}
 \label{thm:Cat-ally-FA}
Let  $m\ge 0$,   $c\ge 1$, $1 \le k \le \ell$,  and $0 \le p \le \ell$. 
The cones over the   Catalan descendants $\Rc^{p,k}_\ell(c,m)$ and $\widehat\Rc^{p,k}_\ell(c,m)$ are   ind-flag-accurate with exponents 
$$\exp\left(\cc\Rc^{p,k}_\ell(c,m)\right) = 
\exp\left(\cc\widehat\Rc^{p,k}_\ell(c,m)\right) = 
(1,  \ell -p+1, 2, 3,\ldots, \ell) +  (0,  (m + \ell+c-1)^{\ell} ).$$ 
 \end{theorem}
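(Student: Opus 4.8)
The plan is to treat the two families $\cc\Rc^{p,k}_\ell(c,m)$ and $\cc\widehat\Rc^{p,k}_\ell(c,m)$ in parallel, and to split the argument into a freeness part and a flag-accuracy part, anchoring everything on the already-established $k=1$ base cases: the origins $\cc\Rc^{p,1}_\ell(c,m)$ (Theorem \ref{thm:Cat-ext}), the first descendants $\cc\widehat\Rc^{p,1}_\ell(c,m)$ (Theorem \ref{thm:nt-starter}), and the ends $\cc\Rc^{p,\ell}_\ell(c,m)$ (Proposition \ref{prop:Cat-finisher}).

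First I would establish inductive freeness together with the claimed exponents. By Proposition \ref{prop:Cat-pth-row} the vertices $\ell,\ell-1,\ldots,\ell-k+2$ are isolated and simplicial in the relevant induced subgraphs, and a direct computation from Proposition \ref{prop:Cat-ally-formula} identifies the induced subarrangement on $[\ell-k+1]$ as the origin $\cc\Rc^{p,1}_{\ell-k+1}(c+k-1,m+k-1)$ (respectively the first descendant, in the hatted case). These are inductively free by Theorems \ref{thm:Cat-ext} and \ref{thm:nt-starter}, so repeated application of Corollary \ref{cor:simplicial}(i) yields inductive freeness of the full cones and, accumulating the contributions $|\psi(n)|+n-1$ of the simplicial vertices, the stated exponents.

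The flag-accuracy is the crux, and here---as the remark preceding the theorem warns---the simplicial-vertex mechanism no longer suffices: Corollary \ref{cor:simplicial}(ii) would require the exponents of the deleted subarrangement not to exceed the exponent $|\psi(n)|+n-1$ of the newly restored simplicial vertex, but a short computation shows that the isolated vertices contribute the \emph{interior} exponents $m+\ell+c+1,\ldots,m+\ell+c+k-1$ rather than the top exponent $m+2\ell+c-1$. Thus a witness must be built by hand. I would induct on $\ell$ and invoke Lemma \ref{lem:flag-accuracy}, peeling the largest exponent by restricting to the bottom weight hyperplane $x_1=(1-c-k)z$ of the first complete-digraph vertex. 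For $k\le 2$ this restriction stays inside the class of Catalan descendants, e.g. $\cc\Rc^{p,2}_\ell(c,m)^{x_1=(-c-1)z}=\cc\Rc^{p-1,2}_{\ell-1}(c+1,m)$, which removes exactly the top exponent, and the induction then closes at once through Lemma \ref{lem:flag-accuracy}.

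The hard part is that for $k\ge 3$ this restriction leaves the class of interval-weighted arrangements: the Coxeter relations $x_1-x_j=0$ force the value $1-c-k$ onto every isolated vertex $j$, producing a \emph{gap} in the weight interval of the higher isolated vertices while keeping those weights \emph{nested}. My plan to overcome this is to run the induction inside the larger class of $\psi$-digraphic arrangements whose isolated part carries arbitrary nested (not necessarily interval) weights---a class still covered by Theorem \ref{thm:nested}, whose ind-flag-accuracy of nested $N$-Ish arrangements (Lemma \ref{lem:N-Ish-FA}) serves as the genuine base case once the complete part has been peeled down to a single vertex. Concretely, each peeling step should reduce the complete part by one vertex, preserve nestedness of the isolated part, and remove exactly the current top exponent, so that iterating reduces $\cc\Rc^{p,k}_\ell(c,m)$ to a nested $N$-Ish arrangement whose explicit witness from Lemma \ref{lem:N-Ish-FA} lifts back up the flag; the hatted family is handled symmetrically by restricting at the source vertex, i.e. at the top weight hyperplane. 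The principal computation to be carried out---and the step I expect to be most delicate---is the exponent bookkeeping that certifies, at every level and in both the $p\le \ell+1-k$ and $p>\ell+1-k$ regimes of Proposition \ref{prop:Cat-pth-row}, that the restrictions peel the exponents in the correct strictly decreasing order.
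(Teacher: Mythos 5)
Your proposal is correct and follows essentially the same route as the paper: inductive freeness via the simplicial isolated vertices and Corollary \ref{cor:simplicial}(i), failure of Corollary \ref{cor:simplicial}(ii) for the flag, and an explicitly constructed witness that peels the complete part down to a strictly nested $N$-Ish arrangement handled by Lemma \ref{lem:N-Ish-FA} -- indeed your iterated bottom-weight restrictions produce exactly the paper's flag $X_s=\bigcap_{j\le s}H_j$ with $H_1\colon x_1=(1-c-k)z$ and $H_j\colon x_{j-1}-x_j=z$. The only organizational difference is that the paper does not formally enlarge the class to nested non-interval weights; it instead identifies each restriction $\Ac^{X_s}$ directly (the gapped isolated vertices remain simplicial, so Corollary \ref{cor:simplicial}(i) reduces $\Ac^{X_s}$ to the smaller origin $\cc\Rc^{p-s,1}_{\ell+1-k-s}(c+k+s-1,m+k-1)$), which is precisely the exponent bookkeeping you defer.
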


 \begin{proof}
By Theorems  \ref{thm:Cat-ext},  \ref{thm:nt-starter}, Remark \ref{rem:0-ell} and Proposition \ref{prop:Cat-finisher}, it suffices to prove for any fixed $1 \le p \le \ell$ and $2 \le k \le \ell-1$ that the cones  
$\cc\Rc^{p,k}_\ell(c,m)$, $\cc\widehat\Rc^{p,k}_\ell(c,m)$  are ind-flag-accurate with exponents $ (1,  \ell -p+1, 2, 3,\ldots, \ell) +  (0,  (m + \ell+c-1)^{\ell} )$. 
There are two cases to consider, $p \le \ell  -k$ versus $p > \ell  -k$. 
Since the proofs use a similar method, we give an argument only for the former (which is the more difficult one). 

First  we show the  assertion for $\Ac : = \cc\Rc^{p,k}_\ell(c,m)$. 
Note that by Proposition \ref{prop:Cat-pth-row} the arrangement $\Bc : = \textbf{c} \Ac\left(K_{\ell}^{k}[\ell-k+1], \psi_{\ell}^{p, k}|_{[\ell-k+1]}\right) $ is given by 
\begin{align*}
z &= 0 , \\
x_{i} - x_{j} &= [-1,1]z \quad  (1 \leq i < j \leq\ell + 1 -k), \\
x_i&=  [1-c-k , m+k-1]z \quad (1 \leq   i \leq p), \\
x_i&= [1-c-k , m+k]z   \quad (p<  i \leq \ell + 1 -k).
\end{align*}
Hence 
$$\Bc  = \textbf{c}\Rc^{p,1}_{\ell + 1 -k}(c+k-1,m+k-1),$$
is  ind-flag-accurate with $$\exp(\Bc) = (1,  \ell -p+1, k+1, k+2,\ldots, \ell) +  (0,  (m + \ell+c-1)^{\ell-k+1} ),$$  by Theorem \ref{thm:Cat-ext}. 

Owing to  Proposition \ref{prop:Cat-pth-row}, each isolated vertex $n \in [\ell-k+2, \ell] $ is  simplicial in  $ \left(K_{\ell}^{k}[n], \psi_{\ell}^{p,k}|_{[n]}\right)$ with $ \psi_{\ell}^{p,k}(n) = [ - c - \ell+n , m + \ell +1 - n] $. 
Applying Corollary \ref{cor:simplicial}(i) repeatedly to the $k-1$ simplicial vertices $\ell ,\ell-1, \ldots, \ell-k+2 $ in this order,  we are able to deduce that  
$$ \Ac \in \mathcal{IF} \quad  \Longleftrightarrow  \quad \Bc    \in \mathcal{IF} .$$ 
Thus the inductive freeness of $ \Ac$ is clear. 
Moreover, $$\exp( \Ac ) = (1,  \ell -p+1, 2, 3,\ldots, \ell) +  (0,  (m + \ell+c-1)^{\ell} ),$$
since the isolated vertices $n \in [\ell-k+2, \ell] $ contribute  the exponents 
$$| \psi_{\ell}^{p,k}(n) |+n-1 =(m + \ell+c-1) + (\ell-n+2)$$ with $2 \le \ell-n+2 \le k$ 
to $\exp( \Ac) $. 

Now we demonstrate the ind-flag-accuracy of $ \Ac$. 
Note that unlike in the proof of Theorem \ref{thm:Shi-ally-FA} for Shi descendants, we cannot appeal to Corollary \ref{cor:simplicial}(ii) here, since the exponent $(m + \ell+c-1) +e$  in $\exp(\Bc)$ for some $k+1 \le e \le \ell$ exceeds $| \psi_{\ell}^{p,k}(n) |+n-1$ for each $n \in [\ell-k+2, \ell] $. 
We may overcome this difficulty by making use of a certain witness for the ind-flag-accuracy of $\Bc$ hinted at in 
Theorem \ref{thm:Cat-ext}. 

Define $\ell-k$ hyperplanes $H_1, \ldots, H_{\ell-k} \in \Ac$ as follows:
$$H_1: x_1 = (1-c-k)z, \quad H_j: x_{j-1} - x_{j} = z \quad (2 \le j \le \ell-k).$$
For each $1 \le s \le \ell-k$, set 
\begin{align*}
X_s & : = \bigcap_{j=1}^{s} H_j, \text{ and } \\
M_s & : = \{ (1-c-k)-(j-1) \mid 1 \le j \le s\} = [2-c-k-s, 1-c-k].
\end{align*} 
Then $X_{\ell-k} \subseteq \cdots \subseteq  X_2  \subseteq X_1 \subseteq V'=\RR^{\ell+1} $ and $\dim_{V'}(X_s) =\ell+1-s$. 

We show that this flag is part of a witness for the ind-flag-accuracy of $\Ac$.
First we show that each restriction $\Ac^{X_s}$ is inductively free and compute its exponents. 

If $1 \le s \le p$ then by the proof of the ind-flag-accuracy in Theorem \ref{thm:Cat-ext}, $\Ac^{X_s}$ consists of the hyperplanes
\begin{align*}
z &= 0 , \\
x_{i} - x_{j} &= [-1,1]z \quad  (s+1 \leq i < j \leq\ell + 1 -k), \\
x_i&=  [1-c-k-s , m+k-1]z \quad (s+1 \leq   i \leq p), \\
x_i&= [1-c-k-s , m+k]z   \quad (p<  i \leq \ell + 1 -k), \\
x_i&= (M_s \sqcup  [ - c - \ell+i , m + \ell +1 - i])z   \quad (\ell + 1 -k <  i \leq \ell  ).
\end{align*}
Upon applying Corollary \ref{cor:simplicial}(i) repeatedly to the $k-1$ simplicial isolated vertices $\ell ,\ell-1, \ldots, \ell-k+2 $ in this order, we delineate that  
$$\Ac^{X_s} \in \mathcal{IF} \quad  \Longleftrightarrow  \quad \textbf{c}\Rc^{p-s,1}_{\ell + 1 -k-s}(c+k+s-1,m+k-1)    \in \mathcal{IF} .$$ 
By Theorem \ref{thm:Cat-ext}, $\Ac^{X_s}$ is  indeed inductively free with 
$$\exp\left(\Ac^{X_s}\right) = (1,  \ell -p+1, 2,\ldots, k, k+1, \ldots, \ell-s) +  (0,  (m + \ell+c-1)^{\ell-k+1-s} ).$$
Note that the isolated vertices from $[\ell-k+2, \ell] $ continue to contribute the exponents $(m + \ell+c-1) + (\ell-i+2)$ with $2 \le \ell-i+2 \le k$ to $\exp\left(\Ac^{X_s}\right)$.

If $p< s  \le \ell-k$ then $\Ac^{X_s}$ is given by replacing the third and fourth set of the hyperplanes in the previous case by 
$$
x_i=  [1-c-k-s , m+k]z \quad (s+1 \leq   i \leq \ell + 1 -k).
$$
Therefore, in this case
$$\Ac^{X_s} \in \mathcal{IF} \quad \Longleftrightarrow  \quad \textbf{c}\Rc^{\ell + 1 -k-s,1}_{\ell + 1 -k-s}(c+k+s-1,m+k)    \in \mathcal{IF} .$$ 
Again, by Theorem \ref{thm:Cat-ext}, $\Ac^{X_s}$ is inductively free with 
$$\exp\left(\Ac^{X_s}\right) = (1, 2,\ldots, k, k+1, \ldots, \ell-s+1) +  (0,  (m + \ell+c-1)^{\ell-k+1-s} ).$$

A simple comparison of the exponents shows that the flag $X_{\ell-k} \subseteq \cdots \subseteq  X_2  \subseteq X_1 \subseteq V'=\RR^{\ell+1} $ is indeed part of a witness for the ind-flag-accuracy of $\Ac$. 
Moreover, $\Ac^{X_{\ell-k}}$ is a strictly nested $N$-Ish arrangement hence is ind-flag-accurate, by Lemma \ref{lem:N-Ish-FA}. 

By Lemma \ref{lem:flag-accuracy}, we conclude that $\Ac$  is ind-flag-accurate. 
Note that the flag above can be easily extended to a proper witness for $\Ac$ thanks to the construction of a witness in Lemma \ref{lem:N-Ish-FA}. 
This completes the proof of the ind-flag-accuracy of $\Ac$.

The proof of the  assertion for $ \cc\widehat\Rc^{p,k}_\ell(c,m)$ runs essentially along the same lines as the argument above with the use of Theorem \ref{thm:nt-starter} in place of Theorem \ref{thm:Cat-ext}.
\end{proof}


\bigskip
{\bf Acknowledgements}:
Work on this paper began during a 
visit to the Mathematisches Forschungsinstitut Oberwolfach while one of us (PM) held a Leibniz Fellowship; we thank them for their support. 
The third author is supported by a postdoctoral fellowship of the Alexander von Humboldt Foundation. 
The authors would like to thank Shuhei Tsujie for stimulating conversations concerning graphic arrangements.

\newcommand{\etalchar}[1]{$^{#1}$}
\providecommand{\bysame}{\leavevmode\hbox to3em{\hrulefill}\thinspace}
\providecommand{\MR}{\relax\ifhmode\unskip\space\fi MR }
\providecommand{\MRhref}[2]{%
	\href{http://www.ams.org/mathscinet-getitem?mr=#1}{#2}
}
\providecommand{\href}[2]{#2}

\end{document}